\newcommand{\f}{{f}}
\newcommand{\n}{{n}}
\newcommand{\Gx}{{\Gamma}}
\newcommand{\N}{{\mathcal{N}}} 
\newcommand{\Gl}{ { \Gamma^\star_{\rm linear}   } } 
\newcommand{\muslashmu}{{\ensuremath{(\mu/\mu_{w}, \lambda)}}}
\newcommand{\markupdraftm}[2]{
    \ifthenelse{\equal{#1}{display}}{#2}{}
    \ifthenelse{\equal{#1}{color}}{\color{#2}}{}
}
\newcommand{\newcolored}[3][]{{\markupdraft{color}{#2}#3}
    \ifthenelse{\equal{#1}{}}{}{\markupdraft{display}{{\color{yellow!70!black}[#1]}}}} 
\newcommand{\del}[2][]{{\markupdraft{display}{{\color{yellow!90!black}[{\tiny removed:}{\small "#2"\ifthenelse{\equal{#1}{}}{}{[#1]}]}}}}} 
\newcommand{\new}[2][]{\newcolored[#1]{blue!80!black}{#2}}
\newcommand{\nnew}[2][]{\newcolored[#1]{red!90!black}{#2}}
\newcommand{\markupdraft}{\markupdraftm}
\newcommand{\indraftonly}[1]{{#1}}  
\renewcommand{\indraftonly}[1]{}\renewcommand{\markupdraft}[2]{}  
\renewcommand{\del}[2][]{}
\newcommand{\anne}[1]{\indraftonly{{\color{magenta}Anne: #1}}}
\newcommand{\niko}[2][]{\indraftonly{\color{green!60!black}~#2$_{-\!\mathrm{Niko}}$#1}}
\newcommand{\ba}{\begin{align}}
\newcommand{\ea}{\end{align}}
\newcommand{\baStar}{\begin{eqnarray*}}
\newcommand{\eaStar}{\end{eqnarray*}}
\newcommand{\todo}[1]{\indraftonly{\color{red} {\bf TODO} #1}}
\newcommand{\done}[1]{\indraftonly{\color{green} {\bf DONE} #1}}
\newcommand{\level}{\mathcal{L}}
\newcommand{\normalized}{$\sigma$-normalized}
     \newcommand{\foncfast}[4]{#2 \in #1 \mapsto #3 \in #4}
  \newcommand{\foncsmall}[2]{#1 \mapsto #2}
   \newcommand{\foncfleche}[2]{#1 \rightarrow #2}
\newcommand{\dsp}{\displaystyle}
\newcommand{\acco}[1]{\left\{#1\right\}} 
\newcommand{\pare}[1]{\left(#1\right)}
\newcommand{\croc}[1]{\left[#1\right]}
\newcommand{\abs}[1]{\left\lvert#1\right\rvert}
\def\R{{\mathbb R}}
\def\diff{{\mathrm d}}
\def\ZZ{{\mathcal Z}}
\journalname{Journal of Global Optimization}
\begin{document}



\title{ 
Global linear convergence of Evolution Strategies with recombination on scaling-invariant functions}

\titlerunning{Global linear convergence of Evolution Strategies with recombination}

%
\author{Cheikh Toure \and  Anne Auger \and Nikolaus Hansen} 

\institute{
Inria and CMAP, Ecole Polytechnique, IP Paris, France \\
cheikh.toure@polytechnique.edu\\firstname.lastname@inria.fr}


%

\date{Received: date / Accepted: date}

%
%
%

\maketitle

\begin{abstract}
Evolution Strategies (ESs) are stochastic derivative-free optimization algorithms whose most prominent representative, the CMA-ES algorithm, is widely used to solve difficult numerical optimization problems. We provide the first rigorous investigation of the linear convergence of step-size adaptive ESs involving a population and recombination, two ingredients crucially important in practice to be robust to local irregularities or multimodality.

We investigate the convergence of step-size adaptive ESs with weighted recombination on composites of strictly increasing functions with continuously differentiable scaling-invariant functions with a global optimum. 
This function class includes functions with non-convex sublevel sets and discontinuous functions. We prove the existence of a constant $r$ such that the logarithm of the distance to the optimum divided by the number of iterations converges to $r$. The constant is given as an expectation with respect to the stationary distribution of a Markov chain---its sign allows to infer linear convergence or divergence of the ES and is found numerically.

Our main condition for convergence is the increase of the expected log step-size on linear functions. In contrast to previous results, our condition is equivalent to the almost sure geometric divergence of the step-size on linear functions.


\keywords{Evolution Strategies; Linear Convergence;  CMA-ES; Scaling-invariant functions; Foster-Lyapunov drift conditions.}

\end{abstract}




\section{Introduction}

Evolution Strategies (ES\nnew{s}) are stochastic numerical optimization algorithms introduced in the 70's \cite{schwefel1977numerische, rech1973a, rechenberg1994evolutionsstrategie, schw1995a}. They aim at optimizing an objective function $\f: \R^{\n} \to \R$ in a so-called zero-order black-box scenario where gradients are not available and only \emph{comparisons} between $f$-values of candidate solutions are used to update the state of the algorithm. ESs sample candidate solutions from a multivariate normal distribution parametrized by a mean vector and a covariance matrix. The mean vector represents the incumbent or current favorite solution while the covariance matrix determines the geometric shape of the sampling probability distribution. In adaptive ES\nnew{s}, not only the mean vector but also \nnew{a step-size or} the covariance matrix is adapted in each iteration. Covariance matrices can be seen as encoding a metric such that Evolution Strategies that adapt a full covariance matrix are variable metric algorithms~\cite{suttorp2009efficient}.

In \new{the domain of Evolutionary Computation}, the covariance-matrix-adaptation ES (CMA-ES)~\cite{hansen2001completely, hansen2016cma} is nowadays recognized as  state-of-the-art to solve difficult numerical optimization problems that can typically be non-convex, non-linear, ill-conditioned, non-separable, rugged or multi-modal\footnote{\new{%
The \href{https://pypi.org/project/cmaes}{\texttt{cmaes}} and the
\href{https://pypi.org/project/cma}{\texttt{pycma}} Python modules that implement the algorithm are downloaded more than \href{https://pepy.tech/project/cmaes}{300,000} and \href{https://pepy.tech/project/cma}{30,000} times per week, respectively, from \href{https://pypi.org}{PyPI} as of \nnew{September}\del{March} 2022.}
\new{Both modules implement the main ideas of CMA-ES \cite{hansen2001completely} and
further enhancements published over the years, notably the rank-$\mu$ update \cite{hansen2003reducing}, a better setting for step-size damping and the weights \cite{hansen2004evaluating}, an active covariance matrix update \cite{jastrebski2006improving},
and restart mechanisms with increasing population size \cite{auger2005restart, hansen2009benchmarking}.
}
}\cite{garcia2017since,hansen2010comparing,bouter2021achieving,glasmachers2022convergence}\cite[Fig.\,20]{rios2013derivative}.
Other relevant algorithms to solve ill-structured, non-convex, multi-modal, non-differentiable problems are also \nnew{often} population based like Estimation of Distribution algorithms notably AMaLGaM \cite{bosman2013benchmarking}, Differential Evolution \cite{feoktistov2006differential,storn1997differential}, and Particle Swarm Optimization (PSO) \cite{kennedy1995particle}. PSO methods however exploit separability and are inefficient to solve non-separable ill-conditioned problems \cite{hansen2011impacts}.
\new{The CMA-ES algorithm is based upon several maximum likelihood updates \cite{hansen2014principled}, can be interpreted as a natural gradient descent \cite{akimoto2010bidirectional, glasmachers2010exponential, ollivier2017information} and has been tightly linked to the EM-algorithm \cite{akimoto2012theoretical}.}
Adaptation of the full covariance matrix is crucial to solve \new{general} ill-conditioned, non-separable problems. Up to a multiplicative factor that converges to zero, the covariance matrix \new{in CMA-ES} becomes on strictly convex quadratic objective functions close to the inverse Hessian of the function~\cite{hansen2006cma}.

The CMA-ES algorithm follows a \muslashmu-ES algorithmic scheme where from the offspring population of $\lambda$ candidate solutions sampled at each iteration, the $\mu \approx\lambda/2$ best solutions---the new parent population---are recombined as a weighted sum to define the new mean vector of the multivariate normal distribution.
On a unimodal spherical function, the optimal step-size, i.e.\  the standard deviation that should be used to sample each coordinate of the candidate solutions, depends monotonously on $\mu$~\cite{rechenberg1994evolutionsstrategie}.
Hence, increasing the population size makes the search less local while preserving a close-to-optimal convergence rate per function evaluation as long as $\lambda$ remains moderately large~\cite{arnold2005optimal, arnold2006weighted, hansen2015evolution}. This remarkable theoretical property implies robustness and partly explains why on many multi-modal test functions increasing $\lambda$ empirically increases the probability to converge to the global optimum~\cite{hansen2004evaluating}.
The robustness when increasing $\lambda$ and the inherent parallel nature of ESs are two key features behind their success for tackling difficult black-box optimization problems.

Convergence is a central question in optimization. For comparison-based algorithms like ES\nnew{s}, linear convergence (where the distance to the optimum decreases geometrically) is the fastest possible convergence \cite{teytaud2006general, jamieson2012query}. 
Gradient methods also converge linearly on strongly convex functions~\cite[Theorem 2.1.15]{nesterov2003introductory}.
We have ample empirical evidence that adaptive ESs converge linearly on wide classes of functions~\cite{ros2008simple, hansen2011impacts, hansen2015evolution, igel2006computational}. Yet,  establishing proofs is known to be difficult.
So far, linear convergence could be proven only for step-size adaptive algorithms where the covariance matrix equals a scalar times the identity~\cite{auger2005convergence, auger2013linear, jagerskupper2003analysis, jagerskupper2007algorithmic, jagerskupper2005rigorous, jagerskupper20061+} or a scalar times a covariance matrix with eigenvalues upper bounded and bounded away from zero~\cite{akimoto2020}. In addition, these proofs require the parent population size to be one.

In this context, we analyze here for the first time the linear convergence of a step-size adaptive ES with a parent population size greater than one and recombination, following a \muslashmu-ES framework.
As a second novelty, we model the step-size update by a generic function and thereby also encompass the step-size updates in the CMA-ES algorithm~\cite{hansen2016cma} (however with a specific parameter setting which leads to a reduced state-space) and in the xNES algorithm~\cite{glasmachers2010exponential}.

Our proofs hold on composites of strictly increasing functions with either continuously differentiable scaling-invariant functions with a unique argmin or nontrivial linear functions. This class of functions includes discontinuous functions, functions with infinite many critical points, and functions with non-convex sublevel sets. It does not include functions with more than one (local or global) optimum.

In this paper, we use a methodology\del{ formalized \del{in} \cite{auger2016linear}}\del{ and previously used in~\cite{auger2005convergence,auger2013linear}} based on analyzing the stochastic process defined as the difference between the mean vector and a reference point (often the optimum of the problem), normalized by the step-size \cite{auger2016linear}.
This construct is a viable model of the underlying (translation and scale-invariant) algorithm when optimizing scaling-invariant functions, in which case the stochastic process is also a Markov chain and here referred to as \emph{\normalized\ Markov chain}. This chain is \emph{homogeneous} as a consequence of three crucial invariance properties of the ES algorithms:
translation invariance, scale invariance, and invariance to strictly increasing transformations of the objective function.
Proving \emph{stability} of the \normalized\ Markov chain ($\varphi$-irreducibility, Harris recurrence, positivity) is key to obtain almost sure \emph{linear behavior} of the algorithm~\cite{auger2016linear}. The sign and value of the convergence or divergence rate can however only be obtained from elementary Monte Carlo simulations.
\new{The technically challenging part in the proof methodology\del{we apply} is the stability analysis.}
\new{It was not carried out by Auger and Hansen \cite{auger2016linear} who presented the methodology and some algorithm classes that can be \nnew{addressed}\del{handled} by the methodology but assumed\del{ the technical} stability \nnew{of the algorithms} without proof.}
\new{We prove in the following the stability for some algorithms belonging to the \muslashmu-ES framework and thus formally prove linear behavior of these algorithms.
}

\paragraph*{Relation to previous works:}~ 
\new{In contrast to our study, most \del{Most}theoretical analyses of linear convergence concern the so-called (1+1)-ES where a single candidate solution is sampled ($\lambda = 1$) and the new mean is the best among the current mean and the sampled solution and in addition the one-fifth success rule is used to adapt the step-size~\cite{rech1973a, kern2004learning}. J\"agersk\"upper established lower-bounds and upper-bounds \new{on the hitting time to reduce the distance to the optimum} related to linear convergence on spherical functions~\cite{jagerskupper2003analysis, jagerskupper2007algorithmic} and on some convex-quadratic functions \cite{jagerskupper2005rigorous, jagerskupper20061+}. \new{Remarkably, these studies derive the dependency of the hitting time bounds on dimension and condition number, an aspect which is not covered with our approach.}
The underlying methodology used for the proofs \del{is hidden within the algorithm analysis and} was later unveiled as connected to drift analysis where an overall Lyapunov function of the state of the algorithm (mean and step-size) is used to prove upper and lower bounds on the hitting time of an epsilon neighborhood of the optimum~\cite{akimoto2018drift}. 
With this drift analysis, Akimoto et al.~\cite{akimoto2018drift} provide lower and upper bounds on  the hitting time of an $\epsilon$-ball pertaining to linear convergence (coming as well with dependency in the dimension) for the 
the (1+1)-ES with one-fifth success rule on spherical functions. The analysis was later generalized for classes of functions including strongly convex functions with Lipschitz gradient as well as positively homogeneous functions~\cite{morinaga2019generalized, akimoto2020}.

\new{Using the same methodology as in this paper}, the linear convergence of the $\pare{1+1}$-ES with step-size adapted via the one-fifth success rule is proven on increasing transformations of $C^{1}$ positively homogeneous functions $p$ with a unique global argmin and upper bounds on the degree of $p$ and on the norm of the gradient $\|\nabla p \|$ \cite{auger2013linear}. \del{The methodology in~\cite{auger2013linear} is similar to ours, as it consists in applying an LLN to ergodic Markov chains. }
 
 \new{While most theoretical studies of linear convergence concern a (1+1)-ES\del{ algorithm framework}, the $(1,\lambda)$-ES with self-adaptation has been analyzed on the sphere function \cite{auger2005convergence} and more recently an ODE method has been developed and \new{applied to}\del{exemplified for} a $(\mu \slash \mu,\lambda)$-ES \new{with a specific step-size adaptation} concluding linear convergence on the sphere function when the learning rate is small enough \cite{akimoto2022ode}}. \new{Our analysis holds for wider classes of functions and does not impose a small learning rate. However it does not allow to obtain the sign of the convergence rate.}
 
A few studies attempt to analyze ES\nnew{s} with\del{ a} covariance matrix adaptation:
\new{A variant of CMA-ES\del{ where the algorithm is}, modified to ensure a sufficient decrease, globally converges (but not provably linearly)
\cite{diouane2015globally}.}
Provided the eigenvalues \new{of the covariance matrix} stay upper bounded and bounded away from zero (which is not the case in the affine-invarian\new{t} CMA-ES),
a (1+1)-CMA-ES with any covariance matrix update and proper step-size adaptation converges linearly \cite{akimoto2020}.
When convergence occurs on a twice continuously differentiable function for CMA-ES without step-size adaptation, the limit point is a local (or global) optimum~\cite{akimoto2010theoretical}.} \\

This paper is organized as follows.
We present in Section~\ref{cb-saes-framework} the algorithm framework, the assumptions on the algorithm and the class of objective functions where the convergence analysis is carried out. In Section~\ref{main-results} we present the main \new{proof idea to prove a linear behavior and present the ensuing proof structure}.
In Section~\ref{stability-markov-process}, we present \new{different} Markov chain notions and tools needed for our analysis. In Section~\ref{normalized-chain-section}, we establish  different stability properties on the \normalized\ Markov chain. We \new{state and prove} the main results in Section~\ref{consequences-section}.\del{In Section~\ref{previous-works},\done{fix reference} we highlight previous works related to this paper.}
\new{Notations are summarized in Table~\ref{notations}.}

\begin{table}\caption{Notations}\label{notations}\new{
\vspace{-1ex}\hspace{-0.5ex}%
\begin{tabular}{r@{ }p{0.88\textwidth}}
$\croc{\cdot}_i$ & is the $i^{\rm th}$ vector of a sequence of vectors\\
$\| \cdot \|$ & is the Euclidean norm \\
$\|\cdot\|_{\infty}$ & is the infinity norm on a space of bounded functions\\
$\|\nu\|_{h}$&${} = \sup_{|g| \leq h} | \mathbb{E}_{\nu}(g) |$
is for a positive function $h$ the norm of the signed measure $\nu$\\

$\pi_{1} \times \pi_{2}$ & is the product measure from two measure spaces
$\pare{\ZZ_{i}, \mathcal{B}(\ZZ_{i}), \pi_{i}}$,
$i=1,2$,
on the product measurable space $\pare{\ZZ_{1}\times \ZZ_{2}, \, \mathcal{B}(\ZZ_{1}) \otimes \mathcal{B}(\ZZ_{2} ) }$ where $\otimes$ is the tensor product\\

$A^{c}$ & is the complement of a set $A$ \\
$A^{\top}$ & is the transpose of a matrix $A$\\
$\mathbf{B}\pare{x, \rho}$ & ${}=\acco{ y\in \R^{\n}; \|x-y\| < \rho}$
is the open ball around
$x\in\R^{\n}$ with radius $\rho > 0$ and $\overline{\mathbf{B}\pare{x, \rho}}$ is its closure\\
$\mathcal{B}(\ZZ)$ & is the Borel sigma-field of the topological space $\ZZ$\\

$\mathbb{E}_{\nu}(g)$ & ${} = \int g(z) \,\nu(\diff z)$ for any real-valued function $g$ and a signed measure $\nu$\\

$\mathcal{L}_{\f,z}$ & ${}=\acco{y\in\R^{\n} \, ; \f(y) = \f(z) }$ is the level set
for an objective function $\f: \R^{\n} \to \R$ and an element $z\in \R^{\n}$\\

$\mathbb{N}$ & is the set of non-negative integers\\
$\N$ & is the standard normal distribution\\
$\N_{m}$ & is the standard multivariate normal distribution in dimension $m$\\
$\N\pare{x, C}$ & is the multivariate normal distribution with mean $x \in\R^{m}$ and covariance matrix $C$\\


$p_{\N_{m}}$ & is the probability density function of $\N_{m}$\\
$\R_{+}$ & is the set of non-negative real numbers\\

$u $&${}= (u^{1},\dots,u^m) \in\R^{pm}$
where $u^{i} \in \R^{p}$ for $i = 1, \dots, m$ 
and $p \in  \mathbb{N}\setminus\acco{0}$
and we write $u = (u^{1}) = u^{1}$ if $m = 1$\\

$w^\top u$&$={} \sum_{i=1}^{m} w_{i} u^{i}$ for $w \in \R^{m}$ and $u \in \R^{pm}$

\end{tabular}
}
\end{table}

\del{
\paragraph*{Notation}
The set of non-negative real numbers (resp.\ non negative integers) is denoted $\R_{+}$ (resp.\ $\mathbb{N}$), $\| \cdot \|$ denotes the Euclidean norm.
For $x\in\R^{\n}$ and $\rho > 0$, 
$\mathbf{B}\pare{x, \rho} = \acco{ y\in \R^{\n}; \|x-y\| < \rho}$ and $\overline{\mathbf{B}\pare{x, \rho}}$ is its closure. 
For a set $A$, $A^{c}$ denotes its complement. For a topological space $\ZZ$, we denote its Borel sigma-field by $\mathcal{B}(\ZZ)$.
For a signed measure $\nu$, we denote for any real-valued function $g$, $\mathbb{E}_{\nu}(g) = \dsp\int g(z) \,\nu(\diff z).$
For a positive function $h,$ we denote by $\|\cdot\|_{h}$ the norm on signed measures on $\mathcal{B}(\ZZ)$ defined for all signed measure $\nu$ as
$\|\nu\|_{h} = \sup_{|g| \leq h} | \mathbb{E}_{\nu}(g) |.$
If $\pare{\ZZ_{1}, \mathcal{B}(\ZZ_{1}), \pi_{1}}$ and $\pare{\ZZ_{2}, \mathcal{B}(\ZZ_{2}), \pi_{2}}$ are two measure spaces, $\pi_{1} \times \pi_{2}$ denotes the product measure on the product measurable space $\pare{\ZZ_{1}\times \ZZ_{2}, \, \mathcal{B}(\ZZ_{1}) \otimes \mathcal{B}(\ZZ_{2} ) }$ where $\otimes$ is the tensor product.
We denote $\N$ the standard normal distribution.
The multivariate normal distribution with mean $x \in\R^{m} $ and a $m \times m$ covariance matrix $C$  is denoted $\N\pare{x, C}$.
If $C$ is the identity matrix, $\N_{m}$ denotes the standard multivariate normal distribution in dimension $m$ and $p_{\N_{m}}$ is its probability density function. 
%
The infinity norm on a space of bounded functions is $\|\cdot\|_{\infty}$.
For a matrix $T$, $T^{\top}$ is its transpose. 
For $p \in  \mathbb{N}\setminus\acco{0}$, we denote an element $u$ of $\R^{pm}$ as $u = (u^{1},\dots,u^m)$ where $u^{i} \in \R^{p}$ for $i = 1, \dots, m$. If $m = 1$, we write that $u = (u^{1}) = u^{1}$.
For $w \in \R^{m}$ and $u \in \R^{pm}$, we denote $\sum_{i=1}^{m} w_{i} u^{i}$ as $w^\top u$.
For an objective function $\f: \R^{\n} \to \R$ and an element $z\in \R^{\n}$, we denote by $\mathcal{L}_{\f,z}$ the level set $\acco{y\in\R^{\n} \, ; \f(y) = \f(z) }$.
We refer to a non-zero linear function as a nontrivial linear function.
}

\section{Algorithm framework and class of functions studied}
\label{cb-saes-framework}

We present in this section the step-size adaptive algorithm framework analyzed, the assumptions on the algorithm and the function class considered as well as preliminary results.
In the following, we consider an abstract measurable space $\pare{\Omega, \mathcal{F}}$ and a probability measure $P$ so that $\pare{\Omega, \mathcal{F}, P}$ is a measure space.

\subsection{The \muslashmu-ES algorithm framework}
\label{algorithm-framework}

We introduce 
step-size adaptive ESs with recombination, referred to as step-size adaptive $(\mu/\mu_{w},\lambda)$-ES. Given a positive integer $n$ and a function $\f: \R^{\n} \to \R$ to be minimized, the sequence of states of the algorithm is represented by $\{(X_k, \sigma_k) \,; k \in \mathbb{N} \}$ where at iteration $k,$ $X_{k}\in \R^n$ is the incumbent (the favorite solution considered as current estimate of the optimum) and the positive scalar $\sigma_{k}$ is the step-size. 
We fix positive integers $\lambda$ and $\mu$ such that $\mu \leq \lambda$.

Let $\left(X_{0}, \sigma_{0} \right) \in \R^{\n} \times \pare{0, \infty}$ and $U=\{ U_{k+1} = (U_{k+1}^1,\ldots,U_{k+1}^\lambda)\, ; k \in \mathbb{N} \}$ be a sequence of independent and identically distributed (i.i.d.) random inputs independent from $\left(X_{0}, \sigma_{0} \right)$, where for all $k \in \mathbb{N}$,  $U_{k+1}= (U_{k+1}^1,\ldots,U_{k+1}^\lambda)$ is  composed of $\lambda$ independent random vectors following a standard multivariate normal distribution $\N_{n}$. Given $(X_k,\sigma_k)$ for $k \in \mathbb{N}$, we consider the following iterative update.
First, we define $\lambda$ candidate solutions as
\begin{equation}\label{eq:offspring}
X_{k+1}^i = X_k + \sigma_k \; U_{k+1}^i \quad \text{for $i=1,\dots, \lambda$}.
\end{equation}
Second, we evaluate the candidate solutions on $f$.
We then denote an $f$-sorted permutation of $\pare{X_{k+1}^{1}, \dots, X_{k+1}^{\lambda} }$ as $\pare{X_{k+1}^{1:\lambda}, \dots, X_{k+1}^{\lambda:\lambda} }$ such that
\begin{align}
\f(X_{k+1}^{1:\lambda}) \leq \dots \leq \f(X_{k+1}^{\lambda:\lambda})
\label{ranking}
\end{align}
and thereby define the indices $i\!:\!\lambda$.
To break possible ties, we require that $i\!:\!\lambda < j\!:\!\lambda$ if $\f(X_{k+1}^{i}) = f(X_{k+1}^{j})$ and $i < j$. The sorting indices $i\!:\!\lambda$ are also used for the $\sigma$-normalized difference vectors $U_{k+1}^i$ in that
%
$
U_{k+1}^{i:\lambda} = \frac{X_{k+1}^{i:\lambda} - X_k}{\sigma_k} %
$.
%
Accordingly, we define the \emph{selection function} $\alpha_{\f}$ of $z \in \R^{\n}$ and $u = (u^{1},\dots,u^{\lambda}) \in \R^{\n \lambda}$ to yield the sorted sequence of the difference vectors as
\begin{align}
\alpha_{f}(z, u) = (u^{1:\lambda}, \dots, u^{\mu: \lambda}) \in  \R^{\n \mu}  ,
\label{alphaDefinition}
\end{align}
with $\f(z + u^{1:\lambda}) \leq \dots \leq \f(z + u^{\lambda:\lambda})$ and the above tie breaking.
For $\lambda = 2$ and $\mu = 1$, the selection function has the simple expression $\alpha_{f}(z, (u^{1}, u^{2})) = (u^{1}-u^{2}) \mathds{1}_{\acco{ \f(z + u^1) \leq \f(z + u^2)} } + u^{2}.$
By definition,  for $k \in \mathbb{N}$, $\alpha_{f}(X_{k} , \sigma_{k} U_{k+1}) =   \pare{ \sigma_{k} U_{k+1}^{1:\lambda}, \dots, \sigma_{k} U_{k+1}^{\mu:\lambda} }$
so that 
\begin{align}
\frac{\alpha_{f}(X_{k} , \sigma_{k} U_{k+1})} {\sigma_{k}} = \pare{ U_{k+1}^{1:\lambda}, \dots, U_{k+1}^{\mu:\lambda} }.
\label{selection-division}
\end{align}
However, $\alpha_{f}$ is not a homogeneous function in general, because the indices $i\!:\!\lambda$ in \eqref{selection-division} depend on $f$ and hence on $\alpha_f$ and hence on $\sigma_k$.

The update of the state of the algorithm uses the objective function only through the above selection function which is invariant to strictly increasing transformations of the objective function. Indeed, the selection is determined through the ranking of candidate solutions in~\eqref{ranking} which is the same when on $g \circ f$ or $f$ given that $g$ is strictly increasing. We talk about comparison-based algorithms. Formally:
\begin{lemma}
Let $\f = \varphi\circ g$ where $g:\R^n \to \R$ and $\varphi$ is strictly increasing. Then $\alpha_{f} = \alpha_{g}$.
\label{selection-function-increasing-transformation}
\end{lemma}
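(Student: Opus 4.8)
The plan is to unwind the definition of the selection function $\alpha_f$ given in~\eqref{alphaDefinition} and observe that it depends on $f$ \emph{only} through the ordering of the values $f(z + u^{1}), \dots, f(z + u^{\lambda})$ together with the tie-breaking rule on the indices. Concretely, $\alpha_f(z,u)$ is obtained by producing a permutation $j \mapsto j\!:\!\lambda$ of $\{1,\dots,\lambda\}$ such that $f(z + u^{1:\lambda}) \le \dots \le f(z + u^{\lambda:\lambda})$, with ties broken by taking the smaller original index first, and then outputting $(u^{1:\lambda},\dots,u^{\mu:\lambda})$. So it suffices to show that when $f = \varphi \circ g$ with $\varphi$ strictly increasing, the very same permutation is a valid sorting permutation for $g$ with the same tie-breaking, hence $\alpha_{\varphi\circ g}(z,u) = \alpha_g(z,u)$ for every $z \in \R^n$ and every $u \in \R^{n\lambda}$.

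First I would fix $z \in \R^n$ and $u = (u^1,\dots,u^\lambda) \in \R^{n\lambda}$, and for brevity write $a_i = g(z + u^i)$ and $b_i = f(z+u^i) = \varphi(a_i)$ for $i = 1,\dots,\lambda$. The key elementary fact is that since $\varphi$ is strictly increasing, for any two indices $i,j$ we have $b_i < b_j \iff a_i < a_j$ and $b_i = b_j \iff a_i = a_j$; in particular $b_i \le b_j \iff a_i \le a_j$. Then I would invoke the exact characterization of the sorted permutation induced by the tie-breaking rule of~\eqref{ranking}: the permutation $j \mapsto j\!:\!\lambda$ is the unique one satisfying, for all $p < q$, either $b_{p:\lambda} < b_{q:\lambda}$, or $b_{p:\lambda} = b_{q:\lambda}$ and $(p\!:\!\lambda) < (q\!:\!\lambda)$ (i.e.\ sorting primarily by value and secondarily by original index, which makes the permutation unique). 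By the equivalence above, this condition is verified for the $b$'s if and only if it is verified for the $a$'s. Hence the permutation sorting the $b_i = f(z+u^i)$ with tie-breaking is literally identical to the permutation sorting the $a_i = g(z+u^i)$ with the same tie-breaking. Since $\alpha_f(z,u)$ and $\alpha_g(z,u)$ are both defined as $(u^{1:\lambda},\dots,u^{\mu:\lambda})$ using these (identical) permutations, they coincide. As $z$ and $u$ were arbitrary, $\alpha_f = \alpha_g$ as functions on $\R^n \times \R^{n\lambda}$.

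There is no real obstacle here; the only mild subtlety worth spelling out is uniqueness of the sorting permutation, which is exactly why the tie-breaking convention ``$i\!:\!\lambda < j\!:\!\lambda$ if $f(X^i)=f(X^j)$ and $i<j$'' was imposed in the first place — without it the selection function would be set-valued and the statement would need to be phrased modulo reordering within ties. With the convention in force, the permutation is a deterministic function of the tuple of values, and two tuples that are related by a strictly increasing transformation yield the same permutation, so the argument is a one-line consequence of monotonicity of $\varphi$. I would therefore present the proof compactly: state that $\alpha_f$ depends on $f$ only via the total preorder $f(z+u^i) \le f(z+u^j)$ refined by index, remark that this preorder is unchanged under post-composition with a strictly increasing $\varphi$, and conclude.
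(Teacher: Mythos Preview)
Your proof is correct and follows exactly the approach the paper takes: the paper simply observes (in the text immediately preceding the lemma) that the selection is determined through the ranking in~\eqref{ranking}, which is unchanged under a strictly increasing transformation, and states the lemma without a separate formal proof. Your write-up just spells this out more carefully, including the role of the tie-breaking rule in making the sorting permutation unique.
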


To update the mean vector $X_k$, we consider a weighted average of the $\mu \leq \lambda$ best solutions $\sum_{i=1}^{\mu} w_{i} X_{k+1}^{i:\lambda} $ where $w=\pare{w_{1},\dots,w_{\mu}}$ is a non-zero vector. \del{With}\nnew{When} only positive weights summing to one \nnew{are used}, this weighted average is situated in the convex hull of the $\mu$ best points.
The next incumbent $X_{k+1}$ is constructed by combining $X_{k}$ and $\sum_{i=1}^{\mu} w_{i} X_{k+1}^{i:\lambda}$  
\begin{align}
X_{k+1} & = \pare{1 - \sum_{i=1}^{\mu}w_{i} } X_{k} + \sum_{i=1}^{\mu} w_{i} X_{k+1}^{i:\lambda}
=  X_{k} + \sigma_{k}\sum_{i=1}^{\mu} w_{i} U_{k+1}^{i:\lambda} \enspace .
\label{incumbent}
\end{align}
Positive weights with small indices move the new mean vector towards the better solutions, hence these weights should generally be large.
In ESs, the weights are always non-increasing in $i$.
With the notable exception of Natural Evolution Strategies (\hspace{1sp}\cite{glasmachers2010exponential} and related works), all weights are positive.
In practice, $\sum_{i=1}^{\mu}w_{i}$ is often set to $1$ such that the new mean vector is the weighted average of the $\mu$ best solutions. Proposition~\ref{prop:weight-condition} describes (generally weak) explicit conditions for the weights under which our results hold.
We write the step-size update in an abstract manner as
\begin{equation}
\sigma_{k+1} = \sigma_{k} \, \Gx\pare{ U_{k+1}^{1:\lambda}, \dots, U_{k+1}^{\mu:\lambda} }   \label{step-size}
\end{equation}
where $\Gx: \foncfleche{ \R^{\n \mu} }{\R_{+}\backslash\acco{0}}$ is a measurable function. This generic step-size update is by construction scale-invariant, which is key for our analysis \cite[Proposition~2.9]{auger2016linear}.
 The update of the mean vector and of the step-size are both functions of the $f$-sorted sampled vectors $(U_{k+1}^{1:\lambda}, \dots, U_{k+1}^{\mu:\lambda})$.

Using~\eqref{selection-division}, we rewrite the algorithm framework \eqref{incumbent} and~\eqref{step-size} for all $k$ as:
\begin{align}
X_{k+1} &= X_{k} + \sum_{i=1}^{\mu} w_{i} \croc{\alpha_{f}(X_{k} , \sigma_{k} U_{k+1})}_{i} = X_{k} +  w^{\top} \alpha_{f}(X_{k} , \sigma_{k} U_{k+1})   \label{x-with-alpha}
\\
\sigma_{k+1} &= \sigma_{k} \, \Gx \left( \frac{\alpha_{f}(X_{k} , \sigma_{k} U_{k+1})} {\sigma_{k}} \right) \label{sigma-with-alpha}
\end{align}%
with $U=\{U_{k+1}\, ; k \in \mathbb{N} \}$ the sequence of identically distributed random inputs and $w \in \R^{\mu}\setminus\acco{0}$. \nnew{In \eqref{x-with-alpha}, we use the notation $[~]_i$ to denote the $i^{\rm th}$ vector\del{of a sequence\del{the vector}} of the $\mu$ vectors \nnew{composing} $\alpha_{f}(X_{k} , \sigma_{k} U_{k+1})$.}

\subsection{Algorithms encompassed\label{sec:alg-encompassed}}

The generic update in \eqref{step-size} or equivalently \eqref{sigma-with-alpha} encompasses the step-size update of the cumulative step-size adaptation evolution strategy ($\pare{\mu/\mu_w, \lambda}$-CSA-ES)
\cite{auger2016linear,hansen2001completely}
with cumulation factor set to $1$
where for $d_\sigma > 0$, $w \in \R^{\mu} \setminus\acco{0}$ and $u = (u^{1},\dots,u^{\mu}) \in \R^{\n \mu}$,
\begin{equation}\label{eq:CSA1}
\Gx_{\rm CSA1}^0(u^{1},\dots,u^{\mu}) = \exp\left(\frac{1}{d_{\sigma}} \left(  \frac{ \| \sum_{i=1}^{\mu} w_{i} u^{i} \|    }  {\|w\| \, \mathbb{E}\croc{ \|  \N_{\n}  \|   }   } - 1 \right)   \right)
\enspace.
\end{equation}
The acronym CSA1 emphasizes that we only consider a particular case here:
in the original CSA algorithm, 
the sum in \eqref{eq:CSA1} is an exponentially fading average of these sums from the past iterations with a smoothing factor of $1 - c_{\sigma}$.
Equation~\eqref{eq:CSA1} only holds when
the cumulation factor $c_{\sigma}$ is equal to $1$,
whereas in practice, $1/c_{\sigma}$ is between $\sqrt\n/2$ and $\n+2$ (see~\cite{hansen2016cma} for more details). The damping parameter $d_{\sigma} \approx 1$ scales the change magnitude of $\log(\sigma_{k})$.

Equation~\eqref{eq:CSA1} increases the step-size if and only if the length of $\sum _{i=1}^{\mu} w_{i} U_{k+1}^{i:\lambda}$ is larger than the expected length of $\sum _{i=1}^{\mu} w_{i} U_{k+1}^{i}$ \nnew{under random selection} which is equal to $\| w \| \,\mathbb{E}\croc{\| \N_{\n} \|} $.
Since the function $\Gx_{\rm CSA1}^0$ is not continuously differentiable (an assumption needed in our analysis) we consider a version of the $\pare{\mu/\mu_w, \lambda}$-CSA1-ES~\cite{arnold2002random} that compares the square length of $\sum _{i=1}^{\mu} w_{i} U_{k+1}^{i:\lambda}$ to the expected square length of $\sum _{i=1}^{\mu} w_{i} U_{k+1}^{i}$ which is $n \| w\|^{2}$. Hence, \nnew{the step-size update we consider and that satisfies our assumptions is defined}\del{we analyze} for $d_\sigma > 0,$ $w \in \R^{\mu} \setminus\acco{0}$ and $u = (u^{1},\dots,u^{\mu}) \in \R^{\n \mu}$\del{:} \nnew{as}
 \begin{align}
 \Gx_{\rm CSA1}(u^{1},\dots,u^{\mu}) =  \exp\left( \frac{1}{2 d_{\sigma}\n }\left(  \frac{ \| \sum_{i=1}^{\mu} w_{i} u^{i} \|^{2} }{ \| w \|^{2} } - \n \right)   \right).
 \label{step-size-csa}
\end{align}

Another step-size update encompassed with~\eqref{selection-division} is given by the Exponential Natural Evolution Strategy (xNES)~\cite{glasmachers2010exponential, schaul2012natural, auger2016linear, ollivier2017information} and defined for $d_\sigma > 0,$ $w \in \R^{\mu} \setminus\acco{0}$ and $u = (u^{1},\dots,u^{\mu}) \in \R^{\n \mu}$ as
 \begin{align}
 \Gx_{\rm xNES}(u^{1},\dots,u^{\mu}) = \exp\left(  \frac{1}{2 d_{\sigma} n}\left(  \sum_{i=1}^{\mu}
\frac{w_{i}}{\sum_{j=1}^\mu |w_j|} \left( \| u^{i} \|^{2} - n \right)   \right) \right).
 \label{step-size-xnes}
 \end{align}
 Both equations \eqref{step-size-csa} and \eqref{step-size-xnes} correlate the step-size increment with the vector lengths of the $\mu$ best solutions.
While \eqref{step-size-csa} takes the squared norm of the weighted sum of the vectors, \eqref{step-size-xnes} takes the weighted sum of squared norms.
Hence, correlations between the directions $u^i$ affect only \eqref{step-size-csa}.
Both equations are offset to become unbiased such that $\log\circ\,\Gamma$ is zero in expectation when $u^i\sim \N_{\n}$ for all $1\le i\le\lambda$, are i.i.d.\ random vectors.

\subsection{Assumptions on the algorithm framework}

\newcommand{\algframe}{\eqref{x-with-alpha} and \eqref{sigma-with-alpha}}
We pose some assumptions on the algorithm \algframe\ starting with assumptions on the step-size update function $\Gx$.
\begin{itemize}
\item[A1.] The function $\Gx: \foncfleche{ \R^{\n \mu} }{\R_{+}\backslash\acco{0}}$ is continuously differentiable ($C^{1}$).

\item[A2.]  $\Gx$ is \emph{invariant under rotation} in the following sense:
for all $n \times n$ orthogonal matrices $T$,  for all $u = \pare{u_{1},\dots,u_{\mu}} \in \R^{\n\mu}$, 
$ \Gamma(Tu_1,\ldots,Tu_\mu)=\Gamma(u)$.

\item[A3.]   The function $\Gamma$ is lower-bounded by a constant $m_{\Gx} > 0$, that is for all $x \in \R^{\n\mu}$, $\Gx(x) \geq m_\Gx$.
\item[A4.]  $\log\circ \,\Gx$ is $\mathcal{N}_{\n\mu}$-integrable, that is, $\dsp\int \abs{\log(\Gx(u))} p_{\mathcal{N}_{\n\mu}} (u) \diff u  < \infty$.
\end{itemize}

We can easily verify that Assumptions A1--A4 are satisfied for the $\pare{\mu/\mu_w, \lambda}$-CSA1 and $\pare{\mu/\mu_w, \lambda}$-xNES updates given in \eqref{step-size-csa} and \eqref{step-size-xnes}. More precisely, the following lemma holds.

\begin{lemma}
The step-size update function $\Gx_{\rm CSA1}$ defined in \eqref{step-size-csa} satisfies Assumptions A1$-$A4. Endowed with non-negative weights $w_{i} \geq 0$ for all $i=1,\dots, \mu$, 
the step-size update function $\Gx_{\rm xNES}$ defined in~\eqref{step-size-xnes} satisfies Assumptions A1$-$A4.
\end{lemma}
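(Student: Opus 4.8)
The plan is to verify each of the four assumptions A1--A4 directly for the two explicit formulas \eqref{step-size-csa} and \eqref{step-size-xnes}, exploiting that both are of the form $\exp(P(u))$ for a polynomial (hence $C^{1}$) map $P$ of the $u^{i}$'s. First I would treat $\Gx_{\rm CSA1}$. For A1, note that $u \mapsto \|\sum_{i=1}^{\mu} w_{i} u^{i}\|^{2}$ is a quadratic polynomial in the coordinates of $u$, hence $C^{\infty}$; composing with the affine map $t \mapsto (t/\|w\|^{2} - \n)/(2 d_{\sigma}\n)$ and with $\exp$ keeps it $C^{1}$ (indeed $C^{\infty}$), and the exponential is everywhere positive so the codomain $\R_{+}\setminus\{0\}$ is respected. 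For A2, if $T$ is an $n\times n$ orthogonal matrix then $\sum_{i} w_{i}(T u^{i}) = T(\sum_{i} w_{i} u^{i})$ and $\|T v\| = \|v\|$, so the argument of $\exp$ is unchanged; hence $\Gx_{\rm CSA1}(Tu_{1},\dots,Tu_{\mu}) = \Gx_{\rm CSA1}(u)$. For A3, since $\|\sum_{i} w_{i} u^{i}\|^{2} \geq 0$ the exponent is bounded below by $\exp(-\n/(2 d_{\sigma}\n)) = \exp(-1/(2 d_{\sigma})) > 0$, so we may take $m_{\Gx} = \exp(-1/(2 d_{\sigma})) > 0$.

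For A4 of $\Gx_{\rm CSA1}$, observe that $\log \Gx_{\rm CSA1}(u) = \frac{1}{2 d_{\sigma}\n}\bigl(\|\sum_{i} w_{i} u^{i}\|^{2}/\|w\|^{2} - \n\bigr)$, so $|\log \Gx_{\rm CSA1}(u)| \leq \frac{1}{2 d_{\sigma}\n\|w\|^{2}}\|\sum_{i} w_{i} u^{i}\|^{2} + \frac{1}{2 d_{\sigma}}$; under $\mathcal{N}_{\n\mu}$ the vector $\sum_{i} w_{i} u^{i}$ is a linear image of a Gaussian, hence Gaussian with finite second moment (in fact $\mathbb{E}[\|\sum_{i} w_{i} u^{i}\|^{2}] = \n\|w\|^{2}$ by independence of the $u^{i}$ and $\mathbb{E}[\|\N_{\n}\|^{2}] = \n$), so the right-hand side is $\N_{\n\mu}$-integrable, which gives A4. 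The argument for $\Gx_{\rm xNES}$ under the hypothesis $w_{i}\geq 0$ for all $i$ is analogous but slightly more delicate for A3: here $\log \Gx_{\rm xNES}(u) = \frac{1}{2 d_{\sigma}\n}\sum_{i} \frac{w_{i}}{\sum_{j}|w_{j}|}(\|u^{i}\|^{2} - \n)$, and when all $w_{i}\geq 0$ (so $\sum_{j}|w_{j}| = \sum_{j} w_{j}$ and the coefficients $w_{i}/\sum_{j} w_{j}$ are nonnegative and sum to $1$), the term $\sum_{i} \frac{w_{i}}{\sum_{j} w_{j}}\|u^{i}\|^{2} \geq 0$, so the exponent is bounded below by $-\frac{1}{2 d_{\sigma}\n}\cdot\n = -\frac{1}{2 d_{\sigma}}$ and we may again take $m_{\Gx} = \exp(-1/(2 d_{\sigma})) > 0$. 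A1 holds since each $\|u^{i}\|^{2}$ is a quadratic polynomial and the coefficients are constants; A2 holds since $\|T u^{i}\| = \|u^{i}\|$ for orthogonal $T$; and A4 follows from $|\log \Gx_{\rm xNES}(u)| \leq \frac{1}{2 d_{\sigma}\n}\sum_{i}\frac{w_{i}}{\sum_{j} w_{j}}\|u^{i}\|^{2} + \frac{1}{2 d_{\sigma}}$ together with $\mathbb{E}_{\N_{\n\mu}}[\|u^{i}\|^{2}] = \n < \infty$ for each $i$.

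The main obstacle -- really the only subtle point -- is A3 for $\Gx_{\rm xNES}$: the lower bound $m_{\Gx}>0$ can fail if some weight $w_{i}$ is negative, because then the corresponding term $-\frac{|w_{i}|}{\sum_{j}|w_{j}|}\|u^{i}\|^{2}$ is unbounded below as $\|u^{i}\|\to\infty$, driving $\Gx_{\rm xNES}$ arbitrarily close to $0$. This is exactly why the lemma restricts to non-negative weights for the xNES update while no such restriction is needed for CSA1 (whose exponent is bounded below by $-1/(2d_{\sigma})$ regardless of the signs of the $w_{i}$, since only the aggregate $\|\sum_{i} w_{i} u^{i}\|^{2}\geq 0$ enters). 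Everything else is a routine check that a composition of polynomials with $\exp$ is smooth, rotation-invariant, and has Gaussian-integrable logarithm.
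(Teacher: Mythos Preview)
Your proof is correct and follows essentially the same approach as the paper's: both verify A1--A4 directly from the explicit formulas, using the norm-preserving property of orthogonal matrices for A2 and the inequality $\|\cdot\|^{2}\geq 0$ (together with non-negativity of the weights in the xNES case) to obtain the common lower bound $m_{\Gamma}=\exp(-1/(2d_{\sigma}))$ for A3. Your write-up is in fact more detailed than the paper's (which declares A1 and A4 ``immediate''), and your closing remark explaining why negative weights would break A3 for $\Gamma_{\rm xNES}$ but not for $\Gamma_{\rm CSA1}$ is a nice addition not present in the original.
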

\begin{proof}
A1 and A4 are immediate to verify.
For A2, the invariance under rotation comes from the norm-preserving property of orthogonal matrices. For all $u = \pare{u^{1},\dots,u^{\mu}} \in \R^{\n\mu},$ $\Gx_{\rm CSA1}(u) \geq \exp\left( -\frac{1}{2d_{\sigma}} \right)$ such that $\Gx_{\rm CSA1}$ satisfies A3. Similarly $\Gx_{\rm xNES}(u)
= \exp\left( -\frac{1}{2 d_{\sigma}}\frac{\sum_{i=1}^{\mu}w_{i}}{\sum_{j=1}^{\mu}|w_{j}| } + \frac{1 }{2 d_{\sigma} n} \sum_{i=1}^{\mu} \frac{w_{i}}{\sum_{j=1}^{\mu}|w_{j}| } \| u^{i} \|^{2}   \right)$. Since all the weights are non-negative, $\frac{1}{2 d_{\sigma} n} \sum_{i=1}^{\mu} w_{i} \| u^{i} \|^{2} \geq 0$. And then $-\frac{1}{2 d_{\sigma} }\sum_{i=1}^{\mu}w_{i} + \frac{1 }{2 d_{\sigma} n} \sum_{i=1}^{\mu} w_{i} \| u^{i} \|^{2}  \geq -\frac{ 1}{2 d_{\sigma}}\sum_{i=1}^{\mu}w_{i}$. Therefore $\Gx_{\rm xNES}(u) \geq \exp\left( -\frac{ 1}{2 d_{\sigma}} \right)$ which does not depend on $u$, such that $\Gx_{\rm xNES}$ satisfies A3.  
\end{proof}

Assumptions A1--A4 are also satisfied for a constant function $\Gx$ equal to a positive number. When the positive number is greater than $1$, our main condition for a linear behavior is satisfied, as we will see later on. Yet, the step-size of this algorithm clearly diverges geometrically.

We formalize now the assumption on the source distribution used to sample candidate solutions, as it was already specified when defining the algorithm framework.

\begin{itemize}
\item [A5.]  $U =\{ U_{k+1} = \pare{U_{k+1}^{1},\dots,U_{k+1}^{\lambda}}\in \R^{\n\lambda} \, ; k \in \mathbb{N} \}$, see e.g.~\eqref{eq:offspring}, is an i.i.d.\ sequence that is also independent from $\pare{X_{0}, \sigma_{0}}$, and for all natural integer $k,$ $U_{k+1}$ is an independent sample of $\lambda$ standard multivariate normal distributions on $\mathbb{R}^{\n}$ at time $k+1$.
\end{itemize}

The last assumption is natural as ESs use predominantly Gaussian distributions\footnote{In Evolution Strategies, Gaussian distributions are mainly used for convenience: they are the natural choice to generate rotationally invariant random vectors. Several attempts have been made to replace Gaussian distributions by Cauchy distributions \cite{kappler1996evolutionary,yao1997fast,schaul2011high}.
Yet, their implementations are typically not rotational invariant and steep performance gains are observed either in low dimensions or crucially based on the implicit exploitation of separability~\cite{hansen2006heavy}. }.
Yet, we can replace the multivariate normal distribution by a distribution with finite first and second moments and a probability density function of the form $x \mapsto \frac{1}{\sigma^{n} } g\pare{ \frac{ \|x\|^{2} }{\sigma^{2}} }$
where  $\sigma > 0$ and $g: \R_{+} \to \R_{+}$ is $C^{1}$, non-increasing and submultiplicative in that there exists $K > 0$ such that for $t \in \R_{+}$ and $s \in \R_{+}$, $g(t + s) \leq K g(t) g(s)$ (such that Proposition~\ref{log-integrable} holds).

\subsection{Assumptions on the objective function}
\label{subclass-scaling}
Our main assumption on $f$ to analyze the linear behavior of a step-size adaptive $\pare{\mu/\mu_{w}, \lambda}$-ES is that it is scaling-invariant. We remind that $\f$ is scaling-invariant~\cite{auger2016linear} with respect to a reference point $x^{\star}$ if for all $\rho > 0$, $x, y \in \R^{n}$
\begin{align}
\f(x^{\star} + x) \leq \f(x^{\star} + y) \iff \f\pare{x^{\star}  + \rho \, x } \leq  \f\pare{x^{\star}  + \rho \, y } .\label{scaling-invariant}
\end{align}
More precisely, we pose one of the following assumptions on $\f$:
\begin{itemize}
\item[F1.] The function $\f$ satisfies $\f = \varphi \circ g $ where $\varphi$ is a strictly increasing function and g is a $C^{1}$ scaling-invariant function with respect to $x^{\star}$ and has a unique global argmin (that is $x^{\star}$).
\item[F2.] The function  $\f$ satisfies $\f = \varphi \circ g $ where $\varphi$ is a strictly increasing function and $g$ is a nontrivial linear function.
\end{itemize}

Assumption F1 is our core assumption for studying convergence: we assume scaling invariance and continuous differentiability not on $f$ but on $g$ where $\f=\varphi \circ g$ such that the function $f$ can be discontinuous (we can include jumps in the function via the function $\varphi$).
Because ESs are comparison-based algorithms and thus the selection function is identical on $f$ or $g \,\circ f$ (see Lemma~\ref{selection-function-increasing-transformation}), our analysis is invariant if we carry it out on $f$ or $g \circ f$.
Strictly increasing transformations of strictly convex quadratic functions satisfy F1. Functions with non-convex sublevel sets can satisfy F1 (see Figure~\ref{fig:scalingInvariance}). More generally, strictly increasing transformations of $C^{1}$ positively homogeneous functions with a unique global argmin satisfy F1.
 Recall that a function $p$ is positively homogeneous with degree $\alpha > 0$ and with respect to $x^{\star}$ if for all $x, y \in \R^{\n}$, for all $\rho > 0$, 
\begin{align}
p(\rho (x-x^{\star})) = \rho^{\alpha} p(x - x^{\star}) \enspace .
\label{def-positively-homogeneous}
\end{align}

\begin{figure*}~\\[-1.5ex]
\centering
\newcommand{\figwidth}{0.3}
        \includegraphics[width=\figwidth\textwidth]{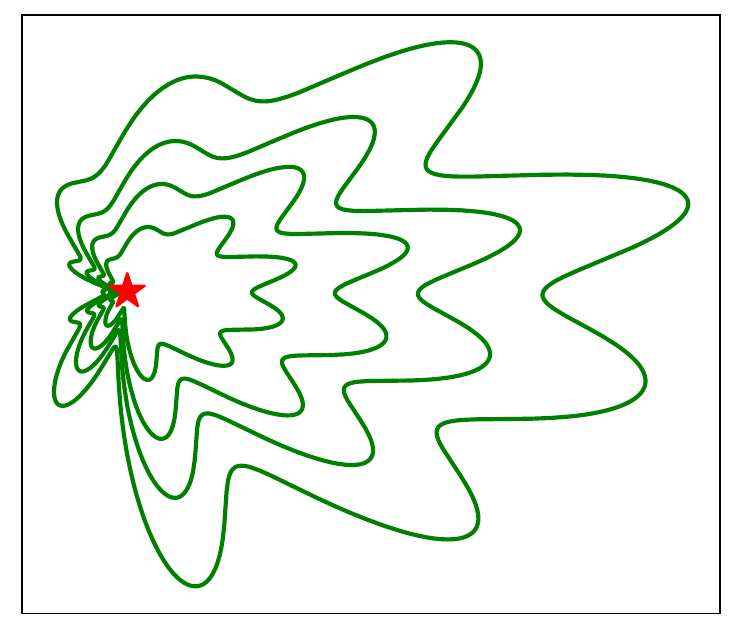}
        \vspace*{-1.5ex}
        \caption{\label{fig:scalingInvariance}
         Level sets of scaling-invariant functions with respect to the red star $x^{\star}$. A randomly generated scaling-invariant function from a ``smoothly'' randomly perturbed sphere function.}
\end{figure*}

\subsection{Preliminary results}

If $\f$ is scaling-invariant with respect to $x^{\star}$, the composite of the selection function $\alpha_{\f}$ with the translation $(z, u) \mapsto (x^{\star} + z, u)$ is positively homogeneous with degree $1$. If in addition $\f$ is a measurable function with Lebesgue negligible level sets,
then\del{~\cite[Proposition 5.2]{chotard2019verifiable} gives} the explicit expression of the probability density function of $\alpha_{f}(x^{\star} + z, U_{1})$ \new{is known \cite[Proposition 5.2]{chotard2019verifiable}} where $U_{1}$ follows the distribution of $\N_{n\lambda}$. These results are formalized in the next lemma.

\begin{lemma}
If $\f$ is a scaling-invariant function with respect to $x^{\star}$, then the function $(z, u) \mapsto \alpha_{f}(x^{\star} + z, u)$ is positively homogeneous with degree $1$. In other words, for all $z \in \R^{\n}$, $\sigma > 0$ and $u = \pare{u^{1},\dots,u^{\lambda}} \in \R^{\n\lambda}$,
$ \alpha_f\pare{x^{\star} + \sigma z, \sigma u } = \sigma \alpha_{f}\pare{x^{\star} + z , u }.$

If in addition $\f$ is a measurable function with Lebesgue negligible level sets and $U_{1} = \pare{ U_{1}^{1},\dots, U_{1}^{\lambda}}$ is distributed according to $\N_{n\lambda}$, then for all $z \in \R^{n}$, the probability density function $p_{z}^{\f}$ of $\alpha_{f}(x^{\star} + z, U_{1})$ exists and for all $u = (u^{1},\dots,u^{\mu}) \in \R^{\n \mu},$
\begin{align}
 p_{z}^{\f}(u) = \frac{\lambda !}{(\lambda - \mu)!} (1-Q_{z}^{\f}(u^{\mu}))^{\lambda-\mu} \prod_{i=1}^{\mu-1} \mathds{1}_{\f(x^{\star} + z+u^i) < \f(x^{\star} + z+u^{i+1})} \prod_{i=1}^{\mu} p_{\N_{\n}}(u^{i})
 \label{mupositive}
 \end{align}
where $Q_{z}^{\f}(w) = P\left( f\left(x^{\star} + z + \N_{\n}\right) \leq f\left(x^{\star} + z + w \right) \right).$
\label{lem:SI-alpha}
\end{lemma}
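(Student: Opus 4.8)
The plan is to treat the two claims in Lemma~\ref{lem:SI-alpha} separately. For the first claim --- positive homogeneity of $(z,u)\mapsto \alpha_f(x^\star+z,u)$ --- I would start directly from the definition of the selection function in \eqref{alphaDefinition}: $\alpha_f(x^\star+z,u)$ is obtained by sorting $u^1,\dots,u^\lambda$ according to the values $\f(x^\star+z+u^i)$ and returning the $\mu$ smallest (with the fixed tie-breaking rule). Scaling-invariance of $\f$ with respect to $x^\star$, as recalled in \eqref{scaling-invariant}, says precisely that for any $\rho>0$ the ordering induced by $i\mapsto \f(x^\star+\rho(z+u^i))$ on the points $z+u^i$ coincides with the ordering induced by $i\mapsto\f(x^\star+(z+u^i))$; applying \eqref{scaling-invariant} with $x = z+u^i$, $y = z+u^j$ and $\rho$ shows the ranking \eqref{ranking} is the same whether computed at scale $1$ or scale $\rho$. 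Since the tie-breaking rule depends only on the raw indices $i$ and on equality of $f$-values (which is also preserved, again by applying \eqref{scaling-invariant} in both directions), the permutation $i\!:\!\lambda$ selected is identical. Hence the sorted vectors at the scaled configuration are $\rho$ times those at the unscaled one: $\alpha_f(x^\star+\rho z,\rho u) = (\rho u^{1:\lambda},\dots,\rho u^{\mu:\lambda}) = \rho\,\alpha_f(x^\star+z,u)$, which is the asserted degree-$1$ homogeneity. Writing $\rho=\sigma$ gives the displayed identity.

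For the second claim --- the explicit density $p^\f_z$ --- I would invoke the cited result \cite[Proposition 5.2]{chotard2019verifiable}, but before doing so it must be checked that its hypotheses hold: $U_1=(U_1^1,\dots,U_1^\lambda)\sim\N_{n\lambda}$ has i.i.d.\ $\N_\n$ components with an everywhere-positive density $p_{\N_\n}$, and $\f$ is measurable with Lebesgue-negligible level sets (the latter being exactly the assumption added in the statement), so the events $\{\f(x^\star+z+U_1^i)=\f(x^\star+z+U_1^j)\}$ for $i\ne j$ have probability zero and ties occur almost surely only on a null set. Under these conditions the order statistics of the $f$-values are a.s.\ well defined, and the density of the $\mu$-tuple of the best difference vectors is computed by the standard order-statistics argument: pick which $\mu$ of the $\lambda$ indices land in the first $\mu$ positions and in which order (the combinatorial factor $\lambda!/(\lambda-\mu)!$), impose that they are strictly $f$-increasing along those positions (the product of indicators $\mathds{1}_{\f(x^\star+z+u^i)<\f(x^\star+z+u^{i+1})}$ for $i=1,\dots,\mu-1$), require each to beat the remaining $\lambda-\mu$ samples which are i.i.d.\ and must all have larger $f$-value than $u^\mu$ (giving the factor $(1-Q^\f_z(u^\mu))^{\lambda-\mu}$ with $Q^\f_z(w)=P(\f(x^\star+z+\N_\n)\le \f(x^\star+z+w))$), and multiply by the joint density $\prod_{i=1}^{\mu}p_{\N_\n}(u^i)$ of the $\mu$ selected vectors. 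This reproduces \eqref{mupositive} verbatim.

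I expect the main obstacle to be a matter of rigor rather than of idea: making the order-statistics computation airtight when $\mu<\lambda$ and when ties can in principle occur. Concretely, one must argue that on the $P$-full-measure event where all $f$-values are distinct the map from the unordered sample to the ordered $\mu$-tuple is measurable and the change-of-variables/decomposition over the $\lambda!/(\lambda-\mu)!$ admissible orderings is legitimate, and that the Lebesgue-null level sets hypothesis is exactly what guarantees the excluded configurations are negligible both under $P$ and under Lebesgue measure (so that the formula is a genuine density). Since this is precisely the content of \cite[Proposition 5.2]{chotard2019verifiable}, in the write-up I would keep the first claim fully self-contained (it is a short deterministic argument from \eqref{scaling-invariant}) and for the second claim reduce to verifying the negligible-level-set hypothesis and then cite the proposition, rather than reprove the density formula from scratch.
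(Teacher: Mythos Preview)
Your proposal is correct and matches the paper's approach almost exactly: the paper proves the first claim by observing that scaling-invariance preserves the $f$-ordering (hence the selected permutation) so that $\alpha_f(x^\star+\sigma z,\sigma u)=\sigma(u^{1:\lambda},\dots,u^{\mu:\lambda})=\sigma\alpha_f(x^\star+z,u)$, and for the second claim simply cites \cite[Proposition 5.2]{chotard2019verifiable} under the Lebesgue-negligible-level-sets hypothesis. Your write-up is actually more thorough than the paper's---you handle tie-breaking explicitly and sketch the order-statistics derivation---but the underlying argument is the same.
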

\begin{proof}
We have that $\f(x^{\star} + z + u^{1:\lambda}) \leq \dots \leq \f(x^{\star} + z +  u^{\lambda:\lambda})$ if and only if $\f(x^{\star} + \sigma (z + u^{1:\lambda})) \leq \dots \leq \f(x^{\star} + \sigma(z + u^{\lambda:\lambda}))$. Therefore $ \alpha_f\pare{x^{\star} + \sigma z, \sigma u } = \sigma \pare{u^{1:\lambda}, \dots, u^{\mu:\lambda}  } = \sigma \alpha_{f}\pare{x^{\star} + z , u }$. 
Equation~\eqref{mupositive} \new{holds}\del{is given by~\cite[Proposition 5.2]{chotard2019verifiable}} whenever $\f$ has Lebesgue negligible level sets \new{\cite[Proposition 5.2]{chotard2019verifiable}}.
\end{proof}

On a linear function $f$, the selection function $\alpha_{\f}$ defined in \eqref{alphaDefinition} is independent of the current state of the algorithm and is positively homogeneous with degree $1$. 
We provide  a simple formalism and proof of this result while it is already known and underlying previous works \cite{auger2005convergence, chotard2012cumulative}.
\begin{lemma}
If $\f$ is an increasing transformation of a linear function, then for all $x \in \R^{n}$ the function $\alpha_{\f}\pare{x, \cdot}$ does not depend on $x$ and is positively homogeneous with degree $1$. In other words, for $x \in \R^{\n}$, $\sigma > 0$ and $u = \pare{u^{1},\dots,u^{\lambda}} \in \R^{\n\lambda}$,
$ \alpha_{\f}\pare{x, \sigma u } = \sigma \alpha_{f}\pare{0 , u }.$
\label{alpha-linearity}
\end{lemma}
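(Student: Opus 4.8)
The plan is to reduce to the case where $\f$ itself is linear and then track the sorting permutation explicitly. Writing $\f = \varphi\circ g$ with $\varphi$ strictly increasing and $g$ a linear function, Lemma~\ref{selection-function-increasing-transformation} gives $\alpha_\f = \alpha_g$, so it is enough to prove both claims for $g(y) = a^\top y + b$ with $a\in\R^\n$ and $b\in\R$.

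The key observation I would use is that sorting the candidate solutions by $g$ amounts to sorting the input vectors by the linear form $u\mapsto a^\top u$, independently of the incumbent and of the scaling. Indeed, for any $x\in\R^\n$, $\sigma>0$ and indices $i,j$,
\[
g(x+\sigma u^i) \le g(x+\sigma u^j) \iff a^\top u^i \le a^\top u^j ,
\]
with equality on the left exactly when $a^\top u^i = a^\top u^j$. Hence the total preorder on $\{x+\sigma u^1,\dots,x+\sigma u^\lambda\}$ induced by $g$, together with its ties, coincides with the one induced by $a^\top\cdot$ on $\{u^1,\dots,u^\lambda\}$; since the tie-breaking rule of~\eqref{ranking} resolves equalities by the original index, the resulting $\f$-sorted permutation $\pi_u$ of $\{1,\dots,\lambda\}$ depends only on $u$, not on $x$ nor on $\sigma$.

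It then remains to unwind the definition~\eqref{alphaDefinition}. For the input $\sigma u = (\sigma u^1,\dots,\sigma u^\lambda)$ and incumbent $x$, the sorted difference vectors are $(\sigma u^{\pi_u(1)},\dots,\sigma u^{\pi_u(\lambda)})$, so
\[
\alpha_\f(x,\sigma u) = \bigl(\sigma u^{\pi_u(1)},\dots,\sigma u^{\pi_u(\mu)}\bigr) = \sigma\,\bigl(u^{\pi_u(1)},\dots,u^{\pi_u(\mu)}\bigr) .
\]
Taking $x=0$ and $\sigma=1$ identifies the right-hand side with $\sigma\,\alpha_\f(0,u)$, which is exactly the claimed identity; in particular the left-hand side does not depend on $x$. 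There is essentially no genuine obstacle here — the only point that needs care is checking that the tie-breaking is preserved under the affine reparametrization $u^i \mapsto x + \sigma u^i$, which is immediate since $t\mapsto \sigma t$ (for $\sigma>0$) and $t\mapsto t + a^\top x$ are strictly increasing and thus preserve both strict inequalities and equalities among the values $a^\top u^i$.
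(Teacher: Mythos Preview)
Your proof is correct and follows essentially the same approach as the paper's: both argue that the ordering of the $f(x+\sigma u^i)$ is the same as the ordering of the $f(u^i)$ by linearity, so the selected indices do not depend on $x$ or $\sigma$. Your version is simply more explicit---you invoke Lemma~\ref{selection-function-increasing-transformation} to reduce to a genuinely linear $g$, write out $g(y)=a^\top y+b$, and check that tie-breaking is preserved---whereas the paper compresses this into a one-line observation.
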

\begin{proof}
By linearity $\f(x + \sigma u^{1:\lambda}) \leq \dots \leq \f(x + \sigma u^{\lambda:\lambda})$ if and only if $ \f( u^{1:\lambda}) \leq \dots \leq \f( u^{\lambda:\lambda})$. Therefore $ \alpha_{\f}\pare{x, \sigma u } = \sigma \pare{u^{1:\lambda}, \dots, u^{\mu:\lambda}  } =\sigma \alpha_{\f}\pare{0, u }$.
\end{proof}

Let $l^{\star}$ be the linear function defined for all $x \in \R^{n}$ as $l^{\star}(x) = x_{1}$ and $U_{1} = \pare{ U_{1}^{1}, \dots, U_{1}^{\lambda} }$ where $U_{1}^{1}, \dots, U_{1}^{\lambda}$ are i.i.d.\ with law $\N_{\n}$. Define the step-size change $\Gl$ as
\begin{align}
\Gl = \Gx \pare{ \alpha_{l^{\star}}(0, U_{1}) } .
\label{step-size-notation}
\end{align}

We prove in the next proposition that for all nontrivial linear functions, the step-size multiplicative factor of the algorithm \algframe\ has at all iterations the distribution of $\Gl$. 
This result derives from the rotation invariance of the function $\Gx$ (see Assumption A2) and of the probability density function $p_{\N_{\n\mu}}: u \mapsto \frac{1}{ (2\pi)^{n\mu/2} } \exp\pare{-\|u \|^{2}/2}$. The details of the proof are in Appendix~\ref{proof-linear-invariance}. 
\begin{proposition}(Invariance of the step-size multiplicative factor on linear functions)
\label{linear-invariance}
Let $f$ be an increasing transformation of a nontrivial linear function, i.e.\ satisfy F2. Assume that \nnew{the sequence }$\{U_{k+1} \,; k \in \mathbb{N} \}$ satisfies Assumption A5 and that $\Gx$ satisfies Assumption A2, i.e.\ $\Gx$ is invariant under rotation.
Then for all $z \in \R^{\n}$ and all natural integer $k$, the step-size multiplicative factor $\Gx \pare{ \alpha_{f}(z, U_{k+1} )}$ has the law of the step-size change $\Gl$ defined in~\eqref{step-size-notation}.
\end{proposition}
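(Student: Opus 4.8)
The plan is to show that the random variable $\Gx(\alpha_f(z,U_{k+1}))$ has, for every $z$ and every $k$, the same distribution as $\Gl = \Gx(\alpha_{l^\star}(0,U_1))$, by reducing the general nontrivial linear case to the coordinate function $l^\star$ via an orthogonal change of variables. First I would invoke Lemma~\ref{selection-function-increasing-transformation}: since $f$ satisfies F2, $f = \varphi\circ g$ with $\varphi$ strictly increasing and $g$ a nontrivial linear function, so $\alpha_f = \alpha_g$, and it suffices to work with the linear function $g$. Next, by Lemma~\ref{alpha-linearity}, $\alpha_g(z,\cdot)$ does not depend on $z$, so $\Gx(\alpha_g(z,U_{k+1})) = \Gx(\alpha_g(0,U_{k+1}))$; and since by Assumption~A5 the $U_{k+1}$ all have the law $\N_{\n\lambda}$, the distribution of $\Gx(\alpha_g(0,U_{k+1}))$ is the same for every $k$ and equals that of $\Gx(\alpha_g(0,U_1))$. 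Thus it remains only to compare $\alpha_g(0,U_1)$ with $\alpha_{l^\star}(0,U_1)$.

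Write $g(x) = b^\top x$ for some $b \neq 0$; by scaling we may assume $\|b\| = 1$ (a positive rescaling of $b$ is a strictly increasing transformation and does not change $\alpha_g$, by Lemma~\ref{selection-function-increasing-transformation} again — or one sees it directly from the definition of $\alpha$ via the ranking \eqref{ranking}). Choose an orthogonal matrix $T$ with $Tb = e_1$, equivalently $b^\top x = (Tx)_1 = l^\star(Tx)$ for all $x$. The key computation is then to track how $\alpha$ transforms under $T$: for $u = (u^1,\dots,u^\lambda)$, the ranking $g(u^{1:\lambda}) \le \dots \le g(u^{\lambda:\lambda})$ is by construction the same as $l^\star(Tu^{1:\lambda}) \le \dots \le l^\star(Tu^{\lambda:\lambda})$, with the index tie-breaking preserved because the index ordering is intrinsic; hence the selected sorted block transforms as $\alpha_{g}(0,u) = (u^{1:\lambda},\dots,u^{\mu:\lambda})$ while $\alpha_{l^\star}(0,(Tu^1,\dots,Tu^\lambda)) = (Tu^{1:\lambda},\dots,Tu^{\mu:\lambda})$, i.e.
\[
\alpha_{l^\star}\bigl(0, (Tu^1,\dots,Tu^\lambda)\bigr) = \bigl(T\,[\alpha_g(0,u)]^1, \dots, T\,[\alpha_g(0,u)]^\mu\bigr).
\]
Now apply $\Gx$ and use Assumption~A2 (rotation invariance of $\Gx$): $\Gx\bigl(T[\alpha_g(0,u)]^1,\dots,T[\alpha_g(0,u)]^\mu\bigr) = \Gx(\alpha_g(0,u))$. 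Therefore $\Gx(\alpha_g(0,u)) = \Gx\bigl(\alpha_{l^\star}(0,(Tu^1,\dots,Tu^\lambda))\bigr)$ for every $u$.

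Finally I would push this identity through the distribution of $U_1$. Since $T$ is orthogonal and the $U_1^i$ are i.i.d.\ $\N_{\n}$, the vector $(TU_1^1,\dots,TU_1^\lambda)$ has exactly the same law $\N_{\n\lambda}$ as $U_1$ (this is the rotation invariance of the standard Gaussian, the same fact that makes $p_{\N_{\n\lambda}}(u)$ depend only on $\|u\|$). Hence
\[
\Gx(\alpha_g(0,U_1)) = \Gx\bigl(\alpha_{l^\star}(0,(TU_1^1,\dots,TU_1^\lambda))\bigr) \;\overset{d}{=}\; \Gx(\alpha_{l^\star}(0,U_1)) = \Gl,
\]
which combined with the first paragraph gives the claim for all $z$ and all $k$. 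The only genuinely delicate point is the bookkeeping in the transformation of $\alpha$ under $T$ — one must check that applying $T$ coordinatewise to the arguments permutes them by precisely the same ranking permutation (including the tie-breaking rule, which is governed by the original indices and therefore untouched by $T$), so that $\alpha$ and the orthogonal map genuinely commute in the sense above; everything else is a direct consequence of the already-established lemmas and the rotation invariance in A2 and of the Gaussian. The full details of this bookkeeping are deferred to Appendix~\ref{proof-linear-invariance}.
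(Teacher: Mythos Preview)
Your proposal is correct and follows essentially the same approach as the paper: reduce to a linear $g$ via Lemma~\ref{selection-function-increasing-transformation}, drop the $z$-dependence via Lemma~\ref{alpha-linearity}, then conjugate by an orthogonal matrix sending the gradient direction to $e_1$ and invoke the rotation invariance of $\Gx$ (A2) and of the Gaussian (A5). The only cosmetic difference is that the paper carries out the comparison through test functions and the explicit density $p_z^{\f}$ of \eqref{mupositive} with a change of variables in the integral, whereas you argue the pathwise identity $\Gx(\alpha_g(0,u)) = \Gx(\alpha_{l^\star}(0,Tu))$ directly and then push forward by the law of $U_1$; both are the same argument.
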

The proposition shows that on any (nontrivial) linear function the step-size change factor is independent of $X_k$, $Z_k$ and even $\sigma_k$.
We can now state the result which is at the origin of the methodology used in this paper, namely that on scaling-invariant functions, $\{Z_k = (X_k - x^{\star})/\sigma_k \, ; k \in \mathbb{N} \}$ is a homogeneous Markov chain. 
For this, we introduce the following function
 \begin{align}
F_{w}  (z, v)= \frac{z + \sum_{i=1}^{\mu}w_{i} v_{i} }{ \Gx(v) } \,\,\mbox{for all} \,\, (z, v) \in \R^{\n} \times \R^{\n \mu},
\label{homogeneousFunction}
\end{align}
which allows to write $Z_{k+1}$ as a deterministic function of $Z_k$ and $U_{k+1}$.
The following proposition establishes conditions under which $\{Z_k ; k \in \mathbb{N}\}$ is a homogeneous Markov chain that is defined with~\eqref{homogeneousFunction}, independently of $\{ \left(X_k, \sigma_{k} \right) ; k \in \mathbb{N}\}$. We refer to $\{ Z_{k} ; k \in \mathbb{N} \}$ as the \normalized\ chain. This is a particular case \new{from}\del{of
\cite[Proposition 4.1]{auger2016linear} where} a more abstract algorithm framework \new{\cite[Proposition 4.1]{auger2016linear}}.\del{is assumed.}

\begin{proposition}
\label{prop:MC} 
Let $\f$ be a scaling invariant function with respect to $x^{\star}$. Define the sequence $\{(X_k,\sigma_k) ; k \in \mathbb{N} \}$ as in~\eqref{incumbent} and~\eqref{step-size}. Then $\{Z_k = (X_k - x^{\star})/\sigma_k \, ; k \in \mathbb{N} \}$ is a homogeneous Markov chain and for all natural integer $k$, the following equation holds
\begin{align}
Z_{k+1} = F_{w} \pare{Z_{k}, \alpha_{\f}\pare{x^{\star} + Z_{k}, \,U_{k+1}}},
\label{normalized-process}
\end{align}
where $\alpha_f$ is defined in~\eqref{alphaDefinition}, $F_{w}$ is defined in \eqref{homogeneousFunction}
and $\{U_{k+1}\, ; k \in \mathbb{N}\}$ is the sequence of random inputs used to sample the candidate solutions in \eqref{eq:offspring} corresponding to the random input in \eqref{x-with-alpha} and \eqref{sigma-with-alpha}.\label{markov-chain-normalized-chain}
\end{proposition}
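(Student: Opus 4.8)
The plan is to verify directly that the recursion \eqref{normalized-process} holds, and then to observe that this recursion, together with Assumption~A5, forces $\{Z_k\}$ to be a homogeneous Markov chain. First I would start from the algorithm update equations. Dividing \eqref{incumbent} by $\sigma_{k+1}$ and subtracting $x^\star/\sigma_{k+1}$ gives
\begin{align}
Z_{k+1} = \frac{X_{k+1} - x^\star}{\sigma_{k+1}} = \frac{X_k - x^\star + \sigma_k \sum_{i=1}^\mu w_i U_{k+1}^{i:\lambda}}{\sigma_{k+1}} .
\label{eq:Zrec}
\end{align}
Using \eqref{step-size} to write $\sigma_{k+1} = \sigma_k\,\Gx(U_{k+1}^{1:\lambda},\dots,U_{k+1}^{\mu:\lambda})$, and dividing numerator and denominator of \eqref{eq:Zrec} by $\sigma_k$, I would obtain
\begin{align}
Z_{k+1} = \frac{Z_k + \sum_{i=1}^\mu w_i U_{k+1}^{i:\lambda}}{\Gx(U_{k+1}^{1:\lambda},\dots,U_{k+1}^{\mu:\lambda})} .
\label{eq:Zrec2}
\end{align}
The remaining task is to express the sorted vectors $U_{k+1}^{i:\lambda}$ in terms of $Z_k$ and $U_{k+1}$ alone, using scaling invariance of $\f$.

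The key step is the scaling-invariance reduction of the selection. By \eqref{selection-division} we have $\alpha_\f(X_k,\sigma_k U_{k+1})/\sigma_k = (U_{k+1}^{1:\lambda},\dots,U_{k+1}^{\mu:\lambda})$, so I need to rewrite $\alpha_\f(X_k,\sigma_k U_{k+1})$ in normalized coordinates. Writing $X_k = x^\star + \sigma_k Z_k$ and applying the positive homogeneity of $(z,u)\mapsto\alpha_\f(x^\star+z,u)$ from Lemma~\ref{lem:SI-alpha}, I get
\begin{align}
\alpha_\f(X_k,\sigma_k U_{k+1}) = \alpha_\f(x^\star + \sigma_k Z_k, \sigma_k U_{k+1}) = \sigma_k\,\alpha_\f(x^\star + Z_k, U_{k+1}),
\label{eq:alphahom}
\end{align}
hence $(U_{k+1}^{1:\lambda},\dots,U_{k+1}^{\mu:\lambda}) = \alpha_\f(x^\star + Z_k, U_{k+1})$. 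Substituting this into \eqref{eq:Zrec2} and recalling the definition \eqref{homogeneousFunction} of $F_w$ yields exactly \eqref{normalized-process}: $Z_{k+1} = F_w(Z_k, \alpha_\f(x^\star + Z_k, U_{k+1}))$.

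Finally, I would argue the Markov and homogeneity properties. Set $G(z,u) := F_w(z,\alpha_\f(x^\star+z,u))$, a fixed measurable map (measurability of $\alpha_\f$ follows from the construction of the sorting in \eqref{ranking}, and $F_w$ is measurable since $\Gx$ is and $\Gx>0$ by its codomain, or by A3). Then $Z_{k+1} = G(Z_k, U_{k+1})$ with $G$ \emph{not} depending on $k$, and by Assumption~A5 the inputs $\{U_{k+1}\}$ are i.i.d.\ and, by induction, $U_{k+1}$ is independent of $(X_0,\sigma_0,U_1,\dots,U_k)$ and therefore of $Z_k$ (which is a function of $(X_0,\sigma_0,U_1,\dots,U_k)$). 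A standard fact then gives that $\{Z_k\}$ is a time-homogeneous Markov chain whose transition kernel is induced by $G$ and the law $\N_{n\lambda}$ of $U_{k+1}$; in particular it evolves autonomously, independently of $\{(X_k,\sigma_k)\}$. I expect the only real subtlety to be the careful bookkeeping of the independence structure — namely that $\sigma_k$ cancels entirely, so that despite $\alpha_\f$ itself not being homogeneous (as remarked after \eqref{selection-division}), the \emph{normalized} quantity $\alpha_\f(X_k,\sigma_k U_{k+1})/\sigma_k$ depends on the state only through $Z_k$; everything else is routine. Since this is a special case of \cite[Proposition 4.1]{auger2016linear}, one could alternatively just invoke that result after checking the algorithm fits its abstract framework, but the direct computation above is short and self-contained.
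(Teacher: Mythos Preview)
Your proposal is correct and follows essentially the same approach as the paper's own proof: both compute $Z_{k+1}$ from the update equations, invoke the positive homogeneity of $(z,u)\mapsto\alpha_\f(x^\star+z,u)$ from Lemma~\ref{lem:SI-alpha} to reduce $\alpha_\f(X_k,\sigma_k U_{k+1})/\sigma_k$ to $\alpha_\f(x^\star+Z_k,U_{k+1})$, and then recognize $F_w$. Your version is slightly more explicit about why the resulting recursion yields a homogeneous Markov chain (via the i.i.d.\ inputs and independence from the past), whereas the paper simply asserts it; otherwise the arguments are the same.
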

\begin{proof}
The definition of the selection function $\alpha_f$ allows to write~\eqref{incumbent} and~\eqref{step-size} as~\eqref{x-with-alpha} and~\eqref{sigma-with-alpha}. 
We have $Z_{k+1} = \frac{X_{k+1} - x^{\star}}{\sigma_{k+1}} = 
\frac{X_{k} - x^{\star} + \sum_{i=1}^{\mu} w_{i} \croc{\alpha_{f}(X_{k} , \sigma_{k} U_{k+1})}_{i}}
{\sigma_{k} \, \Gx \pare{ \frac{\alpha_{f}(X_{k} , \sigma_{k} U_{k+1})}{ \sigma_{k} }  } } = \frac{Z_{k} + \sum_{i=1}^{\mu} w_{i} \frac{\croc{\alpha_{f}(X_{k} , \sigma_{k} U_{k+1})}_{i}}{\sigma_{k}} }
{\Gx \pare{ \frac{\alpha_{f}(X_{k} ,\, \sigma_{k} U_{k+1})}{ \sigma_{k} }  } }.$
By Lemma~\ref{lem:SI-alpha}, $\frac{\alpha_{f}(X_{k} , \,\sigma_{k} U_{k+1})}{ \sigma_{k} } = \frac{\alpha_{f}(x^{\star} + X_{k} - x^{\star}, \,\sigma_{k} U_{k+1})}{ \sigma_{k} }  = \alpha_{f}(x^{\star} + \frac{X_{k} - x^{\star} }{\sigma_{k}}, U_{k+1})$. Then
$Z_{k+1} = F_{w} \pare{Z_{k}, \alpha_{\f}\pare{x^{\star} + Z_{k}, U_{k+1}}} $ and $\{Z_k ; k \in \mathbb{N}\}$ is a homogeneous Markov chain.  
\end{proof}

Three invariances are key to obtain that $\{Z_k = (X_k - x^{\star})/\sigma_k \, ; k \in \mathbb{N} \}$ is a homogeneous Markov chain: invariance to strictly increasing transformations (stemming from the comparison-based property of ESs), translation invariance, and scale invariance~\cite[Proposition 4.1]{auger2016linear}. The last two invariances are satisfied with the update we assume for mean and step-size.

\section{\del{Main results}\new{Methodology and overview of the rest of the analysis}}
\label{main-results}

  
\new{We present in this section the main idea behind the proof methodology used in this paper, namely how the stability study of an underlying Markov chain leads to convergence (or divergence) of the original algorithm. From there, we sketch the different steps of the analysis and present an overview of the structure of the rest of the mathematical analysis.}

\del{We present our main results that express the global linear convergence of the algorithm presented in Section~\ref{cb-saes-framework}. L}\new{We aim at proving l}inear convergence \new{that} can be visualized by looking at the distance to the optimum: after an adaptation phase, we observe that the log distance to the optimum diverges to minus infinity with a graph that resembles a straight line with random perturbations. The step-size converges to zero at the same linear rate (see Figure~\ref{fig:convergence}),\del{. We call this constant}\done{new: which constant?} the \new{so-called} convergence rate of the algorithm. Formally, in case of convergence, there exists ${\rm r} >0$ such that
\begin{align}\label{eq:linear-behavior}
\lim_{k \to \infty} \frac1k \log \frac{ \| X_k - x^\star \| }{\| X_0 - x^\star \|} 
= \lim_{k \to \infty} \frac1k \log \frac{\sigma_k }{\sigma_0} = - {\rm r} 
\end{align}
where $x^\star$ is the optimum of the function. 

\new{
We consider a scaling invariant function with respect to $x^*$. From Proposition~\ref{prop:MC}, we know that $\{Z_k = (X_k - x^{\star})/\sigma_k \, ; k \in \mathbb{N} \}$ is a homogeneous Markov chain where $\{(X_k,\sigma_k) \, ; k \in \mathbb{N}  \}$ is the sequence of states of the step-size adaptive $\pare{\mu/\mu_{w}, \lambda}$-ES defined in~\eqref{incumbent} and~\eqref{sigma-with-alpha} (see Proposition~\ref{prop:MC}). 
We use this  Markov chain to 
%
%
 write the log progress in the following way:
\begin{align}
\log \frac{\|X_{k+1} - x^{\star}  \|}{\|X_{k} - x^{\star}  \|} & = \log \frac{\|Z_{k+1}\|}{\|Z_{k}\|} + \log\frac{\sigma_{k+1}}{\sigma_{k}} \label{one-step-relation}\\
					      &= \log\frac{\|Z_{k+1}\|}{\|Z_{k}\|} + \log\left(\Gx\pare{\alpha_{\f}\pare{x^{\star} + Z_{k}, U_{k+1}} } \right) \nonumber
\end{align}
where $\Gx$ and $\alpha_\f$ are defined in~\eqref{step-size} and in~\eqref{alphaDefinition}. This  equation can now be used to express the term whose limit we need to investigate:
 \begin{align}
 \frac1k\log \frac{\|X_{k} - x^{\star} \|}{\|X_{0} - x^{\star} \|} &  = \frac1k\sum_{t=0}^{k-1}\log \frac{\|X_{t+1} - x^{\star} \|}{\|X_{t} - x^{\star} \|} \\ 
 & = \frac1k\sum_{t=0}^{k-1}\log \frac{\|Z_{t+1}\|}{\|Z_{t}\|} + \frac1k\sum_{t=0}^{k-1}\log(\Gx(\alpha_{\f}(x^{\star} + Z_{t}, U_{t+1}) )) \label{apply-lln} \enspace .
 \end{align}


This latter equation suggests that if we can apply a law of large numbers to $\acco{ Z_k \, ; k \in \mathbb{N}}$ and $\acco{ (Z_{k}, U_{k+1}) \,; k \in \mathbb{N}}$, the right-hand side of~\eqref{apply-lln} converges when $k$ goes to infinity to $\dsp \int \mathbb{E}_{U_{1}\sim\N_{\n\lambda}}\croc{ \log\pare{\Gx \pare{ \alpha_{f}(x^{\star} + z, U_{1})} } } \pi(dz) = \mathbb{E}_{\pi}( \mathcal{R}_{\f} )$ where $\mathcal{R}_{\f}$ is defined \new{as the expected change of the logarithm of the step-size for any state $z \in \R^{\n}$ of the \normalized\ chain as
\begin{align}
\mathcal{R}_{\f}(z) = \mathbb{E}_{U_{1}\sim\N_{\n\lambda}}\croc{ \log\pare{\Gx \pare{ \alpha_{f}(x^{\star} + z, U_{1})} } } \enspace,
\label{expected-step-size-f}
\end{align}}\del{in~\eqref{expected-step-size-f}}and $\pi$ is the invariant measure of $\{ Z_k \, ; k \in \mathbb{N}\}$. From there, we obtain the almost sure convergence of $\dsp\frac1k\log \frac{\|X_{k} - x^{\star}\|}{\|X_{0} - x^{\star}\|}$ towards $\mathbb{E}_{\pi}( \mathcal{R}_{\f} )$ expressed in \eqref{almost-sure-linear-convergence-main} characterizing the asymptotic linear behavior of the algorithm.
\new{A similar equation can be established to prove the convergence of $1/k \log(\sigma_k/\sigma_0)$. Convergence of the expected log-progress can also be deduced from stability properties of $\{Z_k ; k \geq 0 \}$.}

The idea to apply a\del{n} \nnew{Law of Large Numbers (LLN)} to the chain $\{Z_k ; k \geq 0 \}$ to prove the asymptotic linear behavior of the underlying algorithm is the key behind the asymptotic almost sure linear behavior proof we provide.
\new{This seminal idea}\del{It} was introduced \del{by Bienven\"ue and Fran\c cois} \new{for}\del{\cite{bienvenue2003global} on a} self-adaptive ES on the sphere function \cite{bienvenue2003global} \new{and} exploited\del{ by Auger} to prove their linear behavior \cite{auger2005convergence} and\del{ later on} generalized to a wider class of algorithms and functions~\cite{auger2016linear}. 

\new{Hence, in order to obtain a proof of the linear behavior of the studied algorithm following the idea sketched above, we need to investigate now the stability of the chain $\{ Z_k \, ; k \in \mathbb{N}\}$ (and in turn $\{ (Z_{k}, U_{k+1}) \,; k \in \mathbb{N}\}$). In particular, we need to prove that it
satisfies the mathematical properties referred to informally as stability properties (following a terminology by Meyn and Tweedie \cite{meyn2012markov}) such that an LLN can be applied. It is  not a  trivial task and it will occupy a large part of the rest of the paper. While establishing stability properties to obtain an LLN we will prove stronger properties that will allow to state convergence of the expected log progress and a Central Limit Theorem.
The outline of the remaining mathematical analysis and the proof structure is  as follows:}
\new{
\begin{itemize}
\item In Section~\ref{stability-markov-process}, we introduce  different notions related to Markov chains, notably the stability properties that we will prove like $\varphi$-irreducibility, aperiodicity, positivity, Harris-recurrence and geometric ergodicity. We also introduce the different practical tools to prove that a Markov chain satisfies those properties.
\item In Section~\ref{normalized-chain-section}, we establish those stability properties for the Markov chain $\{ Z_k ; k \geq 0 \}$ associated to a step-size adaptive \muslashmu-ES under the appropriate conditions on the objective functions and the step-size adaptation mechanism.
\item In Section~\ref{consequences-section}, we use those properties to prove the linear behavior of the studied algorithms. In addition to the asymptotic almost sure linear behavior stemming from the LLN, we establish convergence in terms of expected log progress and a Central Limit Theorem. Our conditions for linear convergence are expressed for an abstract step-size update. We investigate how those conditions translate to the case of the CSA and xNES step-size updates. 
\end{itemize}
}}
\begin{figure*}~\\[-1.5ex]
\newcommand{\figwidth}{0.24}
	\centering
	\includegraphics[width=\figwidth\textwidth]{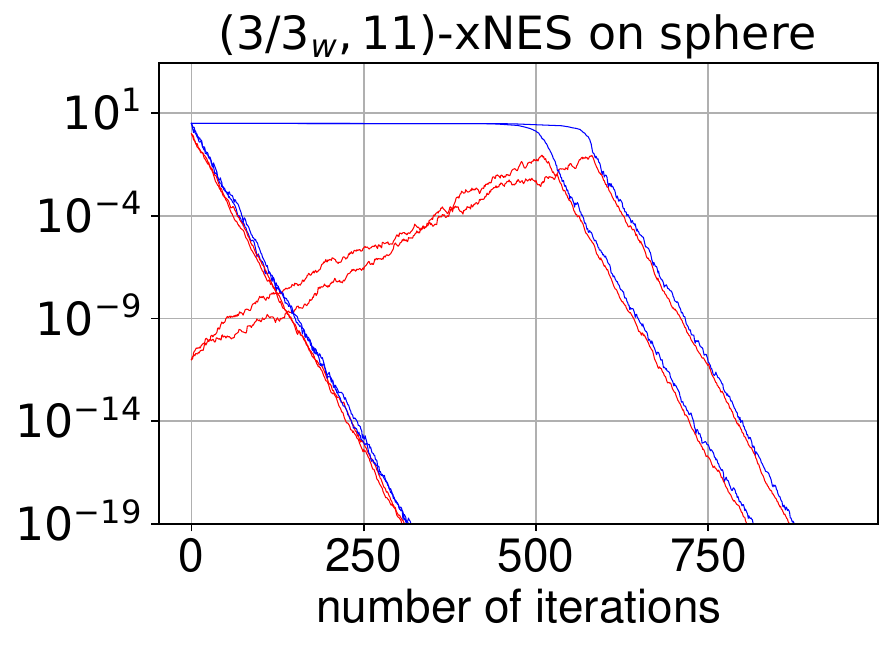}
	\includegraphics[width=\figwidth\textwidth]{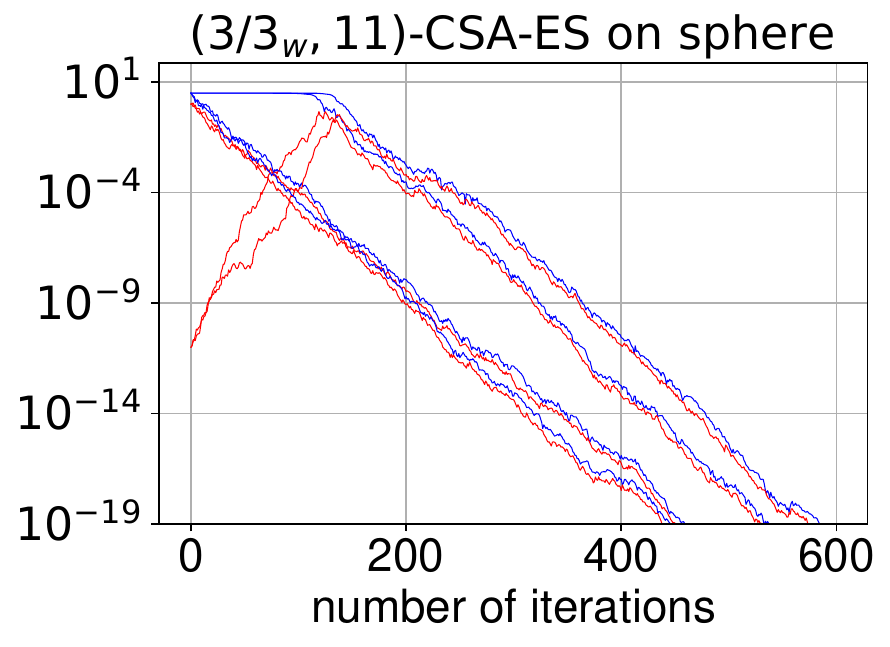}
	\includegraphics[width=\figwidth\textwidth]{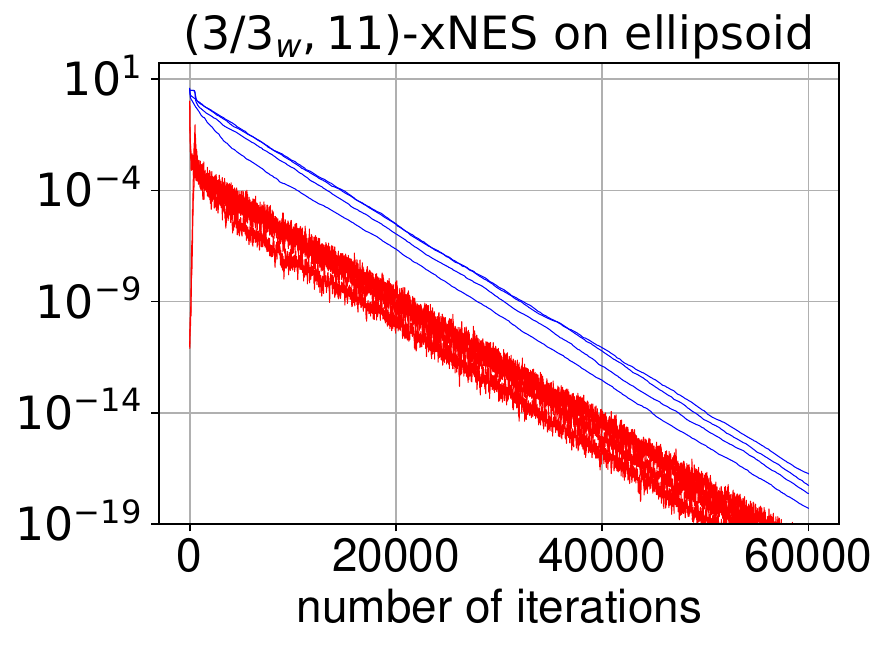}
	\includegraphics[width=\figwidth\textwidth]{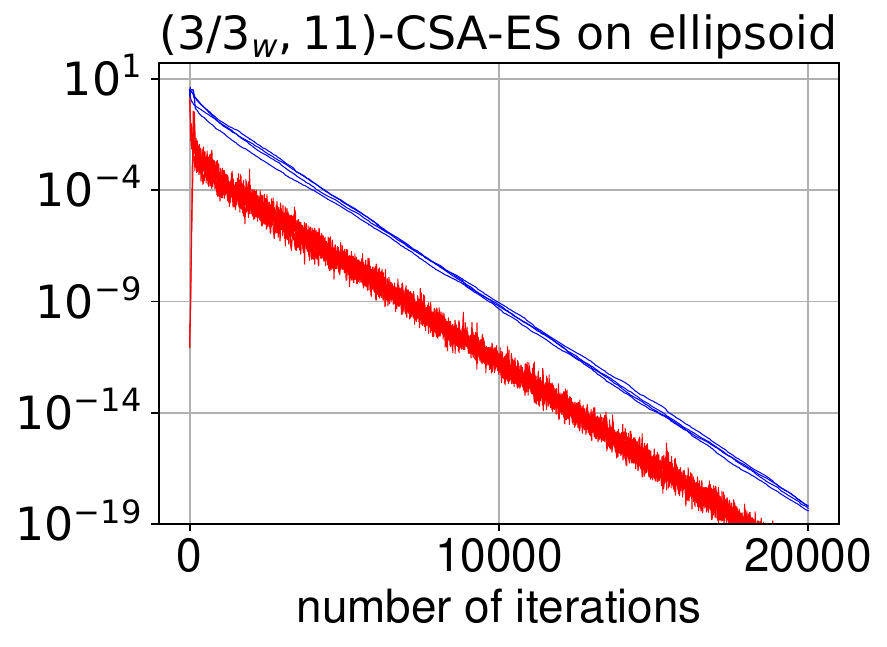}
	\vspace*{-1.5ex}
	\caption{\label{fig:convergence} Four independent runs of $\pare{\mu/\mu_w, \lambda}$-xNES and $\pare{\mu/\mu_w, \lambda}$-CSA1-ES (without cumulation) as presented in Section~\ref{algorithm-framework} on the functions $x \mapsto \| x\|^{2}$ (first two figures) and $x \mapsto \sum_{i=1}^{n}10^{3\frac{i-1}{n-1}} x_{i}^{2}$ (last two figures). Illustration of $\| X_{k }\|$ in blue and $\sigma_{k}$ in red where $k$ is the number of iterations, $\mu = 3$, $\lambda = 11$ and $w_i=1/\mu$.
Initializations:
$\sigma_{0} $ equals to $10^{-11}$ in two runs and $1$ in the two other runs, $X_{0}$ is the all-ones vector in dimension $10$.}
\end{figure*}
\section{\new{Reminders on Markov chains and various tools}}
\label{stability-markov-process}
\label{irreducible-Tchain-aperiodicity}


\del{
Similarly the proof idea for \eqref{expected-log-progress-main} goes as follows.  %
 If we take the expectation under $Z_{0} = z$ in~\eqref{sigma-with-alpha}, then 
 $
 \mathbb{E}_{z}\croc{\log\frac{\sigma_{k+1}}{\sigma_{k}}} = \mathbb{E}_{z}\croc{\log\left(\Gx\pare{\alpha_{\f}\pare{x^{\star} + Z_{k}, U_{k+1}} } \right)} =  \int P^{k}(z, \diff y) \mathcal{R}_{\f}(y).
 $
 With~\eqref{one-step-relation} we have that 
\begin{align}
 & \mathbb{E}_{z} \croc{\log \frac{\|X_{k+1} - x^{\star} \|}{\|X_{k} - x^{\star} \|}} = \mathbb{E}_{z}\croc{\log \frac{\|Z_{k+1}\|}{\|Z_{k}\|} } +  \mathbb{E}_{z}\croc{\log\frac{\sigma_{k+1}}{\sigma_{k}} } \label{expected-step}\\
 &=   \dsp\int P^{k+1}(z, \diff y) \log(\|y\|) - \dsp\int P^{k}(z, \diff y) \log(\|y\|) 
 +   \dsp\int P^{k}(z, \diff y) \mathcal{R}_{\f}(y) \label{expected-step-plus}.
 \end{align}
  
 If $P^k(z,.)$ converges to $\pi$ assuming all the limits can be taken, then the right-hand side converges to $\mathbb{E}_\pi(\mathcal{R}_{\f})$ as the two first integrals cancel each other such that 
 $
\dsp\lim_{k\to\infty} \mathbb{E}_{\frac{x- x^{\star}}{\sigma}}\croc{ \log\frac{ \| X_{k+1}- x^{\star} \|   }{ \| X_{k}- x^{\star} \|   } } = \dsp\lim_{k\to\infty} \mathbb{E}_{\frac{x- x^{\star}}{\sigma}} \croc{  \log\frac{ \sigma_{k+1} }{ \sigma_{k} }}  = \mathbb{E}_{\pi}(\mathcal{R}_{\f}) ,
 $
i.e.\ ~\eqref{expected-log-progress-main} is satisfied. To prove that $P^k(z,.)$ converges to $\pi$ we prove the $h$-ergodicity of $\{Z_k \, ; k \in \mathbb{N}\}$, a notion formally defined later on.\\}

We consider a Markov chain $\{Z_k \, ; k \in \mathbb{N}\}$ on a measure space $\pare{\ZZ, \mathcal{B}(\ZZ), P}$ where $\ZZ$ in an open subset of $\R^{\n}$,  for all $k \in \mathbb{N}$ its $k$-step transition kernel as
$
P^{k}(z, A) = P\pare{Z_{k} \in A | Z_{0} = z}
$ 
for $ z \in \ZZ, \, A \in \mathcal{B}(\ZZ)$. We also denote $P(z, A)$ and $P_{z}(A)$ as $P^{1}(z, A)$.
We remind different stability notions investigated later on to prove in particular that $\{ Z_k \, ; k \in \mathbb{N}\}$ satisfies an LLN, a central limit theorem and that for some $z\in\ZZ$, $P^k(z, \cdot)$ converges to a stationary distribution.
We additionally present different tools to be able to verify that a Markov chain satisfies those various properties.

\subsection{Stability properties and practical drift conditions}
If there exists a nontrivial measure $\varphi$ on $\pare{\ZZ, \mathcal{B}(\ZZ)}$ such that for all $A \in \mathcal{B}(\ZZ),$
$ \varphi(A) > 0$ implies $\sum_{k=1}^{\infty} P^{k}(z, A) > 0 \text{ for all $z \in \ZZ$}, $
then the chain is called $\varphi$-irreducible. A $\varphi$-irreducible Markov chain is Harris recurrent if for all $A \in \mathcal{B}(\ZZ)$ with $\varphi(A) > 0$ and for all $z \in \ZZ,$
 $P_{z}\pare{ \eta_{A} = \infty } = 1,$ where $\eta_{A} = \sum_{k=1}^{\infty} \mathds{1}_{Z_{k} \in A}$  is the \textit{occupation time} of $A.$  
 
A $\sigma$-finite measure $\pi$ on $\pare{\ZZ, \mathcal{B}(\ZZ)}$ is an invariant measure for $\{ Z_k; k \in \mathbb{N}\} $ if for all $A \in \mathcal{B}(\ZZ),$ $\pi(A) = \dsp\int_{\ZZ} \pi(dz) P(z, A)$.
A Harris recurrent chain admits a unique (up to constant multiples) invariant measure $\pi$ (see \cite[Theorem 10.0.1]{meyn2012markov}). A $\varphi$-irreducible Markov chain admitting an invariant probability measure $\pi$ is said positive. A positive Harris-recurrent chain satisfies an LLN as reminded below.
\begin{theorem}{\cite[Theorem 17.0.1]{meyn2012markov}}
 If $\{Z_k ; k \in \mathbb{N}\}$ is a positive and Harris recurrent chain with invariant probability measure $\pi$, then the LLN holds for any $\pi$-integrable function $g$, i.e.\  
for any $g$ with $\mathbb{E}_{\pi}(\abs{g}) < \infty$, $\lim_{k\to\infty} \frac1k \sum_{t=0}^{k-1} g(Z_{t}) = \mathbb{E}_{\pi}(g)$.
\label{lln-origin}
\end{theorem}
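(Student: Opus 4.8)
This statement is the ergodic (law of large numbers) theorem for positive Harris recurrent chains, here invoked from Meyn and Tweedie; should one want a self-contained argument, the plan would be the classical regeneration approach, which I sketch now. The overall strategy is: reduce to a chain possessing an accessible atom, apply the strong law of large numbers for i.i.d.\ sums over the regeneration cycles of that atom, and then discard the incomplete final cycle.

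First I would split the chain. Since $\{Z_k\}$ is $\varphi$-irreducible it admits a small set $C$ with $\varphi(C)>0$ and a minorization $P^{m}(z,\cdot)\ge\delta\,\mathds{1}_{C}(z)\,\nu(\cdot)$ for some $m\ge 1$, $\delta\in(0,1]$ and a probability measure $\nu$ on $\mathcal{B}(\ZZ)$. Nummelin's splitting construction applied to the $m$-skeleton then produces a chain $\{\check Z_k\}$ on $\ZZ\times\{0,1\}$ that has the same law in the first coordinate, possesses the accessible atom $\check\alpha=C\times\{1\}$, and inherits both positivity and Harris recurrence; its invariant probability measure projects onto $\pi$, and (after interlacing the $m$ intermediate steps) it suffices to prove the LLN for $\check Z$. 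Renaming, I may therefore assume the chain itself has an accessible atom $\alpha$ with $P_z(\eta_\alpha=\infty)=1$ for every $z$ (Harris recurrence) and $\mathbb{E}_\alpha[\tau_\alpha]<\infty$ (positivity), writing $\tau_\alpha=\inf\{k\ge 1:Z_k\in\alpha\}$.

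Next I would set up the regeneration. Let $0\le T_0<T_1<T_2<\cdots$ be the successive visit times to $\alpha$ ($T_0$ finite a.s.\ by Harris recurrence). By the strong Markov property the cycle lengths $L_j=T_j-T_{j-1}$ and the block sums $S_j=\sum_{t=T_{j-1}}^{T_j-1}g(Z_t)$, $j\ge 1$, form an i.i.d.\ sequence, and for $g\ge 0$ they have finite mean: $\mathbb{E}_\alpha[L_1]=\mathbb{E}_\alpha[\tau_\alpha]<\infty$, while $\mathbb{E}_\alpha[S_1]=\mathbb{E}_\alpha\bigl[\sum_{t=0}^{\tau_\alpha-1}g(Z_t)\bigr]<\infty$ is precisely the quantity that Kac's formula identifies, after normalisation, with $\pi(g)$, i.e.\ $\pi(g)=\mathbb{E}_\alpha[\sum_{t=0}^{\tau_\alpha-1}g(Z_t)]/\mathbb{E}_\alpha[\tau_\alpha]$. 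Kolmogorov's SLLN then yields $\frac1N\sum_{j=1}^N S_j\to\mathbb{E}_\alpha[S_1]$ and $\frac1N\sum_{j=1}^N L_j\to\mathbb{E}_\alpha[\tau_\alpha]$ a.s.; for a general $\pi$-integrable $g$ one runs this on $g^+$ and $g^-$ separately, which is legitimate since both have finite cycle-mean thanks to $\mathbb{E}_\pi(|g|)<\infty$.

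Finally I would remove the incomplete last cycle: for $k$ with $T_N\le k<T_{N+1}$,
\begin{align*}
\sum_{t=0}^{k-1}g(Z_t)=\sum_{t=0}^{T_0-1}g(Z_t)+\sum_{j=1}^{N}S_j+\sum_{t=T_N}^{k-1}g(Z_t);
\end{align*}
dividing by $k$ and using $k/N\to\mathbb{E}_\alpha[\tau_\alpha]$ (so $k\to\infty\iff N\to\infty$), the first term vanishes, the middle term tends to $\mathbb{E}_\alpha[S_1]/\mathbb{E}_\alpha[\tau_\alpha]=\pi(g)$, and the boundary term is dominated by $\frac1k\,|S'_{N+1}|$ with $S'_j$ the $|g|$-cycle sums; since the $S'_j$ are i.i.d.\ with finite mean, $S'_{N+1}/(N+1)\to 0$ a.s., and $k\gtrsim N$, so this term also vanishes. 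Hence $\frac1k\sum_{t=0}^{k-1}g(Z_t)\to\pi(g)$ a.s.\ from every initial state, and unsplitting transfers the conclusion to the original chain. I expect the main obstacles to be (i) carrying out the Nummelin splitting cleanly and (ii) the Kac representation of $\pi$, which is what allows naming the limit $\mathbb{E}_\pi(g)$ rather than an abstract cycle average; a subtler point is that Harris recurrence, not merely positive recurrence, is what upgrades the almost-sure convergence from $\pi$-almost every starting point to \emph{every} starting point.
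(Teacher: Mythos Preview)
The paper does not prove this statement at all: it is simply quoted as \cite[Theorem~17.0.1]{meyn2012markov} and used as a black box. Your regeneration sketch (Nummelin splitting to manufacture an atom, i.i.d.\ cycle decomposition, Kac's representation of $\pi$, and the boundary-term estimate) is the standard route and is essentially how the cited reference establishes the result, so there is nothing to compare against in the paper itself.
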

We will need the notion of aperiodicity.
 Assume that $d$ is a positive integer and $\{Z_k \, ; k \in \mathbb{N}\}$ is a $\varphi$-irreducible Markov chain defined on $\pare{\ZZ, \mathcal{B}(\ZZ)}$. Let $\pare{D_{i}}_{i=1,\dots,d} \in \mathcal{B}(\ZZ)^{d}$ be a sequence of disjoint sets. Then $\pare{D_{i}}_{i=1,\dots,d} $ is called a $d$-cycle if 
\begin{itemize}
\item[(i)] $P(z, D_{i+1}) = 1$ for all $z \in D_{i}$ and $i = 0, \dots, d-1$ (mod  $d$),
\item[(ii)] $\Lambda\pare{ \pare{ \bigcup_{i=1}^{d} D_{i} }^{c} } = 0$ for all irreducibility measure $\Lambda$ of $\{Z_k \, ; k \in \mathbb{N}\}$.
\end{itemize}  
If $\{Z_k ; k \in \mathbb{N}\}$ is $\varphi$-irreducible, there exists a $d$-cycle where $d$ is a positive integer \cite[Theorem 5.4.4]{meyn2012markov}. The largest $d$ for which there exists a $d$-cycle is called the period of $\{Z_k \, ; k \in \mathbb{N}\}$. We then say that a $\varphi$-irreducible Markov chain $\{Z_k \, ; k \in \mathbb{N}\}$ on $\pare{\ZZ, \mathcal{B}(\ZZ)}$ is aperiodic if it has a period of $1$. 

A set $C \in \mathcal{B}(\ZZ)$ is called {\it small} if there exists a positive integer $k$ and a nontrivial measure $\nu_{k}$ on $\mathcal{B}(\ZZ)$ such that
$
P^{k}(z, A) \geq \nu_{k}(A) \text{ for all } z \in C, \, A \in  \mathcal{B}(\ZZ).
$
We then say that $C$ is a $\nu_{k}$-small set~\cite{meyn2012markov}. 

Given an extended-valued, non-negative and measurable function  $V:$ $\foncfleche{\ZZ}{\R_{+}\cup \acco{\infty}}$ (called potential function), the drift operator is defined 
for all $z \in \ZZ$ as 
$
\Delta V(z) = \mathbb{E}\croc{ V(Z_{1}) | Z_{0} = z} - V(z) = \dsp\int_{\ZZ} V(y) P(z, \diff y)- V(z) .
$
A $\varphi$-irreducible, aperiodic Markov chain $\{Z_k \, ; k \in \mathbb{N}\}$ defined on $\pare{\ZZ, \mathcal{B}(\ZZ)}$ satisfies a geometric drift condition if there exist $0 < \gamma < 1,$ $b \in \R,$ a small set $C$ and a potential function $V$ greater than $1$, finite at some $z_{0} \in \ZZ$ such that for all $z \in \ZZ:$
 $
 \Delta V(z) \leq (\gamma - 1) V(z) + b \mathds{1}_{C}(z) ,
 $
or equivalently if 
$\mathbb{E}\croc{ V(Z_{1}) | Z_{0} = z} \leq \gamma V(z) + b \mathds{1}_{C}(z).$
The function $V$ is called a geometric drift function and if $\acco{y \in \ZZ \, ; V(y) < \infty} = \ZZ$, we say that $\{Z_k \, ; k \in \mathbb{N}\}$ is $V$-geometrically ergodic.

If a $\varphi$-irreducible and aperiodic Markov chain  is $V$-geometrically ergodic, then it is positive and Harris recurrent \cite[Theorem 13.0.1 and Theorem 9.1.8]{meyn2012markov}. 
We prove a geometric drift condition in Section~\ref{geometric-dirft-condition-for-main}, this in turn implies positivity and Harris-recurrence\del{ property}.

From a geometric drift condition follows a stronger result than an LLN, namely a central limit theorem.

\begin{theorem}{\cite[Theorem 17.0.1 and Theorem 16.0.1]{meyn2012markov}}
 Let $\{Z_k \, ; k \in \mathbb{N}\}$ be a $\varphi$-irreducible aperiodic Markov chain on $\pare{\ZZ, \mathcal{B}(\ZZ)}$ that is $V$-geometrically ergodic with invariant probability measure $\pi$. For any function $g$ on $\ZZ$ that satisfies $g^{2} \leq V$, the central limit theorem holds for $\{Z_k \, ; k \in \mathbb{N}\}$ in the following sense.
Define $\bar{g} = g - \mathbb{E}_{\pi}(g)$ and for all positive integer $t,$ define $S_{t}( \bar{g} ) = \sum_{k=0}^{t-1} \bar{g}(Z_{k})$. Then the constant $\gamma^{2} = \mathbb{E}_{ \pi } [ (\bar{g}(Z_{0}) )^{2} ] + 2 \sum_{k=1}^{\infty}  \mathbb{E}_{ \pi } [\bar{g}(Z_{0}) \bar{g}(Z_{k}) ]$
is well defined, non-negative, finite and $\dsp\lim_{t\to\infty}\frac{1}{t}  \mathbb{E}_{ \pi } [ (S_{t}( \bar{g}) )^{2} ] = \gamma^{2}$. Moreover if $\gamma^{2} > 0$ then $\frac{1}{\sqrt{t \gamma^{2}}} S_{t}(\bar{g})$ converges in distribution to $\N(0, 1)$ when $t$ goes to $\infty$, else if $\gamma^{2} = 0$ then $\frac{1}{\sqrt{t} } S_{t}(\bar{g}) = 0$ $a.s.$ 
\label{clt-generic}
\end{theorem}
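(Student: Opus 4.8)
The plan is to recognize that Theorem~\ref{clt-generic} is the specialization to our setting of the Markov-chain central limit theorem of Meyn and Tweedie, so the proof reduces to matching hypotheses and then quoting \cite[Theorem~17.0.1]{meyn2012markov} for the existence, non-negativity and finiteness of $\gamma^2$ together with the Ces\`aro convergence $\frac1t\mathbb{E}_\pi[(S_t(\bar g))^2]\to\gamma^2$, and \cite[Theorem~16.0.1]{meyn2012markov} for the convergence in distribution. The only thing to verify is that the hypothesis ``$g^2\le V$'' is precisely the integrability condition under which those statements hold for a $V$-geometrically ergodic chain (note $V\ge 1$ is already imposed); the degenerate case $\gamma^2=0$ is also part of the same references, so no extra argument is needed for the quoted conclusions.

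For transparency I would additionally recall the mechanism, since it makes the formula for $\gamma^2$ evident. First, $V$-geometric ergodicity makes the resolvent series $\hat g=\sum_{k\ge 0}P^k\bar g$ converge in $V$-norm, so the Poisson equation $\hat g-P\hat g=\bar g$ has a solution with $\hat g^2$ being $\pi$-integrable. Setting $M_{k+1}=\hat g(Z_{k+1})-P\hat g(Z_k)$ gives the telescoping identity
\begin{align*}
S_t(\bar g)=\sum_{k=0}^{t-1}\bar g(Z_k)=\hat g(Z_0)-\hat g(Z_t)+\sum_{k=1}^{t}M_k,
\end{align*}
where, under $P_\pi$, $\{M_k\}$ is a stationary ergodic martingale-difference sequence with $\mathbb{E}_\pi[M_1^2]=\gamma^2$. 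The boundary term $\hat g(Z_0)-\hat g(Z_t)$ is bounded in probability, hence negligible after division by $\sqrt t$, and the martingale CLT applied to $\sum_{k=1}^t M_k$ yields the $\N(0,1)$ limit when $\gamma^2>0$; expanding $\mathbb{E}_\pi[M_1^2]$ via $\hat g$ and re-summing recovers $\gamma^2=\mathbb{E}_\pi[(\bar g(Z_0))^2]+2\sum_{k\ge 1}\mathbb{E}_\pi[\bar g(Z_0)\bar g(Z_k)]$, the geometric decay of the covariances giving finiteness. If $\gamma^2=0$ then $M_k=0$ $P_\pi$-a.s., so $S_t(\bar g)=\hat g(Z_0)-\hat g(Z_t)$, and iterating this identity forces $\bar g=0$ $\pi$-a.s., whence $S_t(\bar g)/\sqrt t=0$ a.s.

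The main --- and only mild --- obstacle is bookkeeping: ensuring that the version of the Meyn--Tweedie CLT and of the associated variance-convergence statement we invoke is the one phrased under the condition $g^2\le V$ rather than under a slightly different moment condition, and, if one prefers a self-contained argument to a citation, carefully justifying the $V$-norm convergence of the resolvent series and the applicability of the martingale CLT to this non-i.i.d.\ stationary martingale-difference sequence.
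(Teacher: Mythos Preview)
Your proposal is correct. The paper does not prove Theorem~\ref{clt-generic} at all: it is stated as a direct citation of \cite[Theorem~17.0.1 and Theorem~16.0.1]{meyn2012markov} with no accompanying proof, so your recognition that the result reduces to matching hypotheses with Meyn--Tweedie is exactly what the paper does. Your additional sketch of the Poisson-equation/martingale mechanism is a helpful elaboration that the paper omits entirely; the only small inaccuracy is the claim that $\gamma^2=0$ forces $\bar g=0$ $\pi$-a.s., which is not quite right (a nonzero coboundary can have $\gamma^2=0$), but the conclusion $S_t(\bar g)/\sqrt t\to 0$ a.s.\ still follows from the telescoping identity since $\hat g(Z_0)-\hat g(Z_t)$ is $o(\sqrt t)$ a.s.
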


For a measurable function $h \geq 1$ on $\ZZ$,
a $\varphi$-irreducible aperiodic Markov chain $\{Z_k \, ; k \in \mathbb{N}\}$ defined on $\pare{\ZZ, \mathcal{B}(\ZZ)}$ is positive Harris recurrent with invariant probability measure $\pi$ such that $h$ is $\pi$-integrable if and only if there exist $b \in \R,$ a small set $C$ and an extended-valued non-negative function $V \neq \infty$ such that
 \begin{align}
 \Delta V(z) \leq -h(z) + b \mathds{1}_{C}(z)
 \label{h-ergodic-drift}
 \end{align}
for all $z \in \ZZ$ \cite[Theorem 14.0.1]{meyn2012markov}.
Recall that for a measurable function $h \geq 1$, we say that a general Markov chain $\{Z_k \, ; k \in \mathbb{N}\}$ is $h$-ergodic if there exists a probability measure $\pi$ such that $\dsp \lim_{k \to \infty} \| P^{k}(z, \cdot) - \pi \|_{h} = 0$ for any initial condition $z$. The probability measure $\pi$ is then called the invariant probability measure of $\{Z_k \, ; k \in \mathbb{N}\}$. If $h = 1$, we say that $\{Z_k \, ; k \in \mathbb{N}\}$ is ergodic.

\del{With~\cite[Theorem 14.0.1]{meyn2012markov},}%
A $\varphi$-irreducible aperiodic Markov chain on $\ZZ$ that satisfies~\eqref{h-ergodic-drift} is $h$-ergodic if in addition $\acco{y \in \ZZ \, ; V(y) < \infty} = \ZZ$ \cite[Theorem 14.0.1]{meyn2012markov}.
 
Prior to establishing a drift condition, we need to identify small sets. Using the notion of T-chain defined below, compact sets are small sets\del{as a direct consequence of \cite[Theorem 5.5.7 and Theorem 6.2.5]{meyn2012markov} stating that}
\new{because}
for a $\varphi$-irreducible aperiodic T-chain, every compact set is a small set \cite[Theorem 5.5.7 and Theorem 6.2.5]{meyn2012markov}.

The T-chain property calls for the notion of kernel: a kernel $K$ is a function on $\pare{\ZZ, \mathcal{B}(\ZZ)}$ such that for all $A \in \mathcal{B}(\ZZ),$ $K(., A)$ is a measurable function and for all $z \in \ZZ,$ $K(z, .)$ is a signed measure.
A non-negative kernel $K$ satisfying $K(z, \ZZ) \leq 1$ for all $z \in \ZZ$ is called substochastic.
A substochastic kernel $K$ satisfying $K(z, \ZZ) = 1$ for all $z \in \ZZ$ is a transition probability kernel. 
Let $b$ be a probability distribution on $\mathbb{N}$ and denote by $K_{b}$ the probability transition kernel defined as 
$ K_{b}(z, A) = \sum_{k=0}^{\infty} b(k) P^{k}(z, A) \text{ for all $z \in \ZZ,$ \,$ A \in \mathcal{B}(\ZZ)$}.$
If $T$ is a substochastic transition kernel such that $T(., A)$ is lower semi-continuous for all $A \in \mathcal{B}(\ZZ)$ and 
$
K_{b}(z, A) \geq T(z, A) \text{ for all $z \in \ZZ,$ \,$ A \in \mathcal{B}(\ZZ)$},
$
then $T$ is called a continuous component of $K_{b}.$
If a Markov chain $\{Z_k \, ; k \in \mathbb{N}\}$ admits a probability distribution $b$ on $\mathbb{N}$ such that $K_{b}$ has a continuous component $T$ that satisfies $T(z, \ZZ) > 0$ for all $z \in \ZZ,$ then $\{Z_k \, ; k \in \mathbb{N}\}$ is called a $T$-chain. 

\subsection{Generalized law of large numbers}
\label{sec:GLLN}

To apply an LLN for the convergence of the term $\frac1k\sum_{t=0}^{k-1}\log(\Gx(\alpha_{\f}(x^{\star} + Z_{t}, U_{t+1}) ))$ in~\eqref{apply-lln}, we proceed in two steps. First we prove that if $\{Z_k \, ; k \in \mathbb{N}\}$ is defined as $Z_{k+1} = G(Z_{k}, U_{k+1} )$ where $G: \ZZ \times \R^{m} \to \ZZ$ is a measurable function and $\acco{U_{k+1} \, ; k \in \mathbb{N} }$ is a sequence of i.i.d.\ random vectors, then the ergodic properties of $\{Z_k \, ; k \in \mathbb{N}\}$ are transferred to $\{W_k = (Z_{k}, U_{k+2}) \, ; k \in \mathbb{N}\}$. Afterwards we apply a generalized LLN recalled in the following theorem.
\begin{theorem}[{\!\cite[Theorem 1]{jensen2007law}}]
\label{general-lln}
Assume that $\{Z_k \, ; k \in \mathbb{N}\}$ is a homogeneous Markov chain on an abstract measurable space $\pare{\mathbf{E}, \mathcal{E}}$ that is ergodic with invariant probability measure $\pi$. 
For all measurable function $g: \mathbf{E}^{\infty} \to \R$ such that for all $s\in \mathbb{N}$, $\mathbb{E}_{\pi }(\abs{g(Z_{s}, Z_{s+1}, \dots)} ) < \infty$ and for any initial distribution $\Lambda$, the generalized LLN holds as follows
$
\lim_{k\to\infty} \frac1k \sum_{t=0}^{k-1} g\pare{Z_{t}, Z_{t+1}, \dots } = \mathbb{E}_{\pi }(g(Z_{s}, Z_{s+1}, \dots)) \enspace P_{\Lambda} \enspace a.s. \enspace 
$
where $P_{\Lambda}$ is the distribution of the process $\acco{Z_{k} \, ; k \in \mathbb{N}}$ on $\pare{ \mathbf{E}^{\infty}, \mathcal{E}^{\infty} }$.
\end{theorem}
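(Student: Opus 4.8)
Since Theorem~\ref{lln-origin} only delivers an LLN for functionals of the single current state $Z_t$, the extra work here is to handle functionals $g(Z_t,Z_{t+1},\dots)$ of the whole future and to allow an arbitrary initial law. The key observation is that the future-valued process $\widetilde Z_k = (Z_k,Z_{k+1},\dots)$ on $\pare{\mathbf{E}^{\infty},\mathcal{E}^{\infty}}$ evolves \emph{deterministically} under the shift $\theta\colon(\omega_0,\omega_1,\dots)\mapsto(\omega_1,\omega_2,\dots)$, i.e.\ $\widetilde Z_k=\theta^{k}\widetilde Z_0$; it is a dynamical system rather than a $\varphi$-irreducible chain, so Theorem~\ref{lln-origin} does not apply to it, and the right tool is Birkhoff's pointwise ergodic theorem. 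The plan is therefore: (i) apply Birkhoff to the shift under the stationary path-space measure, which proves the claim when the chain starts from $\pi$; (ii) transfer the almost sure statement from the stationary start to an arbitrary initial distribution $\Lambda$ by a successful coupling.

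For step (i), I would work on $\pare{\mathbf{E}^{\infty},\mathcal{E}^{\infty}}$ with the shift $\theta$ and with $P_{\pi}$, the law of the stationary chain started from $\pi$; invariance of $\pi$ makes $\theta$ measure-preserving for $P_{\pi}$. Set $G(\omega)=g(\omega_0,\omega_1,\dots)$; the hypothesis with $s=0$ is exactly $G\in L^{1}(P_{\pi})$, and by stationarity $\mathbb{E}_{P_{\pi}}(G)=\mathbb{E}_{\pi}\pare{g(Z_s,Z_{s+1},\dots)}$ for every $s$. Since $G\circ\theta^{t}(\omega)=g(\omega_t,\omega_{t+1},\dots)$, Birkhoff's theorem gives
\[
\frac1k\sum_{t=0}^{k-1}g(Z_t,Z_{t+1},\dots)\ \longrightarrow\ \mathbb{E}_{P_{\pi}}\croc{G\mid\mathcal{I}_{\theta}}\qquad P_{\pi}\text{-a.s.},
\]
where $\mathcal{I}_{\theta}$ is the $\theta$-invariant $\sigma$-field. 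It then remains to check that $\mathcal{I}_{\theta}$ is $P_{\pi}$-trivial, i.e.\ that the stationary chain is ergodic as a dynamical system; this is where I would use that $\{Z_k\}$ is ergodic in the sense of the definitions above ($P^{k}(z,\cdot)$ converges to $\pi$ in total variation for every $z$), which forces mixing of $(\mathbf{E}^{\infty},\theta,P_{\pi})$ and hence triviality of $\mathcal{I}_{\theta}$. The conditional expectation then collapses to $\mathbb{E}_{P_{\pi}}(G)=\mathbb{E}_{\pi}\pare{g(Z_s,Z_{s+1},\dots)}$, settling the claim for the initial distribution $\pi$.

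For step (ii), fix an arbitrary initial distribution $\Lambda$. Ergodicity (total-variation convergence from every state) furnishes, by the standard coupling characterization, a coupling on one probability space of a chain $\{Z_k\}$ with initial law $\Lambda$ and a stationary chain $\{Z_k'\}$ with law $\pi$, together with an a.s.\ finite coupling time $\tau$ such that $Z_k=Z_k'$ for all $k\ge\tau$. For $t\ge\tau$ the entire futures coincide, so $g(Z_t,Z_{t+1},\dots)=g(Z_t',Z_{t+1}',\dots)$; for $t<\tau$ the two terms are real (hence finite), and there are only $\tau$ of them, whence
\[
\abs{\,\frac1k\sum_{t=0}^{k-1}g(Z_t,Z_{t+1},\dots)-\frac1k\sum_{t=0}^{k-1}g(Z_t',Z_{t+1}',\dots)\,}\ \le\ \frac1k\sum_{t=0}^{\tau-1}\pare{\abs{g(Z_t,\dots)}+\abs{g(Z_t',\dots)}}\ \longrightarrow\ 0\quad\text{a.s.}
\]
Applying step (i) to the stationary chain $\{Z_k'\}$, the Cesàro averages along $\{Z_k\}$ converge $P_{\Lambda}$-a.s.\ to $\mathbb{E}_{\pi}\pare{g(Z_s,Z_{s+1},\dots)}$, which is the assertion.

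The main obstacle is the one genuinely non-routine point in step~(i): upgrading ``Markov ergodicity'' (total-variation convergence of the iterated kernels, a statement about one-dimensional marginals) to ergodicity of the infinite-dimensional stationary system $(\mathbf{E}^{\infty},\theta,P_{\pi})$, equivalently triviality of its tail/invariant $\sigma$-field. Everything else — measure-preservation of $\theta$, existence of a successful coupling for an ergodic chain, and the $O(\tau/k)$ truncation estimate — is standard.
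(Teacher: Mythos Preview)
The paper does not prove this theorem; it is quoted from \cite[Theorem~1]{jensen2007law} as an external tool, with no argument supplied (the only remark following the statement contrasts it with \cite{stout1974almost}). There is thus no in-paper proof to compare your proposal against.

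Your plan is the standard route and is correct in outline. For step~(i), the point you flag as the main obstacle is real but classical: total-variation convergence $\|P^{k}(z,\cdot)-\pi\|\to0$ for every $z$ implies that the stationary process is strongly mixing on cylinder sets (for $A\in\sigma(Z_0,\dots,Z_m)$ and any cylinder $B$, $|P_{\pi}(A\cap\theta^{-k}B)-P_{\pi}(A)P_{\pi}(B)|$ is controlled by the total-variation distance at lag $k-m$), hence $(\mathbf{E}^{\infty},\theta,P_{\pi})$ is ergodic and $\mathcal{I}_{\theta}$ is $P_{\pi}$-trivial. For step~(ii), ergodicity yields $\|\Lambda P^{k}-\pi\|\to0$, which is equivalent to the existence of a successful coupling, and your $O(\tau/k)$ truncation bound is legitimate precisely because $g$ takes values in $\R$, so the finitely many pre-coupling summands are finite real numbers. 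The one point you gloss over is the \emph{construction} of a successful coupling on an abstract measurable space $(\mathbf{E},\mathcal{E})$ without any topological assumptions; this is where the cited reference (or Thorisson's coupling machinery, or Nummelin splitting in the Harris setting) does nontrivial work, and it is not a pure formality.
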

Theorem~\ref{general-lln} generalizes\del{~\cite[Theorems 3.5.7 and 3.5.8]{stout1974almost}. Indeed in~\cite[Theorems 3.5.7 and 3.5.8]{stout1974almost}, the generalized LLN holds only if}
\new{the case where} the initial state is distributed under the invariant measure \cite[Theorems 3.5.7 and 3.5.8]{stout1974almost} 
\new{to an arbitrary}\del{, whereas in~\cite[Theorem 1]{jensen2007law}, the} initial distribution.\del{ considered can be anything.}

If we have the generalized LLN for a chain $\{(Z_k, U_{k+2}) \, ; k \in \mathbb{N}\}$ on $\R^{\n} \times \R^{m}$, then an LLN for the chain $\{(Z_k, U_{k+1}) \, ; k \in \mathbb{N}\}$ is directly implied. We formalize this statement in the next corollary. 
\begin{corollary} 
\label{coro-lln}
Assume that $\{W_{k} = (Z_k, U_{k+2}) \, ; k \in \mathbb{N}\}$ is a homogeneous Markov chain on $\R^{\n} \times \R^{m}$ that is ergodic with invariant probability measure $\pi$. Then the LLN holds for $\{(Z_k, U_{k+1}) \, ; k \in \mathbb{N}\}$ in the following sense. Define the function $T: \pare{\R^{\n} \times \R^{m}}^{2} \to \R^{\n} \times \R^{m}$ as $T( (z_1, u_3), (z_2, u_4) ) = (z_2, u_3)$.
If $g:  \R^{\n} \times \R^{m} \to \R$ is such that for all $s \in \mathbb{N}$, $\mathbb{E}_{\pi}(\abs{g\circ T}(W_{s}, W_{s+1})) < \infty$, then $\lim_{k\to\infty} \frac1k \sum_{t=0}^{k-1} g(Z_{t}, U_{t+1}) = \mathbb{E}_{\pi} \croc{\left(g\circ T\right) \left( W_{s}, W_{s+1} \right) } $.
\end{corollary}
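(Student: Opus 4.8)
The plan is to reduce the statement for the chain $\{(Z_k, U_{k+1})\,;k\in\mathbb{N}\}$ to an application of Theorem~\ref{general-lln} (the generalized LLN of Jensen and Rahbek) applied to the chain $\{W_k = (Z_k, U_{k+2})\,;k\in\mathbb{N}\}$. First I would observe that $\{W_k\,;k\in\mathbb{N}\}$ is by hypothesis a homogeneous, ergodic Markov chain on $\R^{\n}\times\R^{m}$ with invariant probability measure $\pi$, so Theorem~\ref{general-lln} applies to it: for any measurable $\tilde g : (\R^{\n}\times\R^{m})^{\infty}\to\R$ with $\mathbb{E}_{\pi}(|\tilde g(W_s, W_{s+1},\dots)|)<\infty$, the time average $\frac1k\sum_{t=0}^{k-1}\tilde g(W_t, W_{t+1},\dots)$ converges almost surely to $\mathbb{E}_{\pi}(\tilde g(W_s, W_{s+1},\dots))$, regardless of the initial distribution.

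The key step is to choose the right function. Writing $W_t = (Z_t, U_{t+2})$ and $W_{t+1} = (Z_{t+1}, U_{t+3})$, the function $T$ defined in the statement extracts $T(W_t, W_{t+1}) = T((Z_t, U_{t+2}), (Z_{t+1}, U_{t+3})) = (Z_{t+1}, U_{t+2})$. Hence $g\circ T$ applied to the pair $(W_t, W_{t+1})$ equals $g(Z_{t+1}, U_{t+2})$. So I would apply Theorem~\ref{general-lln} with the (depends-on-two-coordinates) test function $\tilde g(w_0, w_1, w_2, \dots) = (g\circ T)(w_0, w_1)$, which by the integrability hypothesis $\mathbb{E}_{\pi}(|g\circ T|(W_s, W_{s+1}))<\infty$ satisfies the hypothesis of the theorem. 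This yields
\[
\lim_{k\to\infty}\frac1k\sum_{t=0}^{k-1} g(Z_{t+1}, U_{t+2}) = \mathbb{E}_{\pi}\croc{(g\circ T)(W_s, W_{s+1})}\quad\text{a.s.}
\]
Re-indexing the sum by $s = t+1$ gives $\frac1k\sum_{s=1}^{k} g(Z_s, U_{s+1})$, which differs from $\frac1k\sum_{t=0}^{k-1} g(Z_t, U_{t+1})$ only by the two boundary terms $\frac1k g(Z_k, U_{k+1})$ and $-\frac1k g(Z_0, U_1)$; both tend to $0$ almost surely as $k\to\infty$ once the Cesàro average converges (since a convergent Cesàro average forces $\frac1k$ times the $k$-th term to vanish — or more simply, each fixed term divided by $k$ vanishes and the telescoping difference of the two sums is $O(1/k)$ along the a.s.\ event where the average converges). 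Therefore the same limit holds for $\frac1k\sum_{t=0}^{k-1} g(Z_t, U_{t+1})$.

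The only genuinely delicate point is the bookkeeping on index shifts: one must be careful that $W_t$ couples $Z_t$ with $U_{t+2}$ rather than $U_{t+1}$ (this offset is precisely why the auxiliary chain $W$ is defined with $U_{k+2}$ in the referenced two-step construction), and that $T$ correctly recovers the pair $(Z_{t+1}, U_{t+2})$ whose reindexed time average is the desired one. I do not expect any real analytic obstacle; the substantive hypothesis (ergodicity of $\{W_k\}$) is assumed, and the result is essentially a change of variables inside the conclusion of Theorem~\ref{general-lln}. The main obstacle, such as it is, is simply to verify cleanly that the boundary terms $\frac1k g(Z_0,U_1)$ and $\frac1k g(Z_k,U_{k+1})$ are negligible almost surely, which follows from the a.s.\ finiteness of the Cesàro limit just established.
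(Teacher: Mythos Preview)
Your proposal is correct and follows essentially the same approach as the paper: apply Theorem~\ref{general-lln} to $\{W_k\}$ with the two-coordinate test function $g\circ T$, identify $(g\circ T)(W_t,W_{t+1})=g(Z_{t+1},U_{t+2})$, and then shift the summation index by one. The paper simply asserts the last reindexing step without comment, whereas you spell out why the boundary terms $\frac1k g(Z_0,U_1)$ and $\frac1k g(Z_k,U_{k+1})$ vanish almost surely; your justification via the Ces\`aro argument is correct and is a welcome clarification.
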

\begin{proof}
We have $\lim_{k\to\infty} \frac1k \sum_{t=0}^{k-1} \left(g\circ T\right)\left(W_{t}, W_{t+1}\right) = \mathbb{E}_{\pi} \croc{\left(g\circ T\right) \left( W_{s}, W_{s+1} \right) }$ thanks to Theorem~\ref{general-lln}. For $t\in\mathbb{N}$, $\left(g\circ T\right)\left(W_{t}, W_{t+1}\right) = g\left( Z_{t+1}, U_{t+2} \right)$. Therefore \\
$\dsp\mathbb{E}_{\pi} \croc{\left(g\circ T\right) \left( W_{s}, W_{s+1} \right) } = \lim_{k\to\infty} \frac1k \sum_{t=0}^{k-1} g\left(Z_{t+1}, U_{t+2}\right) 
= \lim_{k\to\infty} \frac1k \sum_{t=0}^{k-1} g(Z_{t}, U_{t+1})  .$
\end{proof}

We formulate now that for a Markov chain following a non-linear state space model of the form $Z_{k+1} = G(Z_{k}, U_{k+1} )$ with $G$ measurable and  $\acco{U_{k+1}  \, ; k \in \mathbb{N}}$ i.i.d., then $\varphi$-irreducibility, aperiodicity and $V$-geometric ergodicity of $Z_k$ are transferred to $\{W_k = (Z_{k}, U_{k+2}) \, ; k \in \mathbb{N}\}$. We provide a proof of this result in Appendix~\ref{proof-general-stability} for the sake of completeness.
\begin{proposition}
Let $\{Z_k \, ; k \in \mathbb{N}\}$ be a Markov chain on $(\ZZ, \mathcal{B}(\ZZ))$ defined as $Z_{k+1} = G(Z_{k}, U_{k+1} )$ where $G: \ZZ \times \R^{m} \to \ZZ$ is a measurable function and $\acco{U_{k+1}  \, ; k \in \mathbb{N}}$ is a sequence of i.i.d.\ random vectors with probability measure $\Psi$. 
Consider $\{W_k = (Z_{k}, U_{k+2}) \, ; k \geq 0\}$, then it is a Markov chain on $\mathcal{B}(\ZZ) \otimes \mathcal{B}(\R^{m})$ which inherits properties of $\{Z_k \, ; k \in \mathbb{N}\}$ in the following sense:
\begin{itemize}
\item If $\varphi$ (resp. $\pi$) is an irreducibility (resp. invariant) measure of $\{Z_k \, ; k \in \mathbb{N}\}$, then $\varphi \times \Psi$ (resp. $\pi \times \Psi$) is an irreducibility (resp. invariant) measure of $\{W_k \, ; k \in \mathbb{N} \}$.
\item The set of integers $d$ such that there exists a $d$-cycle for $\{Z_k \, ; k \in \mathbb{N}\}$ is equal to the set of integers $d$ such that there exists a $d$--cycle for $\{W_k \, ; k \in \mathbb{N}\} $. In particular $\{Z_k \, ; k \in \mathbb{N}\}$ and $\{W_k \, ; k \in \mathbb{N} \}$ have the same period. Therefore $\{Z_k \, ; k \in \mathbb{N}\}$ is aperiodic if and only if $\{W_k \, ; k \in \mathbb{N} \}$ is aperiodic.
\item If $C$ is a small set for $\{Z_k \, ; k \in \mathbb{N}\}$, then $C \times \R^m$ is a small set for $\{ W_k \,; k \in \mathbb{N} \}$.
\item If $\{Z_k \, ; k \in \mathbb{N}\}$ satisfies a drift condition 
\begin{align}
\Delta V(z) \leq - \beta h(z) + b \mathds{1}_{C}(z) \enspace \mbox{ for all } z \in \ZZ,
\label{generic-drift}
\end{align} 
where $V$ is a potential function, $0 < \beta < 1$, $h \geq 0$ is a measurable function and $C \subset \ZZ$ is a measurable set, 
then $\{ W_k \,; k \in \mathbb{N} \}$ satisfies the following drift condition for all $(z, u) \in \ZZ \times \R^{m}:$
 $
 \Delta \widetilde{V}(z, u) \leq - \beta \widetilde{h}(z, u) + b \mathds{1}_{C\times \R^{m}}(z, u),
 \label{generic-drift-couple}
$
  where $\widetilde{V}: (z,u) \mapsto V(z)$ and $\widetilde{h}: (z,u) \mapsto h(z) $.
 \end{itemize}
\label{general-stability}
\end{proposition}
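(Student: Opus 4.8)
The plan is to verify each bullet point separately, exploiting the product structure of the chain $\{W_k = (Z_k, U_{k+2})\}$ and the fact that $U_{k+2}$ is an i.i.d.\ sample independent of the past, in particular independent of $Z_k$ (which is a measurable function of $Z_0$ and $U_1,\dots,U_k$). First I would write down the transition kernel of $\{W_k\}$ explicitly: since $Z_{k+1} = G(Z_k, U_{k+1})$ and $U_{k+3}$ is drawn fresh and independently, one has, for $A \in \mathcal{B}(\ZZ)$ and $B \in \mathcal{B}(\R^m)$,
\begin{align}
P_W\bigl((z,u), A \times B\bigr) = \Psi(B)\,\int_{\R^m} \mathds{1}_{A}(G(z,u'))\,\Psi(\diff u') = \Psi(B)\, P(z, A),
\end{align}
where $P$ is the transition kernel of $\{Z_k\}$. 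Crucially, the first coordinate of $W_{k+1}$ is $Z_{k+1} = G(Z_k, U_{k+1})$, which depends on $u$ (the second coordinate of $W_k$) and on the fresh input $U_{k+1}$, \emph{not} on $Z_k$'s fresh successor input; so the kernel factorizes as above and, by induction, $P_W^k((z,u), A\times B) = \Psi(B)\,\bigl(\text{something depending on } z,u\bigr)$ — more precisely the $Z$-marginal after $k$ steps started from $(z,u)$ is the law of $Z_k$ started from $G(z,u)$ pushed one step, which is still a Markov evolution. This product/shift structure is the single computation underlying all four bullets.

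For irreducibility and invariance: if $\varphi(A)>0 \Rightarrow \sum_k P^k(z,A)>0$ for all $z$, then for $A\times B$ with $(\varphi\times\Psi)(A\times B) = \varphi(A)\Psi(B) > 0$ we have $\varphi(A)>0$ and $\Psi(B)>0$, and $\sum_k P_W^k((z,u), A\times B) = \Psi(B)\sum_k P^{k-1}(G(z,u), A)$ (up to an index bookkeeping for the first step) which is positive — so $\varphi\times\Psi$ is an irreducibility measure; a standard $\pi$-system / monotone class argument extends this from product sets to all of $\mathcal{B}(\ZZ)\otimes\mathcal{B}(\R^m)$. Invariance of $\pi\times\Psi$ is the direct computation $\int P_W((z,u),A\times B)\,(\pi\times\Psi)(\diff z\,\diff u) = \Psi(B)\int P(z,A)\pi(\diff z) = \Psi(B)\pi(A)$. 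For the period, a $d$-cycle $(D_i)$ for $\{Z_k\}$ lifts to $(D_i \times \R^m)$ for $\{W_k\}$ since $P_W((z,u), D_{i+1}\times\R^m) = P(z, D_{i+1}) = 1$ for $z\in D_i$, and the complement has measure zero under $\varphi\times\Psi$ by the corresponding property for $\varphi$; conversely, projecting a $d$-cycle of $\{W_k\}$ onto the first coordinate and using that the $Z$-dynamics is autonomous gives a $d$-cycle for $\{Z_k\}$ — here one must check the projected sets are still disjoint and absorbing, which follows because $P_W$ on the first coordinate is exactly $P$. For small sets: if $P^{k_0}(z,\cdot)\geq\nu_{k_0}(\cdot)$ on $C$, then $P_W^{k_0}((z,u),\cdot)\geq \nu_{k_0}\times\Psi$ on $C\times\R^m$ (again modulo a one-step index shift, which only changes $k_0$ to $k_0+1$ and the minorizing measure accordingly), so $C\times\R^m$ is small.

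For the drift condition, set $\widetilde V(z,u) = V(z)$ and $\widetilde h(z,u) = h(z)$; then
\begin{align}
\Delta\widetilde V(z,u) = \int \widetilde V\,\diff P_W((z,u),\cdot) - \widetilde V(z,u) = \int V(y)\,P(z,\diff y) - V(z) = \Delta V(z) \leq -\beta h(z) + b\mathds{1}_C(z),
\end{align}
and since $h(z) = \widetilde h(z,u)$ and $\mathds{1}_C(z) = \mathds{1}_{C\times\R^m}(z,u)$, this is exactly the asserted inequality. The main obstacle — and the only place requiring care rather than bookkeeping — is the one-step index mismatch in the definition $W_k = (Z_k, U_{k+2})$: the natural coupling sends $W_k \mapsto W_{k+1} = (G(Z_k,U_{k+1}), U_{k+3})$, and one must check this genuinely is a time-homogeneous Markov transition (it is, because conditionally on $W_k = (Z_k, U_{k+2})$ the increment uses only $U_{k+1}$ and $U_{k+3}$, both independent of $W_k$ and of each other, and the pair $(U_{k+1}, U_{k+3})$ has a law not depending on $k$). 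Once the kernel $P_W((z,u),\cdot) = \int \delta_{G(z,u')} \otimes \Psi \,\Psi(\diff u')$ — note the second coordinate of $W_k$ feeds the \emph{next} $G$-update — is pinned down correctly, every bullet is the routine verification sketched above; I would relegate the full computation to Appendix~\ref{proof-general-stability} as the authors indicate.
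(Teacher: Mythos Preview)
Your overall strategy matches the paper's: compute the one-step kernel of $\{W_k\}$ on rectangles, then verify each bullet by direct computation or by lifting/projecting between the two chains. The displayed formula $P_W\bigl((z,u), A \times B\bigr) = \Psi(B)\, P(z, A)$ is correct and is exactly what the paper uses in Appendix~\ref{proof-general-stability}.

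However, your verbal account of the indexing is confused, and this propagates into several incorrect formulas. The second coordinate of $W_k$ is $U_{k+2}$; since $Z_{k+1} = G(Z_k, U_{k+1})$, the transition $W_k \to W_{k+1}$ does \emph{not} use $U_{k+2}$ at all --- it uses $Z_k$ (the first coordinate of $W_k$) together with two fresh inputs $U_{k+1}$ and $U_{k+3}$. Your claims that ``$Z_{k+1}$ depends on $u$ (the second coordinate of $W_k$)'' and that ``the second coordinate of $W_k$ feeds the next $G$-update'' are therefore wrong; the kernel $P_W((z,u),\cdot)$ is genuinely independent of $u$, as your own displayed formula already shows. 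Consequently $P_W^k\bigl((z,u), A\times B\bigr) = P^k(z,A)\,\Psi(B)$ for every $k\geq 1$ with no index shift, so your expression $\Psi(B)\sum_k P^{k-1}(G(z,u), A)$ is incorrect (it depends on $u$ through $G(z,u)$; the correct sum does not). Likewise, the ``one-step index shift'' caveat in the small-set argument (changing $k_0$ to $k_0+1$) is unnecessary: the minorization $P_W^{k_0}\bigl((z,u),\cdot\bigr) \geq \nu_{k_0}\times\Psi$ holds on $C\times\R^m$ directly with the same $k_0$, exactly as the paper writes. Once the $u$-independence of the kernel is used consistently, your sketch collapses to the paper's proof verbatim.
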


Remark that the drift condition in \eqref{generic-drift} includes the geometric drift condition by taking $h=V$, the drift condition for $h$-ergodicity by dividing the equation by $\beta$ and assuming that $h \geq 1$, for positivity and Harris recurrence by taking $h=1/\beta$, and for Harris recurrence by taking $h=0$. This is obtained assuming that $V$ and $C$ satisfy the proper assumptions for the drift to hold.

\subsection{$\varphi$-irreducibility, aperiodicity and $T$-chain property via deterministic control models}
\label{deterministic-control-model}
For the Markov chain considered, it is difficult to establish $\varphi$-irreducible, aperiodicity and the T-chain property ``by hand". We thus resort to tools connecting those properties to stability properties of the underlying control model \cite[Chapter~13]{meyn2012markov} \cite{chotard2019verifiable}.
Assume that $\ZZ$ is an open subset of $\R^{\n}$. We consider a Markov chain that takes the following form
 \begin{align}
Z_{k+1} = F\pare{Z_{k}, \alpha\pare{Z_{k},\, U_{k+1}}},
\label{nonlinear-alpha}
\end{align}
where $Z_{0} \in \ZZ$ and for all natural integer $k,$ $F: \ZZ \times \R^{\n\mu} \to \ZZ$ and $\alpha: \ZZ \times \R^{\n\lambda} \to \R^{\n\mu}$ are measurable functions,  $U = \acco{U_{k+1} \in \R^{\n\lambda} \, ; \, k \in \mathbb{N}}$ is a sequence of i.i.d.\ random vectors. We consider the following assumptions on the model:
\begin{itemize}
\item[B1.] $\pare{Z_{0}, U }$ are random variables on a probability space $\pare{ \Omega, \mathcal{F}, P_{Z_{0}} }.$
\item[B2.] $Z_{0}$ is independent of $U.$
\item[B3.] $U$\ is an independent and identically distributed process.
\item[B4.] For all $z \in \ZZ,$ the random variable $\alpha(z, U_{1})$ admits a probability density function denoted by $p_{z}$, such that the function $\foncsmall{(z,u)}{p_{z}(u)}$ is lower semi-continuous.
\item[B5.] The function $F: \ZZ \times \R^{\n\mu} \to \ZZ$  is $C^{1}.$
\end{itemize}

We recall the deterministic control model related to~\eqref{nonlinear-alpha} denoted by  CM($F$) \cite{chotard2019verifiable}. It is based on the notion of extended transition map function~\cite{meyn1991asymptotic}, defined recursively for all $z \in \ZZ$ as $S_{z}^{0} = z$, and for all $k \in \mathbb{N}\backslash{\acco{0}}$, $S_{z}^{k}: \R^{\n\mu k} \to \ZZ$ such that for all $\bf{w}$ $= \pare{ w_{1},\dots, w_{k} } \in \R^{\n\mu k}$, 
$
S_{z}^{k}(\textbf{w}) = F\pare{S_{z}^{k-1}\pare{w_{1}, \dots, w_{k-1}}, w_{k} }.
$
Assume in the following that Assumptions B1$-$B4 are satisfied and that $F$ is continuous.

Let us define the process $W$ for all $k \in \mathbb{N}\backslash\acco{0}$ and  $z \in \ZZ$  as
$W_{1} = \alpha\pare{ z, \,U_{1} }$ and
$ W_{k} = \alpha\pare{ S_{z}^{k-1}(W_{1},\dots, W_{k-1}), \,U_{k} }$.
Then the probability density function of $( W_{1}, W_{2},\dots, W_{k} )$ denoted by $p_{z}^{k}$ is what is called the extended probability function. It is defined inductively for all $k \in \mathbb{N}\backslash{\acco{0}}$, $\bf{w}$ $= \pare{w_{1}, \dots, w_{k} } \in \R^{\n\mu k}$ by $p_{z}^{1}(w_{1}) = p_{z}(w_{1})$ and
$p_{z}^{k}(\bf w) =$ $p_{z}^{k-1}\pare{ w_{1},\dots, w_{k-1}} p_{S_{z}^{k-1}(w_{1},\dots, w_{k-1}) }(w_{k})$.
For all $k \in \mathbb{N}\backslash\acco{0}$ and for all $z \in \ZZ$, the control sets are finally defined as 
$\mathcal{O}_{z}^{k} = \acco{ \bf{w}\text{ $\in$ $\R^{\n\mu k}$ }  \, ; \text{ $p_{z}^{k}(\bf{w})$ } \text{$ > 0$} }.$
The control sets are open sets since $F$ is continuous and the functions $(z, \bf w)$ $\mapsto$ $p_{z}^{k}(\bf{w})$ are lower semi-continuous  (see~\cite{chotard2019verifiable} for more details).

The deterministic control model CM($F$) is defined recursively for all $k \in \mathbb{N},$ $z \in \ZZ$ and $\pare{w_{1}, \dots, w_{k+1} } \in \mathcal{O}_{z}^{k+1}$ as 
$
S_{z}^{k+1}\pare{w_{1}, \dots, w_{k+1} } = F\pare{S_{z}^{k}\pare{w_{1}, \dots, w_{k}}, w_{k+1} }.
$

For $z \in \ZZ$, $A \in \mathcal{B}(\ZZ)$ and $k \in  \mathbb{N}\backslash{\acco{0}}$, we say that $\bf w$ $\in \R^{\n\mu k}$ is a $k$-steps path from $z$ to $A$ if $\bf w$ $\in \mathcal{O}_{z}^{k}$ and $S_{z}^{k}(\bf w)$ $\in A$.
We introduce for $z \in \ZZ$ and $k \in \mathbb{N}$ the set of all states reachable from $z$ in $k$ steps by  CM($F$), denoted by $A_{+}^{k}(z)$ and defined as $A_{+}^{0}(z) = \acco{z} $ and $A_{+}^{k}(z) = \acco{ S_{z}^{k}(\text{$\bf w$}) \, ; \,\text{$\bf w$} \in \mathcal{O}_{z}^{k} }$.

A point $z \in \ZZ$ is a steadily attracting state if for all $y \in \ZZ$, there exists a sequence $\acco{ y_{k} \in A_{+}^{k}(y)| \, k \in \mathbb{N}\setminus\acco{0} }$ that converges to $z$. 

The controllability matrix is defined for $k \in \mathbb{N}\backslash\acco{0}$, $z \in \ZZ$ and $\bf w$ $\in \R^{\n\mu k}$ as the Jacobian matrix of $\foncsmall{\pare{w_{1},\dots,w_{k} }}{ S_{z}^{k}\pare{w_{1},\dots,w_{k} } }$ and denoted by $C_{z}^{k}(\bf w)$. Namely,
$ C_{z}^{k}(\text{\bf{w}}) = \begin{bmatrix}
 \frac{\partial S_{z}^{k} }{\partial w_{1} }(\text{\bf{w}})|   \dots |  \frac{\partial S_{z}^{k} }{\partial w_{k} }(\text{\bf{w}})
\end{bmatrix}.$

If $F$ is $C^{1}$, the existence of a steadily attracting state $z$ and a full-rank condition on a controllability matrix of $z$ imply that a Markov chain following~\eqref{nonlinear-alpha} is a $\varphi$-irreducible aperiodic $T$-chain, as reminded in the next theorem.

\begin{theorem}{\cite[Theorem 4.4: Practical condition to be a $\varphi$-irreducible aperiodic T-chain.]{chotard2019verifiable}} Consider a Markov chain $\{Z_k \, ; k \in \mathbb{N}\}$ following the model \eqref{nonlinear-alpha} for which the conditions B1$-$B5 are satisfied. If there exist a steadily attracting state $z\in\ZZ$, $k \in \mathbb{N}\backslash\acco{0}$ and $\text{\bf w} \in \mathcal{O}_{z}^{k}$ such that $\text{\rm rank}\pare{C_{z}^{k}(\text{$\bf w$})} = n $, then $\{Z_k \, ; k \in \mathbb{N}\}$ is a $\varphi$-irreducible aperiodic T-chain, and every compact set is a small set.
\label{condition-upstream}
\end{theorem}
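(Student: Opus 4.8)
The plan is to argue exactly as one does for nonlinear state space models in \cite[Chapter~7]{meyn2012markov}, but in the measurable (non-smooth) setting for $\alpha$ developed in \cite{chotard2019verifiable}: use the full-rank controllability matrix to manufacture, near the point $y_\star := S_{z}^{k}(\mathbf{w})$, a continuous component of a sampled transition kernel; then use the steadily attracting state $z$ to globalize this into a $\varphi$-irreducibility measure and a global continuous component; and finally extract aperiodicity from the attracting structure.

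First I would do the local analysis at $(z,\mathbf{w})$. By B5 the extended transition map $\mathbf{w}'\mapsto S_{z}^{k}(\mathbf{w}')$ is $C^{1}$, and $\mathrm{rank}\,C_{z}^{k}(\mathbf{w})=\n$ says it is a submersion at $\mathbf{w}$; since $\mathcal{O}_{z}^{k}$ is open with $p_{z}^{k}>0$ on it, the submersion/constant-rank theorem gives an open neighbourhood $\mathcal{W}\subset\mathcal{O}_{z}^{k}$ of $\mathbf{w}$ on which $S_{z}^{k}$ is, after a diffeomorphism, a coordinate projection. Pushing the measure with density $\mathbf{w}'\mapsto p_{z}^{k}(\mathbf{w}')$ restricted to $\mathcal{W}$ forward through $S_{z}^{k}$ then yields a measure with density $q(z,\cdot)$ strictly positive on the open set $G:=S_{z}^{k}(\mathcal{W})\ni y_\star$. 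I would then check, using the lower semicontinuity of $(z',u)\mapsto p_{z'}(u)$ in B4 together with B5, that this construction is stable under moving the base point, so that $(z',y)\mapsto q(z',y)$ is lower semicontinuous; consequently $T(z',A):=\int_{A}q(z',y)\,\diff y$ is a kernel with a lower semicontinuous component and $T(z',\ZZ)>0$ for $z'$ in a neighbourhood of $z$.

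Next I would globalize using the steadily attracting state. Given any $y\in\ZZ$, pick $y_{j}\in A_{+}^{j}(y)$ with $y_{j}\to z$ and concatenate a $j$-step control word from $y$ landing near $z$ with a $k$-step word close to $\mathbf{w}$; the composed map $S_{y}^{j+k}$ is again a submersion near the concatenated word (its last $k$ blocks already span $\R^{\n}$ by the rank hypothesis, and all control sets are open), so its image contains an open set meeting $G$ on which $p_{y}^{j+k}>0$, whence $P^{j+k}(y,O)>0$ for small open $O$ around $y_\star$. This shows that $\varphi(\cdot):=\int_{\cdot}q(z,y)\,\diff y$ is an irreducibility measure, since $\varphi(A)>0$ forces $A\cap G$ to have positive Lebesgue measure and hence $\sum_{m}P^{m}(y,A)>0$ for all $y$; and combining $T$ with $K_{b}$ for a distribution $b$ supported on the relevant reaching times produces a global continuous component, so $\{Z_{k}\}$ is a $\varphi$-irreducible T-chain, and smallness of compact sets follows from \cite[Theorems 5.5.7 and 6.2.5]{meyn2012markov}. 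For aperiodicity I would note that, $z$ being steadily attracting, $A_{+}^{j}(y_\star)$ enters every neighbourhood of $z$ for all large $j$, so $A_{+}^{j+k}(y_\star)$ meets $G$ for all $j\ge j_{0}$; the submersion argument then gives $P^{m}(y_\star,O)>0$ for all $m\ge m_{0}$, in particular for two consecutive values of $m$, and since $O$ lies in a small set charged by the minorizing measure the period divides $\gcd(m,m+1)=1$.

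The differential-topology part (applying the submersion theorem to $S_{z}^{k}$) is routine; the genuine obstacle is the soft-analysis bookkeeping --- showing that the pushforward density $q$ can be chosen jointly lower semicontinuous in $(z',y)$, and that positivity of the extended probability functions $p_{z'}^{k}$ propagates through the control model to \emph{open} sets of reachable states, all without assuming $\alpha$ itself continuous. This is exactly the content of the refinement of the Meyn--Tweedie control-model machinery carried out in \cite{chotard2019verifiable}, and together with ruling out a cyclic decomposition from the attracting-state hypothesis alone it is where the real work lies; since the statement is precisely \cite[Theorem 4.4]{chotard2019verifiable} we would in the end simply invoke that proof.
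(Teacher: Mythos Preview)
The paper does not prove this statement: it is quoted verbatim as \cite[Theorem~4.4]{chotard2019verifiable} and used as a black box. Your final sentence already recognizes this, so the only ``proof'' needed here is the citation, which you correctly give.

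That said, your sketch of how the result is obtained is broadly accurate and matches the strategy in \cite{chotard2019verifiable}: a local submersion argument at $(z,\mathbf{w})$ produces a continuous component near $y_\star$, steady attractivity of $z$ globalizes it, and aperiodicity comes from reaching a small set at two coprime times. One quibble: you claim the last $k$ blocks of the controllability matrix of the concatenated word already have rank $n$; this is not quite right, since after composing with the earlier steps the relevant Jacobian block is $D_{y}S^{k}_{\cdot}$ evaluated along the trajectory, not $C_{z}^{k}(\mathbf{w})$ itself. The actual argument in \cite{chotard2019verifiable} is slightly more careful here, using continuity of the rank condition and choosing $y_{j}$ close enough to $z$. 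But since the paper simply invokes the theorem, none of this needs to appear.
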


The next lemma allows to loosen the full-rank condition stated above if the control set $\mathcal{O}_{z}^{k}$ is dense in $\R^{\n \mu k}$.
\begin{lemma} Consider a Markov chain $\{Z_k \, ; k \in \mathbb{N}\}$ following the model \eqref{nonlinear-alpha} for which the conditions B1$-$B5 are satisfied. Assume that there exist a positive integer $k$ and $z \in \ZZ$ such that the control set $\mathcal{O}_{z}^{k}$ is dense in $\R^{\n\mu k}$. If there exists $\bf\widetilde{w}$ $\in \R^{\n\mu k}$ such that $\text{\rm rank}(C_{z}^{k}(\text{$\bf \widetilde{w}$})) = n$, then the rank condition in Theorem~\ref{condition-upstream} is satisfied, i.e.\  there exists $\bf w$ $\in \mathcal{O}_{z}^{k}$ such that $\text{\rm rank}(C_{z}^{k}(\text{$\bf w$})) = \n.$
\label{rankCondition}
\end{lemma}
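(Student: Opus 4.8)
The plan is to exploit the fact that the rank of a matrix is a lower semicontinuous function of its entries: the set of points where the controllability matrix has full rank $\n$ is open, it is nonempty by hypothesis, and since $\mathcal{O}_{z}^{k}$ is dense it must meet this open set. Concretely, first I would record that the extended transition map $S_{z}^{k}$ is defined on \emph{all} of $\R^{\n\mu k}$ (not merely on the control set $\mathcal{O}_{z}^{k}$) and is of class $C^{1}$ there: this follows by induction on $k$ from the recursion $S_{z}^{k}(w_{1},\dots,w_{k}) = F(S_{z}^{k-1}(w_{1},\dots,w_{k-1}), w_{k})$ together with Assumption B5 that $F$ is $C^{1}$, since a composition of $C^{1}$ maps is $C^{1}$. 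Consequently the controllability matrix $C_{z}^{k}(\mathbf{w})$, being the Jacobian of $\foncsmall{(w_{1},\dots,w_{k})}{S_{z}^{k}(w_{1},\dots,w_{k})}$, depends continuously on $\mathbf{w} \in \R^{\n\mu k}$.

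Next I would note that $C_{z}^{k}(\mathbf{w})$ is an $\n \times \n\mu k$ matrix, so $\text{\rm rank}(C_{z}^{k}(\mathbf{w})) = \n$ is the maximal possible rank, and this holds if and only if at least one of the $\n \times \n$ minors of $C_{z}^{k}(\mathbf{w})$ is nonzero. Each such minor is a polynomial in the entries of $C_{z}^{k}(\mathbf{w})$ and hence, by the previous paragraph, a continuous function of $\mathbf{w}$. Therefore the set
\[
\mathcal{U} = \acco{\mathbf{w} \in \R^{\n\mu k} \, ; \ \text{\rm rank}(C_{z}^{k}(\mathbf{w})) = \n}
\]
is open in $\R^{\n\mu k}$, being a finite union over all $\n\times\n$ minors $M$ of the open sets $\{M(\mathbf{w}) \neq 0\}$. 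By hypothesis $\widetilde{\mathbf{w}} \in \mathcal{U}$, so $\mathcal{U}$ is a nonempty open subset of $\R^{\n\mu k}$.

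Finally, since $\mathcal{O}_{z}^{k}$ is dense in $\R^{\n\mu k}$ and $\mathcal{U}$ is nonempty and open, the intersection $\mathcal{O}_{z}^{k} \cap \mathcal{U}$ is nonempty. Any $\mathbf{w} \in \mathcal{O}_{z}^{k} \cap \mathcal{U}$ then satisfies simultaneously $\mathbf{w} \in \mathcal{O}_{z}^{k}$ and $\text{\rm rank}(C_{z}^{k}(\mathbf{w})) = \n$, which is exactly the full-rank condition required by Theorem~\ref{condition-upstream}. I do not anticipate a genuine obstacle here; the only point that needs a little care is the first one, namely that $S_{z}^{k}$ (and hence $C_{z}^{k}$) is $C^{1}$ on the whole space $\R^{\n\mu k}$ so that the continuity/openness argument is valid globally rather than only on the control set, and that the "rank $= \n$" set is open precisely because $\n$ is the maximal rank (lower semicontinuity of rank).
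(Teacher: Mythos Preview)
Your proof is correct and follows essentially the same approach as the paper: both use that $S_{z}^{k}$ is $C^{1}$ (the paper cites \cite[Lemma 6.1]{chotard2019verifiable}, you sketch the induction), that the set of full-rank points is open, and that density of $\mathcal{O}_{z}^{k}$ forces a nonempty intersection. Your version is slightly more explicit about why the full-rank set is open (via minors), but the argument is the same.
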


\begin{proof}
The function $\foncsmall{w}{S_{z}^{k}(w)}$ is $C^{1}$\del{ thanks to}~\cite[Lemma 6.1]{chotard2019verifiable}. Since the set of full rank matrices is open, there exists an open neighborhood $\mathcal{V}_{ \text{$\bf \widetilde{w}$} }$ of $\bf \widetilde{w}$ such that  for all $w \in \mathcal{V}_{ \text{$\bf \widetilde{w}$} },$ $\text{\rm rank}(C_{z}^{k}(w)) = \n.$ By density of $\mathcal{O}_{z}^{k},$ the non-empty set $\mathcal{V}_{ \text{$\bf \widetilde{w}$} } \cap \mathcal{O}_{z}^{k}$ contains an element $\bf w$.  
\end{proof}

If $z$ is steadily attracting,  there exists under mild assumptions an open set outside of a ball centered at $z$, with positive measure with respect to the invariant probability measure of a chain following the model~\eqref{nonlinear-alpha} as stated next.

\begin{lemma}
Consider a Markov chain $\{Z_k \, ; k \in \mathbb{N}\}$ on $\R^{n}$ following the model \eqref{nonlinear-alpha} for which the conditions B1$-$B5 are satisfied. Assume that there exist a steadily attracting state $z\in\R^{n}$ such that $\mathcal{O}_{z}^{1}$ is dense in $\R^{n}$ and $w \in \mathcal{O}_{z}^{1}$ with $\text{\rm rank}\pare{C_{z}^{1}(w)} = n$.
Assume also that $\{Z_k \, ; k \in \mathbb{N}\}$ is a positive Harris recurrent chain with invariant probability measure $\pi.$ 
Then there exists $0 < \epsilon < 1$ such that $\pi(\R^{\n} \setminus \overline{\mathbf{B}\pare{z, \epsilon} } ) > 0.$
\label{non-negligible}
\end{lemma}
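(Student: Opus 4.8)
\emph{Proof idea.} The plan is to show that $\pi$ cannot be the Dirac mass $\delta_{z}$, from which the conclusion follows by continuity of $\pi$ from above. Assume for contradiction that $\pi\pare{\R^{\n}\setminus\overline{\mathbf{B}\pare{z,\epsilon}}} = 0$ for every $\epsilon\in(0,1)$. Since the sets $\overline{\mathbf{B}\pare{z,1/k}}$ decrease to $\acco{z}$ as $k\to\infty$ and $\pi$ is a probability measure, we get $\pi(\acco{z}) = \lim_{k\to\infty} \pi\pare{\overline{\mathbf{B}\pare{z,1/k}}} = 1$, i.e.\ $\pi = \delta_{z}$. Plugging this into the invariance identity $\pi(A) = \int_{\R^{\n}} P(y,A)\,\pi(\diff y)$ gives $P(z,\cdot) = \delta_{z}$, and by the model~\eqref{nonlinear-alpha} this is equivalent to $F\pare{z,\alpha(z,U_{1})} = z$ almost surely. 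So it suffices to contradict this last equality.

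To do so I would use the full-rank hypothesis. Let $w\in\mathcal{O}_{z}^{1}$ with $\mathrm{rank}\pare{C_{z}^{1}(w)} = \n$. Since $C_{z}^{1}$ is the Jacobian of $w'\mapsto S_{z}^{1}(w') = F(z,w')$ and $F$ is $C^{1}$ (Assumption B5), the set $\mathcal{U} = \acco{u\in\R^{\n\mu}\,;\,\mathrm{rank}\pare{C_{z}^{1}(u)} = \n}$ is open (full rank is an open condition) and contains $w$; on $\mathcal{U}$ the map $F(z,\cdot):\R^{\n\mu}\to\R^{\n}$ is a submersion, so by the submersion normal form its level set $\acco{u\in\mathcal{U}\,;\,F(z,u) = z}$ is locally an open piece of a subspace of dimension $\n\mu-\n<\n\mu$, hence Lebesgue-negligible in $\R^{\n\mu}$. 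On the other hand, $\mathcal{O}_{z}^{1} = \acco{u\,;\,p_{z}(u)>0}$ is open (Assumption B4: $u\mapsto p_{z}(u)$ is lower semi-continuous), so $\mathcal{U}\cap\mathcal{O}_{z}^{1}$ is a nonempty open set on which $p_{z}>0$; removing from it the Lebesgue-null fibre above, we find a set of positive Lebesgue measure on which $p_{z}>0$ and $F(z,\cdot)\neq z$. Since $\alpha(z,U_{1})$ has law $p_{z}(u)\,\diff u$, this forces $P\pare{F(z,\alpha(z,U_{1}))\neq z}>0$, the desired contradiction.

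Consequently $\pi\neq\delta_{z}$, hence $\pi(\acco{z})<1$; as $\epsilon\mapsto\pi\pare{\R^{\n}\setminus\overline{\mathbf{B}\pare{z,\epsilon}}}$ increases to $1-\pi(\acco{z})>0$ when $\epsilon\downarrow0$, it is positive for all sufficiently small $\epsilon\in(0,1)$, which is the claim. The one point to handle with care is the differential-geometric claim that the level set $\acco{F(z,\cdot)=z}$ over the full-rank locus is Lebesgue-null: this is a positive-codimension $C^{1}$ submanifold when $\mu\geq2$ and, when $\mu=1$, a discrete set (there $F(z,\cdot)$ is a local diffeomorphism on $\mathcal{U}$), so it is null in both cases. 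Note that only the full-rank hypothesis and the existence of an invariant probability measure are used here; the steadily-attracting assumption and the density of $\mathcal{O}_{z}^{1}$ are available but not needed for this argument.
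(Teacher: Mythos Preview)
Your proof is correct and takes a genuinely different route from the paper's. The paper proceeds by invoking two external results: that $\pi$ is equivalent to the maximal irreducibility measure $\psi$ \cite[Theorem 10.4.9]{meyn2012markov}, and that $\mathrm{supp}\,\psi$ equals the closure of the set reachable from the steadily attracting state $z$ \cite[Propositions 3.3 and 4.2]{chotard2019verifiable}. It then argues that the full-rank condition makes $F(z,\cdot)$ non-constant, and uses the density of $\mathcal{O}_{z}^{1}$ to find a point $v\in\mathrm{supp}\,\psi$ at distance $2\epsilon$ from $z$; the open set $\R^{\n}\setminus\overline{\mathbf{B}(z,\epsilon)}$ is then a neighborhood of $v$ and hence has positive $\pi$-mass.

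Your argument is more self-contained: you reduce to ruling out $\pi=\delta_{z}$, use only the invariance identity to get $P(z,\cdot)=\delta_{z}$, and then apply the regular value theorem to show that the fibre $\{F(z,\cdot)=z\}$ is Lebesgue-null on the full-rank locus, contradicting $P(z,\cdot)=\delta_{z}$ since $p_{z}>0$ on the nonempty open set $\mathcal{U}\cap\mathcal{O}_{z}^{1}$. As you correctly observe, this uses neither the steadily attracting hypothesis nor the density of $\mathcal{O}_{z}^{1}$ (only that $w\in\mathcal{O}_{z}^{1}$), so your argument proves a slightly stronger statement with a more elementary toolkit. The paper's route, on the other hand, gives a bit more geometric information about where the support of $\pi$ lies, though that extra information is not exploited later.
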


\begin{proof}
A $\varphi$-irreducible Markov chain admits a maximal irreducibility measure $\psi$ dominating any other irreducibility measure \cite[Theorem 4.0.1]{meyn2012markov}. In other words, for a measurable set $A$, $\psi(A) = 0$ induces that $\varphi(A) = 0$ for any irreducibility measure $\varphi.$
\new{The measure}\del{Thanks to~\cite[Theorem 10.4.9]{meyn2012markov},} $\pi$ is equivalent to the maximal irreducibility measure $\psi$~\cite[Theorem~10.4.9]{meyn2012markov}.
Since $z$ is steadily attracting,\del{the}\del{then thanks to {\cite[Proposition 3.3]{chotard2019verifiable}} and {\cite[Proposition 4.2]{chotard2019verifiable}},}
$\textrm{supp } \psi = \overline{ A_{+}(z) } = \overline{ \bigcup_{k\in\mathbb{N} } \acco{S_{z}^{k}(\bf w) \, ; \bf w \in \text{$\mathcal{O}_{z}^{k}$ } } }$
\cite[Propositions 3.3 and 4.2]{chotard2019verifiable}.
We have $\text{\rm rank}\pare{C_{z}^{1}(w)} = n$, therefore the function $F(z, \cdot)$ is not constant. Along with the density of $\mathcal{O}_{z}^{1},$ we obtain that there exists $\epsilon > 0$ and a vector $v \in \textrm{supp } \psi$ such that $\|z - v\| = 2 \, \epsilon$. By definition of the support, it follows that every open neighborhood of $v$ has a positive measure with respect to $\pi$. Since $\R^{\n} \setminus \overline{\mathbf{B}\pare{z, \epsilon}}$ is an open neighborhood of $v$, the result of the lemma follows.  
\end{proof}

\section{Stability of the \normalized\ Markov chain $\{Z_k \, ; k \in \mathbb{N}\}$}
\label{normalized-chain-section}
Assuming that $\f$ is \del{the}\new{a} strictly increasing transformation of either a $C^{1}$ scaling-invariant function with a unique global argmin or a nontrivial linear function, we prove that if Assumptions A1$-$A5 are satisfied and the expected logarithm of the step-size increases on nontrivial linear functions, then the \normalized\ Markov chain is a $\varphi$-irreducible aperiodic $T$-chain that is geometrically ergodic. In particular, it is positive and Harris recurrent.

\subsection{Irreducibility, aperiodicity and T-chain property of the \normalized\ Markov chain }
Prior to establishing Harris recurrence and positivity of the chain $\{Z_k \, ; k \in \mathbb{N}\}$, we need to establish the $\varphi$-irreducibility and aperiodicity as well as identify some small sets such that drift conditions can be used.
Since the step-size change is a deterministic function of the random input used to update the mean,
\new{we use the tools reminded in Section~\ref{deterministic-control-model} to establish these properties.}
The chain investigated satisfies $Z_{k+1} = F_{w} \pare{Z_{k}, \alpha_{\f}\pare{x^{\star} + Z_{k}, U_{k+1}}}$ 
 and therefore fits the model \eqref{nonlinear-alpha}. We prove next that the necessary assumptions needed to use the tools are satisfied if $f$ satisfies F1 or F2
\new{because}\del{This result relies on \cite[Proposition 5.2]{chotard2019verifiable} ensuring that}
if $\f$ is a continuous scaling-invariant function with Lebesgue negligible level sets, then for all $z \in \R^{\n}$, the random variable $\alpha_{\f}(x^{\star} + z, U_{1})$ admits a probability density function $p_{z}^{\f}$ such that $(z, u) \mapsto p_{z}^{\f}(u)$ is lower semi-continuous \cite[Proposition 5.2]{chotard2019verifiable}, i.e.\ B4 is satisfied.

\begin{proposition}
Let $\f$ be scaling-invariant with respect to $x^{\star}$ defined as $\varphi\circ g$ where $\varphi$ is strictly increasing and $g$ is a continuous scaling-invariant function with Lebesgue negligible level sets. Let $\{Z_k \, ; k \in \mathbb{N}\}$ be a \normalized\ Markov chain associated to the step-size adaptive $\pare{\mu/\mu_{w}, \lambda}$-ES defined as in Proposition~\ref{prop:MC} satisfying 
$
Z_{k+1} = F_{w} \pare{Z_{k}, \alpha_{\f}\pare{x^{\star} + Z_{k}, U_{k+1}}}.
$
Then model~\eqref{nonlinear-alpha} follows. In addition, if Assumption A1 is satisfied, then $F_{w}$ is $C^{1}$ and thus B5 is satisfied. If Assumption A5 is satisfied, then Assumptions B1$-$B4 are satisfied and the probability density function of the random variable $\alpha_{\f}(x^{\star} + z, U_{k+1})$ denoted by $p_{z}^{\f}$ and defined in~\eqref{mupositive} satisfies $(z, u) \mapsto p_{z}^{\f}(u)$ is lower semi-continuous.
\label{assumption-lsc}

In particular, if $\f$ satisfies F1 or F2, the assumption above on $f$ holds such that the conclusions above are valid.
\end{proposition}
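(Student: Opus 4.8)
The plan is to verify conditions B1--B5 of the control model~\eqref{nonlinear-alpha} directly, identifying $F=F_{w}$, the driving map $(z,u)\mapsto\alpha_{\f}(x^{\star}+z,u)$, dimension $m=\n\lambda$ and state space $\ZZ=\R^{\n}$, so that~\eqref{normalized-process} is literally an instance of~\eqref{nonlinear-alpha}. Conditions B1--B3 are a reformulation of Assumption A5 together with the observation that $Z_{0}=(X_{0}-x^{\star})/\sigma_{0}$ is a measurable function of $(X_{0},\sigma_{0})$, hence independent of $U$ and defined on the same probability space. For B5, I would write $F_{w}(z,v)=(z+\sum_{i=1}^{\mu}w_{i}v_{i})/\Gx(v)$ as the quotient of the affine (hence $C^{\infty}$) map $(z,v)\mapsto z+\sum_{i}w_{i}v_{i}$ by $\Gx$, which is $C^{1}$ by A1 and everywhere strictly positive (its codomain is $\R_{+}\setminus\{0\}$, and A3 even gives $\Gx\geq m_{\Gx}>0$); a quotient of $C^{1}$ functions with nowhere-vanishing denominator is $C^{1}$, so $F_{w}$ is $C^{1}$ and B5 holds.

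For B4 the idea is to reduce to the continuous function $g$. Since $\f=\varphi\circ g$ with $\varphi$ strictly increasing, Lemma~\ref{selection-function-increasing-transformation} gives $\alpha_{\f}=\alpha_{g}$, and $\varphi$ strictly increasing also forces $Q_{z}^{\f}=Q_{z}^{g}$ and $\mathds{1}_{\f(\cdot)<\f(\cdot)}=\mathds{1}_{g(\cdot)<g(\cdot)}$, so the right-hand side of~\eqref{mupositive} is unchanged if $\f$ is replaced by $g$. Now $g$ is continuous, scaling-invariant with respect to $x^{\star}$ and has Lebesgue-negligible level sets, so Lemma~\ref{lem:SI-alpha} applied to $g$ shows that $\alpha_{g}(x^{\star}+z,U_{1})$ admits the density $p_{z}^{g}$ of~\eqref{mupositive}, and \cite[Proposition 5.2]{chotard2019verifiable}---whose hypotheses (continuity, negligible level sets) are met by $g$---gives that $(z,u)\mapsto p_{z}^{g}(u)$ is lower semi-continuous. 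Transporting this back through $\alpha_{\f}=\alpha_{g}$ yields B4 with density $p_{z}^{\f}=p_{z}^{g}$.

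It then remains to check that an $\f$ satisfying F1 or F2 falls under the hypotheses just used, namely that $\f=\varphi\circ g$ is scaling-invariant with respect to some $x^{\star}$, $g$ is continuous, and $g$ has Lebesgue-negligible level sets. Under F2, $g$ is $C^{\infty}$, hence continuous; for nontrivial linear $g(x)=a^{\top}x$ with $a\neq0$ and any reference point $x^{\star}$, $g(x^{\star}+x)\leq g(x^{\star}+y)\iff a^{\top}(x-y)\leq0\iff a^{\top}(\rho x-\rho y)\leq0$, so $g$---and hence $\f$, a strictly increasing transformation preserving the ordering---is scaling-invariant with respect to every $x^{\star}$, while the level sets of $g$ are affine hyperplanes and thus Lebesgue-negligible. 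Under F1, $g$ is $C^{1}$ (hence continuous) and scaling-invariant with respect to $x^{\star}$, so $\f$ is scaling-invariant with respect to $x^{\star}$ by the same ordering argument, and the only nontrivial point is the Lebesgue-negligibility of the level sets of $g$.

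I expect this last point to be the main obstacle, and I would handle it as follows, taking $x^{\star}=0$. For $v\neq0$ the map $\rho\in(0,\infty)\mapsto g(\rho v)$ is injective: if $g(\rho_{1}v)=g(\rho_{2}v)$ for some $0<\rho_{1}<\rho_{2}$, then applying~\eqref{scaling-invariant} with an arbitrary ratio $s$ to the pair $(\rho_{1}v,\rho_{2}v)$ in both directions gives $g(s\rho_{1}v)=g(s\rho_{2}v)$ for every $s>0$, i.e.\ $\tau\mapsto g(e^{\tau}v)$ is periodic with period $\log(\rho_{2}/\rho_{1})>0$; being periodic and, by continuity of $g$, convergent to $g(0)$ as $\tau\to-\infty$, it is constant, so $g(v)=g(0)$, contradicting uniqueness of the argmin. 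Hence for every $c$ and every $v\neq0$ the ray $\{\rho v:\rho>0\}$ meets $\{g=c\}$ in at most one point; passing to polar coordinates via the diffeomorphism $\R^{\n}\setminus\{0\}\cong S^{\n-1}\times(0,\infty)$ and applying Tonelli, $\{g=c\}\setminus\{0\}$---and hence $\{g=c\}$---is Lebesgue-negligible. Combining the preceding steps, B1--B5 and the lower semi-continuity of the density hold under F1 or F2, which is the claim.
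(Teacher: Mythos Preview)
Your argument is correct and matches the paper's proof almost line for line: you identify the chain with model~\eqref{nonlinear-alpha}, derive B5 from A1 via the quotient structure of $F_{w}$, read B1--B3 off Assumption A5, and obtain B4 by passing to $g$ through Lemma~\ref{selection-function-increasing-transformation} and invoking \cite[Proposition~5.2]{chotard2019verifiable}. The treatment of F2 is also the same.

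The one genuine difference is how you handle the Lebesgue-negligibility of the level sets of $g$ under F1. The paper simply cites \cite[Proposition~4.2]{scaling2021}. You instead give a self-contained argument: scaling invariance forces $\rho\mapsto g(\rho v)$ to be injective on each ray (otherwise a periodicity argument combined with continuity at the origin would give $g(v)=g(0)$, contradicting uniqueness of the argmin), and then a polar-coordinate Tonelli argument shows every level set is null. This is correct and in fact uses only continuity of $g$, not its $C^{1}$ regularity. It buys you independence from the external reference at the cost of a short extra paragraph; the paper's route is terser but relies on a result proved elsewhere.
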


\begin{proof}
 It follows from~\eqref{normalized-process} that $\{Z_k \, ; k \in \mathbb{N}\}$ is a homogeneous Markov chain following model~\eqref{nonlinear-alpha}. By~\eqref{homogeneousFunction}, $F_{w}$ is of class $C^{1}$ (B5 is satisfied) if A1 is satisfied ($\Gx: \foncfleche{ \R^{\n \mu} }{\R_{+}\backslash\acco{0}}$ is $C^{1}$).
If A5 is satisfied, then B1$-$B3 are also satisfied.

\del{With~\cite[Proposition 5.2]{chotard2019verifiable},}
For all $z \in \R^{\n},$ $\alpha_{g}(x^{\star} + z, U_{k+1})$ has a probability density function $p_{z}^{g}$ such that $(z, u) \mapsto p_{z}^{g}(u)$ is lower semi-continuous \cite[Proposition 5.2]{chotard2019verifiable}, and defined for all $z \in \R^{\n}$ and $u\in \R^{\n\mu}$ as in~\eqref{mupositive}. With Lemma~\ref{selection-function-increasing-transformation}, $\alpha_{f} = \alpha_{g}$ and then B4 holds.

A nontrivial linear function is a continuous scaling-invariant function with Lebesgue negligible level sets. Also\del{~\cite[Proposition 4.2]{scaling2021} implies that} $\f$ still has Lebesgue negligible level sets in the case where it is a $C^{1}$ scaling-invariant function with a unique global argmin \cite[Proposition 4.2]{scaling2021}.
\end{proof}

We show in the following lemma the density of \del{a}\new{the} control set\new{ in  $\R^{\n\mu}$} \del{for}\new{when the objective functions are} strictly increasing transformations of continuous scaling-invariant functions with Lebesgue negligible level sets, especially for functions $\f$ that satisfy F1 or F2. This is useful for Proposition~\ref{steadily-zero} and for the application of Lemma~\ref{rankCondition}.

\begin{lemma} Let $\f$ be a scaling-invariant function defined as $\varphi\circ g$ where $\varphi$ is strictly increasing and $g$ is a continuous scaling-invariant function with Lebesgue negligible level sets. Assume that $\{Z_k \, ; k \in \mathbb{N}\}$ is the \normalized\ Markov chain associated to a step-size adaptive $\pare{\mu/\mu_{w}, \lambda}$-ES as defined in Proposition~\ref{prop:MC} such that A5 is satisfied. Then for all $z \in \R^{\n},$ the control set $\mathcal{O}_{z}^{1} = \acco{ v \in \R^{\n\mu}  \, ; \,p_{z}^{\f}(v) > 0 }$ is dense in $\R^{\n\mu}.$

In particular, if $\f$ satisfies F1 or F2, the assumption above on $f$ holds and thus the conclusions above are valid.
\label{dense}
\end{lemma}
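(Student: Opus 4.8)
The plan is to read the support of $p_z^{\f}$ off the explicit formula~\eqref{mupositive} and then approximate, by elements of $\mathcal{O}_z^1$, an arbitrary point of the set on which $p_z^{\f}$ can be positive --- namely the closed cone $\mathcal{C}=\{u=(u^1,\dots,u^\mu)\in\R^{\n\mu}: g(x^\star+z+u^1)\le\dots\le g(x^\star+z+u^\mu)\}$, which equals $\R^{\n\mu}$ when $\mu=1$. Since $\varphi$ is strictly increasing, the ordering of the $u^i$ by $\f$ coincides with their ordering by $g$, so by~\eqref{mupositive} (using the conventions that the product of indicators is $1$ when $\mu=1$ and that $(1-Q_z^{\f}(u^\mu))^{\lambda-\mu}=1$ when $\mu=\lambda$) we have $\mathcal{O}_z^1=\{u\in\R^{\n\mu}: g(x^\star+z+u^1)<\dots<g(x^\star+z+u^\mu)\text{ and }Q_z^{\f}(u^\mu)<1\}$.

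First I would dispose of the $Q_z^{\f}$ constraint. Because $\N_{\n}$ has full support and $g$ is continuous, $Q_z^{\f}(w)=1$ holds precisely when $g(x^\star+z+w)\geq M:=\sup_{y\in\R^{\n}}g(y)$; hence $\{w:Q_z^{\f}(w)=1\}$ is empty if $g$ is unbounded above (as in case F2, or for positively homogeneous $g$), and otherwise is a translate of the level set $g^{-1}(\{M\})$, which is Lebesgue negligible by hypothesis and in particular has empty interior. So requiring $Q_z^{\f}(u^\mu)<1$ excludes only a closed negligible set of values for $u^\mu$ and can always be met after an arbitrarily small perturbation of $u^\mu$.

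Next I would manufacture the strict ordering. Fix $v=(v^1,\dots,v^\mu)\in\mathcal{C}$ and $\epsilon>0$, and put $c_i=g(x^\star+z+v^i)$, so $c_1\le\dots\le c_\mu$. The structural facts I would use are: $g$ is non-constant on every ball (otherwise some level set contains a ball, contradicting negligibility), so by the intermediate value theorem $g$ attains a whole interval of values near any point; and, in case F1, $t\mapsto g(x^\star+td)$ is strictly increasing on $(0,\infty)$ for every $d\neq0$ (a consequence of scaling invariance and uniqueness of the argmin, which I would record as a preliminary claim), while in case F2 $g$ is affine with nonzero gradient. Choosing $\tau>0$ small, for each $i$ I can then pick $u^i$ with $\|u^i-v^i\|<\epsilon/\mu$ and $g(x^\star+z+u^i)=c_i+i\tau$; since the $c_i$ are nondecreasing the numbers $c_i+i\tau$ are strictly increasing in $i$, so the ordering constraint holds and $\|u-v\|<\epsilon$. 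After the small extra perturbation of $u^\mu$ from the previous paragraph (and $\tau$ small) this gives $u\in\mathcal{O}_z^1$ within $\epsilon$ of $v$. For $\mu=1$ the claim is exactly the density of $\{Q_z^{\f}<1\}$, already established.

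The main obstacle is the degenerate case where some of the largest $c_i$ equal $M=\sup g$ (possible only when $g$ is bounded above), so that $g$ cannot be pushed up near the corresponding $v^i$. There one instead perturbs downward: using that $g^{-1}(\{M\})$ is negligible, there are points arbitrarily near $x^\star+z+v^i$ at which $g<M$, and the intermediate value theorem then allows re-selecting the affected block of values as a strictly increasing sequence lying just below $M$ and above the largest $c_j$ with $c_j<M$. This is bookkeeping only; the genuine ingredients are the description of $\mathcal{O}_z^1$ from~\eqref{mupositive}, negligibility of the level sets of $g$, and the ray-monotonicity (resp. affineness) of $g$.
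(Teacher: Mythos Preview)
Your argument only establishes density of $\mathcal{O}_z^1$ in the closed ordered cone $\mathcal{C}$, not in $\R^{\n\mu}$. For $\mu\geq 2$ the lemma as written is in fact false: from~\eqref{mupositive}, $p_z^{\f}$ vanishes identically on the open set $\{u: g(x^\star+z+u^1)>g(x^\star+z+u^2)\}$, which is nonempty because $g$ is continuous and non-constant (its level sets are negligible). Hence $\mathcal{O}_z^1$ is contained in $\mathcal{C}$ and cannot be dense in all of $\R^{\n\mu}$. Your decision to work only inside $\mathcal{C}$ is therefore the right instinct, but you never address points outside $\mathcal{C}$, and that gap cannot be filled.

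The paper's proof takes a much shorter route: it asserts that $p_z^{\f}=p_z^{g}>0$ Lebesgue-almost everywhere, whence $\mathcal{O}_z^1$ has full measure and is automatically dense. For $\mu=1$ this is correct (the only obstruction is $Q_z^{\f}(u)=1$, which cuts out at most a translated level set of $g$), but for $\mu\geq 2$ the ``almost everywhere'' claim fails for exactly the reason above, so the paper's one-line argument shares the defect of the statement. Your perturbation argument is sound as a proof of density in $\mathcal{C}$ and is more careful than the paper's, though considerably longer; neither proof delivers the lemma as stated when $\mu\geq 2$.
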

\begin{proof}
By Proposition~\ref{assumption-lsc}, we obtain that for all $z \in \R^{\n}$, $p_{z}^{\f}$ is defined as in~\eqref{mupositive}. 
\del{In addition, $\f$ has Lebesgue negligible level sets \cite[Proposition 4.2]{scaling2021}, \new{see} Lemma~\ref{selection-function-increasing-transformation}.} \new{In addition, $\alpha_{\f} = \alpha_{g}$ (see Lemma~\ref{selection-function-increasing-transformation}).} Therefore $p_{z}^{\f}\new{= p_{z}^{g}} > 0$ almost everywhere. Hence we have that $\mathcal{O}_{z}^{1}$ is dense in $\R^{\n\mu}$.
\end{proof}
Thanks to Theorem~\ref{condition-upstream}, to ensure that $\{Z_k \,; \,k \in \mathbb{N}\}$ is a $\varphi$-irreducible aperiodic $T$-chain, we prove that $0$ is a steadily attracting state and that there exists $w \in \mathcal{O}_{0}^{1}$ such that $\text{\rm rank}\pare{C_{0}^{1}( w )} = n $. We start with the steady attractivity in the next proposition.

\begin{proposition}
Let $\f$ be a scaling-invariant function defined as $\varphi\circ g$ where $\varphi$ is strictly increasing and $g$ is a continuous scaling-invariant function with Lebesgue negligible level sets.  Assume that $\{Z_k \, ; k \in \mathbb{N}\}$ is the \normalized\ Markov chain associated to a step-size adaptive $\pare{\mu/\mu_{w}, \lambda}$-ES as defined in Proposition~\ref{prop:MC} such that Assumptions A1 and A5 are satisfied. Then $0$ is a steadily attracting state of CM($F_{w}$).
Especially, if $\f$ satisfies F1 or F2, the assumption above on $f$ holds and thus the conclusions above are valid.
\label{steadily-zero}
\end{proposition}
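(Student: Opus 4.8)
The statement to prove is that $0$ is a steadily attracting state of $\mathrm{CM}(\Fw)$, i.e.\ that for every $y \in \R^{\n}$ there is a sequence $\pare{y_{k}}_{k \geq 1}$ with $y_{k} \in A_{+}^{k}(y)$ and $y_{k} \to 0$. The plan is first to prove a one-step reachability fact — from \emph{any} state one can, in a single control step of $\mathrm{CM}(\Fw)$, reach points arbitrarily close to $0$ — and then to iterate it along a trajectory issued from $y$ with a tolerance shrinking to $0$.

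For the one-step fact I would argue as follows. Since $w \in \R^{\mu}\setminus\acco{0}$, fix an index $j$ with $w_{j}\neq 0$; for $z \in \R^{\n}$ the vector $v^{\star}(z)\in\R^{\n\mu}$ whose $j$-th block equals $-z/w_{j}$ and whose remaining blocks vanish satisfies $z + w^{\top}v^{\star}(z)=0$, hence $\Fw(z,v^{\star}(z))=0$ by~\eqref{homogeneousFunction}. By Assumption~A1 the map $\Gx$ is $C^{1}$ and strictly positive, so $v\mapsto\Fw(z,v)$ is continuous and, given $\epsilon>0$, there is an open ball $N$ around $v^{\star}(z)$ with $\|\Fw(z,v)\|<\epsilon$ for all $v\in N$. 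By Lemma~\ref{dense} — applicable because A5 holds and $\f=\varphi\circ g$ with $g$ continuous, scaling-invariant, and with Lebesgue-negligible level sets — the control set $\mathcal{O}_{z}^{1}$ is dense in $\R^{\n\mu}$, so $N\cap\mathcal{O}_{z}^{1}\neq\emptyset$. Any $v$ in this intersection yields $v\in\mathcal{O}_{z}^{1}$ with $\|\Fw(z,v)\|<\epsilon$, i.e.\ $A_{+}^{1}(z)\cap\mathbf{B}(0,\epsilon)\neq\emptyset$, for every $z\in\R^{\n}$ and every $\epsilon>0$.

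Given this, I would construct the sequence by induction: put $y_{0}=y$ and, assuming $y_{k-1}\in A_{+}^{k-1}(y)$, apply the one-step fact at $z=y_{k-1}$ with $\epsilon=1/k$ to get $v_{k}\in\mathcal{O}_{y_{k-1}}^{1}$ with $\|\Fw(y_{k-1},v_{k})\|<1/k$, and set $y_{k}:=\Fw(y_{k-1},v_{k})$. Unwinding the inductive definitions of the extended probability function $p_{y}^{k}$ and of the transition map $S_{y}^{k}$ shows that the concatenated control $\pare{v_{1},\dots,v_{k}}$ belongs to $\mathcal{O}_{y}^{k}$ and that $S_{y}^{k}\pare{v_{1},\dots,v_{k}}=y_{k}$, so $y_{k}\in A_{+}^{k}(y)$; since $\|y_{k}\|<1/k$ we get $y_{k}\to 0$. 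As $y$ was arbitrary, $0$ is steadily attracting. The final assertion of the proposition follows because F1 and F2 each put $\f$ in the form required above (the same reduction as used in Proposition~\ref{assumption-lsc} and Lemma~\ref{dense}).

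I expect the only delicate point to be the bookkeeping in the last step: one must ensure that gluing together one-step controls, each issued from the \emph{updated} current state $y_{k-1}$, keeps the resulting $k$-step path inside $\acco{p_{y}^{k}>0}$, which is exactly why density of $\mathcal{O}_{z}^{1}$ is required uniformly in $z$ (Lemma~\ref{dense}), not merely at the start point $y$. Everything else is routine: continuity of $\Fw$ from A1, and solvability of $z+w^{\top}v=0$ because $w\neq 0$.
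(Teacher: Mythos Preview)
Your proposal is correct and follows essentially the same route as the paper: both arguments exploit that the linear equation $z+w^{\top}v=0$ is solvable (the paper uses $\tilde v = -\|w\|^{-2}(w_{1}z,\dots,w_{\mu}z)$ rather than your single-block solution), then combine continuity of $\Fw$ with the density of $\mathcal{O}_{z}^{1}$ from Lemma~\ref{dense} to find an admissible control landing near $0$, and iterate along the trajectory with a vanishing tolerance. The inductive bookkeeping you flag as the delicate point is exactly what the paper spells out at the end of its proof.
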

\begin{proof} We fix $z \in \R^{\n}$ and prove that there exists a sequence $\acco{ z_{k} \in A_{+}^{k}(z)  \, ; k \in \mathbb{N} }$ that converges to $0.$ 
We construct the sequence recursively as follows.

We define $z_{0} = z$ and fix a natural integer $k.$ We define $z_{k+1}$ iteratively as follows.
We set $\tilde{v}_{k} = - \frac{1}{\|w\|^{2}} \pare{ w_{1}z_{k},\dots, w_{\mu}z_{k} },$ then $z_{k} + w^{\top}\tilde{v}_{k} = z_{k} - \frac{1}{\|w\|^{2} } \sum_{i=1}^{\mu} w_{i}^{2}z_{k} = 0.$ By continuity of $F_{w} $ and density of $\mathcal{O}_{z_{k}}^{1}$ thanks to Lemma~\ref{dense}, there exists $v_{k} \in \mathcal{O}_{z_{k}}^{1}$ such that $\| F_{w} (z_{k}, v_{k}) \| = \| F_{w} (z_{k}, v_{k}) - F_{w} (z_{k}, \tilde{v}_{k}) \| \leq \frac{1}{2^{k+1}}.$
 Define $z_{k+1} = F_{w} (z_{k}, v_{k}).$ Then the sequence $\pare{z_{k}}_{k\in\mathbb{N}}$ converges to $0.$ 
Now let us show that $z_{k} \in A_{+}^{k}(z)$ for all $k \in \mathbb{N}.$ 
Since $A_{+}^{0}(z) = \acco{z},$ then $z_{0} = z \in A_{+}^{0}(z).$ We fix again a natural integer $k$ and assume that $z_{k} \in A_{+}^{k}(z).$ It is then enough to prove that $z_{k+1} \in A_{+}^{k+1}(z).$
Recall that for all $\bf u \in$ $\R^{\n\mu(k+1)},$ $A_{+}^{k+1}(z) = \acco{ S_{z}^{k+1}(\text{$\bf u$})  \, ; \text{$\bf u$} \in \mathcal{O}_{z}^{k+1} }$,
$S_{z}^{k+1}(\textbf{u}) = F_{w} \pare{S_{z}^{k}\pare{u_{1}, \dots, u_{k}}, u_{k+1} }$,
$p_{z}^{\f, k+1}(\textbf{u}) = p_{z}^{\f, k}\pare{ u_{1},\dots, u_{k}} p_{S_{z}^{k}(u_{1},\dots, u_{k}) }^{\f}(u_{k+1})$,
$\mathcal{O}_{z}^{k+1} = \acco{ \bf{u}\text{ $\in \R^{\n\mu (k+1)}$ } \, ; \text{ $p_{z}^{f, k+1}(\bf{u})$ } \text{$ > 0$} }$.
Therefore by construction,
$p_{z}^{f, k+1}(v_{0}, \dots, v_{k}) = p_{z}^{\f, k}(v_{0},\dots,v_{k-1}) p_{z_{k}}^{\f}(v_{k}) > 0,$ hence $\pare{v_{0}, \dots, v_{k} }$ $\in \mathcal{O}_{z}^{k+1}.$
Finally, $z_{k+1} = F_{w} (z_{k}, v_{k}) = S_{z}^{k+1}(v_{0}, \dots, v_{k}) \in A_{+}^{k+1}(z).$
 
\end{proof}

The next proposition ensures that the steadily attracting state $0$ satisfies also the adequate full-rank condition on a controllability matrix of $0$.

\begin{proposition}
Let $\f$ be a scaling-invariant function defined as $\varphi\circ g$ where $\varphi$ is strictly increasing and $g$ is a continuous scaling-invariant function with Lebesgue negligible level sets. Assume that $\{Z_k \, ; k \in \mathbb{N}\}$ is the \normalized\ Markov chain associated to a step-size adaptive $\pare{\mu/\mu_{w}, \lambda}$-ES as defined in Proposition~\ref{prop:MC} such that Assumptions A1 and A5 are satisfied. Then there exists $w \in \mathcal{O}_{0}^{1}$ such that $\text{\rm rank}\pare{C_{0}^{1}( w )} = n$.

In particular, if $\f$ satisfies F1 or F2, the assumption above on $f$ holds and thus the conclusions above are valid.
\label{full-rank-zero}
\end{proposition}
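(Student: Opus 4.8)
The plan is to invoke Lemma~\ref{rankCondition} with $k=1$ and $z=0$: combined with the density of $\mathcal{O}_{0}^{1}$ in $\R^{\n\mu}$ from Lemma~\ref{dense}, it reduces the statement to exhibiting a \emph{single} point $\widetilde{v}\in\R^{\n\mu}$ (a priori not in the control set) at which the controllability matrix $C_{0}^{1}$ has rank $\n$. The hypotheses B1--B5 needed to apply these results hold by Proposition~\ref{assumption-lsc} under Assumptions A1 and A5, and $\{Z_{k}\,;\,k\in\mathbb{N}\}$ follows the model~\eqref{nonlinear-alpha} with $F=F_{w}$, so that $S_{0}^{1}(v)=F_{w}(0,v)$.

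First I would write $C_{0}^{1}$ explicitly. By~\eqref{homogeneousFunction}, $S_{0}^{1}(v)=F_{w}(0,v)=\dsp\frac{\sum_{i=1}^{\mu}w_{i}v_{i}}{\Gx(v)}$, and $\Gx$ is $C^{1}$ and strictly positive (Assumptions A1 and A3), so $v\mapsto S_{0}^{1}(v)$ is $C^{1}$ and $C_{0}^{1}(v)$ is its $\n\times(\n\mu)$ Jacobian. Writing $v=(v_{1},\dots,v_{\mu})$ with $v_{i}\in\R^{\n}$, the $j$-th $\n\times\n$ block of $C_{0}^{1}(v)$ is
\[
\frac{\partial S_{0}^{1}}{\partial v_{j}}(v)=\frac{w_{j}}{\Gx(v)}\,I_{\n}-\frac{F_{w}(0,v)}{\Gx(v)}\,\bigl(\nabla_{v_{j}}\Gx(v)\bigr)^{\top},\qquad j=1,\dots,\mu,
\]
with $I_{\n}$ the $\n\times\n$ identity matrix and $\nabla_{v_{j}}\Gx(v)\in\R^{\n}$ the gradient of $\Gx$ in its $j$-th block of $\n$ coordinates.

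The crux is to evaluate at $\widetilde{v}=0\in\R^{\n\mu}$: then $\sum_{i=1}^{\mu}w_{i}\widetilde{v}_{i}=0$, so $F_{w}(0,0)=0$, the second term of each block vanishes, and $C_{0}^{1}(0)=\Gx(0)^{-1}\bigl[\,w_{1}I_{\n}\mid\cdots\mid w_{\mu}I_{\n}\,\bigr]$. Since the recombination weight vector $w\in\R^{\mu}\setminus\acco{0}$ is nonzero, there is $j_{0}$ with $w_{j_{0}}\neq 0$; the block $\frac{w_{j_{0}}}{\Gx(0)}I_{\n}$ is invertible, hence $\mathrm{rank}\bigl(C_{0}^{1}(0)\bigr)=\n$.

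It remains to transfer this full-rank property into the control set. By Lemma~\ref{dense} --- which relies on $g$, hence $\alpha_{\f}=\alpha_{g}$, having Lebesgue negligible level sets --- $\mathcal{O}_{0}^{1}$ is dense in $\R^{\n\mu}$; Lemma~\ref{rankCondition} applied with the witness $\widetilde{v}=0$ then produces $v\in\mathcal{O}_{0}^{1}$ with $\mathrm{rank}(C_{0}^{1}(v))=\n$, which is the assertion. Together with Proposition~\ref{steadily-zero} and Theorem~\ref{condition-upstream}, this will show that $\{Z_{k}\,;\,k\in\mathbb{N}\}$ is a $\varphi$-irreducible aperiodic $T$-chain. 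The cases F1 and F2 are covered because in both $\f$ is an increasing transformation of a continuous scaling-invariant function with Lebesgue negligible level sets (for F1 by \cite[Proposition 4.2]{scaling2021}). I expect no genuine obstacle here; the only subtle point --- and precisely the reason Lemma~\ref{rankCondition} is used rather than arguing directly --- is that the convenient point $\widetilde{v}=0$ is in general \emph{not} in $\mathcal{O}_{0}^{1}$, since the density $p_{0}^{\f}$ in~\eqref{mupositive} contains strict-inequality indicators that vanish when the blocks $v_{i}$ coincide, so one must truly perturb $0$ into the control set.
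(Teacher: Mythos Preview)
Your proof is correct and follows essentially the same approach as the paper: reduce via Lemma~\ref{rankCondition} and the density of $\mathcal{O}_{0}^{1}$ (Lemma~\ref{dense}) to checking the rank at the single point $\widetilde{v}=0$, then compute the differential of $S_{0}^{1}=l/\Gx$ at $0$ and observe that the second term vanishes because $l(0)=0$, leaving the surjective map $l/\Gx(0)$. The paper phrases this last step via the differential $DS_{0}^{1}(0)=l/\Gx(0)$ rather than writing out the block Jacobian, but the argument is identical; one very minor remark is that the positivity of $\Gx$ you attribute to A3 is already contained in its codomain $\R_{+}\setminus\{0\}$, so only A1 and A5 are needed, as in the statement.
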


\begin{proof}
Lemma~\ref{rankCondition} along with the density of the control set $\mathcal{O}_{0}^{1}$ in Lemma~\ref{dense} ensure that it is enough to prove the existence of $v \in \R^{\n\mu}$ such that $\text{\rm rank}\pare{C_{0}^{1}(v)} = n .$
Let us show that the matrix 
$
 C_{0}^{1}(0) =
 \frac{\partial S_{0}^{1} }{\partial v_{1} }(0) 
$
has a full rank, with $S_{0}^{1}: \foncfast{\R^{\n\mu}}{v}{F_{w} (0, v)}{\R^{\n}}$. This is equivalent to showing that the differential $ D S_{0}^{1}(0): \R^{\n\mu} \to \R^{\n}$ of $S_{0}^{1}$ at $0$ is surjective.
Denote by $l$ the linear function $\foncfast{\R^{\n\mu}}{h}{\sum_{i=1}^{\mu}w_{i} h_{i} }{\R^{\n}}$. Then $S_{0}^{1} = l / \Gx$ and then $ D S_{0}^{1}(h) = D l(h) \frac{1}{\Gx(h)} + l(h) D (\frac{1}{\Gx}) (h).$ 
Since $ l(0) = 0,$ it follows that $ D S_{0}^{1}(0) = \frac{l}{\Gx(0)}$ and finally we obtain that $ D S_{0}^{1}(0)$ is surjective.
\end{proof}

By applying Propositions~\ref{assumption-lsc},~\ref{steadily-zero} and~\ref{full-rank-zero} along with Theorem~\ref{condition-upstream}, we directly deduce that the \normalized\ Markov chain associated to a step-size adaptive $\pare{\mu/\mu_{w}, \lambda}$-ES is a $\varphi$-irreducible aperiodic T-chain. More formally, the next proposition holds.

\begin{proposition}
Let $\f$ be a scaling-invariant function defined as $\varphi\circ g$ where $\varphi$ is strictly increasing and $g$ is a continuous scaling-invariant function with Lebesgue negligible level sets. Assume that $\{Z_k \, ; k \in \mathbb{N}\}$ is the \normalized\ Markov chain associated to a step-size adaptive $\pare{\mu/\mu_{w}, \lambda}$-ES as defined in Proposition~\ref{prop:MC} such that Assumptions A1 and A5 are satisfied. Then $\{Z_k \, ; k \in \mathbb{N}\}$ is a $\varphi$-irreducible aperiodic $T$-chain, and every compact set is a small set.

In particular, if $\f$ satisfies F1 or F2, the assumption above on $f$ holds and thus the conclusions above are valid.
\label{normal-tchain}
\end{proposition}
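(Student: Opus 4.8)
The statement follows by a direct application of the practical criterion of Theorem~\ref{condition-upstream} (from \cite{chotard2019verifiable}), once its hypotheses have been collected from the three preceding propositions. Concretely, the plan is as follows. First I would recall, via Proposition~\ref{assumption-lsc}, that the \normalized\ chain satisfies $Z_{k+1} = F_{w}(Z_{k},\alpha_{\f}(x^{\star}+Z_{k},U_{k+1}))$ and hence fits the non-linear state-space model \eqref{nonlinear-alpha}, and that Assumptions B1--B5 hold: B1--B3 come from A5, B5 (that $F_{w}$ is $C^{1}$) comes from A1 together with the explicit form \eqref{homogeneousFunction} of $F_{w}$, and B4 (existence of a density $(z,u)\mapsto p_{z}^{\f}(u)$ that is lower semi-continuous) comes from \cite[Proposition 5.2]{chotard2019verifiable} applied to the continuous scaling-invariant function $g$ with Lebesgue-negligible level sets, combined with Lemma~\ref{selection-function-increasing-transformation} which gives $\alpha_{\f}=\alpha_{g}$, so that $p_{z}^{\f}=p_{z}^{g}$ of \eqref{mupositive} is the density in question.

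Second, I would invoke Proposition~\ref{steadily-zero} to obtain that $0$ is a steadily attracting state of the control model CM($F_{w}$), and Proposition~\ref{full-rank-zero} to obtain the existence of $w\in\mathcal{O}_{0}^{1}$ with $\mathrm{rank}(C_{0}^{1}(w))=\n$ (in fact already at the first step $k=1$). These are precisely the two ingredients demanded by Theorem~\ref{condition-upstream}: a steadily attracting state $z\in\ZZ$, and some $k\in\mathbb{N}\setminus\{0\}$ and a path $\mathbf{w}\in\mathcal{O}_{z}^{k}$ along which the controllability matrix has full rank $\n$. Applying that theorem then yields directly that $\{Z_{k}\,;\,k\in\mathbb{N}\}$ is a $\varphi$-irreducible aperiodic $T$-chain and that every compact set is a small set.

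Third, for the ``in particular'' clause, I would observe that both F1 and F2 furnish a decomposition $\f=\varphi\circ g$ with $\varphi$ strictly increasing and $g$ continuous, scaling-invariant, with Lebesgue-negligible level sets: under F2 a nontrivial linear function is such a $g$ outright, and under F1 a $C^{1}$ scaling-invariant function with a unique global argmin has Lebesgue-negligible level sets by \cite[Proposition 4.2]{scaling2021}. Hence the general hypothesis on $\f$ imposed in the proposition is met, and the conclusions carry over verbatim.

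I do not anticipate a genuine obstacle in this proof: the entire difficulty was front-loaded into Propositions~\ref{steadily-zero} and \ref{full-rank-zero} (verifying steady attractivity of $0$ via an explicit recursive construction of a controlled trajectory, and surjectivity of $DS_{0}^{1}(0)=l/\Gx(0)$ using $l(0)=0$), so what remains is a clean citation-and-assembly argument. The only subtlety worth flagging is that the density/lower-semicontinuity hypothesis B4 must be available for $\f$ itself and not merely for $g$, which is exactly why Lemma~\ref{selection-function-increasing-transformation} ($\alpha_{\f}=\alpha_{g}$) is the linchpin that lets the density $p_{z}^{\f}=p_{z}^{g}$ of \eqref{mupositive} be used in place of an otherwise unavailable density for the possibly discontinuous $\f$.
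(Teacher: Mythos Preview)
Your proposal is correct and follows essentially the same approach as the paper: the paper also dispenses with a formal proof here, stating that the result follows directly by applying Propositions~\ref{assumption-lsc}, \ref{steadily-zero} and \ref{full-rank-zero} together with Theorem~\ref{condition-upstream}. Your write-up is in fact more detailed than the paper's one-line justification, and your remark about Lemma~\ref{selection-function-increasing-transformation} being the linchpin for passing from $g$ to $\f$ is accurate and well observed.
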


\subsection{Convergence in distribution of the step-size multiplicative factor}

In order to prove that $\{Z_k \, ; k \in \mathbb{N}\}$ satisfies a geometric drift condition, we investigate the distribution of $\{Z_k \, ; k \in \mathbb{N}\}$ outside of a compact set (small set). Intuitively, when $Z_k$ is very large, i.e.\  $X_k - x^{\star}$ large compared to the step-size $\sigma_k$, the algorithm sees the function $f$ in a small neighborhood from $X_k - x^{\star}$ where $f$ resembles a linear function (this holds under regularity conditions on the level sets of $f$). Formally we prove that for all $\new{k \in \mathbb{N}}$, \del{$z\in \R^{\n},$}the step-size multiplicative factor $\Gx\pare{\alpha_{\f}(x^{\star} + z, U_{\new{k+}1})}$ converges in distribution\footnote{Recall that a sequence of real-valued random variables $\acco{Y_{k}}_{k \in \mathbb{N}}$ converges in distribution to a random variable $Y$ if 
$\lim_{k\to\infty} F_{Y_{k}}(x) = F_{Y}(x)$
for all continuity point $x$ of $F_{Y}$, where $F_{Y_{k}}$ and $F_{Y}$ are respectively the cumulative distribution functions of $Y_{k}$ and $Y.$

The Portmanteau lemma~\cite{billingsley1999convergence} ensures that  $\acco{Y_{k}}_{k \in \mathbb{N}}$ converges in distribution to $Y$ if and only if for all bounded and continuous function $\varphi$,
$ \lim_{k\to\infty} \mathbb{E}\croc{ \varphi(Y_{k}) } = \mathbb{E}\croc{ \varphi(Y) }$.  
} 
towards the step-size change on nontrivial linear functions $\Gl$ defined in~\eqref{step-size-notation}\new{,  when $\|z\|$ goes to $\infty$}.

To do so we derive in Proposition~\ref{dynamic-linear} an intermediate result that requires to introduce a specific nontrivial linear function $l_{z}^{f}$ defined as follows.

We consider a scaling-invariant function $\f$ with respect to its unique global argmin $x^{\star}$. Then the function $\tilde{\f}: x \mapsto f(x^{\star} + x) - \f(x^{\star})$ is $C^{1}$ scaling-invariant with respect to $0$ which is the unique global argmin. There exists a vector in the closed unit ball $z_{0}^{\f} \in \overline{\mathbf{B}\pare{0, 1}}$ whose $\tilde{f}$-level set is included in the closed unit ball, that is $\level_{\tilde{\f}, z_{0}^{\f} } \subset \overline{\mathbf{B}\pare{0, 1}}$ and such that for all $z \in \level_{\tilde{\f}, z_{0}^{\f} }$, the scalar product between $z$ and the gradient of $f$ at $x^{\star} + z$ satisfies $z^{\top}\nabla \f(x^{\star} + z) > 0$ \cite[Corollary 4.1 and Proposition 4.10]{scaling2021}. In addition, any half-line of origin $0$ intersects the level set $\level_{\tilde{\f}, z_{0}^{\f} }$ at a unique point.  We denote for all $z \neq 0$ by $ t_{z}^{\f} $ the unique scalar of $(0, 1]$ such that 
$
t_{z}^{f}\frac{z}{\| z \|}$ belongs to the level set $  \mathcal{L}_{\tilde{f},z_{0}^{f}} \subset \overline{\mathbf{B}\pare{0, 1}}.
$
We finally define for all $z \neq 0$, the nontrivial linear function $l_{z}^{f}$ for all $w \in \R^{\n}$ as 
\begin{align}
l_{z}^{f}(w) = w^{\top}\, \nabla f\pare{x^{\star} + t_{z}^{f} \frac{z}{\| z \|}}.
\label{intermediate-linear}
\end{align}
We state below the intermediate result that when $\|z\|$ goes to $\infty$, the selection random vector $\alpha_{\f}(x^{\star} + z, U_{1})$ has asymptotically the distribution of the selection random vector on the linear function $l_{z}^{\f}$. According to Lemma~\ref{alpha-linearity}, the latter does not depend on the current location and is equal to the distribution of $\alpha_{ l_{z}^{\f} }(0, U_{1})$.

\begin{proposition}
Let $f$ be a $C^{1}$ scaling-invariant function with a unique global argmin. For all $\varphi: \R^{n\mu} \to \R$ continuous and bounded,
$
\lim_{\| z \| \to \infty} \int \varphi(u) \pare{p_{z}^{f}(u) - p_{z}^{ l_{z}^{f} }(u)}\mathrm{d}u  = 0
$
 where $l_{z}^{\f}$ is defined as in~\eqref{intermediate-linear}. In other words, the selection random vectors $\alpha_{\f}(x^{\star} + z, U_{1})$ and  $\alpha_{ l_{z}^{\f} }(0, U_{1})$  have asymptotically the same distribution when $\|z\|$ goes to $\infty$.
\label{dynamic-linear}
\end{proposition}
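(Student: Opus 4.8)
The plan is to use the scaling invariance of $f$ to reduce, as $\|z\|$ grows, the behaviour of $f$ near $x^\star+z$ to its behaviour near a point of the reference level set $\mathcal{L}_{\tilde f,z_0^f}$, on which $\tilde f:x\mapsto f(x^\star+x)-f(x^\star)$ is $C^1$ with a nonvanishing gradient transverse to rays --- the transversality $z^{\top}\nabla\tilde f(z)>0$ on $\mathcal{L}_{\tilde f,z_0^f}$ being exactly what \cite[Corollary~4.1 and Proposition~4.10]{scaling2021} provides, and being the reason $l_z^f$ is built from the gradient at $t_z^f z/\|z\|$ rather than at $z/\|z\|$. Since $l_z^f$ itself varies with $z$, I would argue along subsequences: it is enough to prove that for every sequence $(z_k)$ with $\|z_k\|\to\infty$ and $z_k/\|z_k\|\to\theta\in S^{n-1}$ the integral tends to $0$, the general limit following by the usual compactness/subsequence argument on the unit sphere. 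Fix such a sequence and set $t_k:=t_{z_k}^f\in(0,1]$, $p_k:=t_k\,z_k/\|z_k\|\in\mathcal{L}_{\tilde f,z_0^f}$ and $\rho_k:=t_k/\|z_k\|\to 0$, so that $\rho_k z_k=p_k$. The map $\theta'\mapsto t_{\theta'}$ (characterised by $t_{\theta'}\theta'\in\mathcal{L}_{\tilde f,z_0^f}$) is continuous by the implicit function theorem, since $\partial_t\bigl[\tilde f(t\theta')\bigr]=(\theta')^{\top}\nabla\tilde f(t\theta')=t^{-1}(t\theta')^{\top}\nabla\tilde f(t\theta')>0$ on $\mathcal{L}_{\tilde f,z_0^f}$; hence $t_k\to t_\theta$, $p_k\to p_\infty:=t_\theta\theta\in\mathcal{L}_{\tilde f,z_0^f}$ and $g_k:=\nabla\tilde f(p_k)\to g_\infty:=\nabla\tilde f(p_\infty)\neq 0$. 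Writing $l_\theta(v):=v^{\top}g_\infty$, noting that $l_{z_k}^f(v)=v^{\top}g_k$ and that $p_z^{l_z^f}=p_0^{l_z^f}$ by Lemma~\ref{alpha-linearity}, it suffices to prove $\int\varphi(u)\,p_{z_k}^f(u)\,\mathrm{d}u\to\int\varphi(u)\,p_0^{l_\theta}(u)\,\mathrm{d}u$ and $\int\varphi(u)\,p_0^{l_{z_k}^f}(u)\,\mathrm{d}u\to\int\varphi(u)\,p_0^{l_\theta}(u)\,\mathrm{d}u$.

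For the first convergence I would invoke scaling invariance, which gives for all $a,b\in\R^{n}$ the equivalence $f(x^\star+z_k+a)<f(x^\star+z_k+b)\iff f(x^\star+p_k+\rho_k a)<f(x^\star+p_k+\rho_k b)$ and the analogous equivalence for the event defining $Q_{z_k}^f$. The first-order identity $\rho_k^{-1}\bigl[f(x^\star+p_k+\rho_k a)-f(x^\star+p_k+\rho_k b)\bigr]=\int_0^1(a-b)^{\top}\nabla f\bigl(x^\star+p_k+\rho_k b+s\rho_k(a-b)\bigr)\,\mathrm{d}s$, together with $\rho_k\to 0$, $p_k\to p_\infty$ and continuity of $\nabla f$ near $x^\star+p_\infty$, shows this quantity converges to $(a-b)^{\top}g_\infty=l_\theta(a)-l_\theta(b)$; hence whenever $l_\theta(a)\neq l_\theta(b)$ the sign of the left-hand difference is, for $k$ large, the sign of $l_\theta(a)-l_\theta(b)$. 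Taking $(a,b)=(u^i,u^{i+1})$ yields $\mathds{1}_{f(x^\star+z_k+u^i)<f(x^\star+z_k+u^{i+1})}\to\mathds{1}_{l_\theta(u^i)<l_\theta(u^{i+1})}$ for every $u$ outside the Lebesgue-null hyperplane $\{(u^i-u^{i+1})^{\top}g_\infty=0\}$ (here $g_\infty\neq 0$ is essential), while taking $b=w$ fixed and $a=y$ yields, for $y$ outside a Gaussian-null hyperplane, $\mathds{1}_{f(x^\star+p_k+\rho_k y)\leq f(x^\star+p_k+\rho_k w)}\to\mathds{1}_{l_\theta(y)\leq l_\theta(w)}$, so by dominated convergence $Q_{z_k}^f(w)=\mathbb{E}_{Y\sim\mathcal{N}_{n}}\bigl[\mathds{1}_{f(x^\star+p_k+\rho_k Y)\leq f(x^\star+p_k+\rho_k w)}\bigr]\to\mathbb{E}_Y\bigl[\mathds{1}_{l_\theta(Y)\leq l_\theta(w)}\bigr]=Q_0^{l_\theta}(w)$ for every $w$. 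Substituting these limits into the explicit density formula~\eqref{mupositive} of Lemma~\ref{lem:SI-alpha} (applicable since $f$ has Lebesgue-negligible level sets \cite[Proposition~4.2]{scaling2021}) gives $p_{z_k}^f(u)\to p_0^{l_\theta}(u)$ for a.e.\ $u$; since $|\varphi(u)\,p_{z_k}^f(u)|\leq\|\varphi\|_\infty\,\frac{\lambda!}{(\lambda-\mu)!}\prod_{i=1}^{\mu}p_{\mathcal{N}_{n}}(u^i)$, which is integrable over $\R^{n\mu}$, dominated convergence gives $\int\varphi(u)\,p_{z_k}^f(u)\,\mathrm{d}u\to\int\varphi(u)\,p_0^{l_\theta}(u)\,\mathrm{d}u=:I_\theta$.

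The second convergence is of the same nature but simpler: by Lemma~\ref{alpha-linearity} and~\eqref{mupositive}, $p_0^{l_{z_k}^f}$ is given by the same expression with $l_\theta$ replaced by $l_{z_k}^f=\langle\,\cdot\,,g_k\rangle$; as $g_k\to g_\infty\neq 0$, the indicators $\mathds{1}_{\langle u^i,g_k\rangle<\langle u^{i+1},g_k\rangle}$ converge for a.e.\ $u$, and $Q_0^{l_{z_k}^f}(w)=P(\langle Y,g_k\rangle\leq\langle w,g_k\rangle)\to P(\langle Y,g_\infty\rangle\leq\langle w,g_\infty\rangle)=Q_0^{l_\theta}(w)$ for every $w$ by dominated convergence, so with the same integrable bound $\int\varphi(u)\,p_0^{l_{z_k}^f}(u)\,\mathrm{d}u\to I_\theta$. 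Hence $\int\varphi(u)\,\bigl(p_{z_k}^f(u)-p_{z_k}^{l_{z_k}^f}(u)\bigr)\,\mathrm{d}u\to I_\theta-I_\theta=0$, which is what is needed along the subsequence, and therefore in the limit $\|z\|\to\infty$.

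The main obstacle is the first convergence, and specifically identifying the limiting linear function as exactly $l_z^f$: this forces the correct rescaling $\rho_k=t_{z_k}^f/\|z_k\|$ that lands on $\mathcal{L}_{\tilde f,z_0^f}$, where $\nabla\tilde f$ does not vanish --- a $C^1$ scaling-invariant function may have critical points away from $x^\star$, so a naive first-order expansion around $x^\star+z/\|z\|$ would not be legitimate --- together with the continuity of $\theta'\mapsto t_{\theta'}$ and of the crossing-point gradient, which both rest on the transversality property from \cite{scaling2021}. Everything else reduces to routine dominated-convergence arguments based on the sign stability of first-order expansions of $C^1$ functions.
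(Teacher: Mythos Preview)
Your argument is correct and takes a genuinely different route from the paper's. The paper builds an auxiliary function $g(x,\rho)$ on the compact $(\mathcal{L}_{\tilde f,z_0^f}+\overline{\mathbf B(0,\delta_f)})\times[0,\delta_f]$ (after truncating the integrals to balls of radius $\sqrt{\|z\|}$ via an intermediate lemma), where $\rho$ plays the role of $1/\|z\|$ and $x$ the role of $t_z^f z/\|z\|$; it then checks continuity of $g$ via the mean value theorem and invokes \emph{uniform} continuity on this compact to conclude that $g(t_z^f z/\|z\|,1/\|z\|)-g(t_z^f z/\|z\|,0)\to 0$, which is exactly the desired difference. Your approach sidesteps the uniform-continuity machinery entirely: by extracting a subsequence with $z_k/\|z_k\|\to\theta$ you freeze both moving targets --- the base point $p_k$ on the level set and the comparison linear function $l_{z_k}^f$ --- at a common limit $l_\theta$, and then two separate dominated-convergence arguments on the explicit density \eqref{mupositive} do the work. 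What the paper's route buys is a single self-contained estimate that does not pass through subsequences and is closer in spirit to a quantitative bound; what your route buys is a cleaner, more modular argument relying only on the implicit function theorem (for continuity of $\theta'\mapsto t_{\theta'}$), pointwise sign stability of first-order expansions, and standard domination, with no need for the preliminary truncation lemma or the $\delta_f$-neighbourhood construction from \cite[Proposition~4.11]{scaling2021}.
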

\begin{proof}[Proof idea]
We sketch the proof idea and refer to Appendix~\ref{proof-dynamic-linear} for the full proof.
Note beforehand that $\alpha_{\f}(x^{\star} + z, U_{1}) = \alpha_{\tilde{\f}}(z, U_{1})$ so that we assume without loss of generality that $x^{\star} = 0$ and $\f(0) = 0$.
If $f$ is a $C^{1}$ scaling-invariant function with a unique global argmin, we can construct\del{ thanks to~\cite[Proposition 4.11]{scaling2021}} a positive number $\delta_{\f}$ such that for all element $z$ of the compact set $\mathcal{L}_{f,z_{0}^{\f} }  + \overline{\mathbb{B}(0, 2 \delta_{f})}$, $z^{\top}\nabla \f(z) > 0$ \cite[Proposition 4.11]{scaling2021}.
In particular, this result produces a compact neighborhood of the level set $\mathcal{L}_{f,z_{0}^{\f} }$ where $\nabla f$ does not vanish.
This helps to establish the limit of $\mathbb{E}\croc{ \varphi( \alpha_{\f}(z, U_{1}) ) }$ when $\|z \|$ goes to $\infty$.
We \del{do}\new{prove} it by exploiting the uniform continuity of a function that we obtain thanks to its continuity on the compact set $\pare{\mathcal{L}_{f,z_{0}^{\f}} + \overline{\mathbb{B}(0, \delta_{f})} }\times [0, \delta_{f}]$ \cite{auger2013linear}. 
\end{proof}

Thanks to Proposition~\ref{dynamic-linear} and Proposition~\ref{linear-invariance}, we can finally state in the next theorem the convergence in distribution of the step-size multiplicative factor for $\f$ satisfying F1 towards $\Gl$ defined in~\eqref{step-size-notation}.
\begin{theorem}
\label{scale-linear}
Let $\f$ be a scaling-invariant function satisfying F1. Assume that $\{U_{k+1} \,; k \in \mathbb{N} \}$ satisfies Assumption A5, $\Gx$ is continuous and satisfies Assumption A2, i.e.\  $\Gx$ is invariant under rotation. Then for all natural integer $k$, $\Gx\pare{\alpha_{\f}(x^{\star} + z, U_{k+1} ) }$ converges in distribution to $\Gl$ defined in~\eqref{step-size-notation}, when $\| z \| \to \infty.$
\end{theorem}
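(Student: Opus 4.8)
The plan is to combine Proposition~\ref{dynamic-linear}, which says that far from the optimum the selection vector $\alpha_{\f}(x^{\star}+z,U_{1})$ is distributed asymptotically like the selection vector on the auxiliary nontrivial linear function $l_{z}^{\f}$ of \eqref{intermediate-linear}, with Proposition~\ref{linear-invariance}, which says that on \emph{any} nontrivial linear function the step-size multiplicative factor has exactly the law of $\Gl$. First I would reduce to the case where $\f$ is itself $C^{1}$ scaling-invariant with a unique global argmin: writing $\f=\varphi\circ g$ as in F1, Lemma~\ref{selection-function-increasing-transformation} gives $\alpha_{\f}=\alpha_{g}$, so $\Gx(\alpha_{\f}(x^{\star}+z,U_{k+1}))=\Gx(\alpha_{g}(x^{\star}+z,U_{k+1}))$ and $g$ has the required regularity. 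Moreover, by Assumption A5 the vector $U_{k+1}$ has the law of $U_{1}$, hence $\Gx(\alpha_{\f}(x^{\star}+z,U_{k+1}))$ has the law of $\Gx(\alpha_{\f}(x^{\star}+z,U_{1}))$ for every $k$, and it suffices to treat $k=0$.

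Next I would invoke the Portmanteau characterization recalled in the footnote preceding the theorem: convergence in distribution as $\|z\|\to\infty$ (equivalently, along every sequence $z_{n}$ with $\|z_{n}\|\to\infty$) follows once I show that $\mathbb{E}\croc{\psi(\Gx(\alpha_{\f}(x^{\star}+z,U_{1})))}\to\mathbb{E}\croc{\psi(\Gl)}$ for every bounded continuous $\psi:\R\to\R$. The point is that $u\mapsto\psi(\Gx(u))$ is continuous and bounded on $\R^{\n\mu}$, because $\Gx$ is continuous by hypothesis and $\psi$ is bounded, so it is an admissible test function for Proposition~\ref{dynamic-linear}. Using the density $p_{z}^{\f}$ of $\alpha_{\f}(x^{\star}+z,U_{1})$ from Lemma~\ref{lem:SI-alpha} and Proposition~\ref{assumption-lsc}, I write
\[
\mathbb{E}\croc{\psi(\Gx(\alpha_{\f}(x^{\star}+z,U_{1})))}
= \int \psi(\Gx(u))\pare{p_{z}^{\f}(u)-p_{z}^{l_{z}^{\f}}(u)}\diff u
+ \int \psi(\Gx(u))\, p_{z}^{l_{z}^{\f}}(u)\,\diff u .
\]
By Proposition~\ref{dynamic-linear} applied with $\varphi=\psi\circ\Gx$, the first integral tends to $0$ as $\|z\|\to\infty$.

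For the second integral, I would first check that $l_{z}^{\f}$ is a \emph{nontrivial} linear function: by the construction recalled before Proposition~\ref{dynamic-linear}, the point $t_{z}^{\f}\,z/\|z\|$ lies in the level set $\mathcal{L}_{\tilde{\f},z_{0}^{\f}}$, on which $\zeta^{\top}\nabla\f(x^{\star}+\zeta)>0$, so $\nabla\f(x^{\star}+t_{z}^{\f}z/\|z\|)\neq 0$ and hence $l_{z}^{\f}\neq 0$. Therefore Proposition~\ref{linear-invariance}, whose hypotheses (Assumptions A2 and A5) are in force, applies to $l_{z}^{\f}$ and gives that $\Gx(\alpha_{l_{z}^{\f}}(0,U_{1}))$ has the law of $\Gl$; equivalently $\int\psi(\Gx(u))\,p_{z}^{l_{z}^{\f}}(u)\,\diff u=\mathbb{E}\croc{\psi(\Gl)}$, which does not depend on $z$. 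Combining the two pieces yields $\mathbb{E}\croc{\psi(\Gx(\alpha_{\f}(x^{\star}+z,U_{1})))}\to\mathbb{E}\croc{\psi(\Gl)}$ for every bounded continuous $\psi$, which is the claimed convergence in distribution, and the case of general $k$ follows from the reduction above.

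The genuine mathematical content — the asymptotic replacement of the objective by a linear function near a far-away incumbent — is entirely absorbed into Proposition~\ref{dynamic-linear}, so there is no real obstacle left here; the only points needing a line of care are that $\psi\circ\Gx$ is a legitimate test function (immediate from continuity of $\Gx$) and that $l_{z}^{\f}$ is nontrivial so that Proposition~\ref{linear-invariance} is applicable (immediate from $z^{\top}\nabla\f(x^{\star}+z)>0$ on $\mathcal{L}_{\tilde{\f},z_{0}^{\f}}$).
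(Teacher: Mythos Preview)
Your proposal is correct and follows essentially the same route as the paper: apply the Portmanteau lemma with test functions of the form $\psi\circ\Gx$, use Proposition~\ref{dynamic-linear} to kill the difference $\int\psi(\Gx(u))(p_{z}^{\f}-p_{z}^{l_{z}^{\f}})(u)\,\diff u$, and use Proposition~\ref{linear-invariance} to identify the remaining term with $\mathbb{E}[\psi(\Gl)]$. You are slightly more explicit than the paper on three harmless points---the reduction from $\f$ to $g$ via Lemma~\ref{selection-function-increasing-transformation} (needed because Proposition~\ref{dynamic-linear} is stated for a $C^{1}$ function), the reduction to $k=0$ via A5, and the verification that $l_{z}^{\f}$ is nontrivial---but the argument is the same.
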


\begin{proof}
Let $\varphi: \Gamma(\R^{n\mu}) \to \R$ be continuous and bounded. It is enough to prove that $\dsp\lim_{\|z\| \to \infty}\mathbb{E}_{U_{1}\sim\N_{\n\lambda}}\croc{ \varphi\pare{\Gx\pare{\alpha_{f}(x^{\star}+z, U_{1})}} } = \mathbb{E}\croc{ \varphi\pare{\Gl} } $
 and apply the Portmanteau lemma.
By Proposition~\ref{dynamic-linear}, $\dsp\lim_{\| z \| \to \infty} \int \varphi\pare{ \Gx(u)} \, \pare{p_{z}^{f}(u) - p_{z}^{ l_{z}^{\f} }(u)}\mathrm{d}u = 0$. Then $\dsp\lim_{\| z \| \to \infty} \mathbb{E}_{U_{1}\sim\N_{\n\lambda}}\croc{ \varphi\pare{\Gx\pare{\alpha_{f}(x^{\star}+z, U_{1})}} } - \mathbb{E}_{U_{1}\sim\N_{\n\lambda}}\croc{ \varphi\pare{\Gx\pare{\alpha_{ l_{z}^{f} }(x^{\star}+z, U_{1}) }}} = 0$.
With Proposition~\ref{linear-invariance}, $\mathbb{E}_{U_{1}\sim\N_{\n\lambda}}\croc{ \varphi\pare{\Gx\pare{\alpha_{ l_{z}^{f} }(x^{\star}+z, U_{1}) }}} = \mathbb{E}\croc{ \varphi\pare{\Gl} }$.
\end{proof}

\subsection{Geometric ergodicity of the \normalized\ Markov chain}
\label{geometric-dirft-condition-for-main}

The convergence in distribution of the step-size multiplicative factor while optimizing a function $\f$ that satisfies F1 proven in Theorem~\ref{scale-linear}  allows us to control the behavior of the \normalized\ chain when its norm goes to $\infty$.
More specifically, we use it to show the geometric ergodicity of $\{Z_k \, ; k \in \mathbb{N}\}$ defined as in Proposition~\ref{prop:MC} for $\f$ satisfying F1 or F2. Beforehand, let us show the following proposition, which is a first step towards the construction of a geometric drift function.
\begin{proposition} 
Let $\f$ be a scaling-invariant function that satisfies F1 or F2 and $\{Z_k \, ; k \in \mathbb{N}\}$ be the \normalized\ Markov chain associated to a step-size adaptive $\pare{\mu/\mu_{w}, \lambda}$-ES defined as in Proposition~\ref{prop:MC}. We assume that $\Gx$ is continuous and Assumptions A2, A3 and A5 are satisfied. Then for all $\alpha > 0,$
$
\lim_{\| z \|\to\infty }\frac{\mathbb{E}\croc{ \|Z_{1}\|^{\alpha} | Z_{0} = z} }{ \| z \|^{\alpha} } = \mathbb{E}\croc{  \frac{1}{[\Gl]^{\alpha}} }
$
where $\Gl$ is the random variable defined in~\eqref{step-size-notation} that represents the step-size change on any nontrivial linear function. 
\label{geometric-asymptotic}
\end{proposition}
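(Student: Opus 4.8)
The starting point is to make the one-step update explicit. By Proposition~\ref{prop:MC} together with~\eqref{homogeneousFunction}, conditionally on $Z_{0} = z$ with $z\neq 0$ we have $Z_{1} = \Fw\pare{z, \alpha_{\f}(x^{\star}+z, U_{1})}$, so that writing $v_{z} = \alpha_{\f}(x^{\star}+z, U_{1})$,
\begin{align}
\frac{\mathbb{E}\croc{ \|Z_{1}\|^{\alpha} \mid Z_{0}=z }}{\|z\|^{\alpha}}
= \mathbb{E}_{U_{1}\sim\N_{\n\lambda}}\croc{ \frac{1}{\Gx(v_{z})^{\alpha}}\, \left\| \frac{z}{\|z\|} + \frac{w^{\top} v_{z}}{\|z\|} \right\|^{\alpha} }.
\end{align}
The plan is: (i) show that the bracketed random variable equals $1/\Gx(v_{z})^{\alpha}$ up to an error that is uniformly integrable and vanishes as $\|z\|\to\infty$, and (ii) show that $\mathbb{E}\croc{1/\Gx(v_{z})^{\alpha}}$ converges to $\mathbb{E}\croc{1/\Gl^{\alpha}}$. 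Two elementary a priori bounds drive everything. First, the components of $\alpha_{\f}(x^{\star}+z,u)$ are $\mu$ of the $\lambda$ vectors that make up $u\in\R^{\n\lambda}$, hence $\|\alpha_{\f}(x^{\star}+z,u)\|\le\|u\|$ for every $z$ and $u$; combined with Cauchy--Schwarz this gives $\|w^{\top} v_{z}\|\le\|w\|\,\|U_{1}\|$ $P$-almost surely, with a bound \emph{independent of $z$}, and $\|U_{1}\|$ has finite moments of all orders since $U_{1}\sim\N_{\n\lambda}$. Second, Assumption~A3 gives $\Gx(v_{z})\ge m_{\Gx}>0$, hence $1/\Gx(v_{z})^{\alpha}\le m_{\Gx}^{-\alpha}$ (and likewise $1/\Gl^{\alpha}\le m_{\Gx}^{-\alpha}$).

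For step (i), the triangle inequality gives $\bigl|\,\|z/\|z\| + w^{\top}v_{z}/\|z\|\| - 1\,\bigr|\le \|w^{\top}v_{z}\|/\|z\| \le \|w\|\,\|U_{1}\|/\|z\|$. Using the elementary inequalities $|a^{\alpha}-b^{\alpha}|\le|a-b|^{\alpha}$ for $0<\alpha\le 1$ and $|a^{\alpha}-b^{\alpha}|\le\alpha\max(a,b)^{\alpha-1}|a-b|$ for $\alpha>1$ (with $a,b\ge 0$), one gets, for $\|z\|\ge 1$,
\begin{align}
\left| \left\| \frac{z}{\|z\|} + \frac{w^{\top}v_{z}}{\|z\|} \right\|^{\alpha} - 1 \right| \le \frac{C_{\alpha}\!\pare{\|U_{1}\|}}{\|z\|^{\min(\alpha,1)}},
\end{align}
where $C_{\alpha}\!\pare{\|U_{1}\|}$ is an integrable random variable: a constant times $\pare{\|w\|\|U_{1}\|}^{\alpha}$ when $\alpha\le 1$, and a constant times $\pare{1+\|w\|\|U_{1}\|}^{\alpha-1}\|w\|\|U_{1}\|$ when $\alpha>1$ (bounding $\|U_{1}\|/\|z\|\le\|U_{1}\|$). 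Multiplying by $1/\Gx(v_{z})^{\alpha}\le m_{\Gx}^{-\alpha}$ and taking expectations gives $\bigl|\,\mathbb{E}\croc{\|Z_{1}\|^{\alpha}\mid Z_{0}=z}/\|z\|^{\alpha} - \mathbb{E}\croc{1/\Gx(v_{z})^{\alpha}}\,\bigr|\le m_{\Gx}^{-\alpha}\,\mathbb{E}\croc{C_{\alpha}\!\pare{\|U_{1}\|}}\,\|z\|^{-\min(\alpha,1)}\to 0$.

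For step (ii), $\Gx(v_{z})=\Gx(\alpha_{\f}(x^{\star}+z,U_{1}))$ converges in distribution to $\Gl$ as $\|z\|\to\infty$: this is Theorem~\ref{scale-linear} when $\f$ satisfies F1, and it follows at once from Proposition~\ref{linear-invariance} when $\f$ satisfies F2 (in that case the law is exactly that of $\Gl$ for every $z$). Since $x\mapsto x^{-\alpha}$ is continuous on $[m_{\Gx},\infty)$, the continuous mapping theorem gives $1/\Gx(v_{z})^{\alpha}\to 1/\Gl^{\alpha}$ in distribution; as all these variables take values in the compact interval $[0,m_{\Gx}^{-\alpha}]$, convergence in distribution upgrades to convergence of expectations (apply the Portmanteau lemma to a bounded continuous function agreeing with the identity on $[0,m_{\Gx}^{-\alpha}]$). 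Combining steps (i) and (ii) by the triangle inequality yields the claim. The main obstacle is the bookkeeping in step (i): producing a single $z$-uniform integrable dominating function valid simultaneously for $0<\alpha\le 1$ and $\alpha>1$, and keeping the ``convergence in distribution $\Rightarrow$ convergence of means'' passage clean — this is precisely what the a.s.\ bound $\|\alpha_{\f}(x^{\star}+z,\cdot)\|\le\|\cdot\|$ and Assumption~A3 are for, since together they make all relevant quantities uniformly bounded once $\|z\|\ge 1$.
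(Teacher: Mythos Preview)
Your proof is correct and follows essentially the same route as the paper: split the ratio into $\mathbb{E}\croc{1/\Gx(v_{z})^{\alpha}}$ plus a remainder controlled by the uniform bounds $\|w^{\top}v_{z}\|\le\|w\|\|U_{1}\|$ and $\Gx\ge m_{\Gx}$, then pass to the limit in $\mathbb{E}\croc{1/\Gx(v_{z})^{\alpha}}$ via Theorem~\ref{scale-linear} (F1) or Proposition~\ref{linear-invariance} (F2) together with boundedness of $x\mapsto x^{-\alpha}$ on $[m_{\Gx},\infty)$. The only cosmetic difference is that the paper handles step~(i) by a single dominated-convergence argument with dominating function $(1+(1+\|w\|\|U_{1}\|)^{\alpha})/m_{\Gx}^{\alpha}$, whereas you write out explicit $|a^{\alpha}-b^{\alpha}|$ bounds split according to $\alpha\le 1$ and $\alpha>1$.
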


\begin{proof}
Let $z \neq 0$. Since $Z_{1} = F_{w} \pare{Z_{0}, \alpha_{\f}\pare{x^{\star}+Z_{0}, U_{1}}} = \frac{Z_{0} +  w^{\top} \alpha_{\f}\pare{x^{\star}+Z_{0}, U_{1}}   }{ \Gx \pare{\alpha_{\f}\pare{x^{\star}+Z_{0}, U_{1}} } },$ then
$
\mathbb{E}\croc{ \| Z_{1} \|^{\alpha} | Z_{0} = z} / \| z \|^{\alpha} - \mathbb{E}\croc{1 / \Gx \pare{ \alpha_{f}(x^{\star}+z, U_{1}) }^{\alpha}  } =  \mathbb{E}\croc{ \frac{ \Big\| \frac{z}{\| z \|} + \frac{w^{\top} \alpha_{f}(x^{\star}+z, U_{1})}{\| z \|}  \Big\|^{\alpha} - 1} {\Gx \pare{ \alpha_{f}(x^{\star}+z, U_{1}) }^{\alpha} }  }.
$
The function $\Gx$ is lower bounded by $m_{\Gx} > 0$ thanks to  Assumption A3. In addition,  $\| w^{\top} \alpha_{\f}\pare{x^{\star}+z, U_{1}} \| \leq \| w \| \, \| U_{1} \| .$ Then the term
\begin{equation}
 \left\lvert \left\lVert \frac{z}{\| z \|} + \frac{1}{\| z \|} w^{\top} \alpha_{f}(x^{\star}+z, U_{1}) \right\lVert^{\alpha} - 1 \right \rvert / \Gx \pare{ \alpha_{f}(x^{\star}+z, U_{1}) }^{\alpha} 
\end{equation}
converges almost surely towards 0 when $\|z\|$ goes to $\infty,$ and is bounded (when $\| z \| \geq 1$) by the integrable random variable 
$\frac{ 1 +  \pare{ 1+ \| w \| \,\| U_{1} \|  }^{\alpha} }{m_{\Gx}^{\alpha}}.$ Then it follows by the dominated convergence theorem that

\begin{equation}\label{eq:inproof}
\lim_{\| z \| \to \infty} \mathbb{E}\croc{\|  Z_{1} \|^{\alpha} | Z_{0} = z} / \| z \|^{\alpha} - \mathbb{E}\croc{ 1 / \Gx \pare{ \alpha_{f}(x^{\star}+z, U_{1}) }^{\alpha}  } = 0.
\end{equation}

Since $\dsp x \mapsto 1/x^{\alpha}$ is continuous and bounded on $\Gamma\pare{\R^{n\mu}} \subset [m_{\Gamma}, \infty$), then for $f$ satisfying F1, Theorem \ref{scale-linear} implies that $\dsp\lim_{\| z \| \to \infty}\mathbb{E}\croc{1 / \Gx \pare{ \alpha_{f}(x^{\star}+z, U_{1}) }^{\alpha}   }$ exists and is equal to $\mathbb{E}\croc{ 1 / [\Gl]^\alpha }$. Starting from \eqref{eq:inproof} and using Proposition~\ref{linear-invariance} to replace $\mathbb{E}\croc{\frac{1}{\Gx \pare{ \alpha_{f}(x^{\star}+z, U_{1}) }^{\alpha} }}  $ by $\mathbb{E}\croc{\frac{1}{[\Gl]^\alpha}}$, the same conclusion holds for $\f$ satisfying F2. 
Thereby 
$ \lim_{\| z \| \to \infty} \mathbb{E}\croc{\| Z_{1} \|^{\alpha} | Z_{0} = z} / \| z \|^{\alpha} = \mathbb{E}\croc{  \frac{1}{[\Gl]^{\alpha}} } .$  
\end{proof}

We introduce the next two lemmas, that allow to go from Proposition~\ref{geometric-asymptotic} to a formulation with the multiplicative $\log$-step-size factor.

\begin{lemma} Let $\f$ be a continuous scaling-invariant function with respect to $x^{\star}$ with Lebesgue negligible level sets, let $z \in \R^{\n}.$  Assume that $\Gx$ satisfies Assumption~A4. Then
$u \mapsto \log\pare{ \Gx \pare{ \alpha_{\f}(x^{\star}+z, u)} }$ is $\N_{\n\lambda}$-integrable with 
\begin{align}
 \mathbb{E}_{U_{1}\sim\N_{\n\lambda}}\croc{ \abs{ \log\pare{ \Gx \pare{ \alpha_{\f}(x^{\star}+z, U_{1})} } } } \leq  \frac{\lambda ! \, \, \mathbb{E}_{W_{1}\sim\N_{\n\mu}}\croc{ \abs{ \log\circ \,\Gx}\pare{W_{1}} }}{(\lambda - \mu)!} .
\label{log-gamma-integrable}
\end{align}
\label{log-gamma-lemma} 
\end{lemma}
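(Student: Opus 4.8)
The plan is to bound $\abs{\log \Gx(\alpha_\f(x^\star + z, u))}$ by exploiting the explicit formula~\eqref{mupositive} for the density $p_z^\f$ and then integrate. First I would write, for $U_1 \sim \N_{\n\lambda}$, the expectation as an integral against the pushforward density:
\[
\mathbb{E}_{U_1\sim\N_{\n\lambda}}\croc{ \abs{\log \Gx\pare{\alpha_\f(x^\star + z, U_1)}} } = \int_{\R^{\n\mu}} \abs{\log\Gx(u)} \, p_z^\f(u)\, \diff u,
\]
which is legitimate because, by Proposition~\ref{assumption-lsc} (recalling that a $C^1$ scaling-invariant function with a unique global argmin and any nontrivial linear function have Lebesgue negligible level sets), $\alpha_\f(x^\star+z, U_1)$ admits the density $p_z^\f$ given in~\eqref{mupositive}.

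Next I would bound the density pointwise. From~\eqref{mupositive},
\[
p_z^\f(u) = \frac{\lambda !}{(\lambda-\mu)!}\,\bigl(1 - Q_z^\f(u^\mu)\bigr)^{\lambda-\mu} \prod_{i=1}^{\mu-1}\mathds{1}_{\f(x^\star+z+u^i) < \f(x^\star+z+u^{i+1})}\prod_{i=1}^{\mu} p_{\N_\n}(u^i).
\]
Since $0 \le 1 - Q_z^\f(u^\mu) \le 1$ and the indicator factors are at most $1$, we get the clean majorization
\[
p_z^\f(u) \le \frac{\lambda!}{(\lambda-\mu)!}\prod_{i=1}^{\mu} p_{\N_\n}(u^i) = \frac{\lambda!}{(\lambda-\mu)!}\, p_{\N_{\n\mu}}(u),
\]
using that the product of the $\mu$ standard Gaussian densities on $\R^\n$ is exactly the standard Gaussian density on $\R^{\n\mu}$.

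Combining the two displays,
\[
\int_{\R^{\n\mu}} \abs{\log\Gx(u)}\, p_z^\f(u)\,\diff u \le \frac{\lambda!}{(\lambda-\mu)!}\int_{\R^{\n\mu}} \abs{\log\Gx(u)}\, p_{\N_{\n\mu}}(u)\,\diff u = \frac{\lambda!}{(\lambda-\mu)!}\,\mathbb{E}_{W_1\sim\N_{\n\mu}}\croc{\abs{\log\circ\,\Gx}(W_1)},
\]
which is finite by Assumption~A4; this is exactly~\eqref{log-gamma-integrable}, and it establishes $\N_{\n\lambda}$-integrability of $u\mapsto\log(\Gx(\alpha_\f(x^\star+z,u)))$. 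I do not anticipate a serious obstacle here: the only points requiring care are justifying that $\f$ (equivalently $g$, via Lemma~\ref{selection-function-increasing-transformation}) has Lebesgue negligible level sets so that~\eqref{mupositive} applies — which is already recorded in Proposition~\ref{assumption-lsc} — and recognizing that $\prod_i p_{\N_\n}(u^i) = p_{\N_{\n\mu}}(u)$, a routine identity for product Gaussians.
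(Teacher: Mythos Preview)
Your proof is correct and follows essentially the same approach as the paper: bound the density $p_z^\f$ pointwise by $\frac{\lambda!}{(\lambda-\mu)!}\,p_{\N_{\n\mu}}$ using~\eqref{mupositive}, then integrate $\abs{\log\Gx}$ against this bound and invoke~A4. One minor remark: the Lebesgue negligibility of the level sets is already a hypothesis of the lemma, so you need not appeal to Proposition~\ref{assumption-lsc} for it; the density formula~\eqref{mupositive} is available directly from Lemma~\ref{lem:SI-alpha}.
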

\begin{proof}
With~\eqref{mupositive}, we have $\frac{(\lambda - \mu)!}{\lambda !} \mathbb{E}\croc{ \abs{ \log\pare{ \Gx \pare{ \alpha_{\f}(x^{\star}+z, U_{1})} }} }$ $\leq$ $\dsp\int_{\R^{\n}} \abs{ \log\circ \,\Gx}(v)$ 
$\prod_{i=1}^{\mu} p_{\N_{\n}}(v^{i}) \diff v =  \mathbb{E}\croc{ \abs{ \log\circ \,\Gx}\pare{ \mathcal{N}_{\n\mu} } },$
and A4 says that $\log\circ \,\Gx$ is $\N_{\n\mu}$-integrable.  
\end{proof}

The next lemma states that if the expected logarithm of the step-size change is positive, then we can find $\alpha > 0$ such that the limit in Proposition~\ref{geometric-asymptotic} is strictly smaller than $1$. This is the key lemma to have the condition in the main results expressed as $\mathbb{E}\croc{ \log\pare{\Gl} } > 0$, instead of $\mathbb{E}\croc{1 / [\Gl]^\alpha}  < 1$ for a positive $\alpha$~\cite{auger2013linear}.
\begin{lemma}  Assume that $\Gx$ satisfies Assumptions A3 and A4. If $\mathbb{E}\croc{ \log\pare{\Gl} } > 0$, then there exists $0 < \alpha < 1$ such that $\mathbb{E}\croc{  \frac{1}{[\Gl]^\alpha} }  < 1$, where $\Gl$ is defined in~\eqref{step-size-notation}.
\label{lyapunov-coefficient}
\end{lemma}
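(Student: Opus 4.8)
The plan is to study the function $g(\alpha):=\mathbb{E}\croc{1/[\Gl]^{\alpha}}$ of the real variable $\alpha\in[0,1]$, where $\Gl$ is the step-size change on nontrivial linear functions defined in~\eqref{step-size-notation}; write $Y:=\Gl$ for brevity. I would show that $g$ is finite on $[0,1]$, that $g(0)=1$, and that its right derivative at $0$ equals $-\mathbb{E}\croc{\log Y}$, which is strictly negative by hypothesis. A strict decrease of $g$ from $g(0)=1$ then produces an $\alpha\in(0,1)$ with $g(\alpha)<1$, which is exactly the claim.

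First I would record two elementary facts. By Assumption~A3, $Y\geq m_{\Gx}>0$ almost surely, hence $1/Y^{\alpha}\leq m_{\Gx}^{-\alpha}\leq\max(1,m_{\Gx}^{-1})$ for every $\alpha\in[0,1]$, so $g(\alpha)<\infty$ on $[0,1]$ and in particular $g(0)=1$. Moreover $\log Y$ is integrable: the linear function $l^{\star}$ is continuous, scaling-invariant with respect to $0$ and has Lebesgue-negligible level sets, so Lemma~\ref{log-gamma-lemma} applied with $\f=l^{\star}$ and $z=0$, together with Assumption~A4, gives $\mathbb{E}\croc{\abs{\log Y}}\leq\frac{\lambda!}{(\lambda-\mu)!}\,\mathbb{E}_{W_{1}\sim\N_{\n\mu}}\croc{\abs{\log\circ\,\Gx}(W_{1})}<\infty$.

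Next I would compute the right derivative of $g$ at $0$ by dominated convergence. For $\alpha\in(0,1]$ the integrand $(Y^{-\alpha}-1)/\alpha$ converges almost surely to $-\log Y$ as $\alpha\to0^{+}$. For the domination, set $t:=\log Y$: if $t\geq0$, then $1-e^{-\alpha t}\leq\alpha t$ gives $\abs{(e^{-\alpha t}-1)/\alpha}\leq t$; if $t<0$, then $e^{-\alpha t}-1\leq-\alpha t\,e^{-\alpha t}\leq-\alpha t\,e^{-t}$ since $\alpha\leq1$, whence $\abs{(e^{-\alpha t}-1)/\alpha}\leq\abs{t}\,e^{-t}=\abs{t}/Y\leq\abs{t}/m_{\Gx}$. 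In both cases $\abs{(Y^{-\alpha}-1)/\alpha}\leq\max(1,m_{\Gx}^{-1})\,\abs{\log Y}$, which is integrable by the previous step. Dominated convergence then yields $\lim_{\alpha\to0^{+}}\frac{g(\alpha)-1}{\alpha}=\mathbb{E}\croc{-\log Y}=-\mathbb{E}\croc{\log\pare{\Gl}}$.

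Finally I conclude: since $\mathbb{E}\croc{\log\pare{\Gl}}>0$ by assumption, the right derivative $\lim_{\alpha\to0^{+}}(g(\alpha)-1)/\alpha$ is strictly negative, so there is $\delta\in(0,1)$ such that $(g(\alpha)-1)/\alpha<0$, i.e.\ $g(\alpha)=\mathbb{E}\croc{1/[\Gl]^{\alpha}}<1$, for every $\alpha\in(0,\delta)$; any such $\alpha$ meets the statement. The only non-routine point is the domination as $\log Y\to-\infty$, which is handled precisely by the lower bound $Y\geq m_{\Gx}$ of Assumption~A3 — this is the reason A3 appears in the hypotheses. One could alternatively phrase the argument via convexity of $\alpha\mapsto\log g(\alpha)$ (a cumulant generating function), but that would still require the same dominated-convergence step to identify the slope at $0$.
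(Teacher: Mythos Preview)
Your proof is correct and follows essentially the same approach as the paper: both compute the right derivative of $\alpha\mapsto\mathbb{E}\croc{1/[\Gl]^{\alpha}}$ at $0$ via dominated convergence, identify it as $-\mathbb{E}\croc{\log\Gl}<0$, and conclude. The only cosmetic difference is in the domination bound: the paper uses the mean value theorem to write $(Y^{-\alpha}-1)/\alpha=-\log(Y)\,Y^{-c}$ for some $c\in(0,\alpha)$ and bounds $Y^{-c}\leq\exp(|\log m_{\Gx}|)$, whereas you use the elementary inequalities $1-e^{-x}\leq x$ and $e^{x}-1\leq xe^{x}$ directly; both routes produce a dominating function proportional to $|\log\Gl|$.
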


\begin{proof}
Lemma \ref{log-gamma-lemma} ensures that $\log\pare{\Gl}$ is integrable.
For $\alpha > 0,$
$
 \frac{1}{[\Gl]^\alpha } = \exp\croc{ -\alpha\log\pare{\Gl } } = 1 - \alpha \log\pare{\Gl } + o(\alpha).
$
 Then the random variable $ A(\alpha) = \left(\frac{1}{ [\Gl]^\alpha } - 1 + \alpha \log\pare{\Gl } \right) / \alpha$ depending on the parameter $\alpha$ converges almost surely towards $0$ when $\alpha$ goes to $0$.

Let $u \in \R^{\n\mu}$ and $\alpha \in (0, 1)$. Define $\varphi_{u}: c \mapsto \dsp\frac{1}{ \Gx(u)^{c} } = \exp( -c \log(\Gx(u)) )$ on $[0, \alpha]$. By the mean value theorem, there exists $c_{u,\alpha}\in (0, \alpha)$ such that $\pare{\frac{1}{ \Gx(u)^\alpha } - 1} / \alpha = \varphi_{u}^{\prime}(c_{u,\alpha}) = - \log(\Gx(u)) \frac{1}{ \Gx(u)^{c_{u,\alpha}} }$. In addition, $\frac{1}{ \Gx(u)^{c_{u,\alpha}} } \leq \frac{1}{m_{\Gx}^{c_{u,\alpha}}}$ thanks to Assumption A3, and $ \frac{1}{m_{\Gx}^{c_{u,\alpha}}} = \exp\pare{  - c_{u,\alpha} \log(m_{\Gx}) } \leq \exp\pare{ \abs{\log(m_{\Gx})} }$. Therefore $\abs{A(\alpha)} \leq \pare{1 + \exp\pare{ \abs{\log(m_{\Gx})} }} \abs{\log\pare{\Gl }}$. The latter is integrable thanks to Assumption A4, and does not depend on $\alpha$. Then by the dominated convergence theorem, $\mathbb{E}\croc{A(\alpha)}$ converges to $0$ when $\alpha$ goes to $0$ or equivalently
$
\mathbb{E}\croc{ \frac{1}{[\Gl]^\alpha } } = 1 - \alpha \mathbb{E}\croc{ \log\pare{\Gl} } + o(\alpha).
 $
  Hence there exists $0 < \alpha < 1$ small enough such that $\mathbb{E}\croc{ \frac{1}{[\Gl]^\alpha } }  < 1.$   
\end{proof}

We now have enough material to state and prove the desired geometric ergodicity of the \normalized\ Markov chain in the following theorem.
\begin{theorem} (Geometric ergodicity)
 \label{geometric-drift}
Let $\f$ be a scaling-invariant function that satisfies F1 or F2. Let $\{Z_k \, ; k \in \mathbb{N}\}$ be the \normalized\ Markov chain associated to a step-size adaptive $\pare{\mu/\mu_{w}, \lambda}$-ES defined as in Proposition~\ref{prop:MC} such that Assumptions A1$-$A5 are satisfied.
Assume that $\mathbb{E}\croc{ \log\pare{\Gl} } > 0$ where $\Gl$ is defined in~\eqref{step-size-notation}.

Then there exists $0 < \alpha < 1 $ such that the function $V: z \mapsto 1+\| z \|^{\alpha}$ is a geometric drift  function for the Markov chain $\{Z_k \, ; k \in \mathbb{N}\}$.  Therefore $\{Z_k \, ; k \in \mathbb{N}\}$ is $V$-geometrically ergodic, admits an invariant probability measure $\pi$ and is Harris recurrent.

\end{theorem}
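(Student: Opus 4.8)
The plan is to verify the geometric drift condition recalled in Section~\ref{stability-markov-process}, namely to find $\alpha\in(0,1)$, a constant $\gamma\in(0,1)$, a finite $b$ and a small set $C$ such that $\mathbb{E}[V(Z_1)\mid Z_0=z]\le\gamma\,V(z)+b\,\mathds{1}_C(z)$ for all $z$, with $V(z)=1+\|z\|^\alpha$. Since $V$ is finite everywhere and bounded below by $1$, once this holds $\{Z_k\}$ is $V$-geometrically ergodic, and by Theorems~13.0.1 and~9.1.8 of Meyn--Tweedie it is positive and Harris recurrent, hence admits an invariant probability measure $\pi$. By Proposition~\ref{normal-tchain}, $\{Z_k\}$ is a $\varphi$-irreducible aperiodic $T$-chain and every compact set is small, so it suffices to take $C$ a closed ball.

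First I would choose $\alpha$. Assumptions A3--A4 and the hypothesis $\mathbb{E}[\log(\Gl)]>0$ let Lemma~\ref{lyapunov-coefficient} provide $\alpha\in(0,1)$ with $\gamma_0:=\mathbb{E}[1/[\Gl]^\alpha]<1$. Fix $\gamma\in(\gamma_0,1)$ and set $\gamma_1=(\gamma_0+\gamma)/2\in(\gamma_0,\gamma)$. Because $\Gx$ is continuous and A2, A3, A5 hold and $\f$ satisfies F1 or F2, Proposition~\ref{geometric-asymptotic} gives $\mathbb{E}[\|Z_1\|^\alpha\mid Z_0=z]/\|z\|^\alpha\to\gamma_0$ as $\|z\|\to\infty$; hence there is $R_0>0$ with $\mathbb{E}[\|Z_1\|^\alpha\mid Z_0=z]\le\gamma_1\|z\|^\alpha$ whenever $\|z\|\ge R_0$. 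For such $z$, $\mathbb{E}[V(Z_1)\mid Z_0=z]=1+\mathbb{E}[\|Z_1\|^\alpha\mid Z_0=z]\le\gamma_1 V(z)+(1-\gamma_1)$, and since $\gamma-\gamma_1>0$ this is $\le\gamma V(z)$ as soon as $\|z\|^\alpha\ge(1-\gamma_1)/(\gamma-\gamma_1)$; let $R\ge R_0$ be large enough for this, and put $C=\overline{\mathbf{B}(0,R)}$.

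Second, I would bound $\mathbb{E}[V(Z_1)\mid Z_0=z]$ uniformly on $C$. From $Z_1=\bigl(z+w^\top\alpha_{\f}(x^\star+z,U_1)\bigr)/\Gx(\alpha_{\f}(x^\star+z,U_1))$, the lower bound $\Gx\ge m_\Gx$ (A3) and $\|w^\top\alpha_{\f}(x^\star+z,U_1)\|\le\|w\|\,\|U_1\|$ give $\|Z_1\|\le(\|z\|+\|w\|\,\|U_1\|)/m_\Gx$, so by subadditivity of $t\mapsto t^\alpha$ on $\R_+$ ($\alpha<1$), $\mathbb{E}[\|Z_1\|^\alpha\mid Z_0=z]\le m_\Gx^{-\alpha}\bigl(R^\alpha+\|w\|^\alpha\,\mathbb{E}[\|U_1\|^\alpha]\bigr)=:B<\infty$ for all $z\in C$; this is finite because $U_1\sim\N_{\n\lambda}$ under A5 has finite moments of every order. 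With $b:=1+B$, the drift inequality holds everywhere: for $z\notin C$ it is the bound $\mathbb{E}[V(Z_1)\mid Z_0=z]\le\gamma V(z)$ of the previous paragraph, and for $z\in C$ it follows from $\mathbb{E}[V(Z_1)\mid Z_0=z]\le 1+B=b\le\gamma V(z)+b$. This is precisely a geometric drift condition for $V$ with small set $C$, which concludes the proof as explained above.

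The main obstacle is organisational rather than deep: Proposition~\ref{geometric-asymptotic} only controls the ratio $\mathbb{E}[\|Z_1\|^\alpha\mid Z_0=z]/\|z\|^\alpha$ in the limit, whereas the drift condition involves $V=1+\|z\|^\alpha$ and a genuine (not asymptotic) inequality, so the care is in interleaving the constants $\gamma_0<\gamma_1<\gamma<1$ and enlarging the ball $C$ so as to absorb the additive ``$+1$'' terms; everything else --- the uniform bound on $C$ and the passage from the drift condition to geometric ergodicity, positivity and Harris recurrence --- is standard given Assumptions A3, A5 and Proposition~\ref{normal-tchain}.
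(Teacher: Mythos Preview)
Your proof is correct and follows essentially the same route as the paper: invoke Proposition~\ref{normal-tchain} for irreducibility/aperiodicity and smallness of compacts, use Lemma~\ref{lyapunov-coefficient} to pick $\alpha$, then apply Proposition~\ref{geometric-asymptotic} to get the drift outside a large ball and bound the drift uniformly inside via $\Gx\ge m_\Gx$ and $\|w^\top\alpha_{\f}\|\le\|w\|\,\|U_1\|$. The only cosmetic difference is that the paper observes directly that $(1+\mathbb{E}[\|Z_1\|^\alpha\mid Z_0=z])/(1+\|z\|^\alpha)$ has the same limit $\mathbb{E}[1/[\Gl]^\alpha]$ as $\mathbb{E}[\|Z_1\|^\alpha\mid Z_0=z]/\|z\|^\alpha$, and then picks a single $\gamma=\tfrac12(1+\mathbb{E}[1/[\Gl]^\alpha])$, whereas you interleave $\gamma_0<\gamma_1<\gamma$ to absorb the additive~$1$ --- the content is the same.
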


\begin{proof}
Proposition~\ref{normal-tchain} shows that $\{Z_k \, ; k \in \mathbb{N}\}$ is a $\varphi$-irreducible aperiodic T-chain. With~\cite[Theorem 5.5.7 and Theorem 6.2.5]{meyn2012markov}, every compact set is a small set.
Since $\mathbb{E}\croc{ \log\pare{\Gl} } > 0$, by Lemma \ref{lyapunov-coefficient} there exists $0 < \alpha < 1$ such that $\mathbb{E}\croc{  \frac{1}{[\Gl]^\alpha} } < 1$. Define $V: z \mapsto 1+\| z \|^{\alpha}$.
By Proposition~\ref{geometric-asymptotic},
$\dsp\lim_{\| z \|\to\infty } \mathbb{E}\croc{\| Z_{1} \|^{\alpha} | Z_{0} = z} / \| z \|^{\alpha}  = \mathbb{E}\croc{ 1 / [\Gl]^\alpha }$. 
Since $ {\mathbb{E}\croc{V(Z_{1}) | Z_{0} = z}}/{V(z)} =\pare{1+\mathbb{E}\croc{\| Z_{1} \|^{\alpha} | Z_{0} = z} } / \pare{ 1+\| z \|^{\alpha} }$, $\lim_{\| z \|\to\infty } \mathbb{E}\croc{V(Z_{1}) | Z_{0} = z} / V(z) = \mathbb{E}\croc{ 1 / [\Gl]^\alpha }.$
Let $\gamma = \frac12 \left(  1 +  \mathbb{E}\croc{  \frac{1}{[\Gl]^\alpha} } \right) < 1.$
There exists $r > 0$ such that for all $\| z \| > r$
 \begin{align}
  \mathbb{E}\croc{V(Z_{1}) | Z_{0} = z} / V(z) < \gamma.
 \label{rate}
 \end{align}
 In addition, since $\| z + w^{\top}\alpha_{\f}\pare{x^{\star}+z, U_{1}} \| \leq \| z \| +  \| w \| \| U_{1} \|$ then $\mathbb{E}\croc{V(Z_{1}) | Z_{0} = z}$ $\leq$ \,
 $\dsp{ \new{1 + } \mathbb{E}\croc{\pare{ \| z \| + \| w \|  \| U_{1} \|  }^{\alpha} }}/{m_{\Gx}^{\alpha}}.$ Since $z \mapsto \dsp \nnew{1+}\mathbb{E}\croc{\pare{ \| z \| + \| w \| \| U_{1} \|  }^{\alpha} } / m_{\Gx}^{\alpha} - \gamma V(z)$ is continuous on the compact $\overline{\mathbf{B}\pare{0, r}},$ it is bounded on that compact. Denote by $b\in \R_{+}$ an upper bound. We have proven that for all $z \in \overline{\mathbf{B}\pare{0, r}},$ $\mathbb{E}\croc{V(Z_{1}) | Z_{0} = z} \leq \gamma V(z) + b.$ This result, along with~\eqref{rate}, show that for all $z\in \R^{\n},$
$\mathbb{E}\croc{ V(Z_{1}) | Z_{0} = z} \leq \gamma V(z) + b \mathds{1}_{ \overline{\mathbf{B}\pare{0, r}} }(z)$. Therefore $\{Z_k \, ; k \in \mathbb{N}\}$ is $V$-geometrically ergodic. Then thanks to~\cite[Theorem 15.0.1]{meyn2012markov}, $\{Z_k ; k \in \mathbb{N}\}$ is positive and Harris recurrent with invariant probability measure $\pi$.
 \end{proof}

\section{\del{Proofs of the main results}\new{Main results: linear behavior as a consequence of the stability and integrability}}
\label{consequences-section}
We \new{are now almost ready to establish the main results of the paper.}\del{ present in this section the proofs of Theorems~\ref{theo:main-CV},~\ref{clt-cbsaes} and Proposition~\ref{prop:weight-condition} presented 
in Section~\ref{main-results} the main results of the paper. }
\new{Yet, we first prove in the next section}
\del{Before establishing those main results, we  prove}the integrability of $z \mapsto \log \|z\|$ and $\mathcal{R}_{\f}$ defined in~\eqref{expected-step-size-f}, with respect to the invariant probability measure of the Markov chain $\{Z_k \, ; k \in \mathbb{N}\}$ whose existence is proven in Theorem~\ref{geometric-drift}.
\new{We state and prove in Section~\ref{sec:linear-behavior} the linear behavior of the studied class of algorithms for an abstract step-size update satisfying A1-A4 on scaling invariant functions. We provide in Section~\ref{CLT} a Central Limit Theorem for approximating the convergence rate.
 We investigate in Section~\ref{sec:SC-linear-behavior-CSA-xNES} how the CSA-ES and xNES satisfy the required conditions for a linear behavior providing sufficient conditions expressed in terms of parameters of the algorithms.}

\subsection{Integrabilities with respect to the invariant probability measure}
\label{consequences-lin-conv}

For a scaling-invariant function $\f$ that satisfies F1 or F2, the limit in Theorem~\ref{theo:main-CV} is expressed as $\mathbb{E}_{\pi}(\mathcal{R}_{\f})$ where the function $\mathcal{R}_{\f}$ is defined as in~\eqref{expected-step-size-f} and $\pi$ is a probability measure. Therefore the $\pi$-integrability of the function $z \mapsto \mathcal{R}_{\f}(z)$ is necessary to obtain Theorem~\ref{theo:main-CV}. In the following, we present a result stronger than its $\pi$-integrability, that is the boundedness of $\mathcal{R}_{\f}$ under some assumptions.

\begin{proposition}
Let $\f$ be a continuous scaling-invariant function with Lebesgue negligible level sets. Let $\{(X_k,\sigma_k) \, ; k \in \mathbb{N} \}$ be the  sequence defined in~\eqref{incumbent} and~\eqref{step-size} such that Assumptions A4 and A5 are satisfied. Then the function $z \mapsto \abs{  \mathcal{R}_{\f}}(z)$ is bounded by $\frac{\lambda !}{(\lambda - \mu)!} \mathbb{E}_{W\sim\N_{\n\mu}}\croc{ \abs{ \log\circ \,\Gx}(W) }$, where the function $z \mapsto \mathcal{R}_{\f}(z)$ is defined as in~\eqref{expected-step-size-f}.

If in addition \new{the following holds: (i)} $\f$ satisfies F1 or F2, \new{(ii)}  Assumptions A1$-$A3 are satisfied and \new{(iii)} the expected log step-size change satisfies $\mathbb{E}\croc{ \log\pare{\Gl} } > 0$ where $\Gl$ is defined in~\eqref{step-size-notation},
then 
$
 \mathbb{E}_{\pi}(\abs{  \mathcal{R}_{\f} }) = \int \abs{\mathcal{R}_{\f}(z)} \pi(dz) <  \infty
$
that is $z \mapsto \mathcal{R}_{\f}(z)$ is $\pi$-integrable where $\pi$ is the invariant probability measure of $\{Z_k \, ; k \in \mathbb{N}\}$ defined as in Proposition~\ref{prop:MC}.
\label{R-integrable}
\end{proposition}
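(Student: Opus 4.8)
The plan is to derive the uniform bound on $\abs{\mathcal{R}_\f}$ directly from Lemma~\ref{log-gamma-lemma}, and then read off $\pi$-integrability from the fact that $\pi$ is a probability measure. For the first assertion I start from the definition~\eqref{expected-step-size-f}, namely $\mathcal{R}_\f(z)=\mathbb{E}_{U_1\sim\N_{\n\lambda}}\croc{\log\pare{\Gx\pare{\alpha_\f(x^\star+z,U_1)}}}$, and use the triangle inequality for integrals (equivalently Jensen's inequality applied to $t\mapsto\abs{t}$) to get
\[
\abs{\mathcal{R}_\f(z)}\le \mathbb{E}_{U_1\sim\N_{\n\lambda}}\croc{\abs{\log\pare{\Gx\pare{\alpha_\f(x^\star+z,U_1)}}}}.
\]
Since $\f$ is a continuous scaling-invariant function with Lebesgue negligible level sets and $\Gx$ satisfies Assumption~A4, Lemma~\ref{log-gamma-lemma} bounds the right-hand side by $\frac{\lambda!}{(\lambda-\mu)!}\,\mathbb{E}_{W\sim\N_{\n\mu}}\croc{\abs{\log\circ\,\Gx}(W)}$, uniformly in $z$; this is exactly the finite constant claimed, so the first assertion is proved.

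For the second assertion, I note that the additional hypotheses (i)--(iii) are exactly what Theorem~\ref{geometric-drift} requires ($\f$ satisfies F1 or F2, Assumptions A1--A5 hold, and $\mathbb{E}\croc{\log\pare{\Gl}}>0$), so $\{Z_k\,;k\in\mathbb{N}\}$ is $V$-geometrically ergodic and in particular admits an \emph{invariant probability measure} $\pi$. Integrating the pointwise bound from the first part against $\pi$ then gives
\[
\mathbb{E}_\pi(\abs{\mathcal{R}_\f})=\int\abs{\mathcal{R}_\f(z)}\,\pi(dz)\le \frac{\lambda!}{(\lambda-\mu)!}\,\mathbb{E}_{W\sim\N_{\n\mu}}\croc{\abs{\log\circ\,\Gx}(W)}\,\pi(\R^{\n})<\infty ,
\]
the last step using $\pi(\R^{\n})=1$, the finiteness being again Assumption~A4. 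If one wishes to apply this to a possibly discontinuous $\f$ satisfying F1 or F2, one first observes that, writing $\f=\varphi\circ g$, Lemma~\ref{selection-function-increasing-transformation} yields $\alpha_\f=\alpha_g$ and hence $\mathcal{R}_\f=\mathcal{R}_g$, where $g$ is a $C^1$ scaling-invariant function with a unique global argmin (hence continuous and with Lebesgue negligible level sets) or a nontrivial linear function, so the bound of the first part applies to $g$ and the argument goes through unchanged.

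The main point is that there is really no obstacle remaining at this stage: the substantive work---controlling $\mathbb{E}_{U_1}\croc{\abs{\log\pare{\Gx(\alpha_\f(x^\star+z,U_1))}}}$ via the explicit selection density~\eqref{mupositive}, and constructing the geometric drift function that produces $\pi$---has already been carried out in Lemma~\ref{log-gamma-lemma} and Theorem~\ref{geometric-drift}. The only delicate point worth flagging is the reduction $\mathcal{R}_\f=\mathcal{R}_g$, which keeps the continuity and negligible-level-set hypotheses of Lemma~\ref{log-gamma-lemma} available when $\f$ is allowed to be discontinuous.
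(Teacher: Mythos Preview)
Your proof is correct and follows essentially the same approach as the paper: bound $\abs{\mathcal{R}_\f(z)}$ via Lemma~\ref{log-gamma-lemma}, then invoke Theorem~\ref{geometric-drift} to obtain the invariant probability measure $\pi$ and conclude $\pi$-integrability from boundedness. Your extra remark reducing $\mathcal{R}_\f$ to $\mathcal{R}_g$ via Lemma~\ref{selection-function-increasing-transformation} in the F1/F2 case is a useful clarification that the paper leaves implicit.
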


\begin{proof}
 Lemma~\ref{log-gamma-lemma} shows that for all $z \in \R^{\n},$ $z \mapsto \log\pare{ \Gx \pare{ \alpha_{f}(x^{\star}+z, u )} }$ is $\N_{\n\lambda}$-integrable with 
$\dsp \mathbb{E}\croc{ \abs{ \log\pare{ \Gx \pare{ \alpha_{f}(x^{\star}+z, U_{1})} } } } \leq  \frac{\lambda !}{(\lambda - \mu)!} \mathbb{E}_{W\sim\N_{\n\mu}}\croc{ \abs{ \log\circ \,\Gx}(W) }.$
Then $\abs{\mathcal{R}_{\f}}$ is bounded since $\abs{\mathcal{R}_{\f}(z)} \leq  \frac{\lambda !}{(\lambda - \mu)!} \mathbb{E}_{W\sim\N_{\n\mu}}\croc{ \abs{ \log\circ \,\Gx}(W) }$ for all $z \in \R^{\n}$.
If in addition Assumptions A1$-$A3 are satisfied and $\mathbb{E}\croc{ \log\pare{\Gl} } > 0$, Theorem~\ref{geometric-drift} ensures that $\{Z_k \, ; k \in \mathbb{N}\}$ is a positive Harris recurrent chain with invariant probability measure $\pi.$
Hence the integrability with respect to $\pi.$  
\end{proof}

We prove in the following the $\pi$-integrability of $z \mapsto \log \| z \|$, where $\pi$ is the invariant probability measure of the \normalized\ chain, under some assumptions.

\begin{proposition}
Let $\f$ satisfy F1 or F2 and $\{Z_k \, ; k \in \mathbb{N}\}$ be the Markov chain defined as in Proposition~\ref{prop:MC} such that Assumptions A1$-$A5 are satisfied. 
Assume that $\mathbb{E}\croc{ \log\pare{\Gl} } > 0$ where $\Gl$ is defined in~\eqref{step-size-notation}. Then $\{Z_k \, ; k \in \mathbb{N}\}$ has an invariant probability measure $\pi$ and $z \mapsto \log{\| z \|}$ is $\pi$-integrable.
\label{log-integrable}
\end{proposition}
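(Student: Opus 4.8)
The plan is to split $\log\|z\| = (\log\|z\|)^{+} - (\log\|z\|)^{-}$ and to bound the two parts separately against $\pi$, the positive part being routine and the negative part --- which measures how much mass $\pi$ puts near the origin, where $\log\|z\|$ is singular --- being the real difficulty.

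The existence of $\pi$ is immediate from Theorem~\ref{geometric-drift}, which moreover provides a geometric drift function $V\colon z \mapsto 1 + \|z\|^{\alpha}$ for some $0 < \alpha < 1$. A geometric drift condition implies $\int V\,d\pi < \infty$ (a standard consequence, see~\cite[Theorem 14.0.1]{meyn2012markov}), and since $\log t \le \frac{t^{\alpha}}{\alpha}$ for all $t > 0$ we have $(\log\|z\|)^{+} \le \frac{\|z\|^{\alpha}}{\alpha} \le \frac{V(z)}{\alpha}$, so that $\int (\log\|z\|)^{+}\,\pi(dz) < \infty$. It thus remains to prove $\int (\log\|z\|)^{-}\,\pi(dz) < \infty$.

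For this I would combine the invariance of $\pi$ with a layer-cake identity. Writing $(\log\|z\|)^{-} = \int_{0}^{\infty}\mathds{1}_{\{\|z\| < e^{-t}\}}\,dt$, applying Tonelli, and using $\pi\pare{\mathbf{B}\pare{0, e^{-t}}} = \int P\pare{z, \mathbf{B}\pare{0, e^{-t}}}\,\pi(dz)$ for every $t \ge 0$, one gets
\begin{align*}
\int (\log\|z\|)^{-}\,\pi(dz) = \int_{0}^{\infty}\pi\pare{\mathbf{B}\pare{0, e^{-t}}}\,dt = \int \mathbb{E}\croc{ (\log\|Z_{1}\|)^{-} \mid Z_{0} = z }\,\pi(dz).
\end{align*}
Since $\pi$ is a probability measure, it suffices to show that $z \mapsto \mathbb{E}\croc{ (\log\|Z_{1}\|)^{-} \mid Z_{0} = z }$ is bounded on $\R^{\n}$. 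Writing $\alpha_{z} = \alpha_{\f}(x^{\star} + z, U_{1})$, we have $Z_{1} = \pare{z + w^{\top}\alpha_{z}}/\Gx(\alpha_{z})$, hence $(\log\|Z_{1}\|)^{-} \le \pare{\log\Gx(\alpha_{z})}^{+} + \pare{\log\pare{1/\|z + w^{\top}\alpha_{z}\|}}^{+}$. The expectation of the first summand is bounded, uniformly in $z$, by $\frac{\lambda!}{(\lambda-\mu)!}\,\mathbb{E}_{W\sim\N_{\n\mu}}\croc{ \abs{\log\circ\,\Gx}(W) } < \infty$, by Lemma~\ref{log-gamma-lemma} and Assumption A4 (using $\alpha_{\f} = \alpha_{g}$ from Lemma~\ref{selection-function-increasing-transformation}). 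For the second summand, the crucial observation is that by~\eqref{mupositive}, applied to $g$, the density of $\alpha_{\f}(x^{\star} + z, U_{1})$ is dominated, uniformly in $z$, by $\frac{\lambda!}{(\lambda-\mu)!}\prod_{i=1}^{\mu} p_{\N_{\n}}(u^{i})$; consequently $\mathbb{E}\croc{ h\pare{\alpha_{\f}(x^{\star}+z, U_{1})} } \le \frac{\lambda!}{(\lambda-\mu)!}\,\mathbb{E}\croc{ h(W) }$ for every nonnegative measurable $h$ and $W\sim\N_{\n\mu}$. Taking $h(u) = \pare{\log\pare{1/\|z + w^{\top}u\|}}^{+}$ and using that $w^{\top}W \sim \N_{\n}\pare{0, \|w\|^{2} I_{\n}}$ is a non-degenerate Gaussian (recall $w \neq 0$), so that $z + w^{\top}W$ has a density bounded by a constant $C$ independent of $z$, we obtain $\mathbb{E}\croc{ \pare{\log\pare{1/\|z + w^{\top}\alpha_{z}\|}}^{+} } \le \frac{\lambda!}{(\lambda-\mu)!}\,C\int_{\mathbf{B}\pare{0,1}}\log(1/\|x\|)\,dx < \infty$, still uniformly in $z$, the last integral being finite since $\log(1/\|x\|)$ is locally integrable on $\R^{\n}$. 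This gives the required uniform bound. The case F2 is covered verbatim: a nontrivial linear function is scaling-invariant with Lebesgue-negligible (hyperplane) level sets, so~\eqref{mupositive} and the domination above apply unchanged.

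The main obstacle is precisely the possible accumulation of $\pi$-mass near $0$. It is resolved by the observation that a single transition of the chain regularises the singularity of $\log\|\cdot\|$ \emph{uniformly in the starting point}: since the selection density is dominated by a fixed Gaussian density, $z \mapsto \mathbb{E}\croc{ (\log\|Z_{1}\|)^{-} \mid Z_{0} = z }$ is bounded on all of $\R^{\n}$, and stationarity then carries this bound over to $\pi$. Everything else --- the positive part and the existence of $\pi$ --- is routine given Theorem~\ref{geometric-drift}.
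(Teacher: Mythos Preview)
Your proof is correct and takes a genuinely different route from the paper's.

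The paper does not use the one-step stationarity identity. Instead it invokes a drift-type criterion of Tweedie~\cite[Theorem~1]{tweedie1983existence}: it fixes a set $A=\R^{\n}\setminus\overline{\mathbf{B}\pare{0,\epsilon}}$ of positive $\pi$-measure (supplied by Lemma~\ref{non-negligible}), checks $\int_{A}|\log\|z\||\,\pi(\diff z)<\infty$ via the $V$-bound, and then constructs an auxiliary function $h=2g\mathds{1}_{A^{c}}$ satisfying a drift inequality on the small ball $A^{c}$. The analytic core of that verification is the bound
\[
\int_{A^{c}}P(z,\diff y)\,h(y)\ \le\ 2\,\mathbb{E}_{W\sim\N_{\n\mu}}\!\croc{|\log\circ\Gx|(W)}+\int|\log\|w^{\top}v\||\,\varphi(z,v)\,\diff v,
\]
obtained after the translation $v\mapsto v-\|w\|^{-2}(w_{1}z,\ldots,w_{\mu}z)$; this is exactly your observation that the density of $z+w^{\top}W$ with $W\sim\N_{\n\mu}$ is bounded by a constant independent of $z$.

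So both proofs hinge on the same inequality $p_{z}^{f}\le\frac{\lambda!}{(\lambda-\mu)!}\,p_{\N_{\n\mu}}$ and the resulting uniform smoothing of the logarithmic singularity after one step. The difference is packaging: you feed that uniform bound directly into the stationarity identity $\int g\,\diff\pi=\int\mathbb{E}\croc{g(Z_{1})\mid Z_{0}=z}\,\pi(\diff z)$, which makes the layer-cake detour optional and, more importantly, lets you dispense entirely with Tweedie's criterion and with Lemma~\ref{non-negligible}. Your argument is shorter and more elementary; the paper's drift formulation would only pay off if one wanted quantitative moment bounds or to fit the result into a broader $h$-ergodicity statement (as indeed the paper later exploits in the proof of~\eqref{expected-log-progress-main}).
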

\begin{proof}
Theorem~\ref{geometric-drift} ensures that $\{Z_k \, ; k \in \mathbb{N}\}$ is $V$-geometrically ergodic with invariant probability measure $\pi$, where $V:\foncfast{\R^{\n}}{z}{ 1+\| z \|^{\alpha} }{\R_{+}}$. We define for all $z \in \R^{\n},$ 
$g(z) =  \frac{ (\lambda - \mu)!}{2 \lambda !} \abs{\log \| z \|}.$
The $\pi$-integrability of $g$ is 
obtained if there exist a set $A$ with $\pi(A) > 0$ such that $\dsp\int_{A} g(z)\pi(\mathrm{d}z) < \infty,$ and a measurable function $h$ with $h\mathds{1}_{A^{c}} \geq g\mathds{1}_{A^{c}}$ such that (i) $\dsp\int_{A^{c}} P(z, \diff y) h(y) < h(z) - g(z) \, , \forall z \in A^{c}$ and (ii) $\dsp \sup_{z \in A} \int_{A^{c}} P(z, \diff y) h(y) < \infty$ \cite[Theorem 1]{tweedie1983existence}.
\new{For $z \in  \R^{\n}$} and $v \in \R^{\n\mu}$, denote $\varphi(z, v)$ as
$\varphi(z, v) = p_{\N_{\n\mu}} \pare{ v - \frac{1}{\|w\|^{2}} \pare{ w_{1}z,\dots, w_{\mu}z} } \mathds{1}_{ \| w^{\top} v \| \leq 1 }  \enspace$.
We prove in a first time that 
$\dsp\lim_{ \| z \|\to 0 } \dsp \int \abs{ \log\dsp \| w^{\top} v\| } \varphi(z, v) \diff v < \infty$.
We have $(2\pi)^{n\mu/2}  \varphi(z, v)$ which is equal to $\exp\left( \frac12\left( -\|v\|^{2} -  \frac{\| w\|^{2}\|z\|^{2} }{ \|w\|^{4}}  + \frac{2 (w^{\top}v)^{\top}z}{ \|w\|^{2}}  \right) \right) \mathds{1}_{ \| w^{\top} v \| \leq 1 }$ which is smaller than $\exp \left( \frac12 \left(-\|v\|^{2} -  \frac{\| w\|^{2}\|z\|^{2}}{ \|w\|^{4} } + \frac{2 \|w^{\top}v \| \|z\|}{ \|w\|^{2}}  \right) \right) \mathds{1}_{ \| w^{\top} v \| \leq 1 }$. \new{Then for $z \in  \R^{\n}$ and $v \in \R^{\n\mu}$, 
\begin{align}
(2\pi)^{n\mu/2}  \varphi(z, v) \leq   \exp\left( \frac12 \left( -\|v\|^{2} - \frac{\| w\|^{2}\|z\|^{2}}{ \|w\|^{4}}  + \frac{2 \|z\|}{  \|w\|^{2}}  \right) \right) \mathds{1}_{ \| w^{\top} v \| \leq 1 }.
\label{domination-varprhi}
\end{align}
Therefore for $z \in  \overline{\mathbf{B}\pare{0, 1} }$ and $v \in \R^{\n\mu}$, $\varphi(z, v) \leq \exp\left(\frac{1}{\|w\|^{2}}  \right) \varphi(0, v)$.}
\del{ which is smaller than
 \begin{multline}
  \exp\left( \frac12 \left( -\|v\|^{2} - \frac{\| w\|^{2}\|z\|^{2}}{ \|w\|^{4}}  + \frac{2 \|z\|}{  \|w\|^{2}}  \right) \right) \mathds{1}_{ \| w^{\top} v \| \leq 1 } \\
 \leq (2\pi)^{\frac{n\mu}{2}} \exp\left(\frac{1}{\|w\|^{2}}  \right) \varphi(0, v). \label{domination-varprhi}
 \end{multline}
 }

Since $v \mapsto \abs{ \log\dsp \| w^{\top} v\| } \varphi(0, v)$ is Lebesgue integrable, it follows by the dominated convergence theorem that $z \mapsto \dsp\int  \abs{ \log\dsp \| w^{\top} v\| } \varphi(z, v) \diff v$ is continuous on $\overline{\mathbf{B}\pare{0, 1} }$ and $\dsp\lim_{ \| z \|\to 0 } \dsp \int \abs{ \log\dsp \| w^{\top} v\| } \varphi(z, v) \diff v < \infty$.
In addition, $\dsp\lim_{ \| z \|\to 0 } g(z) = \infty$. Then there exists $\epsilon_{1} \in \pare{0, 1}$ such that for $z \in  \overline{\mathbf{B}\pare{0, \epsilon_{1}}}:$
\begin{align}
 \int  \abs{\log\dsp \| w^{\top} v \|} \varphi(z, v) \diff v \, + 2\, \mathbb{E}_{W\sim\N_{\n\mu}}\croc{ \abs{\log\circ\Gx}(W)} \leq g(z).
\label{majoration}
\end{align}
We define $\epsilon_{2}$ from Lemma~\ref{non-negligible} and denote $\epsilon = \min(\epsilon_{1}, \epsilon_{2}).$
Define $A = \R^{\n} \setminus \overline{\mathbf{B}\pare{0, \epsilon}}.$ Then from Lemma \ref{non-negligible} it follows that $\pi(A) > 0.$ Note also that $A^c = \overline{\mathbf{B}\pare{0, \epsilon}}$. In addition, $g$ is dominated by the $\pi$-integrable function V around $\infty,$ then $\dsp\int_{A} g(z)\pi(\mathrm{d}z) < \infty.$
We define now the function $h$ for all $z \in \R^{\n}$ as $h(z) = 2 g(z) \mathds{1}_{A^{c}}(z).$ Then $h\mathds{1}_{A^{c}} \geq g\mathds{1}_{A^{c}}$.
 It remains to verify the items \del{$1$}\new{(i)} and \del{$2$}\new{(ii)} from above to obtain the $\pi$-integrability of $g$. We give in the following an upper bound of $K(z) = \dsp\int_{A^{c}} P(z, \diff y) h(y) = \dsp-\frac{ (\lambda - \mu)!}{\lambda !} \mathbb{E}_{z}\croc{\dsp\mathds{1}_{  \overline{\mathbf{B}\pare{0, \epsilon}} }(Z_{1}) \log\| Z_{1} \| }$. We have
$K(z) \leq \dsp - \frac{ (\lambda - \mu)!}{\lambda !}  \int_{ \|  z + w^{\top} v \| \leq \Gx(v) } \log\dsp\frac{ \|  z + w^{\top} v\|  }{ \Gx(v)} p_{z}^{f}(v)\diff v.$ With~\eqref{mupositive}, $\dsp \frac{(\lambda - \mu)!}{\lambda !} p_{z}^{\f} \leq p_{\N_{\n\mu}} $. Then
$\dsp K(z) \leq \dsp\int \abs{\log(\Gx(v))} p_{\N_{\n\mu}}(v) \diff v + \dsp\int_{ \| z + w^{\top} v\|  \leq \Gx(v) } \abs{\log\| z + w^{\top} v \|  } p_{\N_{\n\mu}}(v)\diff v.$
We split the latter integral between the events $\acco{\| z + w^{\top} v \| \leq \min(1, \Gx(v)) }$ and the events $\acco{1 < \| z + w^{\top} v \| \leq \Gx(v)}$. Then
$K(z) \leq \dsp \int_{ \| z + w^{\top} v \| \leq \min(1,\Gx(v)) } \abs{\log\dsp\| z + w^{\top} v \| }$ \\ $p_{\N_{\n\mu}}(v) \diff v \,+ \dsp \int_{\Gx(v) \geq 1} \log(\Gx(v)) p_{\N_{\n\mu}}(v) \diff v \,+ \dsp \int \abs{\log(\Gx(v))} p_{\N_{\n\mu}}(v) \diff v.$ Hence
 $\dsp K(z) \leq 2\, \mathbb{E}_{W\sim\N_{\n\mu}}\croc{ \abs{\log\circ\Gx}(W)}
- \dsp  \int_{ \| z + w^{\top} v \| \leq 1 } \log\dsp\| z + w^{\top} v \|p_{\N_{\n\mu}}(v) \diff v .$
With a translation $v \to v - \frac{1}{\|w\|^{2}} \pare{ w_{1}z,\dots, w_{\mu}z}$ within the last integrand, we obtain:
\begin{align}
K(z)  \leq 2 \, \mathbb{E}_{W\sim\N_{\n\mu}}\croc{ \abs{\log\circ\Gx}(W)} +  \dsp\int \abs{\log\dsp\| w^{\top} v \|} \varphi(z, v) \diff v. \label{simidrift}
\end{align}

Equations \eqref{majoration} and \eqref{simidrift} show that for $z \in  A^{c} = \overline{\mathbf{B}\pare{0, \epsilon}}$, 
$\dsp\int_{A^{c}} P(z, \diff y) h(y)  \leq g(z) = h(z) - g(z)$. Therefore the item \del{$1$}\new{(i)} follows.
With~\eqref{domination-varprhi}, it follows that there exist $c_{1} > 0$ and $c_{2} > 0$ such that for $\| z\| \geq c_{1}$ and $v \in \R^{\n}$, $\varphi(z, v) \leq c_{2} \varphi(0, v)$. Thanks to the dominated convergence theorem, 
$\dsp\lim_{\| z \| \mapsto \infty} \dsp\int \abs{\log\dsp\| w^{\top} v \|} \varphi(z, v) \diff v = 0$. Therefore that integral is bounded outside of a compact. In addition, $z \mapsto \dsp\int \abs{\log\dsp\| w^{\top} v \|} \varphi(z, v) \diff v$ is continuous and is bounded on any compact included in $\overline{A}$. Then along with~\eqref{simidrift} it follows that $\dsp \sup_{z \in A} \int_{A^{c}} P(z, \diff y) h(y) < \infty$. Hence the item \del{$2$}\new{(ii)} is also satisfied, which ends the integrability proof of $z \mapsto \log \| z \|$.
\end{proof}

\subsection{Linear behavior \new{for an abstract step-size update}}
\label{sec:linear-behavior}
\new{We are now ready to establish the linear behavior of the \muslashmu-ES.}
Our condition for the linear behavior \new{stemming from the drift condition for geometric ergodicity established in Theorem~\ref{geometric-drift}} is that the expected logarithm of the step-size change function $\Gx$ on a nontrivial linear function is positive. 
By Proposition~\ref{linear-invariance}, when $\f$ satisfies F2, the expected change of the logarithm of the step-size is constant and for all $z$, 
$
\mathcal{R}_{\f}(z) = \mathcal{R}_{\f}(-x^{\star}) =  \mathbb{E}\croc{\log\pare{\Gl}}
$ where $\Gl$ is defined in~\eqref{step-size-notation}. Our main result states that if the expected logarithm of the step-size increases on nontrivial linear functions, i.e.\ if $\mathbb{E}\croc{\log\pare{\Gl}} > 0$, then almost sure linear behavior holds on functions satisfying F1 or F2. If $f$ satisfies F2, then almost sure linear divergence holds with a divergence rate of $\mathbb{E}\croc{\log\pare{\Gl} }$.
\new{More precisely the following results hold.}
\begin{theorem}
\label{theo:main-CV} 
Let $\f$ be a scaling-invariant function with respect to $x^{\star}$. Assume that $\f$ satisfies F1 (in which case $x^{\star}$ is the global optimum) or F2. Let $\{(X_k,\sigma_k) \, ; k \in \mathbb{N}  \}$ be the  sequence defined in~\eqref{incumbent} and~\eqref{step-size} such that Assumptions A1$-$A5 are satisfied. Let $\left\{Z_k = (X_k - x^{\star})/\sigma_k \, ; k \in \mathbb{N} \right\}$ be the \normalized\ Markov chain (Proposition~\ref{prop:MC}).
If the expected logarithm of the step-size increases on nontrivial linear functions, i.e.\  if $\mathbb{E}\croc{\log\pare{\Gl}} > 0$  where $\Gl$ is defined in~\eqref{step-size-notation}, then $\{ Z_k \, ; k \in \mathbb{N}\}$ admits an invariant probability measure $\pi$ such that $\mathcal{R}_{\f}$ defined in~\eqref{expected-step-size-f} is $\pi$-integrable. And for all $\pare{X_{0}, \sigma_{0}} \in \left(\R^{\n}\setminus\acco{x^{\star}}\right) \times \pare{0,\infty},$ linear behavior of $X_k$ and $\sigma_k$ as in \eqref{eq:linear-behavior} holds almost surely with
\begin{align}
\lim_{k\to\infty} \frac1k \log\frac{ \| X_{k} - x^{\star}\| }{ \| X_{0} - x^{\star} \| } = \lim_{k\to\infty} \frac1k \log\frac{ \sigma_{k} }{ \sigma_{0} } =   \mathbb{E}_{\pi}(\mathcal{R}_{\f})
\enspace.
\label{almost-sure-linear-convergence-main}
\end{align}
In addition, for all initial conditions $\pare{X_{0}, \sigma_{0} } = (x, \sigma)  \in \R^{\n} \times \pare{0,\infty},$ \del{we have linear behavior of} the expected log-progress \new{behaves linearly} with 
\begin{align}
\lim_{k\to\infty} \mathbb{E}_{\frac{x-x^{\star}}{\sigma}}\croc{ \log\frac{ \| X_{k+1} - x^{\star}\|   }{ \| X_{k} - x^{\star} \|   } } = \lim_{k\to\infty} \mathbb{E}_{\frac{x-x^{\star}}{\sigma}} \croc{  \log\frac{ \sigma_{k+1} }{ \sigma_{k} }}  = \mathbb{E}_{\pi}(\mathcal{R}_{\f}) \enspace .
\label{expected-log-progress-main}
\end{align}

If $\f$ satisfies F2, then $\mathcal{R}_{\f} $ is constant equal to $\mathbb{E}_{\pi}(\mathcal{R}_{\f}) = \mathbb{E}\croc{\log\pare{\Gl}} > 0$, and then both $X_k$ and $\sigma_k$ diverge to infinity with a divergence rate of $\mathbb{E}\croc{\log\pare{\Gl}}$.

If  $\mathbb{E}_{\pi}(\mathcal{R}_{\f}) < 0$, 
then $X_k$ converges linearly to the global optimum $x^{\star}$ with a convergence rate of $-\mathbb{E}_{\pi}(\mathcal{R}_{\f})$ and the step-size converges \new{linearly} to zero. 
\label{almostSureAndLogProgress}
\end{theorem}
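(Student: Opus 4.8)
The strategy is to combine the stability results already established (Theorem~\ref{geometric-drift}, Propositions~\ref{R-integrable} and~\ref{log-integrable}) with the generalized law of large numbers (Theorem~\ref{general-lln} via Corollary~\ref{coro-lln}) applied to the telescoping identity~\eqref{apply-lln}. Concretely I would proceed as follows. First, invoke Theorem~\ref{geometric-drift}: since $\mathbb{E}[\log(\Gl)]>0$ and Assumptions A1--A5 hold, the \normalized\ chain $\{Z_k\}$ is $V$-geometrically ergodic (with $V(z)=1+\|z\|^\alpha$ for a suitable $0<\alpha<1$), hence positive Harris recurrent with invariant probability measure $\pi$. By Proposition~\ref{R-integrable}, $\mathcal{R}_\f$ is $\pi$-integrable (indeed bounded), and by Proposition~\ref{log-integrable}, $z\mapsto\log\|z\|$ is $\pi$-integrable. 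Then, using Proposition~\ref{general-stability}, the companion chain $\{W_k=(Z_k,U_{k+2})\}$ is also geometrically ergodic with invariant measure $\pi\times\N_{\n\lambda}$, so Corollary~\ref{coro-lln} yields an LLN for $\{(Z_k,U_{k+1})\}$. Applying it to the (by Lemma~\ref{log-gamma-lemma} integrable) function $(z,u)\mapsto\log(\Gx(\alpha_\f(x^\star+z,u)))$ gives that the second sum in~\eqref{apply-lln} converges a.s.\ to $\mathbb{E}_\pi(\mathcal{R}_\f)$. For the first sum in~\eqref{apply-lln}, Theorem~\ref{lln-origin} applied to the $\pi$-integrable function $z\mapsto\log\|z\|$ gives $\frac1k\sum_{t=0}^{k-1}\log\|Z_{t+1}\|-\log\|Z_t\| = \frac1k(\log\|Z_k\|-\log\|Z_0\|)\to 0$ a.s.\ (the Cesàro difference of an a.s.-convergent average of $\log\|Z_t\|$ vanishes). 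Summing the two limits gives~\eqref{almost-sure-linear-convergence-main} for $\|X_k-x^\star\|$; the analogous statement for $\sigma_k$ follows directly from~\eqref{sigma-with-alpha} applied in telescoped form, i.e.\ $\frac1k\log(\sigma_k/\sigma_0)=\frac1k\sum_{t=0}^{k-1}\log(\Gx(\alpha_\f(x^\star+Z_t,U_{t+1})))\to\mathbb{E}_\pi(\mathcal{R}_\f)$ by the same LLN.

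For the expected log-progress~\eqref{expected-log-progress-main}, I would use the $h$-ergodicity consequences of the geometric drift. Writing $\mathbb{E}_z[\log(\sigma_{k+1}/\sigma_k)]=\int P^k(z,\diff y)\,\mathcal{R}_\f(y)$ and using that $\{Z_k\}$ is $V$-geometrically ergodic with $\mathcal{R}_\f$ bounded (hence $|\mathcal{R}_\f|\le c\le c\,V$), the convergence $\|P^k(z,\cdot)-\pi\|_V\to 0$ forces $\int P^k(z,\diff y)\mathcal{R}_\f(y)\to\mathbb{E}_\pi(\mathcal{R}_\f)$ for every initial $z$. For the mean-vector progress, decompose as in~\eqref{one-step-relation}: $\mathbb{E}_z[\log(\|X_{k+1}-x^\star\|/\|X_k-x^\star\|)]=\int P^{k+1}(z,\diff y)\log\|y\|-\int P^k(z,\diff y)\log\|y\|+\int P^k(z,\diff y)\mathcal{R}_\f(y)$. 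Since $\log\|\cdot\|$ is $\pi$-integrable and $|\log\|z\||\le c'V(z)$ outside a neighborhood of $0$ (and near $0$ a separate argument using the bound~\eqref{domination-varprhi}-type control is needed, or one simply notes $V$-ergodicity gives convergence of $\int P^k(z,\diff y)\log\|y\|$ to a finite limit), the two $\log\|\cdot\|$ terms cancel in the limit and one is left with $\mathbb{E}_\pi(\mathcal{R}_\f)$. The case F2 is immediate: by Proposition~\ref{linear-invariance}, $\mathcal{R}_\f\equiv\mathbb{E}[\log(\Gl)]>0$, so $\mathbb{E}_\pi(\mathcal{R}_\f)=\mathbb{E}[\log(\Gl)]>0$ and both $X_k$ and $\sigma_k$ diverge geometrically; when $\mathbb{E}_\pi(\mathcal{R}_\f)<0$ (which, for F1, can only be decided numerically via Monte Carlo on $\pi$) the limits in~\eqref{almost-sure-linear-convergence-main} are negative, giving linear convergence of $X_k$ to $x^\star$ and of $\sigma_k$ to $0$.

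The main obstacle I anticipate is the rigorous handling of the $\log\|\cdot\|$ terms. The a.s.\ statement needs $\frac1k(\log\|Z_k\|-\log\|Z_0\|)\to 0$; this is not automatic from $\pi$-integrability alone but does follow from the LLN applied to $\log\|z\|$ (the average $\frac1k\sum\log\|Z_t\|$ converges a.s.\ to a finite constant, hence the increment divided by $k$ vanishes) — one must be slightly careful that $Z_0$ is a.s.\ finite and nonzero, guaranteed by $X_0\neq x^\star$. For the expected-progress version, the subtlety is that $\log\|z\|\to-\infty$ as $z\to 0$, so $\log\|\cdot\|$ is not dominated by the drift function $V=1+\|z\|^\alpha$ near the origin; the clean fix is to observe that $\int P^k(z,\diff y)\log\|y\|$ is well-defined and converges to $\mathbb{E}_\pi(\log\|\cdot\|)$ because the negative part of $\log\|\cdot\|$ is controlled by the bound established in the proof of Proposition~\ref{log-integrable} (the domination~\eqref{domination-varprhi} shows the contribution of a neighborhood of $0$ to $\int P^k(z,\diff y)\,|\log\|y\||$ is uniformly bounded), so the two consecutive terms genuinely cancel in the limit. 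Everything else is bookkeeping: assembling the already-proven stability facts, the integrability facts, and the generalized LLN.
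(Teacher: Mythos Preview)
Your proposal is correct and follows essentially the same route as the paper: geometric ergodicity from Theorem~\ref{geometric-drift}, integrability from Propositions~\ref{R-integrable} and~\ref{log-integrable}, transfer of stability to $\{(Z_k,U_{k+2})\}$ via Proposition~\ref{general-stability}, then Corollary~\ref{coro-lln} for the second sum in~\eqref{apply-lln} and the ordinary LLN (Theorem~\ref{lln-origin}) on $\log\|\cdot\|$ for the telescoping first sum. The F2 case is also handled identically.

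The one point where you diverge is the expected log-progress~\eqref{expected-log-progress-main}. You correctly identify the obstacle---$\log\|z\|$ is not dominated by $V(z)=1+\|z\|^\alpha$ near the origin, so $V$-ergodicity alone does not give $\int P^k(z,\diff y)\log\|y\|\to\mathbb{E}_\pi(\log\|\cdot\|)$---and you propose patching this with the explicit bounds from the proof of Proposition~\ref{log-integrable}. The paper takes a cleaner route: it sets $h(z)=1+\lvert\log\|z\|\rvert$, notes that $h$ is $\pi$-integrable by Proposition~\ref{log-integrable}, and invokes \cite[Theorem~14.0.1]{meyn2012markov} to obtain $h$-ergodicity, i.e.\ $\|P^k(z,\cdot)-\pi\|_h\to 0$ for every $z$. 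Since $\lvert\log\|y\|\rvert\le h(y)$ and $\lvert\mathcal{R}_\f\rvert/h$ is bounded (because $\mathcal{R}_\f$ is bounded and $h\ge 1$), all three integrals in the decomposition converge, and the two $\log\|\cdot\|$ terms cancel. This avoids any ad hoc control near $0$. Your workaround would also go through, but the $h$-ergodicity argument is the tidier packaging.
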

\begin{proof}
Theorem~\ref{geometric-drift} ensures that $\{Z_k \, ; k \in \mathbb{N}\}$ is a positive Harris recurrent chain with invariant probability measure $\pi.$
We start from \eqref{apply-lln}. Since $z \mapsto \log\|z\|$ is $\pi$-integrable, Theorem~\ref{lln-origin} ensures that the LLN holds with $ \lim_{k \to \infty} \frac1k\sum_{t=0}^{k-1}\log \frac{\|Z_{t+1}\|}{\|Z_{t}\|} = \dsp\int \log \pare{\|z\|} \pi(\diff z) -  \dsp\int \log \pare{\|z\|} \pi(\diff z) = 0$.

Let us consider the chain $\{W_k = (Z_{k}, U_{k+2}) \, ; k \in \mathbb{N}\}$. Then thanks to Proposition~\ref{general-stability}, $\{W_k = (Z_{k}, U_{k+2}) \, ; k \in \mathbb{N}\}$ is geometrically ergodic with invariant probability measure $\pi\times \N_{\n\lambda}$. Define the function $g$ for $\pare{ (z_{1}, u_{3}), \,(z_{2}, u_{4}) } \in  \pare{\R^{\n}\times\R^{\n\lambda} }^{2}$ as $g\pare{ (z_{1}, u_{3}), (z_{2}, u_{4}) } = \log(\Gx(\alpha_{\f}(x^{\star}+z_{2}, u_{3} )))$.
We have by Proposition~\ref{R-integrable} that for all natural integer $t$, 
$\mathbb{E}_{\pi\times\N_{\n\lambda}}(\abs{g(W_{t}, W_{t+1})} ) \leq  \frac{\lambda !}{(\lambda - \mu)!} \mathbb{E}_{Y\sim\N_{\n\mu}}\croc{ \abs{ \log\circ \,\Gx}(Y) } < \infty .$ By Theorem~\ref{general-lln} or Corollary~\ref{coro-lln}, for any initial distribution, $\frac1k\sum_{t=0}^{k-1}\log(\Gx(\alpha_{\f}(x^{\star}+Z_{t}, U_{t+1}) ))$ converges almost surely towards $ \mathbb{E}_{\pi \times \N_{\n\lambda}}(g(W_{1}, W_{2})) = \mathbb{E}_{\pi}(\mathcal{R}_{\f})$.
 
 Let us prove now~\eqref{expected-log-progress-main}. Equation~\eqref{one-step-relation} \new{implies that}\del{ and~\eqref{expected-step-plus} show that} for all $z \in \R^{\n}$
\begin{align*}
 \mathbb{E}_{z} \croc{\log \frac{\|X_{k+1}- x^{\star} \|}{\|X_{k} - x^{\star} \|}} &= \mathbb{E}_{z}\croc{\log \frac{\|Z_{k+1}\|}{\|Z_{k}\|} } +  \mathbb{E}_{z}\croc{\log\frac{\sigma_{k+1}}{\sigma_{k}} } \\
 &=   \dsp\int P^{k+1}(z, \diff y) \log(\|y\|) - \dsp\int P^{k}(z, \diff y) \log(\|y\|) \nonumber\\&+   \dsp\int P^{k}(z, \diff y) \mathcal{R}_{\f}(y) .
 \end{align*}

Define $h$ on $\R^{\n}$ as $h(z) = 1 + \abs{\log\| z \|}$ for all $z \in \R^{\n}$ which is $\pi$-integrable thanks to Proposition~\ref{log-integrable}.
Then $z \mapsto \log\|z\|$ is $\pi$-integrable, and\del{ by~\cite[Theorem 14.0.1]{meyn2012markov},} for $z \in \acco{y \in \R^{\n} \, ; V(y) < \infty} = \R^{\n}$, $\dsp \lim_{k \to \infty} \| P^{k}(z, \cdot) - \pi \|_{h} = 0$ \cite[Theorem 14.0.1]{meyn2012markov}.
 Then $\dsp\lim_{k \to \infty} \int P^{k+1}(z, \diff y) \log(\|y\|) = \dsp\lim_{k \to \infty} \int P^{k}(z, \diff y) \log(\|y\|) = \dsp\int \log(\|y\|) \pi(\diff y)$.
In addition, $\abs{\mathcal{R}_{\f}} / h$ is bounded, then $\dsp\lim_{k \to \infty} \int P^{k}(z, \diff y) \mathcal{R}_{\f}(y) = \dsp\int \mathcal{R}_{\f}(y) \pi(\diff y) = \mathbb{E}_{\pi}(\mathcal{R}_{\f})$, and finally~\eqref{expected-log-progress-main} follows.
 %
We also note that if $\f$ satisfies F2, then thanks to Proposition~\ref{linear-invariance}, for all $z \in \R^{\n},$ $\mathcal{R}_{\f}(z) = \mathbb{E}\croc{ \log\pare{\Gl} } $, hence $\mathcal{R}_{\f}$ is constant. Then $\mathbb{E}_{\pi} \pare{ \mathcal{R}_{\f} } =  \dsp\int \mathcal{R}_{\f}(z) \pi(\diff z) = \mathbb{E}\croc{ \log\pare{\Gl} } $.
If in addition $\mathbb{E}\croc{ \log\pare{\Gl} } > 0$, we obtain that $\| X_{k} \|$ and $\sigma_{k}$ both diverge to $\infty$ when $k$ goes to $\infty$.  
\end{proof}

The result that both the step-size and log distance converge (resp.\ diverge) to the optimum (resp.\ to $\infty$) at the same rate is noteworthy and directly follows from our theory. In addition, we provide the exact expression of the rate. Yet it is expressed using the stationary distribution of the Markov chain $\{Z_k \, ; k \in \mathbb{N}\}$ for which we know little information. From a practical perspective, while we never know the optimum of a function on a real problem, \eqref{almost-sure-linear-convergence-main} suggests that we can track the evolution of the step-size to define a termination criterion based on the tolerance of the x-values.
\del{The almost sure linear behavior result  in \eqref{almost-sure-linear-convergence-main} derives from applying a Law of Large Numbers (LLN) to $\{Z_k \, ; k \in \mathbb{N}\}$ and a generalized law of large numbers to the chain $\{(Z_k,\, U_{k+2}) \, ; k \in \mathbb{N}\}$. Our proof mainly relies on showing that $\{Z_k \, ; k \in \mathbb{N}\}$ satisfies an LLN. Yet we prove stronger properties than what is needed for an LLN and imply from there a central limit theorem related to the expected logarithm of the step-size change.}

\subsection{Central Limit Theorem}
\label{CLT}

The rate of convergence (or divergence) of a step-size adaptive $(\mu/\mu_{w},\lambda)$-ES given in \eqref{almost-sure-linear-convergence-main} is expressed as $\abs{\mathbb{E}_{\pi}(\mathcal{R}_{\f})}$ where $\pi$ is the invariant probability measure of the $\sigma$-normalized Markov chain and $\mathcal{R}_{\f}$ is defined in~\eqref{expected-step-size-f}. Yet we do not have an explicit expression for $\pi$ and thus of $\mathbb{E}_{\pi}(\mathcal{R}_{\f})$.
However, we can approximate $\mathbb{E}_{\pi}(\mathcal{R}_{\f})$ with Monte Carlo simulations. We present a central limit theorem for the approximation of $\mathbb{E}_{\pi}(\mathcal{R}_{\f})$  as $\frac1t \sum_{k=0}^{t-1} \mathcal{R}_{\f}(Z_k)$ where $\{ Z_{k} ; k \in \mathbb{N}\}$ is the homogeneous Markov chain defined in Proposition~\ref{prop:MC}.

\begin{theorem} (Central limit theorem for the expected logarithm of the step-size)
\label{clt-cbsaes}
Let $\f$ be a scaling-invariant function with respect to $x^{\star}$ that satisfies F1 or F2. Let $\{(X_k,\sigma_k) \, ; k \in \mathbb{N}\}$ be the sequence defined in~\eqref{incumbent} and \eqref{step-size} such that Assumptions A1$-$A5 are satisfied.
If the expected logarithm of the step-size increases on nontrivial linear functions, i.e.\  if $\mathbb{E}\croc{\log\pare{\Gl}} > 0$  where $\Gl$ is defined in~\eqref{step-size-notation}, then $\acco{Z_k = (X_k - x^{\star})/\sigma_k\, ; k \in \mathbb{N} }$ is a Markov chain admitting an invariant probability measure $\pi$. Define $\mathcal{R}_{\f}$ as in~\eqref{expected-step-size-f} and for all positive integer $t,$ define $S_{t}(\mathcal{R}_{\f}) = \sum_{k=0}^{t-1} \mathcal{R}_{\f}(Z_{k})$. Then the constant $\gamma^{2}$ defined as\\
$\mathbb{E}_{ \pi } \croc{   \pare{ \mathcal{R}_{\f}(Z_{0}) - \mathbb{E}_{\pi}(\mathcal{R}_{\f}) }^{2}   } + 2 \dsp \sum_{k=1}^{\infty}  \mathbb{E}_{ \pi } \croc{ \pare{ \mathcal{R}_{\f}(Z_{0}) - \mathbb{E}_{\pi}(\mathcal{R}_{\f}) }\pare{ \mathcal{R}_{\f}(Z_{k}) - \mathbb{E}_{\pi}(\mathcal{R}_{\f}) }  }$
is well defined, non-negative, finite and 
$\dsp\lim_{t\to\infty}\frac{1}{t}  \mathbb{E}_{ \pi } \croc{ \pare{ S_{t}(\mathcal{R}_{\f}) - t \,\mathbb{E}_{\pi}(\mathcal{R}_{\f}) }^{2} } = \gamma^{2}$.

If $\gamma^{2} > 0,$ then the central limit theorem holds in the sense that for any initial condition $z_{0},$ 
$\dsp \sqrt{\frac{t}{\gamma^{2}} } \pare{ \frac1t S_{t}(\mathcal{R}_{\f}) -  \mathbb{E}_{\pi}(\mathcal{R}_{\f}) }$  converges in distribution to $\N(0, 1).$ 
If $\gamma^{2} = 0,$ then 
$\dsp\lim_{t \to \infty} \frac{ S_{t}(\mathcal{R}_{\f}) - t \mathbb{E}_{\pi}(\mathcal{R}_{\f}) }{ \sqrt{t} }  = 0 $ a.s.
\end{theorem}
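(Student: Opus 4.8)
The plan is to obtain this statement as an essentially immediate consequence of the geometric ergodicity of $\{Z_k \, ; k \in \mathbb{N}\}$ established in Theorem~\ref{geometric-drift} combined with the generic central limit theorem for $V$-geometrically ergodic Markov chains recalled in Theorem~\ref{clt-generic} (i.e.\ \cite[Theorems 16.0.1 and 17.0.1]{meyn2012markov}). Under the hypotheses (F1 or F2, Assumptions A1--A5 and $\mathbb{E}\croc{\log\pare{\Gl}} > 0$), Theorem~\ref{geometric-drift} guarantees that $\{Z_k \, ; k \in \mathbb{N}\}$ is a $\varphi$-irreducible aperiodic chain that is $V$-geometrically ergodic for $V : z \mapsto 1 + \|z\|^{\alpha}$ with some $0 < \alpha < 1$, and that it admits an invariant probability measure $\pi$. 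So the only thing to verify before invoking Theorem~\ref{clt-generic} with a test function essentially equal to $\mathcal{R}_{\f}$ is the growth condition $g^{2} \leq V$. This is exactly what Proposition~\ref{R-integrable} provides: it shows that $\mathcal{R}_{\f}$ is \emph{bounded}, with $\abs{\mathcal{R}_{\f}(z)} \leq M$ for all $z \in \R^{\n}$, where $M = \frac{\lambda!}{(\lambda-\mu)!}\,\mathbb{E}_{W\sim\N_{\n\mu}}\croc{\abs{\log\circ\,\Gx}(W)}$ is finite by Assumption A4.

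Concretely I would proceed as follows. First note that $M > 0$: by Lemma~\ref{log-gamma-lemma} (or directly from Proposition~\ref{R-integrable} evaluated at a point where $\f$ is linear) one has $\abs{\mathbb{E}\croc{\log\pare{\Gl}}} \leq M$, so the assumption $\mathbb{E}\croc{\log\pare{\Gl}} > 0$ forces $M > 0$. Set $g = \mathcal{R}_{\f}/M$; since $V \geq 1$ we get $g^{2} \leq 1 \leq V$, so Theorem~\ref{clt-generic} applies to $g$. Writing $\bar g = g - \mathbb{E}_{\pi}(g)$ and $S_{t}(\bar g) = \sum_{k=0}^{t-1}\bar g(Z_{k})$, it produces a well-defined, non-negative, finite constant $\gamma_{g}^{2} = \mathbb{E}_{\pi}\croc{\bar g(Z_{0})^{2}} + 2\sum_{k=1}^{\infty}\mathbb{E}_{\pi}\croc{\bar g(Z_{0})\bar g(Z_{k})}$ with $\frac1t\mathbb{E}_{\pi}\croc{S_{t}(\bar g)^{2}} \to \gamma_{g}^{2}$, together with: $\frac{1}{\sqrt{t\gamma_{g}^{2}}}S_{t}(\bar g) \to \N(0,1)$ in distribution for any initial condition when $\gamma_{g}^{2} > 0$, and $\frac{1}{\sqrt t}S_{t}(\bar g) = 0$ a.s.\ when $\gamma_{g}^{2} = 0$. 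Now $M\bar g = \mathcal{R}_{\f} - \mathbb{E}_{\pi}(\mathcal{R}_{\f})$, hence $M\,S_{t}(\bar g) = S_{t}(\mathcal{R}_{\f}) - t\,\mathbb{E}_{\pi}(\mathcal{R}_{\f})$ and $\gamma^{2} = M^{2}\gamma_{g}^{2}$, where $\gamma^{2}$ is precisely the constant in the statement. Multiplying through by $M$ (resp.\ $M^{2}$) then gives all the assertions: $\gamma^{2}$ is well-defined, non-negative and finite; $\frac1t\mathbb{E}_{\pi}\croc{\pare{S_{t}(\mathcal{R}_{\f}) - t\,\mathbb{E}_{\pi}(\mathcal{R}_{\f})}^{2}} \to \gamma^{2}$; if $\gamma^{2} > 0$ then $\gamma_{g}^{2} > 0$ and $\sqrt{\frac{t}{\gamma^{2}}}\pare{\frac1t S_{t}(\mathcal{R}_{\f}) - \mathbb{E}_{\pi}(\mathcal{R}_{\f})} = \frac{1}{\sqrt{t\gamma_{g}^{2}}}S_{t}(\bar g)$ converges in distribution to $\N(0,1)$ for any initial condition; and if $\gamma^{2} = 0$ then $\gamma_{g}^{2} = 0$ and $\frac{S_{t}(\mathcal{R}_{\f}) - t\,\mathbb{E}_{\pi}(\mathcal{R}_{\f})}{\sqrt t} = \frac{M\,S_{t}(\bar g)}{\sqrt t} = 0$ a.s.

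I do not expect a genuine obstacle in this step: the technically demanding work, namely the stability analysis culminating in the geometric drift condition (Theorem~\ref{geometric-drift}) and the uniform boundedness of $\mathcal{R}_{\f}$ (Proposition~\ref{R-integrable}), is already in place. The only care required is the harmless normalization by $M$ so as to meet the exact hypothesis $g^{2} \leq V$ of Theorem~\ref{clt-generic}, plus the routine bookkeeping to recover the displayed form of $\gamma^{2}$ and of the normalized partial sums. (Alternatively, one could absorb the constant directly by noting $\mathcal{R}_{\f}^{2} \leq M^{2}V$ and appealing to a version of the Meyn--Tweedie CLT phrased for $g^{2} \leq cV$; the rescaling argument sidesteps even that.)
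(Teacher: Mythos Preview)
Your proposal is correct and follows essentially the same approach as the paper: both use the boundedness of $\mathcal{R}_{\f}$ from Proposition~\ref{R-integrable} to meet the growth hypothesis of Theorem~\ref{clt-generic} and then read off the CLT. The only cosmetic difference is that the paper scales the drift function up (choosing $K$ large so that $\mathcal{R}_{\f}^{2} \leq K\,V$ and noting $K\,V$ is still a geometric drift function), whereas you scale the test function down by $M$; you even mention the paper's variant as your alternative at the end.
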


\begin{proof}
Thanks to Proposition~\ref{R-integrable}, $\abs{  \mathcal{R}_{\f} }$ is bounded. And then there exists a positive constant $K$ large enough such that $\mathcal{R}_{\f}^{2} \leq K \, V$ where $V$ is the geometric drift function of $\acco{ Z_{k}\, ; k \in \mathbb{N} }$ given by Theorem~\ref{geometric-drift}. Then $K \, V$ remains a geometric drift function. Thanks to Theorem~\ref{clt-generic}, the constant $\gamma$ defined as
$
\mathbb{E}_{ \pi } \croc{   \pare{ \mathcal{R}_{\f}(Z_{0}) -  \mathbb{E}_{\pi}(\mathcal{R}_{\f}) }^{2}   } + 2  \sum_{k=1}^{\infty}  \mathbb{E}_{ \pi } \croc{ \pare{ \mathcal{R}_{\f}(Z_{0}) -  \mathbb{E}_{\pi}(\mathcal{R}_{\f}) }\pare{ \mathcal{R}_{\f}(Z_{k}) -  \mathbb{E}_{\pi}(\mathcal{R}_{\f}) }  }
$
 is well defined, non-negative, finite and  
$\dsp\lim_{t\to\infty}\frac{1}{t}  \mathbb{E}_{ \pi } \croc{ \pare{ S_{t}(\mathcal{R}_{\f}) - t  \mathbb{E}_{\pi}(\mathcal{R}_{\f}) }^{2} } = \gamma^{2}.$
Moreover if $\gamma^{2} > 0,$ then the CLT holds for any $z_{0}$ as follows
$
\lim_{t \to \infty} P_{z_{0}} \pare{ (t \gamma^{2})^{-\frac{1}{2}} \pare{S_{t}(\mathcal{R}_{\f}) - t  \mathbb{E}_{\pi}(\mathcal{R}_{\f})}  \leq z} = \dsp\int_{-\infty}^{z} \frac{1}{\sqrt{2\pi}} e^{-u^{2}/2} \diff u.
$
Which can be rephrased as 
$\frac{1}{ \sqrt{t \gamma^{2}} } \pare{S_{t}(\mathcal{R}_{\f}) - t \mathbb{E}_{\pi}(\mathcal{R}_{\f})}$   converges in distribution to $\N(0, 1)$ when $t \to \infty.$
And if $\gamma = 0,$ then 
$\lim_{t \to \infty} (S_{t}(\mathcal{R}_{\f}) - t \,\mathbb{E}_{\pi}(\mathcal{R}_{\f})) / \sqrt{t}  = 0 \textit{ a.s.}
$
\end{proof}

\subsection{Sufficient conditions for the linear behavior of \nnew{the} $(\mu/\mu_w, \lambda)$-CSA1-ES and \nnew{the} $(\mu/\mu_w, \lambda)$-xNES}
\label{sec:SC-linear-behavior-CSA-xNES}

Theorems~\ref{theo:main-CV} and~\ref{clt-cbsaes} hold for an abstract step-size update function $\Gamma$ that satisfies Assumptions A1$-$A4. For the step-size update functions of the $\pare{\mu/\mu_w, \lambda}$-CSA1-ES and the $\pare{\mu/\mu_w, \lambda}$-xNES defined in \eqref{step-size-csa} and \eqref{step-size-xnes}, sufficient and necessary conditions to obtain a step-size increase on linear functions are presented in the next proposition. They are expressed using the weights and the $\mu$ best order statistics $\N^{1:\lambda}, \dots, \N^{\mu:\lambda}$ of a sample of $\lambda$ standard normal distributions $\N^{1}, \dots, \N^{\lambda}$
defined such as $\N^{1:\lambda} \leq \N^{2:\lambda} \leq \dots \leq \N^{\lambda:\lambda}$. 
\begin{proposition}[\textbf{Necessary and sufficient condition for step-size increase on nontrivial linear functions}]%
\label{prop:weight-condition}
For the $\pare{\mu/\mu_w, \lambda}$-CSA-ES algorithm without cumulation, $ \mathbb{E}\left[ \log\pare{ (\Gx_{ \rm{CSA1}})_{{\rm linear}}^{\star} } \right] = \frac{1}{2 d_{\sigma}\n}\left(\mathbb{E}\left[\left(\sum_{i=1}^{\mu} \frac{w_{i}}{\| w\|  } \N^{i:\lambda} \right)^{2} \right] - 1\right)$. Therefore, the expected logarithm of the step-size increases on nontrivial linear functions if and only if
$\mathbb{E}\croc{\pare{\sum_{i=1}^{\mu} \frac{w_{i}}{\| w\|  } \N^{i:\lambda} }^{2} }  > 1.$

For the $\pare{\mu/\mu_w, \lambda}$-xNES without covariance matrix adaptation, if  $w_{i} \geq 0$ for all $i=1,\dots, \mu$, $\mathbb{E}\croc{ \log\pare{ (\Gx_{ \rm{xNES}})_{\rm linear}^{\star} } } = \frac{1}{2 d_{\sigma}\n}\pare{\sum_{i=1}^{\mu} \frac{w_{i}}{\sum_{j=1}^{\mu} w_{j}} \mathbb{E}\croc{\pare{ \N^{i:\lambda} }^{2} } - 1}$. Therefore, the expected logarithm of the step-size increases on nontrivial linear functions if and only if
 $\sum_{i=1}^{\mu} \frac{w_{i}}{\sum_{j=1}^{\mu} w_{j}} \mathbb{E}\croc{\pare{ \N^{i:\lambda} }^{2} }  > 1$.
In addition, this latter equation is satisfied if $\lambda, \mu$ and $w$ are set such that $\lambda \geq 3,$ $\mu < \frac{\lambda}{2}$ and $w_{1} \geq w_{2} \geq \cdots \geq w_{\mu} \geq 0$.
\end{proposition}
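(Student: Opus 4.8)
The three assertions split into two routine computations and one genuine inequality, which I sketch in turn.

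\textbf{The two explicit formulas.} The plan is to make the law of $\alpha_{l^\star}(0,U_1)$ explicit and substitute it into the closed forms \eqref{step-size-csa} and \eqref{step-size-xnes}. On the linear function $l^\star(x)=x_1$ the ranking in \eqref{ranking} compares only the first coordinates of the $U_1^i$, which are i.i.d.\ $\N(0,1)$; hence $(U_1^{1:\lambda},\dots,U_1^{\mu:\lambda})$ has the law of $\big((\N^{1:\lambda},M^1),\dots,(\N^{\mu:\lambda},M^\mu)\big)$, where $M^1,\dots,M^\mu$ are i.i.d.\ $\N_{n-1}$ and independent of $(\N^{1:\lambda},\dots,\N^{\lambda:\lambda})$ --- the point being that the sorting permutation depends only on the first coordinates, which are independent of the remaining ones. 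Feeding this into \eqref{step-size-csa} gives $\log\Gl=\frac{1}{2d_\sigma n}\big(\|w\|^{-2}\big[(\sum_i w_i\N^{i:\lambda})^2+\|\sum_i w_iM^i\|^2\big]-n\big)$; taking expectations (legitimate since $\log\circ\,\Gx$ is $\N_{n\mu}$-integrable, cf.\ Lemma~\ref{log-gamma-lemma}) and using $\mathbb E\big[\|\sum_i w_iM^i\|^2\big]=(n-1)\|w\|^2$ collapses the $M$-terms and yields the claimed identity for $\Gx_{\rm CSA1}$. The computation for \eqref{step-size-xnes} is identical, now with $\|u^i\|^2=(\N^{i:\lambda})^2+\|M^i\|^2$, $\mathbb E[\|M^i\|^2]=n-1$, and $\sum_j|w_j|=\sum_j w_j=:W$ because the weights are nonnegative. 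In both cases $2d_\sigma n>0$, so the two ``if and only if'' statements are immediate.

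\textbf{The sufficient condition.} Write $v_i:=\mathbb E[(\N^{i:\lambda})^2]$; the target is $\sum_{i=1}^\mu\tfrac{w_i}{W}v_i>1$. I would lean on three facts about $(v_i)$: (a) the symmetry $v_i=v_{\lambda+1-i}$ (because $\N^{i:\lambda}\stackrel d=-\N^{\lambda+1-i:\lambda}$); (b) the normalization $\sum_{i=1}^\lambda v_i=\mathbb E[\sum_i(\N^i)^2]=\lambda$, since the order statistics are a permutation of the sample; and (c) the ``U-shape'' $v_1>v_2>\dots>v_{\lceil\lambda/2\rceil}$. Granting (c): the vector $(w_i/W)_{i\le\mu}$ is a probability vector with nonincreasing entries, so Chebyshev's sum inequality applied to the similarly ordered sequences $w_1\ge\cdots\ge w_\mu$ and $v_1\ge\cdots\ge v_\mu$ gives $\sum_{i=1}^\mu\tfrac{w_i}{W}v_i\ge\bar v:=\tfrac1\mu\sum_{i=1}^\mu v_i$. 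It then remains to show $\bar v>1$. Since $\mu<\lambda/2$, the indices $1,\dots,\mu$ lie strictly in the decreasing half (so $v_\mu>v_{\mu+1}$), there are $\lambda-2\mu\ge1$ ``middle'' indices, and by (a)+(c) every middle index $j\in\{\mu+1,\dots,\lambda-\mu\}$ has $v_j\le v_{\mu+1}<v_\mu\le\bar v$; combining with (b), $\lambda=2\mu\bar v+\sum_{j=\mu+1}^{\lambda-\mu}v_j<2\mu\bar v+(\lambda-2\mu)\bar v=\lambda\bar v$, i.e.\ $\bar v>1$.

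\textbf{The U-shape (the hard part).} The only substantive step is (c), namely $v_i>v_{i+1}$ for $i\le(\lambda-1)/2$, and this is where I expect the main difficulty. From the order-statistic densities $f_{i:\lambda}=C_i\,\phi\,\Phi^{i-1}(1-\Phi)^{\lambda-i}$ (with $\phi,\Phi$ the standard normal density and cdf and $C_i$ the usual normalizing constant) one gets $f_{i:\lambda}-f_{i+1:\lambda}=\tfrac{C_i}{i}\,\phi\,\Phi^{i-1}(1-\Phi)^{\lambda-i-1}(i-\lambda\Phi)$; substituting $u=\Phi(x)$, setting $\psi(u):=(\Phi^{-1}(u))^2$ and $V_i\sim\mathrm{Beta}(i,\lambda-i)$, and using $\mathbb E[V_i]=i/\lambda$, this rewrites as $v_i-v_{i+1}=-c_{i,\lambda}\,\mathrm{Cov}\big(\psi(V_i),V_i\big)$ for an explicit $c_{i,\lambda}>0$. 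So it suffices to prove $\mathrm{Cov}(\psi(V_i),V_i)<0$ when $i<\lambda/2$. Two features make this work: $\psi$ is symmetric about $1/2$ and strictly decreasing on $(0,1/2)$ (since $\Phi^{-1}(1-u)=-\Phi^{-1}(u)$), and the Beta$(i,\lambda-i)$ density $p$ is left-skewed in the precise sense $p(u)>p(1-u)$ for $u<1/2$, because $p(u)/p(1-u)=(u/(1-u))^{2i-\lambda}$ with $2i-\lambda<0$. Writing $\mathrm{Cov}(\psi(V_i),V_i)=\tfrac12\iint(\psi(u)-\psi(u'))(u-u')\,p(u)p(u')\,\mathrm du\,\mathrm du'$ and folding through the involution $(u,u')\mapsto(1-u,1-u')$, which negates the kernel $(\psi(u)-\psi(u'))(u-u')$, turns this into $\tfrac12\iint_{\{u+u'<1\}}(\psi(u)-\psi(u'))(u-u')\,\big[p(u)p(u')-p(1-u)p(1-u')\big]\,\mathrm du\,\mathrm du'$. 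On $\{u+u'<1\}$ one checks that $p(u)p(u')>p(1-u)p(1-u')$ (equivalent to $u+u'<1$ after taking logarithms) while $(\psi(u)-\psi(u'))(u-u')\le0$ (a short case check using that $\psi$ is symmetric and nonincreasing on $(0,1/2]$), and that the product is strictly negative on a set of positive measure; hence the covariance is strictly negative, which gives (c). Everything else is bookkeeping.
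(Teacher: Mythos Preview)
Your argument is correct. For the two closed-form identities you proceed exactly as the paper does --- decompose each selected vector into its first coordinate (which carries the ranking) and the remaining $n-1$ coordinates (which stay i.i.d.\ $\N_{n-1}$ and independent of the ranking) --- only you phrase it as a distributional identification while the paper writes out the integral against $p_{e_1}^{l^\star}$ and separates the $u_1$- and $u_{-1}$-variables by hand; the content is the same.

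For the sufficient condition the skeletons also coincide: both the paper and you reduce, via Chebyshev's sum inequality applied to $w_1\ge\cdots\ge w_\mu$ and $v_1\ge\cdots\ge v_\mu$, to the equal-weight statement $\bar v=\tfrac1\mu\sum_{i\le\mu}v_i>1$. The genuine difference is in how the monotonicity $v_1\ge\cdots\ge v_\mu$ (and the comparison $v_j<\bar v$ for the middle indices) is obtained. The paper argues elementarily from the pointwise sandwich $\N^{i:\lambda}\le \N^{j:\lambda}\le \N^{\lambda+1-i:\lambda}$: split on the sign of $\N^{j:\lambda}$ and invoke $(\N^{\lambda+1-i:\lambda})^2\stackrel{d}{=}(\N^{i:\lambda})^2$ to pass to $\mathbb E[(\N^{j:\lambda})^2]\le\mathbb E[(\N^{i:\lambda})^2]$. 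Your route --- rewriting $v_i-v_{i+1}=-c_{i,\lambda}\operatorname{Cov}(\psi(V_i),V_i)$ with $V_i\sim\mathrm{Beta}(i,\lambda-i)$ and then a folding argument on $(u,u')\mapsto(1-u,1-u')$ to determine the sign --- is more analytic and more work, but it delivers the full strict U-shape in one stroke and is entirely self-contained, whereas the paper's sign-splitting step combines a pointwise bound with a marginal distributional symmetry and leaves the passage to expectations somewhat implicit. Either approach closes the proof; yours trades brevity for explicitness.
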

\begin{proof}
We first prove the statement related to the $\pare{\mu/\mu_w, \lambda}$-CSA1-ES. Then we show the condition regarding \new{the} $\pare{\mu/\mu_w, \lambda}$-xNES. Finally we prove the general practical condition that allows to obtain the condition regarding the xNES algorithm.

 If $m$ is a positive integer and $u = \pare{u^{1}, \dots, u^{m}} \in \R^{\n m},$ we denote $u_{1} = \pare{u^{1}_{1}, \dots, u^{m}_{1} }$ and $u_{-1} = \pare{u^{1}_{-1}, \dots, u^{m}_{-1} }$ where $u^{i}_{-1} = \pare{u^{i}_{2},\dots,u^{i}_{\n}}$ for $i  = 1,\dots, m.$ Define the nontrivial linear function $\l^{\star}$ such that $l^{\star}(x) = x_{1}$ for $x \in \R^{\n}$, and denote by $e_{1}$ the unit vector $\pare{1, \dots, 0}$.

\textit{Part 1.}
We prove that $\mathbb{E}_{U_{1}\sim\N_{\n\lambda}}\croc{ \log\pare{\Gx_{\rm CSA1} \pare{ \alpha_{l^{\star}}(e_{1}, U_{1})} } }$ has the same sign than $\mathbb{E}\croc{\pare{\sum_{i=1}^{\mu} \frac{w_{i}}{\| w\| } \N^{i:\lambda} }^{2} } - 1,$ and apply Theorem~\ref{almostSureAndLogProgress}. 
We have\\
 $2d_{\sigma}\|w\|^{2}\n \mathbb{E}_{U_{1}\sim\N_{\n\lambda}}\croc{ \log\pare{\Gx_{\rm CSA1} \pare{ \alpha_{l^{\star}}(e_{1}, U_{1})}} } =$\\$ \left( \mathbb{E}_{U_{1}\sim\N_{\n\lambda}}\croc{\|\sum_{i=1}^{\mu}
w_{i} \croc{\alpha_{l^{\star}}(e_{1}, U_{1})}_{i}  \|^{2} }  - \|w\|^{2}\n  \right)  .
$ Therefore it is enough to show that 
$\mathbb{E}\croc{ \left\Vert \sum_{i=1}^{\mu} \frac{w_{i}}{\|w\|} \croc{\alpha_{l^{\star}}(e_{1}, U_{1})}_{i}  \right\rVert^{2} }  - \n = \mathbb{E}\croc{\pare{\sum_{i=1}^{\mu} \frac{w_{i}}{\|w\|} \N^{i:\lambda} }^{2} } - 1.$
Recall that the probability density function of $\alpha_{l^{\star}}(e_{1}, U_{1})$ is $p_{e_{1}}^{l^{\star}}$ defined for all $u\in \R^{\n\mu}$ as 
 $p_{e_{1}}^{ l^{\star} }(u) = \frac{\lambda !}{(\lambda - \mu)!} (1-Q_{e_{1}}^{ l^{\star} }(u^{\mu}))^{\lambda-\mu} \dsp\prod_{i=1}^{\mu-1} \mathds{1}_{\acco{l^{\star}(u^i) < l^{\star}(u^{i+1})}} \dsp\prod_{i=1}^{\mu} p_{\N_{\n}}(u^{i}).
 $
 
Denote $A = \mathbb{E}_{U_{1}\sim\N_{\n\lambda}}\croc{\lVert \sum_{i=1}^{\mu} \frac{w_{i}}{\|w\|} \croc{\alpha_{l^{\star}}(e_{1}, U_{1})}_{i}  \rVert^{2} }.$ It follows that\\
$A = \frac{\lambda !}{(\lambda - \mu)!} \! \dsp\bigintsss \left\lVert\sum_{i=1}^{\mu} \frac{w_{i}}{\|w\|} u^{i} \right\rVert^{2}  (1-Q_{e_{1}}^{ l^{\star} }(u^{\mu}))^{\lambda-\mu}
\prod_{j=1}^{\mu-1} \mathds{1}_{\acco{l^{\star}(u^j) < l^{\star}(u^{j+1})}} \dsp\prod_{j=1}^{\mu} p_{\N_{\n}}(u^{j}) \diff u $ \\ 
$= \dsp \bigintsss \!\! \pare{\left\lVert\sum_{i=1}^{\mu} \frac{w_{i}}{\|w\|} u_{1}^{i} \right\rVert^{2} \! +  \left\| \sum_{i=1}^{\mu} \frac{w_{i}}{\|w\|} u_{-1}^{i} \right\|^{2 } } \!\! P\pare{\N > u_{1}^{\mu}}^{\lambda-\mu} \dsp\prod_{j=1}^{\mu-1} \mathds{1}_{\acco{u_{1}^j < u_{1}^{j+1} } } \dsp\prod_{j=1}^{\mu} p_{\N}(u_{1}^{j}) $\\$ \dsp\prod_{j=1}^{\mu} p_{\N_{\n-1}}(u_{-1}^{j}) \,\diff u.$
We expand the integrand, the first term is $ \mathbb{E}\croc{\pare{\sum_{i=1}^{\mu} \frac{w_{i}}{\|w\|} \N^{i:\lambda} }^{2} }.$ 

Denote $B = \mathbb{E}\croc{\pare{\sum_{i=1}^{\mu} \frac{w_{i}}{\|w\|} \N^{i:\lambda} }^{2} }$ and $C = B-A.$ Then
$\frac{(\lambda - \mu)!}{\lambda !} C$ equals $\bigintss \left\lVert\sum_{i=1}^{\mu} \frac{w_{i}}{\|w\|} u_{-1}^{i} \right\rVert^{2 } P\pare{\N > u_{1}^{\mu}}^{\lambda-\mu}\prod_{j=1}^{\mu-1} \mathds{1}_{\acco{u_{1}^j < u_{1}^{j+1} } } \dsp\prod_{j=1}^{\mu} p_{\N}(u_{1}^{j}) p_{\N_{\n-1}}(u_{-1}^{j}) \diff u$.\\
Then $C = \bigintss_{\R^{\mu}}  \frac{\lambda !}{(\lambda - \mu)!} P\pare{\N > u_{1}^{\mu}}^{\lambda-\mu}  \prod_{j=1}^{\mu-1} \mathds{1}_{\acco{u_{1}^j < u_{1}^{j+1} } } \prod_{j=1}^{\mu} p_{\N}(u_{1}^{j}) \diff u_{1}$ \\
$\bigintss_{\R^{(\n-1)\mu}}  \left\lVert\sum_{i=1}^{\mu} \frac{w_{i}}{\|w\|} u_{-1}^{i} \right\rVert^{2 } \prod_{j=1}^{\mu} p_{\N_{\n-1}}(u_{-1}^{j}) \diff u_{-1}$.
The first integral equals $1$ as it is the integral of a probability density function. The second integral is equal to $\mathbb{E}\croc{\|\sum_{i=1}^{\mu} \frac{w_{i}}{\|w\|} W_{i} \|^{2} }$ where $W_{1}, \dots, W_{\mu}$ are i.i.d. random variables of law $\N_{\n-1}.$ Then the law of $\sum_{i=1}^{\mu} \frac{w_{i}}{\|w\| } W_{i}$ is $\N_{\n-1}$. Then $\mathbb{E}\croc{\left\lVert\sum_{i=1}^{\mu} \frac{w_{i}}{\|w\|} W_{i} \right\rVert^{2} } = \n - 1.$
Hence\\
$
\mathbb{E}_{U_{1}\sim\N_{\n\lambda}}\croc{\left\lVert\sum_{i=1}^{\mu} \frac{w_{i}}{\|w\|} \croc{\alpha_{l^{\star}}(e_{1}, U_{1})}_{i}  \right\rVert^{2} }$ $-$ $\mathbb{E}\croc{\pare{\sum_{i=1}^{\mu} \frac{w_{i}}{\|w\|} \N^{i:\lambda} }^{2} } = \n- 1,
$
which ends this part.


\textit{Part 2.} For the second item, we show that $\mathbb{E}_{U_{1}\sim\N_{\n\lambda}}\croc{ \log\pare{\Gx_{\rm xNES} \pare{ \alpha_{l^{\star}}(e_{1}, U_{1})}} }$ has the same sign than $\sum_{i=1}^{\mu} \frac{w_{i}}{\sum_{j=1}^{\mu} w_{j} } \mathbb{E}\croc{\pare{ \N^{i:\lambda} }^{2} } - 1, $ and apply Theorem~\ref{almostSureAndLogProgress}.
We have
$\mathbb{E}_{U_{1}\sim\N_{\n\lambda}}\croc{ \log\pare{\Gx_{\rm xNES} \pare{ \alpha_{l^{\star}}(e_{1}, U_{1})}} }$ $=$\\ $ \frac{1}{2 d_{\sigma} n \sum_{i=1}^{\mu} w_{i}} \sum_{i=1}^{\mu}
w_{i} \pare{\mathbb{E}_{U_{1}\sim\N_{\n\lambda}}\croc{\| \croc{ \alpha_{l^{\star}}(e_{1}, U_{1}) }_{i} \|^{2} }  - n}.$
Then it is enough to show: 
$\dsp \sum_{i=1}^{\mu} w_{i} \pare{\mathbb{E}_{U_{1}\sim\N_{\n\lambda}}\croc{\| \croc{ \alpha_{l^{\star}}(e_{1}, U_{1}) }_{i} \|^{2} }  - n} = \dsp\sum_{i=1}^{\mu}w_{i} \mathbb{E}\croc{\pare{ \N^{i:\lambda} }^{2} } - \sum_{i=1}^{\mu} w_{i}.$
Denote $A =  \sum_{i=1}^{\mu} w_{i} \mathbb{E}_{U_{1}\sim\N_{\n\lambda}}\croc{\| \croc{ \alpha_{l^{\star}}(e_{1}, U_{1}) }_{i} \|^{2} }.$ It follows
$ A = \frac{\lambda !}{ (\lambda - \mu)! } \int \sum_{i=1}^{\mu} w_{i}  \| u^{i} \|^{2}  (1-Q_{e_{1}}^{ l^{\star} }(u^{\mu}))^{\lambda-\mu}
\prod_{j=1}^{\mu-1} \mathds{1}_{\acco{l^{\star}(u^j) < l^{\star}(u^{j+1})}} 
\prod_{j=1}^{\mu} p_{\N_{\n}}(u^{j}) \diff u $ which is equal to \\
$
 \frac{\lambda !}{ (\lambda - \mu)! }\int \pare{ \sum_{i=1}^{\mu} w_{i}  \| u^{i}_{1} \|^{2} + \sum_{i=1}^{\mu} w_{i}  \| u^{i}_{-1} \|^{2} }  P\Big(\N > u_{1}^{\mu}\Big)^{\lambda-\mu} 
\prod_{j=1}^{\mu-1} \mathds{1}_{\acco{u_{1}^j < u_{1}^{j+1} } } $\\
$\prod_{j=1}^{\mu} p_{\N}(u_{1}^{j})  \dsp\prod_{j=1}^{\mu} p_{\N_{\n-1}}(u_{-1}^{j}) \,\diff u$. Then after expansion, the integral of the first term of the integrand equals $\dsp \frac{(\lambda - \mu)!}{\lambda !} \sum_{i=1}^{\mu}w_{i} \mathbb{E}\croc{\pare{ \N^{i:\lambda} }^{2} }.$ 
Denote $B =  \dsp\sum_{i=1}^{\mu} w_{i} \mathbb{E}\croc{\pare{ \N^{i:\lambda} }^{2} }$ and $C = A-B.$ Then $ \dsp \frac {(\lambda - \mu)!} {\lambda !} C = \dsp\int \sum_{i=1}^{\mu} w_{i} \| u_{-1}^{i} \|^{2 } \pare{ P\pare{\N > u_{1}^{\mu}} }^{\lambda-\mu} $\\
$\dsp\prod_{j=1}^{\mu-1} \mathds{1}_{\acco{u_{1}^j < u_{1}^{j+1} } } \dsp\prod_{j=1}^{\mu} p_{\N}(u_{1}^{j}) p_{\N_{\n-1}}(u_{-1}^{j}) \diff u$. Then $C = \dsp\int_{\R^{\mu}}  \dsp \frac {\lambda !} {(\lambda - \mu)!} \dsp\prod_{j=1}^{\mu-1} \mathds{1}_{\acco{u_{1}^j < u_{1}^{j+1} } }$ \\$P\pare{\N > u_{1}^{\mu}}^{\lambda-\mu} \dsp\prod_{j=1}^{\mu} p_{\N}(u_{1}^{j}) \diff u_{1} \dsp\int_{\R^{(\n-1)\mu}}  \sum_{i=1}^{\mu} w_{i}\|u_{-1}^{i} \|^{2 } \dsp\prod_{j=1}^{\mu} p_{\N_{\n-1}}(u_{-1}^{j}) \diff u_{-1}$.
The first integral equals $1$ as it is the integral of a probability density function. The second one equals $\sum_{i=1}^{\mu} w_{i} \mathbb{E}\croc{\| \N_{\n-1} \|^{2} }  = \pare{\n-1} \sum_{i=1}^{\mu} w_{i}.$
We finally have that \\
$\sum_{i=1}^{\mu} w_{i} \mathbb{E}_{U_{1}\sim\N_{\n\lambda}}\croc{\| \croc{ \alpha_{l^{\star}}(e_{1}, U_{1}) }_{i} \|^{2} } - \sum_{i=1}^{\mu}w_{i} \mathbb{E}\croc{\pare{ \N^{i:\lambda} }^{2} }= \pare{\n-1} \sum_{i=1}^{\mu} w_{i} .$


\textit{Part 3.}
If $\pare{X_{1}, \dots, X_{\lambda}}$ is distributed according to $\pare{\N^{1:\lambda}, \dots, \N^{\lambda:\lambda} },$ then $X_{1} \leq \dots \leq X_{\lambda}$ and then $-X_{\lambda} \leq \dots \leq -X_{1} .$ Therefore $\pare{-X_{\lambda}, \dots, -X_{1}}$ is also distributed according to $\pare{\N^{1:\lambda}, \dots, \N^{\lambda:\lambda} }.$
Assume that $\lambda \geq 3$ and $\mu > \frac{\lambda}{2}.$ We show the results in two parts.

\textit{Part 3.1.} First we assume that $w_{1} = \dots = w_{\mu} = \frac{1}{\mu}.$
In this case, we have to prove that:
$1 < \sum_{i=1}^{\mu} \frac{w_{i}}{\sum_{j=1}^{\mu} w_{j}} \mathbb{E}\croc{\pare{ \N^{i:\lambda} }^{2} } = \frac{1}{\mu}\sum_{i=1}^{\mu} \mathbb{E}\croc{ \pare{\N^{i:\lambda}}^{2}}.
$
Since $\N^{1:\lambda} \leq \dots \leq \N^{\lambda:\lambda}$ is equivalent to $-\N^{\lambda:\lambda} \leq \dots \leq -\N^{1:\lambda},$ then $\pare{\N^{1:\lambda}, \dots, \N^{\lambda:\lambda} }$ has the distribution of $\pare{-\N^{\lambda:\lambda}, \dots, -\N^{1:\lambda} }$. And then for $i = 1, \dots, \lambda,$
$\pare{\N^{i:\lambda}}^{2}$ has the distribution of $\pare{\N^{\lambda-i+1:\lambda}}^{2}$. It follows that
$
\sum_{i=1}^{\lambda} \mathbb{E}\croc{\pare{ \N^{i:\lambda} }^{2} } = 2 \sum_{i=1}^{\mu} \mathbb{E}\croc{\pare{ \N^{i:\lambda} }^{2} } +
 \sum_{i=\mu+1}^{\lambda-\mu} \mathbb{E}\croc{\pare{ \N^{i:\lambda} }^{2} }$.
Moreover, 
$
\sum_{i=1}^{\lambda} \mathbb{E}\croc{\pare{ \N^{i:\lambda} }^{2} } = \sum_{i=1}^{\lambda} \mathbb{E}\croc{\pare{ \N^{i} }^{2} } = \lambda,
$
meaning that we lose the selection effect of the order statistics when we do the above summation. Both equations above
ensure that
\begin{align}
2 \sum_{i=1}^{\mu} \mathbb{E}\croc{\pare{ \N^{i:\lambda} }^{2} } +
 \sum_{i=\mu+1}^{\lambda-\mu} \mathbb{E}\croc{\pare{ \N^{i:\lambda} }^{2} } = \lambda.
 \label{selection-equality}
\end{align}
For any $j \in \acco{\mu+1,\dots,\lambda-\mu}$ and any $i \in \acco{ 1\dots,\mu},$ 
$\N^{i:\lambda} \leq \N^{j:\lambda} \leq  \N^{\lambda+1-i:\lambda}.$ Therefore if $ \N^{j:\lambda} \geq 0,$ $ \pare{\N^{j:\lambda}}^{2} \leq \pare{  \N^{\lambda+1-i:\lambda}  }^{2},$ and if $ \N^{j:\lambda} \leq 0,$ $ \pare{\N^{j:\lambda}}^{2} \leq \pare{\N^{i:\lambda} }^{2}.$
Since $\pare{  \N^{\lambda+1-i:\lambda}  }^{2}$ has the distribution of $\pare{  \N^{i:\lambda}  }^{2},$ it follows that for all $j \in \acco{\mu+1,\dots,\lambda-\mu}$ and $i \in \acco{ 1\dots,\mu}:$
$
\pare{  \N^{j:\lambda}  }^{2} \leq \pare{  \N^{i:\lambda}  }^{2},
$
and it is straightforward to see that the we do not have almost sure equality. It then follows that for all $j \in \acco{\mu+1,\dots,\lambda-\mu}\footnote{Note that the set $\acco{\mu+1,\dots,\lambda-\mu}$ is not empty since $1 \leq \mu < \frac{\lambda}{2}$.}$ and $i \in \acco{ 1\dots,\mu}:$
$
 \mathbb{E}\croc{\pare{ \N^{j:\lambda} }^{2} } <  \mathbb{E}\croc{\pare{ \N^{i:\lambda} }^{2} }. 
$
Therefore for all $j \in \acco{\mu+1,\dots,\lambda-\mu}$
\begin{align}
\mathbb{E}\croc{\pare{ \N^{j:\lambda} }^{2} } < \frac{1}{\mu}\sum_{i=1}^{\mu} \mathbb{E}\croc{\pare{ \N^{i:\lambda} }^{2} }. 
\label{selection-average}
\end{align}
With~\eqref{selection-average} and~\eqref{selection-equality}, we have
$\lambda =  2 \sum_{i=1}^{\mu} \mathbb{E}\croc{\pare{ \N^{i:\lambda} }^{2} } +
 \sum_{i=\mu+1}^{\lambda-\mu} \mathbb{E}\croc{\pare{ \N^{i:\lambda} }^{2} } 
 <  2 \sum_{i=1}^{\mu} \mathbb{E}\croc{\pare{ \N^{i:\lambda} }^{2} } + \frac{ \lambda-2\mu }{\mu}\sum_{i=1}^{\mu} \mathbb{E}\croc{\pare{ \N^{i:\lambda} }^{2} } 
 = \frac{ \lambda }{\mu}\sum_{i=1}^{\mu} \mathbb{E}\croc{\pare{ \N^{i:\lambda} }^{2} }.
$
Finally it follows that 
\begin{align}
\frac{1}{\mu}\sum_{i=1}^{\mu} \mathbb{E}\croc{\pare{ \N^{i:\lambda} }^{2} } > 1.
\label{particular-case-equal-weights}
\end{align}

\textit{Part 3.2.} Now we fall back to the general assumption where $w_{1} \geq \dots \geq w_{\mu}.$
Let us prove beforehand that: 
\begin{align}
\mathbb{E}\croc{\pare{ \N^{1:\lambda} }^{2} } \geq \mathbb{E}\croc{\pare{ \N^{\,2:\lambda} }^{2} } \geq \dots \geq \mathbb{E}\croc{\pare{ \N^{\mu:\lambda} }^{2} }.
\label{order-stats-inequalities}
\end{align}
Let $i \in \acco{1,\dots, \mu-1}.$ We have that
$
\N^{i:\lambda} \leq \N^{i+1:\lambda} \leq \N^{\lambda+1-i\nnew{:\lambda}}
$.
Then if $\N^{i+1:\lambda} \geq 0,$ $\pare{\N^{i+1:\lambda}}^{2} \leq \pare{\N^{\lambda+1-i:\lambda}}^{2}$ and if $\N^{i+1:\lambda} \leq 0,$ $\pare{\N^{i+1:\lambda}}^{2} \leq \pare{\N^{i:\lambda}}^{2}.$ Since $\pare{\N^{\lambda+1-i:\lambda}}^{2}$ and $\pare{\N^{i:\lambda}}^{2}$ have the same distribution, it follows that 
$\pare{\N^{i+1:\lambda}}^{2} \leq \pare{\N^{i:\lambda}}^{2}$. Therefore~\eqref{order-stats-inequalities} holds.

To prove the general case, we use the Chebyshev's sum inequality which states that if
$a_{1} \geq a_{2} \geq \dots \geq a_{\mu}$ and $b_{1} \geq b_{2} \geq \dots \geq b_{\mu},$ then
$
\frac{1}{\mu}\sum_{k=1}^{\mu} a_{k} b_{k} \geq \pare{ \frac{1}{\mu}\sum_{k=1}^{\mu} a_{k} } \pare{ \frac{1}{\mu}\sum_{k=1}^{\mu} b_{k} }.$
By applying Chebyshev's sum inequality on $w_{1} \geq \dots \geq w_{\mu}$ and $\mathbb{E}\croc{\pare{ \N^{1:\lambda} }^{2} } \geq \mathbb{E}\croc{\pare{ \N^{\,2:\lambda} }^{2} } \geq \dots \geq \mathbb{E}\croc{\pare{ \N^{\mu:\lambda} }^{2} }$, it follows that
$
\frac{1}{\mu}\sum_{i=1}^{\mu} w_{i} \mathbb{E}\croc{\pare{ \N^{i:\lambda} }^{2} }
 \geq \pare{ \frac{1}{\mu}\sum_{j=1}^{\mu} w_{j} } \pare{ \frac{1}{\mu}\sum_{i=1}^{\mu} \mathbb{E}\croc{\pare{ \N^{i:\lambda} }^{2} } }.
$
Therefore,
$ \sum_{i=1}^{\mu} \frac{ w_{i} }{  \sum_{j=1}^{\mu} w_{j} } \mathbb{E}\croc{\pare{ \N^{i:\lambda} }^{2} }
 \geq \frac{1}{\mu}\sum_{i=1}^{\mu} \mathbb{E}\croc{\pare{ \N^{i:\lambda} }^{2} }$. And the first case in~\eqref{particular-case-equal-weights} ensures that  
$ \sum_{i=1}^{\mu} \frac{ w_{i} }{  \sum_{j=1}^{\mu} w_{j} } \mathbb{E}\croc{\pare{ \N^{i:\lambda} }^{2} }
 \geq \frac{1}{\mu}\sum_{i=1}^{\mu} \mathbb{E}\croc{\pare{ \N^{i:\lambda} }^{2} }  >1.
$

\end{proof}
The positivity of $\mathbb{E}\croc{\log\pare{\Gl}}$ is the main assumption for our main results. In this context, Proposition~\ref{prop:weight-condition} gives more practical and concrete ways to obtain the conclusion of Theorems~\ref{almostSureAndLogProgress} and~\ref{clt-cbsaes} for the $\pare{\mu/\mu_w, \lambda}$-CSA1-ES and $\pare{\mu/\mu_w, \lambda}$-xNES.
In the case where $\mu = 1$, the two conditions given in the previous proposition for CSA and xNES 
are equivalent and yield the equation $\mathbb{E}\croc{\pare{ \N^{1:\lambda} }^{2} }  > 1$. The latter is satisfied if $\lambda \geq 3$ and $\mu = 1$, which is the linear divergence condition on linear functions of the $\pare{1, \lambda}$-CSA1-ES~\cite{chotard2012cumulative}.
Conditions similar to the one given for CSA in the previous proposition
had already been derived for the so-called mutative self-adaptation of the step-size \cite{hansen2006analysis}.

\section{Conclusion and discussion}

We have proven the asymptotic linear behavior of step-size adaptive \muslashmu-ESs on composites of strictly increasing functions with continuously differentiable scaling-invariant functions. The step-size update has been modeled as an abstract function of the random input multiplied by the current step-size. Two well-known step-size adaptation mechanisms are included in this model, namely derived from the Exponential Natural Evolution Strategy (xNES)~\cite{glasmachers2010exponential} and the Cumulative Step-size Adaptation (CSA)~\cite{hansen2016cma} without cumulation.

Our main condition for the linear behavior proven in Theorem~\ref{theo:main-CV} reads ``the logarithm of the step-size increases on linear functions'', formally, stated as $\mathbb{E}\croc{ \log\pare{\Gl} } > 0$ where $\Gl$ is the step-size change on nontrivial linear functions. This condition is equivalent to the geometric divergence of the step-size on nontrivial linear functions, as shown by the next lemma.
\begin{lemma}
Let $f$ be an increasing transformation of a nontrivial linear function, i.e. satisfy F2. Let $\{(X_k,\sigma_k) \, ; k \in \mathbb{N}  \}$ be the  sequence defined in~\eqref{incumbent} and~\eqref{step-size}. Assume that $\{U_{k+1} \,; k \in \mathbb{N} \}$ satisfies Assumption A5 and that $\Gx$ satisfies Assumptions A2 and A4, i.e.\  $\Gx$ is invariant under rotation and $\log\circ\,\Gamma$ is $\N_{n\mu}$-integrable. Then $\dsp \lim_{k\to\infty} \frac1k \log\frac{ \sigma_{k} }{ \sigma_{0} } = \mathbb{E}\croc{\log\pare{\Gl}} $.
\end{lemma}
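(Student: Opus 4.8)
The plan is to derive the statement directly from Kolmogorov's strong law of large numbers for i.i.d.\ real random variables, without invoking the Markov chain machinery of Section~\ref{normalized-chain-section}. First I would telescope the step-size recursion: iterating~\eqref{step-size} (equivalently~\eqref{sigma-with-alpha}) gives, for every $k$,
\[
\log\frac{\sigma_k}{\sigma_0}=\sum_{t=0}^{k-1}\log\frac{\sigma_{t+1}}{\sigma_t}=\sum_{t=0}^{k-1}\log\Gx\bigl(\alpha_{\f}(0,U_{t+1})\bigr),
\]
where the last equality uses that $\f$ satisfies F2: by Lemma~\ref{alpha-linearity} (combined with Lemma~\ref{selection-function-increasing-transformation}) the sorted vectors $(U_{t+1}^{1:\lambda},\dots,U_{t+1}^{\mu:\lambda})$ equal $\alpha_{\f}(X_t,\sigma_t U_{t+1})/\sigma_t=\alpha_{\f}(0,U_{t+1})$ and do not depend on $X_t$ nor $\sigma_t$, so each summand is a fixed measurable function of $U_{t+1}$ only.

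Next I would check the three hypotheses of the SLLN. Independence and identical distribution of $\{\log\Gx(\alpha_{\f}(0,U_{t+1}))\,;\,t\in\mathbb{N}\}$ follow from Assumption~A5, since the $U_{t+1}$ are i.i.d.\ and $u\mapsto\log\Gx(\alpha_{\f}(0,u))$ is a fixed measurable map. The common law is identified via Proposition~\ref{linear-invariance} (which relies on the rotation invariance A2): $\Gx(\alpha_{\f}(0,U_1))$ has the law of $\Gl$ defined in~\eqref{step-size-notation}. Finally, integrability of $\log\Gl$ comes from Assumption~A4 through Lemma~\ref{log-gamma-lemma} applied to the nontrivial linear function $g$ in $\f=\varphi\circ g$ (continuous, scaling-invariant, with Lebesgue negligible level sets), yielding $\mathbb{E}\croc{|\log\Gl|}\le\frac{\lambda!}{(\lambda-\mu)!}\,\mathbb{E}_{W\sim\N_{\n\mu}}\croc{|\log\circ\,\Gx|(W)}<\infty$.

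Putting the pieces together, Kolmogorov's strong law of large numbers gives
\[
\frac1k\log\frac{\sigma_k}{\sigma_0}=\frac1k\sum_{t=0}^{k-1}\log\Gx(\alpha_{\f}(0,U_{t+1}))\xrightarrow[k\to\infty]{}\mathbb{E}\croc{\log\pare{\Gl}}\qquad\text{a.s.},
\]
which is the desired identity. I do not expect a genuine obstacle: the proof is essentially an assembly of Lemma~\ref{alpha-linearity}, Proposition~\ref{linear-invariance} and the integrability bound of Lemma~\ref{log-gamma-lemma}. The one point that deserves a line of care is the reduction in the first display, i.e.\ verifying that on a nontrivial linear function the selection step, and hence the whole step-size multiplier, is truly state-independent; this is what upgrades the sequence of summands from merely identically distributed to genuinely i.i.d., as the strong law requires.
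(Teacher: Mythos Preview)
Your proposal is correct and follows essentially the same approach as the paper: telescope the log step-size, use Lemma~\ref{alpha-linearity} (with Lemma~\ref{selection-function-increasing-transformation}) to make the summands state-independent functions of the i.i.d.\ inputs $U_{t+1}$, invoke Lemma~\ref{log-gamma-lemma} for integrability, and apply the strong law of large numbers, identifying the limit via Proposition~\ref{linear-invariance}. Your write-up is arguably slightly cleaner in that you keep $\alpha_{\f}(0,U_{t+1})$ throughout and only identify the law with $\Gl$ at the end, whereas the paper replaces $\alpha_{\f}$ by $\alpha_{l^\star}$ early on (an equality that strictly holds only in distribution).
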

\begin{proof}
We have $ \frac1k \log\frac{ \sigma_{k} }{ \sigma_{0} } = \frac1k \sum_{t=0}^{k-1} \log\frac{ \sigma_{t+1} }{ \sigma_{t} }.$ With~\eqref{step-size-notation} and Proposition~\ref{linear-invariance}, $\sigma_{t+1} = \sigma_{t} \, \Gx \pare{ \alpha_{l^{\star}}(0, U_{t+1})}$ where $l^{\star}$ is the linear function defined as $l^{\star}(x) = x_{1}$ for $x \in \R^{n}$.
Therefore $ \frac1k \log\frac{ \sigma_{k} }{ \sigma_{0} } = \frac1k \sum_{t=0}^{k-1} \pare{\log\circ \Gx\circ\alpha_{l^{\star} } } \pare{ 0, U_{t+1} }.$ Using Assumption A3 and Lemma~\ref{log-gamma-lemma}, we have that the function $u \mapsto \pare{\log\circ \Gx\circ\alpha_{l^{\star} } }\pare{ 0, u}$ is $\N_{n\lambda}$-integrable. Then by the LLN applied to the i.i.d. sequence $\{U_{k+1} \,; k \in \mathbb{N} \}$, $ \frac1k \log\frac{ \sigma_{k} }{ \sigma_{0} } $ converges almost surely to $\mathbb{E}\croc{\log\pare{\Gl}} $.
\end{proof}
\new{Geometric divergence of the step-size on a linear function is also the main condition when analyzing the deterministic flow of the IGO algorithm \cite{akimoto2012convergence}.}
\new{For the $\pare{1\!+\!1}$-ES and the $\pare{1, \lambda}$ self-adaptive ES,
a different condition \nnew{than $\mathbb{E}\croc{ \log\pare{\Gl} } > 0$} has been used to characterize the step-size increase on linear functions:
there exists $\beta > 0$ such that $\mathbb{E}\croc{ \frac{1}{\Gl^{\beta} }} < 1$
\cite{auger2013linear, auger2005convergence}.}\del{
We find in the literature a different condition \nnew{than $\mathbb{E}\croc{ \log\pare{\Gl} } > 0$} for the $\pare{1+1}$-ES~\cite{auger2013linear} and the $\pare{1, \lambda}$ self-adaptive ES~\cite{auger2005convergence}, that \del{is}\nnew{characterizes the step-size increase on linear functions.}\del{ ``the step-size increases on linear functions''}.\todo{}\niko{do we need the ambiguous phrases when we have the unambiguous conditions?}\anne{I took it out} This condition is formally stated as the existence of $\beta > 0$ such that $\mathbb{E}\croc{ \frac{1}{\Gl^{\beta} }} < 1$.}
With the concavity of the logarithm \nnew{and}\del{thanks to} Jensen's inequality, we have that $\log \pare{\mathbb{E}\croc{\frac{1}{\Gl^\beta } } } \geq
 \mathbb{E}\croc{\log \pare{\frac{1}{\Gl^\beta} } } = -\beta\, \mathbb{E}\croc{\log\pare{\Gl } }$. Therefore\del{ if} $\mathbb{E}\croc{ \frac{1}{\Gl^{\beta} }} < 1$ \new{implies}\del{, then} $ \mathbb{E}\croc{\log\pare{\Gl } } > 0$ \nnew{and}\del{. Thereby} our condition \nnew{that} ``the logarithm of the step-size increases on linear functions'' is tighter than \del{traditional}\new{the previously used.}\del{ condition ``the step-size increases on linear functions''.}

\nnew{Our main condition for the linear behavior of} the $\pare{\mu/\mu_w, \lambda}$-CSA-ES algorithm without cumulation\del{, our main condition in Proposition~\ref{prop:weight-condition} for the linear behavior} is formulated based on $\lambda$, $\mu$, the weights $w$ and the order statistics of the standard normal distribution \nnew{as}\del{. It reads} $\mathbb{E}\Bigl[\pare{\sum_{i=1}^{\mu} \frac{w_{i}}{\| w\|  } \N^{i:\lambda} }^{\!2} \Big]  > 1$\new{, see Proposition~\ref{prop:weight-condition}.} For $\mu = 1$, this condition is satisfied when $\lambda \geq 3$.\del{, thanks to Proposition~\ref{prop:weight-condition}.}

\del{In~\cite{chotard2012cumulative}\done{},}
The linear divergence of both the incumbent and the step-size \new{was proven for}\del{is obtained in} a $\pare{1, \lambda}$\nnew{-ES}\del{ scenario} without cumulation on linear functions whenever $\lambda \geq 3$ with a divergence rate equal to $\frac{ \mathbb{E}\croc{ (\N^{1:\lambda})^{2} } - 1 }{2 d_{\sigma} n} $ \cite{chotard2012cumulative}. 
\new{Proposition~\ref{prop:weight-condition} extends this result to values of $\mu>1$.}\del{This result is therefore incorporated in Proposition~\ref{prop:weight-condition}.}
Note that \del{we have simultaneously}\new{our results cover both,} linear divergence on strictly increasing transformations of nontrivial linear functions and linear behavior on strictly increasing transformations of $C^{1}$ scaling-invariant functions with a unique global argmin. 

\del{While our framework does not include cumulation by a path for the step-size update via CSA~\cite{hansen2001completely},\del{ cumulation is encompassed in \cite{chotard2012cumulative}\done{} and} linear divergence of the step-size holds on linear functions for the $\pare{1,\lambda}$-CSA-ES \new{also with a cumulation path \cite{chotard2012cumulative}}.
The key aspect consists in applying an LLN to the cumulation path. Linear divergence is only proven for the step-size as it requires the application of the LLN to a more complex Markov chain to prove it for the mean~\cite{chotard2012cumulative}.
}

Our methodology leans on investigating the stability of the \normalized\ homogeneous Markov chain to be able to apply an LLN and obtain the limit of the log-distance to the optimum divided by the iteration index. Then we obtain an exact expression of the rate of convergence or divergence as an expectation with respect to the stationary distribution of the \normalized\ chain. This is an elegant feature of our analysis. Other approaches \cite{jagerskupper2003analysis,jagerskupper2007algorithmic,jagerskupper2005rigorous,jagerskupper20061+,akimoto2018drift,akimoto2020} provide bounds on the convergence rate but not its exact expression \new{with the advantage that the} \del{.B}\new{b}ounds are often expressed depending on dimension or population size \del{which are relevant parameters in practice}\new{and thus describe the scaling of the algorithm with respect to relevant parameters}.

The class of scaling-invariant functions is, as far as we can see, the largest class to which our methodology can conceivably be applied---because on any wider class of functions, a selection function for the \normalized\ Markov chain can not anymore reflect the selection operation in the underlying chain. We require additionally that the objective function is a strictly increasing transformation of either a continuously differentiable function with a unique global argmin or a nontrivial linear function. Many non-convex functions with non-convex sublevel sets are included.

The implied requirement of smooth level sets is instrumental for our analysis.
We believe that there exist unimodal functions with non-smooth level sets on which scale-invariant ESs can not converge to the global optimum with probability one
\new{independently of the initial conditions},
for example $x\mapsto\sum_{i=1}^{n} \sqrt{|x_i|}$.
However,\del{ we also believe that} smooth level sets are not a necessary condition for convergence---we consistently observe convergence on $x\mapsto \sum_{i=1}^{n} |x_i|$ for smaller values of $n$ and understand the reason why ESs succeed on the one-norm but fail on the $\nicefrac12$-norm function.
Capturing this distinction in a rigorous analysis of the Markov chain remains an open challenge.

\del{In contrast, the approach used in~\cite{akimoto2020}\done{} allows to handle functions that are not scaling-invariant.}
\new{A broader function class has been analyzed by requiring}\del{.
This approach requires}
a drift condition to hold on the whole state-space \cite{akimoto2020} while our methodology requires\del{ that} the drift condition \new{to} only hold outside of a small set \del{which means here }(when the step-size is much smaller than the distance to the optimum).
Hence in our approach, it suffices to control the behavior in the limit when the step-size normalized by the distance to the optimum approaches zero.

A major limitation of our current analysis is the omission of cumulation that is used in the $\muslashmu$-CSA-ES to adapt the step-size (we have set the cumulation parameter to 1, see Section~\ref{sec:alg-encompassed}).
In case of a parent population of size $\mu = 1$, Chotard et al.~\cite{chotard2012cumulative} obtain linear divergence of the step-size on linear functions also with cumulation.
However, no proof of linear behavior exists, to our knowledge, on functions whose level sets are not affine subspaces.
While we consider cumulation a crucial component in practice, proving the drift condition for the stability of the Markov chain is much harder when the state space is extended with the cumulative evolution path and this remains an open challenge.

Technically, our results rely on proving $\varphi$-irreducibility, positivity and Harris-recurrence of the \normalized\ Markov chain. The $\varphi$-irreducibility is difficult to prove directly for the class of algorithms studied in this paper while it is relatively easy to prove for the $(1,\lambda)$-ES with self-adaptation~\cite{auger2005convergence} or for the (1+1)-ES with one-fifth success rule~\cite{auger2013linear}. \new{We circumvented the problem by looking at the stability of an underlying deterministic control model and exploit its connection to the stability of Markov chains
\cite{chotard2019verifiable}.}\del{With \new{better}\del{the} tools\niko{tools is a little strange and pretty generic word}\del{ developed in} \cite{chotard2019verifiable},\done{} proving $\varphi$-irreducibility, aperiodicity and a T-chain property is much easier, illustrating how useful the connection between stability of Markov chains with stability of deterministic control models can be.}
Positivity and Harris-recurrence are proven using Foster-Lyapunov drift conditions~\cite{meyn2012markov}. We prove a drift condition for geometric ergodicity that implies positivity and Harris-recurrence. \new{The main ingredient for obtaining the drift condition is}\del{ It relies on} the convergence in distribution of the step-size change towards the step-size change on a linear function when $Z_k=z$ goes to infinity. \new{It implies that the drift condition holds for $Z_k=z$ outside a compact set}. We also prove in Lemma~\ref{non-negligible} the existence of non-negligible sets with respect to the invariant probability measure $\pi$, outside of a neighborhood of a steadily attracting state. This is used in Proposition~\ref{log-integrable} to obtain the $\pi$-integrability of the function $z \mapsto \log \|z\|$.

We have developed generic results to facilitate further studies of similar Markov chains. More specifically, applying an LLN to the \normalized\ chain is not enough to conclude linear convergence. We introduce the technique to apply the generalized LLN to an abstract chain $\{ \pare{Z_k, U_{k+2}} ; k \in \mathbb{N}\}$ and prove that stability properties from $\{Z_k; k \geq 0\}$ are transferred to $\{ \pare{Z_k, U_{k+2}} ; k \in \mathbb{N}\}$.

\section*{Acknowledgements}
Part of this research has been conducted in the context of a research collaboration between Storengy and Inria. We particularly thank F. Huguet and A. Lange from Storengy for their strong support.

\bibliographystyle{spmpsci}
\bibliography{saes}

\appendix
\renewcommand*{\thesection}{\Alph{section}}

\section{ Proof of Proposition~\ref{linear-invariance}}
\label{proof-linear-invariance}
With Lemma~\ref{selection-function-increasing-transformation}, we assume without loss of generality that $\f$ is a nontrivial linear function.
Let us remark beforehand that the random variable $\alpha_{f}(z, U_{1})$ does not depend on $z$ thanks to Lemma~\ref{alpha-linearity}. 
Let $\varphi: \Gamma(\R^{n\mu}) \to \R$ be a continuous and bounded function, it is then enough to prove that
$\mathbb{E}_{U_{1} \sim \N_{\n \lambda}}\croc{ \varphi\pare{\Gx( \alpha_{f}(z, U_{1}) )} }= \mathbb{E}_{U_{1} \sim \N_{\n \lambda}}\croc{ \varphi\pare{\Gx( \alpha_{l^{\star}}(0, U_{1}) ) }}.$ Denote by $e_{1}$ the unit vector $\pare{1,0,\dots,0}$, then for all $x \in \R^{\n}$, $l^{\star}(x) = e_{1}^{\top}x$. 
Denote by $\tilde{e}_{1}$ the \normalized\ gradient of $f$ at some point. Then there exists $K > 0$ such that
for all  $x \in \R^{n},$ $f(x) = K \tilde{e}_{1}^{\top} x$. And by the Gram-Schmidt process, there exist $\pare{e_{2},\dots, e_{n}}$ and $\pare{\tilde{e}_{2},\dots, \tilde{e}_{n}}$ such that 
$\pare{e_{1}, e_{2},\dots, e_{n}}$ and $\pare{\tilde{e}_{1}, \tilde{e}_{2},\dots, \tilde{e}_{n}}$  are orthonormal bases.
Denote by $T$ the linear function defined as
$T(e_{i}) = \tilde{e}_{i} \text{ for $i = 1,\dots, n$}.
$ Then $T$ is an orthogonal matrix. For all $x\in \R^{n}$, 
$\tilde{e}_{1}^{\top} T(x) = e_{1}^{\top}  x, \text{ and } \|T(x)\| = \|x\|.$
Denote $A = \mathbb{E}_{U_{1} \sim \N_{\n \lambda}}\croc{ \varphi\pare{\Gx( \alpha_{f}(z, U_{1}) )} } .$ 
We do a change of variable $u \mapsto \pare{T(u^{1}),\dots,T(u^{\mu})}$. Then
$\frac{(\lambda - \mu)!}{\lambda !} A =
 \int \varphi\pare{ \Gx(u)} \mathds{1}_{\tilde{e}_{1}^{\top} (u^{2}-u^{1}) \, > \, 0, \dots, \tilde{e}_{1}^{\top} (u^{\mu}-u^{\mu-1}) \,> \,0 }$ \\
 $P\pare{\tilde{e}_{1}^{\top} \N_{\n}  > \tilde{e}_{1}^{\top} u^{\mu}  }^{\lambda-\mu} p_{\N_{\n}}(u^{1}) \dots p_{\N_{\n}}(u^{\mu}) \mathrm{d}u^{1}\dots\mathrm{d}u^{\mu}$
$ = \int  \varphi\pare{ \Gx\pare{ T(u^{1}), \dots, T(u^{\mu})} }$ \\$\mathds{1}_{e_{1}^{\top} (u^{2}-u^{1}) \,> \,0, \dots, e_{1}^{\top} (u^{\mu}-u^{\mu-1}) \,> \,0 } P\pare{e_{1}^{\top} \N_{\n}  > e_{1}^{\top} u^{\mu}  }^{\lambda-\mu}p_{\N_{\n}}(T(u^{1})) \dots p_{\N_{\n}}(T(u^{\mu})) \mathrm{d}u^{1}\dots$ \\$\mathrm{d}u^{\mu}$,
thanks to the fact that $e_{1}^{\top} \N_{\n} \sim \tilde{e}_{1}^{\top} \N_{\n} \sim \N(0, 1).$ Since $\Gx$ and $p_{\N_{\n}}$ are invariant under rotation, $\mathbb{E}_{U_{1} \sim \N_{\n \lambda}}\croc{ \varphi\pare{\Gx( \alpha_{f}(z, U_{1}) )} } = \mathbb{E}_{U_{1} \sim \N_{\n \lambda}}\croc{ \varphi\pare{\Gx( \alpha_{l^{\star}}(0, U_{1}) )} }$. \quad  

\section{ Proof of Proposition~\ref{general-stability} }
\label{proof-general-stability}

We have $Z_{k+1} = G(Z_{k}, U_{k+1} )$ and $U_{k+3}$ is independent from $\{ W_t\, ; t \leq k \}$, then $\{W_k\, ; k \in \mathbb{N}\}$ is a Markov chain on $\mathcal{B}(\ZZ) \otimes \mathcal{B}(\R^{m})$. 
Let $\pare{A, B} \in \mathcal{B}(\ZZ) \times \mathcal{B}(\R^{m})$ and $(z, u) \in \ZZ \times \R^{m}$. Then by independence 
$P\pare{ (Z_{t+1}, U_{t+3} )\in A\times B | (Z_t,U_{t+2})=(z,u) } = P\pare{ Z_{t+1}\in A | Z_{t}=z} P\pare{ U_{t+3} \in B }.$
For $\pare{A, B} \in \mathcal{B}(\ZZ) \times \mathcal{B}(\R^{m})$, for $(z, u) \in \ZZ \times \R^{m}$, $\sum_{k=1}^{\infty} P^{k}((z,u), A\times B) =  \Psi(B)\, \sum_{k=1}^{\infty} P^{k}(z, A).$ Therefore $\sum_{k=1}^{\infty} P^{k}((z,u), \cdot)$ is a product measure. 

Let $\varphi$ be an irreducible measure of $\{Z_k \, ; k \in \mathbb{N}\}$ and let $E \in \mathcal{B}(\ZZ) \otimes \mathcal{B}(\R^{m})$. By definition of a product measure, $\pare{\varphi \times \Psi }(E) = \dsp \int \varphi(E^{v}) \Psi(\diff v)$ and thus $\sum_{k=1}^{\infty} P^{k}((z,u), E)  =  \dsp \int \sum_{k=1}^{\infty} P^{k}(z, E^{v}) \Psi(\diff v) $
 where $E^{v} = \acco{z \in \ZZ\, ; (z, v) \in E}$. 
 If $\sum_{k=1}^{\infty} P^{k}((z,u), E) = 0$, then $0 = \sum_{k=1}^{\infty} P^{k}(z, E^{v})$ for almost all $v$ and then $\varphi(E^{v}) = 0$ for almost all $v$. Then $\pare{\varphi \times \Psi }(E) = \dsp \int \varphi(E^{v}) \Psi(\diff v) = 0$, hence the $\pare{\varphi \times \Psi }$-irreducibility of $\{W_k \,; k \in \mathbb{N}\}$.
 
Let us show that $\pi \times \Psi$ is an invariant probability measure of $\{W_k \, ; k \in \mathbb{N}\}$ when $\pi$ is an invariant measure of $\{Z_k \, ; k \in \mathbb{N}\}$.
Assume that $\pare{A, B} \in \mathcal{B}(\ZZ) \times \mathcal{B}(\R^{m})$. Then 
$\int P\pare{ (Z_{1}, U_{3} )\in A\times B | (Z_0,U_2)=(z,u)} (\pi\times\Psi )\pare{\diff (z,u)} = 
\dsp\int P_{z}\pare{Z_{1} \in A }\Psi(B) \pi(\diff z) \Psi(\diff u) = \Psi(B) \pi(A) = (\pi\times \Psi )(A\times B).
$
Hence $\pi \times \Psi$ is an invariant probability of $\{W_k \,; k \in \mathbb{N}\}$.
Assume that $\{W_k \, ; k \in \mathbb{N}\}$ has a $d$-cycle $\pare{D_{i}}_{i=1,\dots,d} \in \pare{\mathcal{B}(\ZZ)\otimes\mathcal{B}(\R^{m} )}^{d}$. Define for $i=1,\dots,d$, $\widetilde{D}_{i} = \acco{ z \in \ZZ | \exists \, u\in \R^{m}\,; (z,u) \in D_{i}}$ and let us prove that $\pare{  \widetilde{D} _{i}}_{i=1,\dots,d}$ is a $d$-cycle of $\{Z_k \,; k \in \mathbb{N}\}$. 

Let $z \in \widetilde{D}_{i}$ and $i = 0, \dots, d-1$ (mod  $d$). There exists $u \in \R^{m}$ such that $(z, u) \in D_{i}$. Then $1 = P((z,u), D_{i+1}) = P\pare{ \pare{Z_{1}, U_{3} } \in D_{i+1} | Z_{0}=z} \leq P\pare{ Z_{1} \in \widetilde{D}_{i+1} | Z_{0}=z}$.  
Thereforer $P\pare{ Z_{1} \in \widetilde{D}_{i+1} | Z_{0}=z} = 1$.  

 If $\Lambda$ is an irreducible measure of $\{Z_k \, ; k \in \mathbb{N}\}$, then we have proven above that $\Lambda \times \Psi$ is an irreducible measure of $\{W_k\,; k \in \mathbb{N}\}$. Then $0 = \pare{\Lambda\times \Psi } \pare{ \pare{ \bigcup_{i=1}^{d} D_{i} }^{c} }$. For $i = 1,\dots, d$, 
 $\pare{\Lambda\times \Psi } \pare{ D_{i} } = \dsp\int \Lambda(D_{i}^{v}) \Psi (\diff v) \leq \dsp\int \Lambda(\widetilde{D}_{i}) \Psi(\diff v) = \Lambda(\widetilde{D}_{i})$. Then $\Lambda \pare{ \bigcup_{i=1}^{d} \widetilde{D}_{i} } = \sum_{i=1}^{d} \Lambda( \widetilde{D}_{i}) \geq \pare{\Lambda\times\Psi } \pare{\bigcup_{i=1}^{d} D_{i} } $. Hence $\Lambda\pare{ \pare{ \bigcup_{i=1}^{d} \widetilde{D}_{i} }^{c} }$ $=$ $0$ and finally we have a $d$-cycle for $\{Z_k \, ; k \in \mathbb{N}\}$. 
Similarly we can show that if $\{Z_k \, ; k \in \mathbb{N}\}$ has a $d$-cycle, then $\{W_k \, ; k \in \mathbb{N}\}$ also has a $d$-cycle.
Now assume that $C$ is a small set of $\{Z_k\, ; k \in \mathbb{N}\}$. Then there exists a positive integer $k$ and a nontrivial measure $\nu_{k}$ on $\mathcal{B}(\ZZ)$ such that $P^{k}(z, A) \geq \nu_{k}(A)$ for all $z \in C, \, A \in  \mathcal{B}(\ZZ)$. If $(z, u) \in C \times \R^{m}$ and $E \in \mathcal{B}(\ZZ) \otimes \mathcal{B}( \R^{m} )$, $P^{k}((z,u), E) \geq \pare{\nu_{k}\times\Psi}(E)$ and therefore
$C \times \R^{m}$ is a small set of $\{W_k \, ; k \in \mathbb{N} \}$.
The drift condition for $\{W_k\,; k \in \mathbb{N}\}$ follows directly from the drift condition for $\{Z_k  \, ; k \in \mathbb{N}\}$. 

\section{ Proof of Proposition~\ref{dynamic-linear} }
\label{proof-dynamic-linear}

To prove the convergence in distribution of the step-size multiplicative factor for a function $\f$ that satisfies F1 or F2, we use the intermediate result given by Proposition~\ref{dynamic-linear}, that asymptotically links $\Gx\pare{\alpha_{\f}(x^{\star}+z, U_{1})}$ to the random variable $\Gx\pare{\alpha_{l_{z}^{\f}}(z, U_{1})}$ where the nontrivial linear function $l_{z}^{\f}$ depends on $z$, $\nabla \f$, and is introduced in~\eqref{intermediate-linear}. 
Since $\alpha_{\f}(x^{\star} + z, U_{1}) = \alpha_{\tilde{\f}}(z, U_{1})$, we assume without loss of generality that $x^{\star} = 0$ and $\f(0) = 0$.

The next lemma is our fist step towards understanding the asymptotic behavior of $\alpha_{f}\pare{z, U_{1}}$ for a $C^{1}$ scaling-invariant function $f$ with a unique global argmin. For $\varphi: \R^{n\mu} \to \R$ continuous and bounded, we approximate $\mathbb{E}\croc{ \varphi( \alpha_{\f}(z, U_{1}) ) }$ by using the explicit definition of $p_{z}^{\f}$ in~\eqref{mupositive}, and observing the integrals in the balls $\overline{\mathbf{B}\pare{0, \sqrt{\|z\|}  }  }  $, such that the $f$-values we consider are relatively close to the $f$-values of $\frac{t_{z}^{f}}{\| z \|}z \in \level_{\f, z_{0}}$.

\begin{lemma}
Let $f$ be a $C^{1}$ scaling-invariant function with a unique global argmin \new{assumed to be in $0$ such that $f(0)=0$}. For $(z, w, v) \in (\R^{n})^{3}$, define the function $h: (z, w, v) \mapsto  \mathds{1}_{\acco{ f\pare{\frac{t_{z}^{f} }{\| z \|}z + \frac{t_{z}^{f} }{\| z \|} w } > f\pare{\frac{t_{z}^{f} }{\| z \|} z + \frac{t_{z}^{f}}{\| z \|} v } }}$.
Then for all $\varphi: \R^{n\mu} \to \R$ continuous and bounded:\\
$\lim_{\| z \| \to \infty} \int_{ \|u\|  \leq \sqrt{\| z \|} }   \left( \int_{ \|w\|  \leq \sqrt{\| z \|} } h(z, w, u^{\mu}) p_{\N_{\n}}(w) \mathrm{d}w \right)^{\lambda-\mu} $\\
$\frac{\lambda !} { (\lambda - \mu)! } \varphi(u) \prod_{i=1}^{\mu-1} h(z, u^{i+1}, u^{i}) \prod_{i=1}^{\mu} p_{\N_{\n}}(u^{i}) \mathrm{d}u  - \int \varphi(u) p_{z}^{f}(u) \mathrm{d}u 
= 0. $
\label{bounded} 
\end{lemma}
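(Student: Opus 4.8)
The goal is to show that the "full" selection density $p_z^f(u)$ given by the closed form in \eqref{mupositive} is, as $\|z\|\to\infty$, well approximated by the expression obtained by replacing the comparison events $\mathds{1}_{f(\cdot)<f(\cdot)}$ and the tail probability $1-Q_z^f(u^\mu)$ with their analogues expressed through the rescaled point $\frac{t_z^f}{\|z\|}z$ on the level set $\mathcal{L}_{\tilde f,z_0^f}$, and additionally by truncating all the Gaussian integrals to the ball $\overline{\mathbf{B}(0,\sqrt{\|z\|})}$. The plan is to do this in two independent reduction steps: (i) the truncation error is negligible, and (ii) on the truncated domain, $h(z,w,v)$ is a good pointwise substitute for the genuine comparison indicator $\mathds{1}_{f(z+w)<f(z+v)}$ inside the integrand.

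First I would handle the truncation. Writing out $p_z^f(u)$ from \eqref{mupositive}, since $\varphi$ is bounded, $|1-Q_z^f(u^\mu)|\le 1$, each comparison indicator is $\le 1$, and $\prod_{i=1}^\mu p_{\N_n}(u^i)$ is an integrable weight, the mass of $\int \varphi(u)p_z^f(u)\,\mathrm du$ carried by $\{\|u\|>\sqrt{\|z\|}\}$ is bounded by $\frac{\lambda!}{(\lambda-\mu)!}\|\varphi\|_\infty\,P(\|\N_{n\mu}\|>\sqrt{\|z\|})\to 0$; likewise $1-Q_z^f(u^\mu)-\int_{\|w\|\le\sqrt{\|z\|}}\mathds{1}_{\cdots}p_{\N_n}(w)\,\mathrm dw$ differs from the truncated inner integral by at most $P(\|\N_n\|>\sqrt{\|z\|})$, and since $x\mapsto x^{\lambda-\mu}$ is Lipschitz on $[0,1]$ this propagates to an $O(P(\|\N_n\|>\sqrt{\|z\|}))$ error after integrating against $\varphi$ and the Gaussian weights. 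So up to a vanishing error we may replace both sides by their versions truncated to $\overline{\mathbf{B}(0,\sqrt{\|z\|})}$, and it remains to compare, on that truncated domain, the integrand built from $h$ with the integrand built from $\mathds{1}_{f(z+\cdot)<f(z+\cdot)}$.

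The heart of the argument — and the main obstacle — is the comparison of the two indicators on the truncated domain. By scaling invariance of $f$, for any scalar $\rho>0$ the comparison $f(z+w)<f(z+v)$ is equivalent to $f(\rho z+\rho w)<f(\rho z+\rho v)$; taking $\rho=t_z^f/\|z\|$ (so that $\rho z\in\mathcal{L}_{\tilde f,z_0^f}\subset\overline{\mathbf{B}(0,1)}$ by the construction preceding \eqref{intermediate-linear}) we get $\mathds{1}_{f(z+w)<f(z+v)}=\mathds{1}_{f(\frac{t_z^f}{\|z\|}z+\frac{t_z^f}{\|z\|}w)<f(\frac{t_z^f}{\|z\|}z+\frac{t_z^f}{\|z\|}v)}$, which is exactly $h(z,v,w)$ — so, somewhat surprisingly, the two indicators are literally equal for every $z\neq 0$ and the "approximation" is an identity. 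The only subtlety is the set of ties, where $f(\frac{t_z^f}{\|z\|}z+\frac{t_z^f}{\|z\|}w)=f(\frac{t_z^f}{\|z\|}z+\frac{t_z^f}{\|z\|}v)$; this set has Lebesgue measure zero because $f$ (being $\varphi\circ g$ is not needed here; $f$ is $C^1$ scaling-invariant with Lebesgue-negligible level sets by \cite[Proposition 4.2]{scaling2021}) has negligible level sets, and $w\mapsto \frac{t_z^f}{\|z\|}z+\frac{t_z^f}{\|z\|}w$ is an affine bijection, so it contributes nothing to the integrals against the absolutely continuous measures $p_{\N_n}(w)\,\mathrm dw$. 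Putting the two steps together: the truncated-$h$ integral equals the truncated-$p_z^f$ integral exactly, and each differs from its untruncated counterpart by $O(P(\|\N_n\|>\sqrt{\|z\|}))\to 0$; hence the displayed difference tends to $0$ as $\|z\|\to\infty$, which is the claim. (In the write-up I would be careful to note that $t_z^f\in(0,1]$, so the prefactors $\frac{t_z^f}{\|z\|}$ do not blow up, and that the inner exponent $\lambda-\mu\ge 0$ so raising to that power is benign; the genuinely substantive input from the geometry of $C^1$ scaling-invariant functions — the existence of $z_0^f$, $t_z^f$, and a neighborhood of $\mathcal{L}_{\tilde f,z_0^f}$ on which $\nabla f$ does not vanish — enters only later, in the passage from this lemma to Proposition~\ref{dynamic-linear}, where $f$ near that level set is linearized.)
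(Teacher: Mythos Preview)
Your proposal is correct and follows essentially the same route as the paper: bound the outer truncation error of $\int\varphi\,p_z^f$ by $\frac{\lambda!}{(\lambda-\mu)!}\|\varphi\|_\infty P(\|\N_{n\mu}\|>\sqrt{\|z\|})$, use scaling invariance with factor $t_z^f/\|z\|$ to identify the comparison indicators with $h$ exactly, and control the inner truncation by $P(\|\N_n\|>\sqrt{\|z\|})$. The only cosmetic differences are that the paper applies scaling invariance before the inner truncation (and finishes with dominated convergence rather than the Lipschitz bound on $x\mapsto x^{\lambda-\mu}$), and that your tie-set discussion is in fact unnecessary since scaling invariance preserves strict inequalities, so $\mathds{1}_{f(z+u^i)<f(z+u^{i+1})}=h(z,u^{i+1},u^i)$ holds everywhere, not merely almost everywhere.
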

\begin{proof}
 For $z \in \R^{\n}$ and $u \in \R^{n\mu}$, define $A(z) = \biggr\lvert \int \varphi(u) p_{z}^{f}(u) \mathrm{d}u  -  \int_{\|u\|\leq \sqrt{\| z \|} } \varphi(u) p_{z}^{f}(u) \mathrm{d}u \biggr\rvert$. 
Then $A(z) \leq \frac{\lambda !} { (\lambda - \mu)! } \| \varphi \|_{\infty} \int_{\| u\|  > \sqrt{\| z \|} } \prod_{i=1}^{\mu} p_{\N_{\n}}(u^{i}) \mathrm{d}u = \frac{\lambda !} { (\lambda - \mu)! }  \|\varphi\|_{\infty} \int_{\|u\|  > \sqrt{\| z \|} } p_{\N_{\n \mu}}(u) \mathrm{d}u 
= \frac{\lambda !} { (\lambda - \mu)! }  \| \varphi \|_{\infty} \pare{1 - P\pare{ \| \N_{\n \mu} \| \leq \sqrt{\| z \|} } }$.

Then by scaling-invariance with a multiplication by $t_{z}^{f} / \| z \|$,
$\lim_{\| z \| \to \infty} \int_{ \|u\|  \leq \sqrt{\| z \|} } \varphi(u)$ \\$\pare{ \mathbb{E}\croc{h(z, \N_{n}, u^{\mu})} }^{\lambda-\mu} 
 \dsp \prod_{i=1}^{\mu-1} h(z, u^{i+1}, u^{i}) \prod_{i=1}^{\mu} p_{\N_{\n}}(u^{i}) \mathrm{d}u - \frac{ (\lambda - \mu)! }{\lambda !}  \int \varphi(u) p_{z}^{f}(u) \mathrm{d}u = 0.$ 
Also, $\mathbb{E}\croc{ h(z, \N_{n}, u^{\mu}) } - \int_{\|w\|  \leq \sqrt{\| z \|} }  h(z, w, u^{\mu}) p_{\N_{\n}}(w) \diff w =$
 $ \int_{ \|w\|  > \sqrt{\| z \|} } 
h(z, w, u^{\mu})
 p_{\N_{\n}}(w) \diff w \leq 1 - P\pare{ \| \N_{\n}\| \leq \sqrt{\| z \|} }$.
 Hence along with the dominated convergence theorem, the lemma is proven.  
\end{proof}

\new{\noindent We are now ready to prove the proposition.}\\
Let $\varphi: \R^{n\mu} \to \R$ be continuous and bounded. Using Lemma \ref{bounded}, it is enough to prove that
$ \lim_{ \| z \| \to  \infty }\int_{ \|u\|  \leq \sqrt{\| z \|} } \pare{ \int_{ \|w\|  \leq \sqrt{\| z \|} } 
h(z, w, u^{\mu})  p_{\N_{\n}}(w) \mathrm{d}w }^{\lambda-\mu}  \varphi(u)$ \\$\prod_{i=1}^{\mu-1} h(z, u^{i+1}, u^{i})  \prod_{i=1}^{\mu} p_{\N_{\n}}(u^{i}) \mathrm{d}u 
-  \frac{ (\lambda - \mu)! }{\lambda !} \int \varphi(u) p_{z}^{ l_{z}^{f} }(u)\mathrm{d}u = 0.
$
We define the function $g$ on the compact  $\pare{\mathcal{L}_{f,z_{0}^{\f}} + \overline{\mathbb{B}(0, \delta_{f})} }\times [0, \delta_{f}]$ as,  for $(x, \rho) \in \pare{\mathcal{L}_{f,z_{0}^{f}} + \overline{\mathbb{B}(0, \delta_{f})} }\times (0, \delta_{f}],$ $g(x, \rho) =$
$
   \int_{ \|u\| \leq \frac{1}{\sqrt{\rho} } } \pare{\int_{ \|w\| \leq \frac{1}{\sqrt{\rho} }   } 1_{\theta(w,u^\mu,x)  \,> \,0 }  \,p_{\N_{\n}}(w) \mathrm{d}w}^{\lambda-\mu} \!\!\!\!\!\!
   \varphi(u) \prod_{i=1}^{\mu-1} \mathds{1}_{
   \theta(u^{i+1},u^{i},x) \,> \,0} 
p_{\N_{\n \mu}(u)}
\mathrm{d}u
$, 
with $\theta(w,v,x) = (w-v)^{\top} \nabla f\pare{ x + t_{x}^{f} \rho (v+ \tau_{x}^{\rho}(v,w) (w-v))  } $ and  $\tau_{x}^{\rho}(v^{1}, v^{2})\in\pare{0,1}$ defined thanks to the mean value theorem by 
$
f(x + t_{x}^{f}\rho v^{2}) - f(x+ t_{x}^{f}\rho v^{1}) =  t_{x}^{f}
\theta(v^2,v^1,x)
$.
For $x \in \mathcal{L}_{f, z_{0}^{f}} + \overline{\mathbb{B}(0, \delta_{f})},$\del{ $g(x, 0)$ equals} 
$g(x, 0) = \int \varphi(u) P\Big( (\N_{\n}-u^{\mu})^{\top} \nabla f(x) > 0 \Big) ^{\lambda-\mu} 
\prod_{i=1}^{\mu-1} \mathds{1}_{ (u^{i+1}-u^{i})^{\top} \nabla f(x) \,> \,0}
p_{\N_{\n\mu}}(u)
\mathrm{d}u.$
Note that
$ g\pare{ t_{z}^{f}\frac{z}{\| z \|},  0 } =  \frac{ (\lambda - \mu)! }{\lambda !}  \dsp\int \varphi(u) p_{z}^{l_{z}^{f}}(u)  \mathrm{d}u$. With Lemma \ref{bounded},
$ \lim_{ \| z \| \to \infty } g\pare{ t_{z}^{f}\frac{z}{\| z \|}, \frac{ 1 }{\| z \|}} -  \frac{ (\lambda - \mu)! }{\lambda !} \dsp\int \varphi(u) p_{z}^{f}(u)  \mathrm{d}u = 0.$ Therefore it is enough to prove that $g$ is uniformly continuous in order to obtain that 
$ \frac{(\lambda-\mu) !}{\lambda !} \pare{ \lim_{ \| z \| \to \infty }  \int \varphi(u) p_{z}^{f}(u)  \mathrm{d}u
-   \int \varphi(u) p_{z}^{l_{z}^{f}}(u)   \mathrm{d}u}$ is equal to $\lim_{ \| z \| \to \infty } g\pare{ t_{z}^{f}\frac{z}{\| z \|}, \frac{ 1 }{\| z \|}} -  g\pare{ t_{z}^{f}\frac{z}{\| z \|},  0 }$ which is equal to $0.$

For $(x, \rho) \in \pare{\mathcal{L}_{f, z_{0}^{f}} + \overline{\mathbb{B}(0, \delta_{f})} }\times (0, \delta_{f}],$ for $u \in  \overline{\mathbb{B}(0, 1 / \sqrt{\rho} )},$ $w \in  \overline{\mathbb{B}(0, 1 / \sqrt{\rho} )},$
$\nabla f\Big( x+ t_{x}^{f}\rho (u^{\mu}+ \tau_{x}^{\rho}(u^{\mu}, w) (w-u^{\mu})) \Big) \neq 0$
since $x+ t_{x}^{f}\rho (u^{\mu}+ \tau_{x}^{\rho}(u^{\mu}, w) (w-u^{\mu})) \in \mathcal{L}_{f, z_{0}^{f}} + \overline{\mathbb{B}(0, 2 \delta_{f})}.$
Then the set 
$\acco{w \in \R^{\n}; 
\theta(w,u^\mu,x)
= 0}$ is Lebesgue negligible. In addition, the function $y \mapsto \mathds{1}_{y \,> \,0}$ is continuous on $\mathbb{R} \backslash \acco{0}$, therefore  for almost all $w,$ 
$(x, \rho, u^{\mu}) \mapsto \mathds{1}_{ \|w\| \leq \frac{1}{\sqrt{\rho} }} \mathds{1}_{  
\theta(w,u^\mu,x)
}   p_{\N_{\n}}(w) 
$
 is continuous and bounded by the integrable function $ p_{\N_{\n}}$. Then by domination, for almost all  $u,$ $ (x, \rho) \mapsto$
 $
 \mathds{1}_{ \|u\| \leq \frac{1}{\sqrt{\rho} }} \pare{\int_{ \|w\| \leq \frac{1}{\sqrt{\rho} }   } \mathds{1}_
 {
 \theta(w,u^\mu,x)
 > \,0 }  
 \,p_{\N_{\n}}(w) \mathrm{d}w}^{\lambda-\mu}
 $
is continuous.
Similarly $(x, \rho) \mapsto  \mathds{1}_{ \|u\| \leq \frac{1}{\sqrt{\rho} }} \prod_{i=1}^{\mu-1} \mathds{1}_{ 
\theta(u^{i+1},u^i,x)
\, > \,0}
$
 is continuous for almost all $u$.
Therefore $g$ is continuous on $\pare{\mathcal{L}_{f,z_{0}^{f}} + \overline{\mathbb{B}(0, \delta_{f})} }\times (0, \delta_{f}],$ and for all $x \in \mathcal{L}_{f, z_{0}^{f}} + \overline{\mathbb{B}(0, \delta_{f})},$ $\lim_{\rho \to 0} g(x, \rho)$ exists and equals
\begin{align*}
 &  \int \lim_{\rho \to 0}  \mathds{1}_{ \|u\| \leq \frac{1}{\sqrt{\rho} } } 
 \varphi(u) \prod_{i=1}^{\mu-1} \mathds{1}_{ 
 \theta(u^{i+1},u^i,x)
 > 0} 
 \pare{\int_{ \|w\| \leq \frac{1}{\sqrt{\rho} }   } 1_{ 
\theta(w,u^\mu,x)
> 0 }  \,p_{\N_{\n}}(w) \mathrm{d}w}^{\lambda-\mu} \!\!\!\!\!
p_{\N_{\n \mu}(u)}
\mathrm{d}u 
\end{align*}
which is equal to $g(x, 0)$.
Finally $g$ is continuous on the compact $\pare{\mathcal{L}_{f, z_{0}^{f}} + \overline{\mathbb{B}(0, \delta_{f})} }\times [0, \delta_{f}]$; it is thereby uniformly continuous on that compact.

\end{document}